\documentclass[a4paper,11pt]{article}
\usepackage{amsmath}
\usepackage{amsthm}
\usepackage[top=3.5cm, bottom=3.5cm, left=2.5cm, right=2.5cm]{geometry}
\usepackage[utf8]{inputenc}
\usepackage[T1]{fontenc}
\usepackage[french,english]{babel} 
\usepackage{enumerate}
\usepackage[shortlabels]{enumitem}
\usepackage{color}
\usepackage[]{pdfpages}
\usepackage{titling}
\usepackage[colorlinks=true, linkcolor=red,citecolor=blue,linktoc=page]{hyperref} 
\usepackage{csquotes}
\usepackage[nottoc, notlof, notlot, numbib]{tocbibind}
\usepackage[style=alphabetic,giveninits,doi=false,isbn=false,url=false,eprint=false,clearlang=true,sorting=nyt,maxbibnames=99]{biblatex}
\addbibresource{biblio.bib}
\usepackage{mathtools}
\usepackage{thmtools}
\usepackage{bbm}
\usepackage{amssymb}
\usepackage{mathrsfs}
\usepackage{float}
\usepackage{stmaryrd}
\usepackage{array}
\usepackage{tabularx}
\usepackage{ltablex} 
\DeclareUnicodeCharacter{221E}{$\infty$}
\DeclareUnicodeCharacter{00B4}{\'}
\DeclareUnicodeCharacter{03BB}{$\lambda$}
\DeclareUnicodeCharacter{0393}{$\Gamma$}
\DeclareUnicodeCharacter{211D}{$\mathbb{R}$}
\DeclareUnicodeCharacter{2212}{-} 

\newcommand{\A}{\forall}
\newcommand{\R}{\mathbb{R}}
\newcommand{\N}{\mathbb{N}}
\newcommand{\1}{\mathbbm{1}}
\newcommand{\E}{\mathbb{E}}
\newcommand{\ps}{\mathcal{P}}

\renewcommand{\S}{\mathcal{S}}
\renewcommand{\t}{t}
\renewcommand{\d}{\mathrm{d}}
\renewcommand{\P}{\mathbb{P}} 
\renewcommand{\L}{\mathcal{L}} 

\newcommand{\C}{\mathcal{C}_{\overline{\mu}}}
\newcommand{\F}{\mathcal{F}}
\newcommand{\I}{\mathcal{T}}
\newcommand{\T}{\mathcal{T}}
\newcommand{\M}{\mathcal{M}}

\newcommand{\Ad}{\mathcal{A}_\Psi^\zeta}
\newcommand{\Adeq}{\mathcal{A}^{\zeta}}
\newcommand{\Adineq}{\mathcal{A}_{\Psi}}
\newcommand{\AdF}{\mathcal{D}_\mathcal{I}}

\DeclareMathOperator{\argmin}{argmin}
\newtheorem{theorem}{Theorem}[section]
\newtheorem{proposition}[theorem]{Proposition}
\newtheorem{lemma}[theorem]{Lemma}
\newtheorem{corollary}[theorem]{Corollary}
\theoremstyle{definition}
\newtheorem{definition}[theorem]{Definition}
\theoremstyle{remark}
\newtheorem{rem}[theorem]{Remark}
\newtheorem{example}[theorem]{Example}
\theoremstyle{plain}
\newtheorem{assumptiona}{Assumption}

\newtheorem{assumptionb}{Assumption}

\newtheorem{assumptionc}{Assumption}

\newcommand{\dd}{\mathrm{d}}

\usepackage{authblk}

\title{\bf Gibbs principle with infinitely many constraints: optimality conditions and stability}
\author[1,2,4]{Louis-Pierre \textsc{Chaintron}}
\author[3]{Giovanni \textsc{Conforti}}
\author[4]{Julien \textsc{Reygner}}
\affil[1]{\small
DMA, École normale supérieure, Université PSL, CNRS, 75005 Paris, France}
\affil[2]{\small
Inria, Team M${\sf \Xi}$DISIM, Inria Saclay, 91128 Palaiseau, France
}
\affil[3]{\small
Università degli Studi di Padova, Via Trieste, 63, 35131 Padova, Italia
}
\affil[4]{\small
CERMICS, École des Ponts, IP Paris, Marne-La-Vall\'ee, France
}
\date{\today}

\begin{document}

\maketitle

\abstract{We extend the Gibbs conditioning principle to an abstract setting combining infinitely many linear equality constraints and non-linear inequality constraints, which need not be convex.
A conditional large large deviation principle (LDP) is proved in a Wasserstein-type topology,
and optimality conditions are written in this abstract setting.
This setting encompasses versions of the Schrödinger bridge problem with marginal non-linear inequality constraints at every time.
In the case of convex constraints, stability results for perturbations both in the constraints and the reference measure are proved.
We then specify our results when the reference measure is the path-law of a continuous diffusion process, whose law is constrained at each time.
We obtain a complete description of the constrained process through an atypical mean-field PDE system involving a Lagrange multiplier.}

\tableofcontents

\section{Introduction} \label{sec:Introduction}

\subsection{Motivation}

A fundamental question in statistical mechanics is to estimate the most likely configuration of a large system of exchangeable particles, given some macroscopic observation on it.
More precisely, let us consider a $N$-tuple of exchangeable random variables $\vec{X}^N := ( X^{1,N}, \ldots , X^{N,N})$ in some abstract Polish space $E$.
We assume that the $X^{i,N}$ are either independent or interact through their empirical measure
\[ \pi ( \vec{X}^N ) := \frac{1}{N} \sum_{i =1}^N \delta_{X^{i,N}} \in \ps (E). \]
The purpose is then to estimate $\pi ( \vec{X}^N )$ as $N \rightarrow +\infty$, given the knowledge that $\{ \pi ( \vec{X}^N ) \in A \}$ for some measurable $A \subset E$.
This amounts to determining the behaviour of a \emph{typical particle} in the system given the \emph{observation} that $\{ \pi ( \vec{X}^N ) \in A \}$.
In a physical context, measurements are often submitted to uncertainties.
Therefore, a closely related question is the stability of the computed behaviour when perturbing the observation. 

These questions can be specified when $\L ( \pi ( \vec{X}^N ) )$ satisfy the large deviation principle (LDP) with rate function $\mathcal{I}$.
Informally speaking, the LDP states that
\begin{equation} \label{eq:IntroLDP}
\P ( \pi ( \vec{X}^N ) \in B ) \asymp \exp \big[ - N \inf_{ \mu \in B } \mathcal{I} ( \mu ) \big], 
\end{equation} 
at the exponential-in-$N$ scale, for any sufficiently regular $B \subset E$. 
In this setting, the \emph{Gibbs conditioning principle} suggests the heuristic 
\begin{equation} \label{eq:IntroGibbs}
\L ( X^{1,N} \vert \pi ( \vec{X}^N ) \in A ) \xrightarrow[N \rightarrow + \infty]{} \argmin_{A} \mathcal{I} , 
\end{equation}
provided that $ \argmin_{A} \mathcal{I} $ is well-defined, and $ \argmin_{ A} \mathcal{I} $ is also the weak limit of $ \pi ( \vec{X}^N )$ conditionally on $\{ \pi ( \vec{X}^N ) \in A \}$. More precisely, a LDP can be established for the conditional law.
Some reference textbooks on this approach are \cite{lanford1973entropy,ruelle1965correlation,dembo2009large,dupuis2011weak,ellis2006entropy}.
To explicitly compute $ \argmin_{A} \mathcal{I} $, some further knowledge of $\mathcal{I}$ is needed.

The scope of this article is to establish \eqref{eq:IntroGibbs}, to compute minimisers of $\mathcal{I}$, and to prove stability properties when $\mathcal{I}$ is the sum of relative entropy and an interaction term:
\[ \mathcal{I} ( \mu ) := H ( \mu \vert \nu ) + \F ( \mu ), \]
the definition of the relative entropy $H ( \mu \vert \nu )$ being recalled in Section \ref{subsec:notations}, and $A = \Ad$, where
\[ \Ad := \big\{ \mu \in \ps ( E ), \; \forall s \in \S, \, {\textstyle \int_E \zeta_s \d \mu = 0}, \, \forall t \in \T, \, \Psi_t ( \mu ) \leq 0 \big\} \]
is given by (possibly) infinitely many linear equality constraints and non-linear inequality constraints.
Precise assumptions on $\F$, $( \zeta_s )_{s \in \S}$ and $( \Psi_t )_{t \in \T}$ are detailed in Section \ref{s:abstract-results}, under which  we prove that minimisers are Gibbs measures, whose density involves suitable Lagrange multipliers.
Our approach relies on the Hahn-Banach theorem, combining tools from functional analysis in the spirit of \cite{csiszar1984sanov,leonard2000minimizers,pennanen2019convex} with differential calculus on $\ps (E )$.
In particular, $\F$ and the $\Psi_t$ need not be convex.
When $\F \equiv 0$, the LDP \eqref{eq:IntroLDP} is the well-known Sanov theorem \cite{dembo2009large} for independent particles. 
The case $\F \neq 0$ allows for mean-field interaction, modelling particles that are distributed according to mean-field Gibbs measures as in \cite{leonard1987large,arous1990methode,wang2010sanov,dupuis2020large}.
When $\S = \emptyset$, our results extend the standard Gibbs principle to settings with infinitely many constraints, see Section \ref{subsec:IntroGibbs} below.
When $E$ is a product space, we can further impose marginal laws for $\argmin_{\Ad} \mathcal{I}$ through the linear equality constraints.
This last setting includes the famous Schrödinger bridge problem, with additional inequality constraints, see Section \ref{subsec:IntroSchrod} below. 
A notable example, which is studied in Section \ref{s:processes}, is given by the space of continuous paths $E = C ( [0,T], \R^d )$.

When $\F$ and the $\Psi_t$ are convex, uniqueness holds for the minimiser $\argmin_{\Ad} \mathcal{I}$, corresponding to the notion of \emph{entropic projection} introduced by \cite{csiszar1975divergence,csiszar1984sanov}.
Stability results for entropic projections have enjoyed many recent developments \cite{ghosal2022stability,eckstein2022quantitative,nutz2023stability,chiarini2023gradient,divol2024tight}, mainly motivated by the surge of interest around the Schr\"odinger problem and its applications in machine learning.  
In our abstract setting, we prove two kinds of apparented results:
\begin{itemize}
    \item A quantitative stability result when changing $\Psi_t$ into $\Psi_t - \varepsilon$ for small $\varepsilon > 0$.
    \item A weak stability result when perturbing $\nu$, $\F$, $( \Psi_t )_{t \in \T}$ at the same time, when $\S = \emptyset$.
\end{itemize}
These new stability results hold in $\ps (E)$ at a great level of generality, under minimal assumptions on $\nu$, $\F$, $( \Psi_t )_{t \in \T}$ that are stated in Section \ref{ss:convex}.

Let us now illustrate our results with the examples of the Gibbs conditioning principle and the Schrödinger bridge problem. For the clarity of exposition, the next two subsections provide an overview of our results only in the particular case $\F \equiv 0$ with linear constraints $\Psi_t ( \mu) = \int_E \psi_t \d \mu$.

\subsection{The Gibbs conditioning principle} \label{subsec:IntroGibbs}

When $\mathcal{I} ( \mu ) = H ( \mu \vert \nu )$ and the constraints are given by a finite number of moments against $\psi_1, \ldots \psi_T : E \rightarrow \R$, namely  
\[ A = \big\{ \mu \in \ps (E), \quad \forall \t \in \{1, \ldots T \}, \; {\textstyle\int_E \psi_t \d \mu \leq 0} \big\}, \]
then it is well-known that the density of $\overline{\mu} := \argmin_{ \mu \in A} H ( \mu \vert \nu )$ is given by
\begin{equation} \label{eq:gibbs-density}
\frac{\d\overline\mu}{\d\nu} (x) = {\overline Z}^{-1} \exp \bigg[ -\sum_{t=1}^T \overline\lambda_t \psi_t (x) \bigg], 
\end{equation} 
where $\overline{Z}$ is a normalising constant, and the $\overline\lambda_1, \ldots, \overline{\lambda}_T$ are non-negative Lagrange multipliers. 
A similar result holds when imposing ${\textstyle\int_E \psi_t \d \mu = 0}$ instead of inequalities, corresponding to the \emph{canonical ensemble} in statistical physics.
In addition to the aforementioned textbooks on the Gibbs principle, we mention the prominent contributions \cite{borel1906principes,diaconis1987dozen,stroock1991microcanonical} for precise statements, and \cite{dembo1996refinements,dembo1998re,cattiaux2007deviations} for quantitative versions of \eqref{eq:IntroLDP} in this setting.

Our abstract results now allow for an infinite number of constraints $( \psi_t )_{t \in \T}$, provided a continuous dependence on $t$ in the compact space $\T$.
In particular, Theorem \ref{thm:abstractGibbs} provides existence for a positive Radon measure $\overline\lambda \in \M_+ ( \T )$, which generalises the previous multipliers, such that
\[ \frac{\d\overline\mu}{\d\nu} (x) = {\overline Z}^{-1} \exp \bigg[ - \int_{\T } \psi_t (x) \overline{\lambda} ( \d t ) \bigg]. \]
In the particular setting $E = C ( [0,T], \R^d )$ and $\T = [0,T]$, $x = ( x_t )_{t \in [0,T]}$ being a continuous path, a natural choice is $\psi_t ( x ) = \psi ( x_t )$, for some continuous $\psi : \R^d \rightarrow \R$.
In this case, if $\nu$ is the path-law of the solution to the stochastic differential equation (SDE), $(B_t)_{t \geq 0}$ being a Brownian motion,
\[ \d X_t = b_t ( X_t ) \d t + \sigma_t ( X_t ) \d B_t, \]
under standard Lipschitz assumptions,
Theorem \ref{thm:LinkPath} further identifies $\overline\mu$ as being the path-law of the solution to the SDE
\[ \d \overline{X}_t = b_t ( \overline{X}_t ) \d t - \sigma_t \sigma_t^\top \nabla \varphi_t ( \overline{X}_t ) \d t + \sigma_t ( \overline{X}_t ) \d B_t, \quad \overline{X}_0 \sim \overline{Z}^{-1} e^{-\varphi_0 (x)} \d x, \]
where $\varphi$ is the weak solution of the Hamilton-Jacobi-Bellman (HJB) equation
\[ - \varphi_t + \int_t^T b_s \cdot \nabla \varphi_s - \frac{1}{2} \vert \sigma_s^\top \nabla \varphi_s \vert^2 + \frac{1}{2} \mathrm{Tr} [ \sigma_s \sigma_s^\top \nabla^2 \varphi_s ] \, \d s + \int_{[t,T]} \psi_s \, \overline{\lambda} ( \d s ) = 0, \]
for which we prove well-posedness in a suitable sense detailed in Section \ref{sec:Gibbsdif}.
This HJB approach is reminiscent of mean-field control problems under constraints that were recently studied in \cite{daudin2020optimal,daudin2023optimal,daudin2023mean}.

\subsection{Schrödinger bridge with additional constraints} \label{subsec:IntroSchrod}

When $E = C ( [0,T], \R^d )$ and $\T = [0,T]$, another famous example is the Schr\"odinger bridge problem \cite{Schr32}, which is the prototype of a stochastic mass transport problem. 
For this problem, 
\[ A = \big\{ \mu \in \ps ( C ( [0,T], \R^d ) ), \; \mu_0 = \mu_{\mathrm{ini}}, \, \mu_T = \mu_{\mathrm{fin}} \big\}, \]
where the marginal laws $\mu_{\mathrm{ini}}, \mu_{\mathrm{fin}} \in \ps ( \R^d )$ are imposed.
The literature on Schr\"odinger bridges has recently enjoyed thriving developments.
Some seminal references are \cite{csiszar1975divergence,cattiaux1995large,cattiaux1996minimization}.
For more recent results, we refer to the survey article \cite{leonard2013survey}, the lecture notes \cite{nutz2021introduction}, and references therein. 
We also mention \cite{backhoff2020mean} for an extension to a more general rate function $\mathcal{I}$. 
Denoting $\overline{\mu} := \argmin_{\mu \in A} H (\mu \vert \nu )$, a standard result \cite{nutz2021introduction} is the existence of measurable functions $\xi_0, \xi_T : \R^d \rightarrow \R$, called \emph{Schrödinger potentials}, such that
\[ \frac{\d \overline{\mu}}{\d \nu} ( x ) = \exp \big[ - \xi_0 ( x_0 ) - \xi_T ( x_T ) \big], \]
under suitable assumptions on $(\nu, \mu_{\mathrm{ini}}, \mu_{\mathrm{fin}})$.
Our results extend this decomposition to the case
\[ A = \big\{ \mu \in \ps ( C ( [0,T], \R^d ), \; \mu_0 = \mu_{\mathrm{ini}}, \, \mu_T = \mu_{\mathrm{fin}}, \, \forall t \in [0,T], \, {\textstyle \int_{\R^d} \psi \d \mu_t \leq 0} \big\}. \]
In this setting, as a consequence of Theorem \ref{thm:abstractGibbs}, Theorem \ref{thm:ConsSchröd} provides $\overline{\lambda} \in \M_+ ( [0,T] )$ and measurable $\xi_0, \xi_T : \R^d \rightarrow \R$ such that
\[ \frac{\d \overline{\mu}}{\d \nu} ( x ) = \exp \bigg[ - \xi_0 ( x_0 ) - \xi_T ( x_T ) - \int_{[0,T]} \psi ( x_t ) \overline{\lambda} ( \d t) \bigg]. \]
In fact, Theorem \ref{thm:ConsSchröd} states a stronger result, since it allows for non-linear and non-convex constraints $\Psi ( \mu_t ) \leq 0$ instead of $\int_{\R^d} \psi \d \mu_t \leq 0$.

\subsection{Outline}

This article is organised as follows.
Our main abstract results are stated in Section
\ref{s:abstract-results}.
The optimality conditions are presented in Section~\ref{ss:gibbs-density}, whereas the stability results are given in Section~\ref{ss:convex}.
Section \ref{s:processes} develops the case of continuous paths $E = C([0,T],\R^d)$. 
Our results on the Schrödinger bridge problem with additional constraints are stated in Section \ref{ss:schro}. 
The specific case of Gibbs principle for diffusion processes is further studied in Section \ref{sec:Gibbsdif}, and some examples for Gaussian processes can be found in Section \ref{ss:brown-expl}.
The proofs of the results are written in Sections \ref{s:proofAbs}-\ref{sec:Difproofs}.
Appendix \ref{sec:appLin} finally presents an alternative proof of Theorem~\ref{thm:abstractGibbs} (from Section \ref{ss:gibbs-density}) in the linear setting of Section \ref{subsec:IntroGibbs}, with improved assumptions.

\subsection{Notations} \label{subsec:notations}

\begin{itemize}
    \item $\vec{x}^N$ denotes  a generic element of a product set $E^N$, for an integer $N \geq 1$. 
    We also write $\vec{x}^N = ( x^i )_{1 \leq i \leq N}$. 
    \item $\ps (E)$ denotes the set of probability measures over a Polish space $E$ endowed with its Borel $\sigma$-algebra.
    \item $\delta_x$ denotes the Dirac measure at some point $x$ in $E$.
    \item $\pi$ denotes the function that maps a vector $\vec{x}^N$ in $E^N$ to the empirical measure 
    \[ \pi ( \vec{x}^N ) := \frac{1}{N} \sum_{i=1}^N \delta_{x^i} \in \ps(E). \] 
    \item $\langle\mu, f \rangle$ denotes the integral (when it exists) of a measurable function $f$ against the measure $\mu$. 
    The convention will often be adopted that $\langle\mu, f\rangle = +\infty$ if the integral is not well-defined. 
    \item $\mu( \cdot \vert A)$ stands for the conditional measure $\mu(A)^{-1} \1_A \mu$, for $\mu$ in $\ps(E)$ and $A$ measurable with $\mu(A) >0$.   
    \item $\M_+(\I)$ denotes the convex cone of positive Radon measures. In this setting, a Radon measure is defined as a signed finite measure that is both inner and outer regular as defined in \cite[Definition 2.15]{rudin1970real}.
    \item $\ps_\phi (E)$ denotes the set of measures $\mu \in \ps (E)$ with $\langle \mu , \phi \rangle < +\infty$ for some measurable $\phi : E \rightarrow \R$. When $\phi(x) = d(x,x_0)^p$ for some distance $d$ on $E$, some $x_0$ in $E$ and some $p \geq 1$, we will often write $\ps_p (E)$ instead.
    \item $W_p$ denotes the Wasserstein distance on $\ps_p(E)$, defined by
    \[ W_p(\mu,\mu') := \bigg[ \inf_{X \sim \mu, \, Y \sim \mu'} \E [ \vert X -Y \rvert^p ] \bigg]^{1/p}.  \]
    \item $T > 0$ is a given real number, and $d \geq 1$ is an integer.
    \item $x_{[0,T]} \in C([0,T],\R^d)$ denotes a continuous function $x_{[0,T]} : [0,T] \rightarrow \R^d$.
    \item $\mu_{[0,T]}$ denotes a path measure in $\ps( C([0,T], \R^d))$. Its marginal measure at time $t$ will be denoted by $\mu_t$.
    \item $\mu_{\cdot}$ denotes a continuous curve $t \mapsto \mu_t$ in $C([0,T], \ps(\R^d))$.
    \item ${\sf{X}}_t$ denotes the coordinate map $x_{[0,T]} \mapsto x_t$. It can be seen as a random variable on the canonical space $\Omega = C([0,T],\R^d)$.
    \item $\tfrac{\delta\Psi}{\delta\mu}(\mu) : x \mapsto \tfrac{\delta\Psi}{\delta\mu}(\mu,x)$ denotes the linear functional derivative at $\mu$ (when it exists) of a function $\Psi : \ps(\R^d) \rightarrow \R$. See Definition \ref{def:diff} below for more details.  
    The convention is adopted throughout the paper that $\langle \mu, \tfrac{\delta\Psi}{\delta\mu}(\mu) \rangle =0$. 
    \item $\cdot^\top$ and $\mathrm{Tr}[\cdot]$ respectively denote the transpose and the trace of matrices.
\end{itemize}

\section{Abstract setting and main results}\label{s:abstract-results}

This section contains our main two theorems for the generalisation of the Gibbs principle. We first present our abstract setting in Section~\ref{ss:abstract-setting}, allowing for interaction terms and infinitely many linear equality constraints and nonlinear inequality constraints. We then establish a conditional LDP (Theorem~\ref{thm:AbsLDP}), taking only inequality constraints into account, in Section~\ref{ss:LDP}. Next, we establish a Gibbs-like formula for the minimiser of the rate function of the conditional LDP (Theorem~\ref{thm:abstractGibbs}) in Section~\ref{ss:gibbs-density}, and study more properties, in particular stability, of this minimiser in the case where the rate function and the inequality constraints are convex, in Section~\ref{ss:convex}.

\subsection{Global setting}\label{ss:abstract-setting}

\subsubsection{The rate function}\label{sss:def-I}

Let $E$ be a complete separable metric space, endowed with its Borel $\sigma$-algebra. 
The space $\ps(E)$ of probability measures over
$E$ is endowed with the \emph{weak topology} \cite[Chapter 2]{billingsley2013convergence}. 
We recall that a sequence $( \mu_k )_{k \geq 1}$ weakly converges towards $\mu$ in $\ps(E)$ if and only if 
\[ \langle \mu_k, \varphi \rangle \xrightarrow[k \rightarrow +\infty]{} \langle \mu, \varphi \rangle, \]
for every bounded continuous $\varphi : E \rightarrow \R$.
The space $\ps(E)$ is endowed with its Borel $\sigma$-algebra.
For any continuous $\phi : E \rightarrow \R_+$, we define the set
\[ \ps_\phi (E) := \{ \mu \in \ps(E) \, , \, \langle \mu , \phi \rangle < +\infty \}. \]
The topology on $\ps_\phi (E)$ is now defined through its converging sequences. 

\begin{definition}[Weak convergence in $\ps_\phi(E)$] \label{def:weakCVphi}
A sequence $( \mu_{k} )_{k \geq 1}$ weakly converges towards $\mu$ in $\ps_\phi (E)$ if and only if 
\[ \langle \mu_k, \phi \rangle \xrightarrow[k \rightarrow +\infty]{} \langle \mu, \phi \rangle \quad \text{and} \quad \langle \mu_k, \varphi \rangle \xrightarrow[k \rightarrow +\infty]{} \langle \mu, \varphi \rangle, \]
for every bounded continuous $\varphi : E \rightarrow \R$.
\end{definition}
    
This topology on $\ps_\phi(E)$ is stronger than the one induced by the weak topology on $\ps(E)$. 
It was used for instance in \cite{leonard1987large}.
When $\phi(x) = 1$ for every $x$ in $E$, we notice that $\ps(E)$ and $\ps_\phi(E)$ coincide, together with their topology.
When $\phi(x) = d(x,x_0)^p$ for some $x_0$ in $E$ and $p \geq 1$, this topology corresponds to the one defined in \cite[Definition 6.8]{villani2009optimal}.
This latter topology can be metrised using the $p$-Wasserstein distance $W_p$ \cite[Theorem 6.9]{villani2009optimal}. 

\medskip
The dual representation formula 
\begin{equation}\label{eq:DualEntrop}
H( \mu \vert \nu ) = \sup_{\substack{\Phi \text{ measurable} \\ \langle \nu, e^{\Phi} \rangle < \infty}} \langle \mu , \Phi \rangle - \log \, \langle \nu, e^\Phi \rangle
\end{equation} 
of the relative entropy~\cite[Proposition 3.1-(iii)]{leonard2012girsanov} yields the following statement.

\begin{lemma}[Integrability condition]\label{lem:integ}
    Let $\phi: E \rightarrow \R_+$ be a continuous function and $\nu \in \ps(E)$. If there exists $\alpha>0$ such that $\langle \nu, e^{\alpha \phi}\rangle < \infty$, then any probability measure $\mu \in \ps(E)$ such that $H(\mu|\nu)<+\infty$ is in $\ps_\phi(E)$.
\end{lemma}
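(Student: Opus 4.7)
The plan is to apply the dual variational formula \eqref{eq:DualEntrop} to a well-chosen test function $\Phi$ that is a positive multiple of $\phi$, and then extract the desired integrability of $\phi$ against $\mu$ from the finiteness of $H(\mu|\nu)$.

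More precisely, first I would note that since $\phi$ is continuous and nonnegative, it is Borel measurable, and the hypothesis $\langle \nu, e^{\alpha\phi}\rangle < \infty$ says exactly that $\Phi := \alpha \phi$ is an admissible test function in the supremum of \eqref{eq:DualEntrop}. Consequently,
\[ H(\mu|\nu) \;\geq\; \alpha \, \langle \mu, \phi \rangle - \log \langle \nu, e^{\alpha\phi}\rangle. \]
Because $\phi \geq 0$, the integral $\langle \mu, \phi \rangle$ is unambiguously defined in $[0, +\infty]$, so this inequality makes sense with no measurability or integrability caveat on the left-hand factor.

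Rearranging yields
\[ \langle \mu, \phi \rangle \;\leq\; \frac{1}{\alpha}\Bigl[ H(\mu|\nu) + \log \langle \nu, e^{\alpha\phi}\rangle \Bigr]. \]
Under the two hypotheses $H(\mu|\nu) < +\infty$ and $\langle \nu, e^{\alpha\phi}\rangle < \infty$, the right-hand side is a finite real number, hence $\langle \mu, \phi \rangle < +\infty$, which is exactly the statement that $\mu \in \ps_\phi(E)$.

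There is no real obstacle here: the only nontrivial ingredient is the dual representation \eqref{eq:DualEntrop}, which is quoted from \cite{leonard2012girsanov}, and the argument reduces to choosing the single test function $\Phi = \alpha\phi$. The only minor point to be careful about is to observe that $\phi \geq 0$ makes the integral $\langle \mu, \phi\rangle$ meaningful without a prior integrability assumption, so the inequality above can be read as a genuine upper bound rather than an indeterminate $\infty - \infty$ expression.
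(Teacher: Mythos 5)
Your proof is correct and takes exactly the approach the paper intends: the lemma is stated immediately after~\eqref{eq:DualEntrop} with the remark that the dual representation formula ``yields'' it, and your choice of test function $\Phi = \alpha\phi$ together with the rearrangement is the obvious instantiation. The observation that $\phi \geq 0$ makes $\langle \mu,\phi\rangle$ well-defined in $[0,+\infty]$ is a small but worthwhile clarification.
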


We now fix a continuous function $\phi: E \rightarrow \R_+$ and a reference probability measure $\nu \in \ps(E)$.

\begin{lemma}[Relative entropy as a good rate function on $\ps_\phi(E)$]\label{lem:Hgoodratepsphi}
    If the condition
    \begin{equation}\label{eq:phi-nu}
        \forall \alpha > 0, \qquad \langle \nu, e^{\alpha \phi}\rangle < \infty
    \end{equation}
    holds, then $\mu \mapsto H(\mu \vert \nu)$ is lower semi-continuous on $\ps_\phi(E)$, with compact level sets.
\end{lemma}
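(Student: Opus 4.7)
The proof splits naturally into two independent assertions: lower semi-continuity and compactness of level sets. My plan is to leverage the corresponding, classical statements on $\ps(E)$ endowed with the \emph{weak} topology, and upgrade them to $\ps_\phi(E)$.

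\smallskip

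\textbf{Lower semi-continuity.} The dual representation formula~\eqref{eq:DualEntrop}, restricted to bounded continuous test functions $\Phi$, exhibits $H(\cdot|\nu)$ as a supremum of weakly continuous affine functionals on $\ps(E)$. Hence $H(\cdot|\nu)$ is weakly lower semi-continuous on $\ps(E)$. Since the topology on $\ps_\phi(E)$ from Definition~\ref{def:weakCVphi} is strictly stronger than the weak topology on $\ps(E)$, lower semi-continuity is inherited on $\ps_\phi(E)$.

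\smallskip

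\textbf{Compactness of level sets.} Fix $K \geq 0$ and let $L_K := \{\mu \in \ps_\phi(E) : H(\mu|\nu) \leq K\}$. It is a classical consequence of Sanov's theorem (or of~\eqref{eq:DualEntrop} combined with an exponential tightness argument) that $L_K$ is compact in $\ps(E)$ for the weak topology. I would then argue sequentially: given $(\mu_k)_{k \geq 1}$ in $L_K$, extract a subsequence, still denoted $(\mu_k)$, converging weakly in $\ps(E)$ towards some $\mu$. By the weak lower semi-continuity just established, $H(\mu|\nu) \leq K$, and Lemma~\ref{lem:integ} then gives $\mu \in \ps_\phi(E)$. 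It remains to upgrade weak convergence into convergence in $\ps_\phi(E)$, i.e.\ to show $\langle \mu_k, \phi \rangle \to \langle \mu, \phi \rangle$.

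\smallskip

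\textbf{The main step: uniform integrability of $\phi$ on $L_K$.} The key estimate, which is where assumption~\eqref{eq:phi-nu} is crucial, is a \emph{uniform} tail bound
\[
\sup_{\mu \in L_K} \langle \mu, \phi \1_{\{\phi \geq M\}} \rangle \xrightarrow[M \to \infty]{} 0.
\]
To obtain it, I apply~\eqref{eq:DualEntrop} with test function $\Phi = \alpha \phi \1_{\{\phi \geq M\}}$ to get, for every $\mu \in L_K$ and every $\alpha > 0$,
\[
\alpha \langle \mu, \phi \1_{\{\phi \geq M\}} \rangle \leq H(\mu|\nu) + \log \langle \nu, e^{\alpha \phi \1_{\{\phi \geq M\}}} \rangle \leq K + \log\bigl(1 + \langle \nu, e^{\alpha \phi} \1_{\{\phi \geq M\}} \rangle\bigr).
\]
Since $\langle \nu, e^{\alpha \phi} \rangle < \infty$ by~\eqref{eq:phi-nu}, dominated convergence yields $\langle \nu, e^{\alpha \phi}\1_{\{\phi \geq M\}}\rangle \to 0$ as $M \to \infty$. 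Taking first $M \to \infty$ and then $\alpha \to \infty$ in the inequality above yields the claimed uniform tail bound.

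\smallskip

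From this uniform integrability, the convergence $\langle \mu_k, \phi \rangle \to \langle \mu, \phi \rangle$ is a routine truncation argument: decompose $\phi = (\phi \wedge M) + (\phi - M)_+$; the first term is bounded and continuous, so handled by the weak convergence $\mu_k \to \mu$, while the second is dominated by $\phi \1_{\{\phi \geq M\}}$ and thus uniformly small in $k$, and also small when integrated against $\mu$ since $\mu \in L_K$. I expect this last part to be entirely mechanical; the main obstacle of the whole proof is really the uniform tail bound, and this is precisely where the full strength of the Gaussian-type integrability condition~\eqref{eq:phi-nu} (as opposed to just $\langle \nu, e^{\alpha \phi}\rangle < \infty$ for some $\alpha > 0$, which only suffices for Lemma~\ref{lem:integ}) gets used, through the freedom to send $\alpha \to \infty$.
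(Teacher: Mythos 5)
Your proof is correct and follows essentially the same route as the paper: weak compactness of level sets from Sanov, uniform integrability of $\phi$ obtained by applying the dual formula~\eqref{eq:DualEntrop} to the test function $\Phi = \alpha\phi\1_{\{\phi\geq M\}}$ (letting $M\to\infty$ then $\alpha\to\infty$), and an upgrade to $\ps_\phi(E)$-convergence (the paper delegates your final truncation argument to \cite[Theorem~3.5]{billingsley2013convergence}). The only detail you supply that the paper leaves implicit is the separate lower semi-continuity step; otherwise the two arguments coincide.
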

In the terminology of \cite[Section 1.2]{dembo2009large}, under the condition~\eqref{eq:phi-nu}, the relative entropy with respect to $\nu$ is a \emph{good rate function} on $\ps_\phi(E)$.
\begin{proof}
    The compactness of the level sets of $H$ for the usual weak topology is part of the Sanov theorem. 
    From \eqref{eq:DualEntrop} applied to $\Phi = \alpha \phi \1_{\lvert \phi \rvert \geq M} $ for every $\alpha, M > 0$, $\phi$ is uniformly integrable over any level set of $H$. The compactness for the weak topology on $\ps_\phi (E)$ then follows from \cite[Theorem 3.5]{billingsley2013convergence}.    
\end{proof}

As is stated in the proof of Lemma~\ref{lem:Hgoodratepsphi}, by Sanov's theorem, the relative entropy describes the large deviations of systems of independent particles. To take interaction between particles into account, we now consider a measurable map $\F : \ps_\phi(E) \rightarrow (-\infty,+\infty]$, and define the function $\mathcal{I} : \ps_\phi(E) \rightarrow (-\infty,+\infty]$ by 
\begin{equation} \label{eq:defI}
    \mathcal{I} (\mu) := H( \mu \vert \nu ) + \F(\mu),
\end{equation}
the domain of $\mathcal{I}$ being the set
\begin{equation}\label{eq:defAdF} 
    \AdF := \{ \mu \in \ps_\phi(E), \; H(\mu\vert \nu) < +\infty, \, \F(\mu) < +\infty\}.
\end{equation}

\begin{assumptiona}[Rate function] \label{ass:FiniteF}
Condition~\eqref{eq:phi-nu} holds and $\mathcal{I}$ is lower semi-continuous on $\ps_\phi (E)$.
\end{assumptiona}

This assumption tells that $\mathcal{I}$ is a \emph{rate function} on $\ps_\phi(E)$. Of course, by Lemma~\ref{lem:Hgoodratepsphi}, $\mathcal{I}$ is lower semi-continuous on $\ps_\phi(E)$ as soon as $\F$ is lower semi-continuous on $\ps_\phi(E)$. Then a sufficient condition for $\mathcal{I}$ to be a \emph{good} rate function on $\ps_\phi(E)$ is given as follows. The proof is omitted.

\begin{lemma}[Sufficient condition for $\mathcal{I}$ to be a good rate function]\label{lem:Igood}
    Under Assumption~\ref{ass:FiniteF}, if $\F$ is bounded from below on $\ps_\phi(E)$, then $\mathcal{I}$ has compact level sets in $\ps_\phi(E)$.
\end{lemma}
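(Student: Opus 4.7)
The plan is extremely short, since almost all of the work has already been done by Lemma \ref{lem:Hgoodratepsphi}. I would first fix a threshold $c \in \R$ and examine the level set $\{\mu \in \ps_\phi(E) : \mathcal{I}(\mu) \leq c\}$. Using the lower bound $\F \geq -M$ for some $M \in \R$, I would write, for any $\mu$ in this level set,
\[
H(\mu \vert \nu) = \mathcal{I}(\mu) - \F(\mu) \leq c + M,
\]
which shows the inclusion $\{\mathcal{I} \leq c\} \subset \{H(\cdot \vert \nu) \leq c+M\}$.

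Next, I would invoke Lemma \ref{lem:Hgoodratepsphi}, whose hypothesis~\eqref{eq:phi-nu} is part of Assumption \ref{ass:FiniteF}, to get that $\{H(\cdot \vert \nu) \leq c+M\}$ is compact in $\ps_\phi(E)$. Finally, Assumption \ref{ass:FiniteF} also provides the lower semi-continuity of $\mathcal{I}$ on $\ps_\phi(E)$, so the level set $\{\mathcal{I} \leq c\}$ is closed. Being a closed subset of a compact set, it is itself compact, which is what we wanted.

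There is no real obstacle here; the argument is just a standard \emph{closed subset of a compact set is compact} dressed up with the bound from below on $\F$. This is presumably why the authors announce that the proof is omitted.
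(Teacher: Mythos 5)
Your proof is correct, and since the paper explicitly omits the proof of this lemma, this is clearly the intended argument: the lower bound on $\F$ gives the inclusion of the level set of $\mathcal{I}$ in a level set of $H(\cdot\vert\nu)$, which is compact by Lemma~\ref{lem:Hgoodratepsphi}, and the lower semi-continuity from~\ref{ass:FiniteF} closes it. Nothing to add.
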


\subsubsection{Constraints}

The first goal of this article is to study the minimisation of the rate function $\mathcal{I}$ on the space $\ps_\phi(E)$, subject to two types of constraints: linear equality constraints, and non-linear inequality constraints.

\medskip
Linear equality constraints are encoded by a family $(\zeta_s)_{s \in \S}$ of functions $E \rightarrow \R$, where $\S$ is an arbitrary set. They will be assumed to satisfy the following properties.

\begin{assumptiona}[On the linear equality constraints]\label{ass:linear-constr}
    For any $s \in \S$, $\zeta_s$ is continuous on $E$, and there exists $C^\zeta_s \in [0,\infty)$ such that
    \[ \forall x \in E, \quad \lvert \zeta_s (x) \rvert \leq C^\zeta_s [ 1 + \phi(x) ]. \]
\end{assumptiona}

We define the associated constrained subset of $\ps_\phi(E)$ by
\[\Adeq := \{ \mu \in \ps_\phi(E), \; \forall s \in \S, \;  \langle \mu , \zeta_s \rangle = 0 \}.\]

Nonlinear inequality constraints are encoded by a family $( \Psi_{\t} )_{\t \in \T}$ of functions $\ps_\phi (E) \rightarrow \R$, where $\T$ is a compact metric space. The basic assumption on these functions is the following.

\begin{assumptiona}[On the nonlinear inequality constraints]\label{ass:nonlinear-constr}
    For any $\t \in \T$, the function $\Psi_{\t}$ is lower semi-continuous on $\ps_\phi(E)$.
\end{assumptiona}

We define the associated constrained subset of $\ps_\phi(E)$ by 
\[\Adineq := \{ \mu \in \ps_\phi(E), \; \forall t \in \T, \;  \Psi_t ( \mu ) \leq 0 \}.\]

\begin{rem}[Linear inequality constraints]\label{rem:lin-ineq-constr}
    A particular case of inequality constraints is given by linear functions $\Psi_t(\mu) = \langle \mu, \psi_t\rangle$ for some function $\psi_t : E \to \R$. Then Assumption~\ref{ass:nonlinear-constr} holds as soon as, for any $t \in \T$, $\psi_t$ is lower semi-continuous and there exists $C^\psi_t \in [0,\infty)$ such that 
    \[ \forall x \in E, \quad [\psi_t (x)]_- \leq C^\psi_t [ 1 + \phi(x) ]. \]
\end{rem}

Our assumptions on the linear equality and nonlinear inequality constraints yield the following statement.

\begin{lemma} \label{lem:AdClosed}
Under~\ref{ass:linear-constr}-\ref{ass:nonlinear-constr}, the constrained sets $\Adeq$, $\Adineq$ and $\Ad := \Adeq \cap \Adineq$ are closed in $\ps_\phi(E)$.  
\end{lemma}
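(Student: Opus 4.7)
The plan is to decompose $\Ad = \Adeq \cap \Adineq$ and prove that each factor is closed separately, working with sequential closure in $\ps_\phi(E)$ in accordance with Definition~\ref{def:weakCVphi}. For $\Adineq$, Assumption~\ref{ass:nonlinear-constr} gives directly that each sublevel set $\{\mu \in \ps_\phi(E) : \Psi_t(\mu) \leq 0\}$ is closed, and $\Adineq$ is then the arbitrary intersection of these closed sets over $t \in \T$, hence closed.

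For $\Adeq$, the task reduces to showing that, for each $s \in \S$, the linear functional $\mu \mapsto \langle \mu, \zeta_s \rangle$ is sequentially continuous on $\ps_\phi(E)$, since then $\Adeq$ is the intersection of the closed sets $\{\mu : \langle \mu, \zeta_s \rangle = 0\}$ over $s \in \S$. The growth bound $|\zeta_s| \leq C^\zeta_s (1 + \phi)$ in Assumption~\ref{ass:linear-constr} ensures the integral $\langle \mu, \zeta_s \rangle$ is well defined on $\ps_\phi(E)$.

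The central technical step, and in my view the main obstacle, is the following statement, of which the continuity of $\mu \mapsto \langle \mu, \zeta_s\rangle$ is a direct consequence: whenever $\mu_k \to \mu$ in $\ps_\phi(E)$ and $f : E \to \R$ is continuous with $|f| \leq C(1 + \phi)$, one has $\langle \mu_k, f \rangle \to \langle \mu, f \rangle$. I would prove this via a truncation argument. Introduce a continuous cutoff $\chi_M : \R_+ \to [0,1]$ equal to $1$ on $[0,M]$ and to $0$ on $[2M,\infty)$, so that both $\phi \cdot \chi_M \circ \phi$ and $f \cdot \chi_M \circ \phi$ are bounded continuous on $E$. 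Combining the convergences $\langle \mu_k, \phi \rangle \to \langle \mu, \phi \rangle$ and $\langle \mu_k, \phi \cdot \chi_M\circ\phi \rangle \to \langle \mu, \phi \cdot \chi_M\circ\phi \rangle$ with the sandwich $\phi \1_{\phi \geq 2M} \leq \phi(1 - \chi_M\circ\phi) \leq \phi \1_{\phi \geq M}$ yields
\[
\limsup_{k \to \infty} \langle \mu_k, \phi \1_{\phi \geq 2M} \rangle \leq \langle \mu, \phi \1_{\phi \geq M} \rangle \xrightarrow[M \to \infty]{} 0,
\]
the last limit following from $\mu \in \ps_\phi(E)$ and dominated convergence. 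This yields the uniform integrability of $(1 + \phi)$ along $(\mu_k)$. Applied to the bounded continuous truncation $f \cdot \chi_M \circ \phi$ and combined with the same tail estimate for $f (1 - \chi_M\circ\phi)$, which is pointwise dominated by $C(1+\phi) \1_{\phi \geq M}$, it gives $\langle \mu_k, f\rangle \to \langle \mu, f\rangle$ by a standard three-epsilon argument, and the closure properties follow.
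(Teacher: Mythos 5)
Your proof is correct and complete. The paper states Lemma~\ref{lem:AdClosed} without a proof, treating it as an immediate consequence of the assumptions and of the definition of the topology on $\ps_\phi(E)$; your argument is the natural one it has in mind. Both the decomposition (closedness of $\Adineq$ from lower semi-continuity of the $\Psi_t$, closedness of $\Adeq$ from sequential continuity of $\mu \mapsto \langle \mu, \zeta_s\rangle$ on $\ps_\phi(E)$, intersections preserving closedness) and your key truncation lemma are correct, and the sandwich $\phi\1_{\phi\geq 2M}\leq\phi(1-\chi_M\circ\phi)\leq\phi\1_{\phi\geq M}$ does deliver $\limsup_k\langle\mu_k,\phi\1_{\phi\geq 2M}\rangle\leq\langle\mu,\phi\1_{\phi\geq M}\rangle\to 0$, which is precisely the uniform integrability needed. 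For the record, elsewhere in the paper (proof of Proposition~\ref{prop:ass-ldp-diff}) the authors invoke \cite[Theorem~4.5.6]{bogachev2007measure} for this same consequence of Definition~\ref{def:weakCVphi}; your cutoff argument re-derives it self-containedly and reaches the same conclusion.
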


The elements of $\AdF \cap \Ad$ are the \emph{admissible measures}. Under~\ref{ass:FiniteF}-\ref{ass:linear-constr}-\ref{ass:nonlinear-constr}, the sequel of this section is dedicated to the study of the minimisation problem
\begin{equation}\label{eq:min-pb}
    \overline{\mathcal{I}}^\zeta_\Psi := \inf_{\mu \in \AdF \cap \Ad} \mathcal{I}(\mu).
\end{equation}
By Lemma~\ref{lem:AdClosed}, any limit $\overline\mu$ of a minimising sequence for this problem belongs to $\AdF \cap \Ad$, and by the semi-continuity assumption~\ref{ass:FiniteF} it satisfies 
\begin{equation*}
    \mathcal{I}(\overline\mu) = \overline{\mathcal{I}}^\zeta_\Psi < \infty.
\end{equation*}
We call $\overline\mu$ a \emph{minimiser} for~\eqref{eq:min-pb}. 
If $\AdF \cap \Ad$ is non-empty and $\mathcal{I}$ is a good rate function, as in Lemma~\ref{lem:Igood}, then there exists at least one minimiser.

\subsection{Conditional LDP for nonlinear inequality constraints}\label{ss:LDP}

As is explained in Section~\ref{sec:Introduction}, the first step of the proof of the Gibbs principle is the derivation of a LDP for the conditional distribution of the empirical measure of the particle system under inequality constraints. Thus, in this section, we consider a sequence $(\Pi^N)_{N \geq 1}$ of probability measures on $\ps_\phi(E)$, which is assumed to satisfy a LDP with rate function $\mathcal{I}$ defined in~\eqref{eq:defI}. Under the assumption that
\begin{equation}\label{eq:PiNAdineq-pos}
    \forall N \geq 1, \qquad \Pi^N(\Adineq) > 0,
\end{equation}
we may define the conditional distribution $\Pi^N(\cdot|\Adineq)$. The goal of this section is to state a LDP for the sequence $\Pi^N(\cdot|\Adineq)$.

\subsubsection{Statement of the result}

In order to state the LDP for this sequence, we need to introduce the following assumptions.

\begin{assumptiona}[Regularity of $\Psi_t$]\label{ass:psi-ldp}
    The function $\mu \mapsto \sup_{t \in \T} \Psi_t(\mu)$ is well-defined and continuous on $\ps_\phi(E)$.
\end{assumptiona}

\begin{assumptiona}[Constraint qualification]\label{ass:constr-qual-ldp}
    For any $\mu \in \AdF \cap \Adineq$, there exists $\tilde\mu \in \AdF$ such that:
    \begin{enumerate}[label=(\roman*),ref=\roman*]
        \item\label{ass:constr-qual-ldp:1} for every $\varepsilon > 0$ small enough, we have $\Psi_t(\mu + \varepsilon(\tilde\mu-\mu)) < 0$ for all $t \in \T$;
        \item\label{ass:constr-qual-ldp:2} $\limsup_{\varepsilon \to 0} \F(\mu + \varepsilon(\tilde\mu-\mu)) \leq \F(\mu)$.
    \end{enumerate}
\end{assumptiona}

\begin{theorem}[Conditional LDP]\label{thm:AbsLDP}
    Let~\ref{ass:FiniteF}-\ref{ass:nonlinear-constr}-\ref{ass:psi-ldp}-\ref{ass:constr-qual-ldp} hold, and assume that
    \begin{equation}\label{eq:min-pb-ineq}
        \overline{\mathcal{I}}_\Psi := \inf_{\mu \in \AdF \cap \Adineq} \mathcal{I}(\mu) < +\infty.
    \end{equation}
    Let $(\Pi^N)_{N \geq 1}$ be a sequence of probability measures on $\ps_\phi(E)$ which satisfies a LDP with rate function $\mathcal{I}$, such that~\eqref{eq:PiNAdineq-pos} holds. Then $( \Pi^N ( \cdot \vert \Adineq ))_{N \geq 1}$ satisfies a LDP with rate function $\mathcal{I}_\Psi : \ps_\phi(E) \rightarrow [0,+\infty]$ defined by
    \[
        \mathcal{I}_\Psi ( \mu ) :=
        \begin{cases}
            \mathcal{I} (\mu) - \overline{\mathcal{I}}_\Psi, &\text{if $\mu \in \Adineq$,} \\
            + \infty, &\text{otherwise}.
        \end{cases} 
    \]
\end{theorem}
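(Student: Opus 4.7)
The natural approach is to write, for any measurable $B \subset \mathcal{P}_\phi(E)$,
\[
    \Pi^N(B \mid \mathcal{A}_\Psi) = \frac{\Pi^N(B \cap \mathcal{A}_\Psi)}{\Pi^N(\mathcal{A}_\Psi)},
\]
and to derive the conditional LDP from a separate analysis of numerator and denominator. By Lemma~\ref{lem:AdClosed} the set $\mathcal{A}_\Psi$ is closed in $\mathcal{P}_\phi(E)$, so upper bounds come essentially for free from the unconditional LDP. The real work is to establish matching lower bounds despite $\mathcal{A}_\Psi$ typically having empty interior in the directions tangent to the active constraints.

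The first step is to pin down the precise exponential rate of the normalising constant:
\[
    \lim_{N \to \infty} \frac{1}{N} \log \Pi^N(\mathcal{A}_\Psi) = -\overline{\mathcal{I}}_\Psi.
\]
The upper bound follows directly from the LDP upper bound, since $\mathcal{A}_\Psi$ is closed. For the lower bound, Assumption~\ref{ass:psi-ldp} tells us that $G_\Psi := \{\mu \in \mathcal{P}_\phi(E) : \sup_{t \in \mathcal{T}} \Psi_t(\mu) < 0 \}$ is open and contained in $\mathcal{A}_\Psi$, so the LDP lower bound gives $\liminf_N \tfrac{1}{N}\log \Pi^N(\mathcal{A}_\Psi) \geq -\inf_{G_\Psi} \mathcal{I}$, and the task reduces to proving $\inf_{G_\Psi} \mathcal{I} \leq \overline{\mathcal{I}}_\Psi$.

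This is where the constraint qualification Assumption~\ref{ass:constr-qual-ldp} enters. For any admissible $\mu \in \mathcal{D}_\mathcal{I} \cap \mathcal{A}_\Psi$, pick the associated $\tilde\mu \in \mathcal{D}_\mathcal{I}$ and set $\mu_\varepsilon := \mu + \varepsilon(\tilde\mu - \mu) = (1-\varepsilon)\mu + \varepsilon \tilde\mu$. Condition~(\ref{ass:constr-qual-ldp:1}) places $\mu_\varepsilon$ in $G_\Psi$ for small $\varepsilon$. Since $H(\cdot\mid\nu)$ is convex,
\[
    H(\mu_\varepsilon \mid \nu) \leq (1-\varepsilon) H(\mu \mid \nu) + \varepsilon H(\tilde\mu \mid \nu),
\]
and since $H(\tilde\mu\mid\nu) < +\infty$, one obtains $\limsup_{\varepsilon \to 0} H(\mu_\varepsilon \mid \nu) \leq H(\mu \mid \nu)$. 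Combined with condition~(\ref{ass:constr-qual-ldp:2}) on $\mathcal{F}$, this yields $\limsup_{\varepsilon \to 0}\mathcal{I}(\mu_\varepsilon) \leq \mathcal{I}(\mu)$, so $\inf_{G_\Psi}\mathcal{I} \leq \mathcal{I}(\mu)$; taking the infimum over $\mu$ gives $\inf_{G_\Psi}\mathcal{I} \leq \overline{\mathcal{I}}_\Psi$, and the opposite inequality is trivial from $G_\Psi \subset \mathcal{A}_\Psi$.

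The conditional LDP upper bound for a closed set $F$ then follows immediately from the unconditional upper bound applied to the closed set $F \cap \mathcal{A}_\Psi$, together with the denominator asymptotics. For the lower bound on an open set $G$, one rerun the approximation scheme \emph{localised inside $G$}: for any $\mu \in G \cap \mathcal{A}_\Psi \cap \mathcal{D}_\mathcal{I}$, the sequence $\mu_\varepsilon$ converges to $\mu$ in $\mathcal{P}_\phi(E)$ (since $\langle \mu_\varepsilon, f \rangle \to \langle \mu, f\rangle$ for $f$ either bounded continuous or equal to $\phi$), so $\mu_\varepsilon \in G \cap G_\Psi$ for small $\varepsilon$; applying the LDP lower bound to the open set $G \cap G_\Psi$ and the same $\limsup$ inequality yields $\liminf_N \tfrac{1}{N}\log \Pi^N(G \cap \mathcal{A}_\Psi) \geq -\inf_{G \cap \mathcal{A}_\Psi}\mathcal{I}$. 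Dividing by $\Pi^N(\mathcal{A}_\Psi)$ and using the already established denominator asymptotics delivers the conditional LDP lower bound with rate $\mathcal{I}_\Psi$.

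The main obstacle is the constraint qualification step, which is exactly what Assumption~\ref{ass:constr-qual-ldp} is designed to overcome: without it, $\mathcal{A}_\Psi$ could be a set whose interior does not see the minimisers of $\mathcal{I}$, and the lower bound would fail. The convexity of $H$ is crucial to control the entropy along the straight line $\mu_\varepsilon$; the $\mathcal{F}$ part is controlled by pure assumption. Everything else is a clean combination of the unconditional LDP with the closedness of $\mathcal{A}_\Psi$ and the continuity of $\sup_t \Psi_t$ provided by Assumption~\ref{ass:psi-ldp}.
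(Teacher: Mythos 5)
Your proof is correct and takes essentially the same route as the paper. The paper isolates the key approximation argument into a separate lemma (Lemma~\ref{lem:ContSet}, showing $\inf_{U \cap \mathring{\Adineq}} \mathcal{I} = \inf_{U \cap \Adineq} \mathcal{I}$ for every open $U$), whereas you inline it and work directly with the explicit open set $G_\Psi = \{\mu : \sup_t\Psi_t(\mu) < 0\}$ in place of $\mathring{\Adineq}$; since~\ref{ass:psi-ldp} guarantees $G_\Psi$ is open and contained in $\Adineq$, these are interchangeable, and the rest of the argument (numerator/denominator decomposition, convexity of $H$ along $\mu_\varepsilon$, constraint qualification controlling $\F$, convergence $\mu_\varepsilon \to \mu$ in $\ps_\phi(E)$) is the same.
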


Theorem~\ref{thm:AbsLDP} is proved in Section~\ref{ss:proofs-LDP}. 
This result can be seen as a variation of \cite[Theorem 1]{la2015general} or
\cite[Theorem 3.1]{chafai2021coulomb}, without requiring the rate function to be good.
Under the assumptions of Theorem~\ref{thm:AbsLDP}, if $\mathcal{I}$ is a good rate function (as in Lemma~\ref{lem:Igood}), then by Lemma~\ref{lem:AdClosed}, so is $\mathcal{I}_\Psi$ and therefore $( \Pi^N ( \cdot \vert \Adineq ))_{N \geq 1}$ satisfies the LDP with a good rate function. In particular, there is at least one minimiser for~\eqref{eq:min-pb-ineq}. If this minimiser $\overline\mu$ is moreover unique, then $\Pi^N ( \cdot \vert \Adineq)$ weakly converges towards $\delta_{\overline{\mu}}$. Therefore, in this case, the Gibbs principle holds in the general setting of Theorem~\ref{thm:AbsLDP}, namely with infinitely many constraints $\Psi_t$, which need not be linear functionals of $\mu$, and interaction between particles, which are encoded by the functional $\mathcal{F}$. Moreover, it is stated in the abstract metric space $E$, for the Wasserstein-like topology on $\mathcal{P}_\phi(E)$ which is stronger than the usual weak topology. 

The detailed study of minimisers for~\eqref{eq:min-pb-ineq} is carried out in Sections~\ref{ss:gibbs-density} and~\ref{ss:convex}, in the more general setting of~\eqref{eq:min-pb} which takes equality constraints into account. In the sequel of the present section, we discuss how to check the continuity assumption~\ref{ass:psi-ldp} when the functions $\Psi_t$ have a certain regularity property, and then we provide an example of application of Theorem~\ref{thm:AbsLDP}, when $\F=0$. We therefore obtain a conditional version of Sanov's Theorem, which is stated in Wasserstein topology. Some examples with $\F \neq 0$
are LDPs for Gibbs measures like \cite{arous1990methode} which typically enter the framework of Theorem \ref{thm:AbsLDP}, and suitable examples in the Wasserstein topology are given in \cite{leonard1987large,liu2020large,dupuis2020large,chaintronLDP}.

\subsubsection{Checking~\ref{ass:psi-ldp}
for differentiable constraints}

In the sequel of this article, we shall be particularly interested in the case where the functions $\Psi_t$ are differentiable with respect to $\mu$, using the notion of linear functional derivative, whose definition and basic properties are gathered in Appendix~\ref{app:Diff}. In this context, explicit, sufficient conditions for the continuity assumption~\ref{ass:psi-ldp} may be formulated. 

\begin{proposition}[Assumption~\ref{ass:psi-ldp} for differentiable constraints]\label{prop:ass-ldp-diff}
    Assume that for any $\mu \in \ps_\phi(E)$, for any $t \in \T$, the function $\Psi_t$ is differentiable at $\mu$ w.r.t. the set of directions $\ps_\phi(E)$, in the sense of Definition~\ref{def:diff}. If the conditions:
    \begin{enumerate}[label=(\roman*),ref=\roman*]
        \item\label{it:ass-ldp-diff:1} for any compact set $K \subset \ps_\phi(E)$, there exists $D_K^\Psi \in [0,\infty)$ such that
        \[
            \forall x \in E, \qquad \sup_{(t,\mu) \in \T \times K} \left|\frac{\delta \Psi_t}{\delta\mu}(\mu,x)\right| \leq D_K^\Psi[1+\phi(x)];
        \]
        \item\label{it:ass-ldp-diff:2} for any compact set $K \subset \ps_\phi(E)$, the family of functions $(x \mapsto \frac{\delta \Psi_t}{\delta \mu}(\mu,x))_{t \in \T, \mu \in K}$ is equi-continuous;
        \item\label{it:ass-ldp-diff:3} for any $x \in E$, the family of functions $(\mu \mapsto \frac{\delta \Psi_t}{\delta \mu}(\mu,x))_{t \in \T}$ is (sequentially) equi-continuous on $\ps_\phi(E)$;
    \end{enumerate}
    are satisfied, then Assumption~\ref{ass:psi-ldp} holds.
\end{proposition}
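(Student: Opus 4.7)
The plan is to establish local equi-continuity of the family $\{\Psi_t\}_{t \in \T}$ on $\ps_\phi(E)$, from which the continuity of $\mu \mapsto \sup_{t \in \T} \Psi_t(\mu)$ follows via the elementary inequality $|\sup_t \Psi_t(\mu_k) - \sup_t \Psi_t(\mu)| \leq \sup_t |\Psi_t(\mu_k) - \Psi_t(\mu)|$. The starting point is the integral representation provided by Definition~\ref{def:diff}, namely
\[
    \Psi_t(\nu) - \Psi_t(\mu) = \int_0^1 \Big\langle \nu - \mu,\; \tfrac{\delta \Psi_t}{\delta \mu}\big((1-s)\mu + s\nu,\cdot\big) \Big\rangle \d s,
\]
valid for all $\mu, \nu \in \ps_\phi(E)$.

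The central technical tool I would isolate is a \emph{uniform integration lemma}: if $\mu_k \to \mu$ in $\ps_\phi(E)$ and $\mathcal{G}$ is a family of continuous functions $E \to \R$ uniformly dominated by $C(1+\phi)$ and equi-continuous on every compact subset of $E$, then $\sup_{f \in \mathcal{G}} |\langle \mu_k - \mu, f \rangle| \to 0$. The proof combines the tightness of $\{\mu_k\}$ (Prohorov), the uniform $(1+\phi)$-integrability outside a large compact $L \subset E$ which $\langle \mu_k, \phi \rangle \to \langle \mu, \phi \rangle$ produces, and an Arzelà--Ascoli $\delta$-net approximation of the restriction $\mathcal{G}|_L$ that reduces the supremum in $f$ to weak convergence against finitely many continuous functions. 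To apply this to the problem at hand, I set $\mu_{s,k} := (1-s)\mu + s\mu_k$ and split
\[
    \Psi_t(\mu_k) - \Psi_t(\mu) = \Big\langle \mu_k - \mu,\; \tfrac{\delta\Psi_t}{\delta\mu}(\mu,\cdot) \Big\rangle + \int_0^1 \Big\langle \mu_k - \mu,\; \tfrac{\delta\Psi_t}{\delta\mu}(\mu_{s,k},\cdot) - \tfrac{\delta\Psi_t}{\delta\mu}(\mu,\cdot) \Big\rangle \d s.
\]
The uniform-in-$t$ decay of the first term follows from the lemma applied to $\mathcal{G}_1 := \{\tfrac{\delta\Psi_t}{\delta\mu}(\mu,\cdot) : t \in \T\}$, whose hypotheses are supplied by (i) and (ii) with $K = \{\mu\}$. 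For the second term, the enlarged set $K' := \{\mu_{s,k} : s \in [0,1],\, k \geq 1\} \cup \{\mu\}$ is compact in $\ps_\phi(E)$ as the continuous image of $[0,1] \times (\{\mu_k\}_k \cup \{\mu\})$; hypotheses (i) and (ii) with $K'$ then control the integrands uniformly in $(t,s)$, while (iii) together with $\mu_{s,k} \to \mu$ uniformly in $s \in [0,1]$ yields pointwise convergence to $0$, again uniformly in $(t,s)$. A second application of the lemma (to the resulting $k$-indexed family of differences) produces the desired vanishing.

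For the well-definedness of $\sup_t \Psi_t(\mu)$, the same integral representation applied between a fixed reference point $\mu_0 \in \ps_\phi(E)$ and $\mu$, with $K$ chosen as the segment between them, yields $|\Psi_t(\mu) - \Psi_t(\mu_0)| \leq D_K^\Psi \langle \mu + \mu_0, 1+\phi \rangle$ uniformly in $t$, so that finiteness of $\sup_t \Psi_t$ propagates from one point of $\ps_\phi(E)$ to every other point. The hardest part of the argument is the uniform integration lemma itself: one must reconcile three distinct uniformities --- in $t \in \T$, in $s \in [0,1]$, and in the spatial variable via tightness and equi-continuity on compacts of $E$ --- against a family whose dependence on $k$ in the second term requires that the equi-continuity and sub-linear bounds remain uniform as $\mu_{s,k}$ ranges over the compact $K'$, which is precisely what compactness of $K'$ in $\ps_\phi(E)$ guarantees through (i) and (ii).
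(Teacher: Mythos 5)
Your proof is correct and takes essentially the same route as the paper's: the integral representation from Lemma~\ref{lem:integ-diff} reduces the statement to a uniform-integration lemma for equi-continuous, $(1+\phi)$-dominated test-function families, which is exactly the role played by Lemma~\ref{lem:CVUnif}. The only notable differences are that the paper proves that lemma via the Skorokhod representation theorem while you sketch a tightness/Arzel\`a--Ascoli/$\delta$-net argument, and that you split the increment into two terms rather than working with the single pair $f^k_t$, $f_t$ as in the paper—both are minor variations on the same idea.
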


Proposition~\ref{prop:ass-ldp-diff} is proved in Section~\ref{ss:proofs-LDP}. 

\begin{rem}[Linear constraints]\label{rem:lin-ineq-constr-diff}
    In the setting of Remark~\ref{rem:lin-ineq-constr}, the conditions (\ref{it:ass-ldp-diff:1},\ref{it:ass-ldp-diff:2},\ref{it:ass-ldp-diff:3}) hold as soon as the family $(\psi_t)_{t \in \T}$ is equi-continuous on $E$ and there exists $D^\psi \in [0,\infty)$ such that
    \[
        \forall x \in E, \qquad \sup_{t \in \T} |\psi_t(x)| \leq D^\psi [1+\phi(x)].
    \]
\end{rem}

\subsubsection{Application: the conditional Sanov Theorem with linear constraints}

A particular setting where Theorem~\ref{thm:AbsLDP} may be applied is the case where $\Pi^N$ is the law of the empirical measure of independent random variables distributed according to $\nu$. In this case, the LDP is known to hold with $\mathcal{I}(\mu) = H(\mu|\nu)$ on $\ps_\phi(E)$, either with $\phi \equiv 1$ (this is the usual Sanov Theorem), or with $\phi(x) = d(x,x_0)^p$, $p \geq 1$ under the exponential integrability assumption~\eqref{eq:phi-nu} in~\ref{ass:FiniteF} (this is the Sanov Theorem in Wasserstein distance~\cite[Theorem 1.1]{wang2010sanov}).

In this case, and if the inequality constraints are actually linear, as in Remark~\ref{rem:lin-ineq-constr}, then defining
\[
    E_\eta := \{ x \in E: \; \forall \t \in \T, \, \psi_{\t} (x) \leq -\eta \},
\]
we have that a sufficient condition for~\ref{ass:constr-qual-ldp}-\eqref{ass:constr-qual-ldp:1} to hold is 
\begin{equation}\label{eq:cond-suff-lin}
    \exists \eta > 0: \quad \nu(E_\eta)>0, 
\end{equation}
because then we may take $\tilde\mu = \nu(\cdot|E_\eta)$, which satisfies $\langle \tilde\mu,\psi_t\rangle \leq -\eta$ for any $t$. This condition also guarantees that the conditions~\eqref{eq:PiNAdineq-pos} and~\eqref{eq:min-pb-ineq} are satisfied. Thus, the combination of this observation with Remark~\ref{rem:lin-ineq-constr-diff} yields the following statement.

\begin{proposition}[Conditional Sanov Theorem with linear constraints]\label{prop:cond-sanov}
    Let $\phi(x)=d(x,x_0)^p$ for $p=0$ or $p \geq 1$, and let $\nu \in \mathcal{P}(E)$ be such that the integrability condition~\eqref{eq:phi-nu} holds. Let $\Pi^N$ be the law of the empirical measure of $N$ independent random variables distributed according to $\nu$. Let $(\psi_t)_{t \in \T}$ be a family of functions $E \to \R$ satisfying the conditions of Remark~\ref{rem:lin-ineq-constr-diff}, and let the set $\Adineq$ be defined accordingly. If this family satisfies~\eqref{eq:cond-suff-lin}, then the sequence $(\Pi^N(\cdot|\Adineq))_{N \geq 1}$ is well-defined and satisfies a LDP on $\ps_\phi(E)$ with good rate function 
    \begin{equation*}
        \begin{cases}
            \displaystyle H(\mu|\nu) - \inf_{\mu' \in \Adineq} H(\mu'|\nu), & \text{if $\mu \in \Adineq$,}\\
            +\infty, & \text{otherwise.}
        \end{cases}
    \end{equation*}
\end{proposition}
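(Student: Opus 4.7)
The plan is to verify the hypotheses of Theorem~\ref{thm:AbsLDP} in this linear, non-interacting setting ($\F \equiv 0$) and read off the conclusion. The base LDP for $\Pi^N$ on $\ps_\phi(E)$ with rate function $\mathcal{I}(\mu) = H(\mu\vert\nu)$ holds by the classical Sanov Theorem when $p=0$ (where $\ps_\phi(E) = \ps(E)$), and by \cite[Theorem~1.1]{wang2010sanov} when $p \geq 1$, both under the integrability hypothesis~\eqref{eq:phi-nu}. The same hypothesis, via Lemma~\ref{lem:Hgoodratepsphi}, delivers Assumption~\ref{ass:FiniteF}; linearity of the constraints together with the growth bound $\sup_{t \in \T} |\psi_t(x)| \leq D^\psi[1+\phi(x)]$ delivers Assumption~\ref{ass:nonlinear-constr} by Remark~\ref{rem:lin-ineq-constr}; and the equi-continuity assumption, combined with Remark~\ref{rem:lin-ineq-constr-diff}, delivers Assumption~\ref{ass:psi-ldp} through Proposition~\ref{prop:ass-ldp-diff}.

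The main step is to check the constraint qualification~\ref{ass:constr-qual-ldp}, for which I will use the candidate $\tilde\mu := \nu(\cdot\vert E_\eta)$ already suggested in the paragraph preceding the statement. Since $\tilde\mu$ has bounded density $\nu(E_\eta)^{-1}\1_{E_\eta}$ with respect to $\nu$, one has $H(\tilde\mu\vert\nu) = -\log\nu(E_\eta) < +\infty$, and $\langle\tilde\mu,\phi\rangle \leq \nu(E_\eta)^{-1}\langle\nu,\phi\rangle < +\infty$ thanks to~\eqref{eq:phi-nu}, so that $\tilde\mu \in \AdF$. Moreover, $\psi_t \leq -\eta$ on $E_\eta$ gives $\langle\tilde\mu,\psi_t\rangle \leq -\eta$ for every $t \in \T$. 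Therefore, for any $\mu \in \AdF \cap \Adineq$, the convex combination $\mu_\varepsilon := (1-\varepsilon)\mu + \varepsilon\tilde\mu$ satisfies
\[ \Psi_t(\mu_\varepsilon) = (1-\varepsilon)\langle\mu,\psi_t\rangle + \varepsilon\langle\tilde\mu,\psi_t\rangle \leq -\varepsilon\eta < 0, \]
uniformly in $t$, giving item~\ref{ass:constr-qual-ldp:1} of~\ref{ass:constr-qual-ldp}; item~\ref{ass:constr-qual-ldp:2} is trivial since $\F \equiv 0$.

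It then remains to observe that $\tilde\mu \in \AdF \cap \Adineq$ with $\mathcal{I}(\tilde\mu) = -\log\nu(E_\eta) < +\infty$, so the infimum~\eqref{eq:min-pb-ineq} is finite, and that $\Pi^N(\Adineq) \geq \nu(E_\eta)^N > 0$ by restricting to the event that all $N$ i.i.d.\ samples fall in $E_\eta$, which forces $\langle\pi(\vec{X}^N),\psi_t\rangle \leq -\eta$ for every $t$. Applying Theorem~\ref{thm:AbsLDP} then yields the announced conditional LDP with the prescribed rate function; this rate function is \emph{good} because $H(\cdot\vert\nu)$ is good on $\ps_\phi(E)$ by Lemma~\ref{lem:Hgoodratepsphi} and $\Adineq$ is closed by Lemma~\ref{lem:AdClosed}. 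I do not expect a genuine obstacle: the creative content is packaged inside Theorem~\ref{thm:AbsLDP}, and the only point requiring some care is the verification that $\tilde\mu$ has \emph{both} finite entropy and finite $\phi$-moment, which is precisely where the sub-exponential integrability~\eqref{eq:phi-nu} enters.
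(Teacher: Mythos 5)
Your proof is correct and follows the same route the paper intends: invoke Sanov's theorem (classical for $p=0$, \cite[Theorem~1.1]{wang2010sanov} for $p\geq 1$) to get the base LDP with $\mathcal{I}(\mu)=H(\mu|\nu)$ on $\ps_\phi(E)$, check Assumptions~\ref{ass:FiniteF}--\ref{ass:nonlinear-constr}--\ref{ass:psi-ldp} via Lemma~\ref{lem:Hgoodratepsphi}, Remark~\ref{rem:lin-ineq-constr} and Remark~\ref{rem:lin-ineq-constr-diff}/Proposition~\ref{prop:ass-ldp-diff}, verify~\ref{ass:constr-qual-ldp} (and~\eqref{eq:PiNAdineq-pos},~\eqref{eq:min-pb-ineq}) using $\tilde\mu=\nu(\cdot|E_\eta)$, and conclude with Theorem~\ref{thm:AbsLDP} plus Lemmata~\ref{lem:Hgoodratepsphi} and~\ref{lem:AdClosed} for goodness of the rate function. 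The paper leaves these verifications as an immediate consequence of the discussion preceding the statement; your write-up just makes them explicit, including the useful observations that $H(\tilde\mu|\nu)=-\log\nu(E_\eta)$, $\langle\tilde\mu,\phi\rangle\leq\nu(E_\eta)^{-1}\langle\nu,\phi\rangle<\infty$, and $\Pi^N(\Adineq)\geq\nu(E_\eta)^N$.
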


In the setting of Proposition~\ref{prop:cond-sanov}, the set $\Adineq$ is convex and the (good) rate function is strictly convex, so it admits a unique minimiser $\overline\mu$ and $\Pi^N(\cdot|\Adineq)$ converges to $\delta_{\overline\mu}$. A more general discussion on the convex case is postponed to Section~\ref{ss:convex}.

\begin{rem}[Convergence for conditioned particles]
Denoting by $\vec{X}^N = (X^1, \ldots, X^N)$ the underlying particle system, whose empirical measure $\pi(\vec{X}^N )$ has distribution $\Pi^N$, this statement implies that the conditional distribution $\mathcal{L}(X^1 \vert \pi(\vec{X}^N ) \in \Adineq )$ of the first particle converges weakly to $\overline\mu$. This convergence can be quantified: we deduce from~\cite[Theorem~1]{csiszar1984sanov} that
\[ \forall N \geq 1, \quad H( \mathcal{L}(X^1 \vert \pi(\vec{X}^N ) \in \Adineq ) \vert \overline\mu ) \leq -\frac{1}{N} \log \Pi^N(\Adineq) - H( \overline\mu \vert \nu). \]
More explicit rates are given in \cite{dembo1996refinements,dembo1998re,cattiaux2007deviations}.
\end{rem}

\subsection{Gibbs density for minimisers}\label{ss:gibbs-density}

In this section we focus on the minimisation problem~\eqref{eq:min-pb}, which takes both equality and inequality constraints into account. We show that minimisers, if they exist, have a density with respect to $\nu$, whose form generalises~\eqref{eq:gibbs-density} to the infinite-dimensional setting. 
To take into account both the interaction part $\mathcal{F}$ in the rate function, and the nonlinearity of the inequality constraints $\Psi_t$, we need to assume differentiability properties of these functionals, at a given $\overline{\mu} \in \ps_\phi(E)$.

\begin{assumptiona}[Differentiability of $\F$ at $\overline{\mu}$] \label{ass:DiffF-barmu}
    The set $\AdF$ is convex, and $\F$ is differentiable at $\overline\mu$ w.r.t. the set of directions $\AdF$.
\end{assumptiona}

\begin{assumptiona}[Regularity of $\Psi_t$ and constraint qualification at $\overline{\mu}$]\label{ass:regPsi-barmu}
$\phantom{a}$
    \begin{enumerate}[label=(\roman*),ref=\roman*]
        \item\label{ass:regPsi-barmu:1} The function $t \mapsto \Psi_t(\overline\mu)$ is continuous.
        \item\label{ass:regPsi-barmu:2} For any $t \in \T$, $\Psi_t$ is differentiable at $\overline\mu$ w.r.t. the set of directions $\ps_\phi(E)$, uniformly in $t$, in the sense that for any $\mu \in \ps_\phi(E)$,
        \begin{equation*}
            \lim_{\varepsilon \to 0} \sup_{t \in \T} \left|\frac{\Psi_t((1-\varepsilon)\overline\mu + \varepsilon\mu)-\Psi_t(\overline\mu)}{\varepsilon} - \left\langle \mu-\overline\mu, \frac{\delta \Psi_t}{\delta\mu}(\overline\mu)\right\rangle\right|=0.
        \end{equation*}
        \item\label{ass:regPsi-barmu:3} For any $x \in E$, the function $t \mapsto \frac{\delta \Psi_t}{\delta \mu}(\overline\mu,x)$ is continuous.
        \item\label{ass:regPsi-barmu:4} There exists $D^\Psi \in [0,\infty)$ such that
        \begin{equation*}
            \forall x \in E, \qquad \sup_{t \in \T} \left|\frac{\delta \Psi_t}{\delta\mu}(\overline\mu,x)\right| \leq D^\Psi [1+\phi(x)].
        \end{equation*}
        \item\label{ass:regPsi-barmu:5} There exist $\tilde\mu$ in $\AdF \cap \Adeq$ and $\tilde\varepsilon > 0$ such that
        \begin{equation*}
            \forall \t \in \T, \quad \Psi_{\t}(\overline\mu) + \tilde\varepsilon \bigg\langle \tilde\mu - \overline\mu, \frac{\delta\Psi_{\t}}{\delta\mu}(\overline\mu) \bigg\rangle < 0.
        \end{equation*}
    \end{enumerate}
\end{assumptiona}

\begin{rem}\label{rem:continuity-new}
    By~\ref{ass:regPsi-barmu}-(\ref{ass:regPsi-barmu:1},\ref{ass:regPsi-barmu:3},\ref{ass:regPsi-barmu:4}), for any $\mu \in \ps_\phi(E)$ and any $\varepsilon>0$, the function $t \mapsto \Psi_{\t}(\overline\mu) + \varepsilon \langle \mu - \overline\mu, \frac{\delta\Psi_{\t}}{\delta\mu}(\overline\mu) \rangle$ is continuous. In particular, \ref{ass:regPsi-barmu}-\eqref{ass:regPsi-barmu:5} equivalently rewrites
    \[
        \sup_{t \in \T} \Psi_{\t}(\overline\mu) + \tilde\varepsilon \bigg\langle \tilde\mu - \overline\mu, \frac{\delta\Psi_{\t}}{\delta\mu}(\overline\mu) \bigg\rangle < 0.
    \]
The condition \ref{ass:regPsi-barmu}-\eqref{ass:regPsi-barmu:2} can be verified under equi-continuity assumptions on $\tfrac{\delta\Psi_t}{\delta\mu}$ in the spirit of Proposition \ref{prop:ass-ldp-diff}.
\end{rem}

The proof of Theorem~\ref{thm:abstractGibbs} also requires the following technical condition.

\begin{assumptiona} \label{ass:Fboundedens-barmu}
    Every probability measure on $E$ that has a bounded density with respect to $\overline\mu$ or to the measure $\tilde\mu$ given by~\ref{ass:regPsi-barmu}-\eqref{ass:regPsi-barmu:5} belongs to $\AdF$.
\end{assumptiona}

For instance, Assumption \ref{ass:Fboundedens-barmu} holds when $\F \equiv 0$, as in Sanov's theorem.

\begin{theorem}[Gibbs density for minimisers of~\eqref{eq:min-pb}] \label{thm:abstractGibbs} 
Let~\ref{ass:FiniteF}-\ref{ass:linear-constr}-\ref{ass:nonlinear-constr} hold, and let $\overline{\mu}$ be a minimiser for \eqref{eq:min-pb}, which satisfies~\ref{ass:DiffF-barmu}-\ref{ass:regPsi-barmu}-\ref{ass:Fboundedens-barmu}. 
Then, the following holds
\begin{enumerate}
    \item\label{it:thmAbsAbsCont} Any $\mu \in \AdF \cap \Adeq$ is absolutely continuous w.r.t. $\overline\mu$.
    \item\label{it:thmAbsGibbs} There exist $(\overline\zeta,\overline{\lambda}) \in L^1(E,\d \overline\mu) \times \M_+(\T)$ and a measurable $\overline{S} \subset E$ 
    such that $\overline\mu(\overline{S})=1$,
    \begin{equation}\label{eq:GibbsMeasure:Z}
        \overline{Z} := \int_E \1_{\overline{S}}(x) \exp \biggl[ - \frac{\delta \F}{\delta \mu}(\overline\mu,x) - \overline\zeta (x)- \int_\T \frac{\delta\Psi_{\t}}{\delta \mu}(\overline\mu,x) \overline{\lambda}( \d \t) \biggr] \nu(\d x) \in (0,\infty),
    \end{equation}
    and $\overline\mu$ has density
    \begin{equation} \label{eq:Gibbsmeasure}
        \frac{\d \overline\mu}{\d \nu} \bigl( x \bigr) = \frac{1}{\overline{Z}} \1_{\overline{S}} (x) 
        \exp \biggl[ - \frac{\delta \F}{\delta \mu}(\overline\mu,x) - \overline\zeta (x)- \int_\T \frac{\delta\Psi_{\t}}{\delta \mu}(\overline\mu,x) \overline{\lambda}( \d \t) \biggr]
    \end{equation}
    with respect to $\nu$.
    \item\label{it:thmAbsSlack} The function $\overline\zeta$ belongs to the closure of $\mathrm{Span}(\zeta_s, s \in \S)$ in $L^1(E, \d\overline\mu)$ hence 
    \begin{equation}\label{eq:int-zeta-barmu}
        \int_E \overline\zeta \, \d \overline\mu = 0,
    \end{equation}
    and the complementary slackness condition is satisfied:
    \begin{equation} \label{eq:compSlackness}
        \Psi_{\t}(\overline{\mu}) = 0 \quad \text{for } \overline{\lambda}\text{-a.e. } \t\in\T.
    \end{equation}
\end{enumerate}
\end{theorem}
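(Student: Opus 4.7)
\emph{Strategy.} The plan is to linearise \eqref{eq:min-pb} at $\overline\mu$ to obtain first-order necessary conditions, then extract Lagrange multipliers through a Hahn-Banach separation in $C(\T)\times\R$, in the spirit of \cite{csiszar1984sanov,leonard2000minimizers,pennanen2019convex}. Even though $\F$ and the $\Psi_t$ are not convex, the linearisation is affine, so classical duality applies. Any probability measure $\mu$ with bounded density with respect to $\overline\mu$ or to the Slater measure $\tilde\mu$ from \ref{ass:regPsi-barmu}-(\ref{ass:regPsi-barmu:5}) lies in $\AdF$ by \ref{ass:Fboundedens-barmu}, and by convexity of $\AdF$ from~\ref{ass:DiffF-barmu} so does $\overline\mu_\varepsilon:=(1-\varepsilon)\overline\mu+\varepsilon\mu$. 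Using~\ref{ass:DiffF-barmu} and a direct dominated-convergence computation of the right-derivative of the relative entropy (the $\log$-term being controlled by the bounded-density hypothesis) yields
\[
\lim_{\varepsilon\to 0^+}\frac{\mathcal{I}(\overline\mu_\varepsilon)-\mathcal{I}(\overline\mu)}{\varepsilon}=\langle\mu-\overline\mu,L\rangle,\qquad L:=\log\tfrac{\d\overline\mu}{\d\nu}+\tfrac{\delta\F}{\delta\mu}(\overline\mu),
\]
while~\ref{ass:regPsi-barmu}-(\ref{ass:regPsi-barmu:2}) yields $\Psi_t(\overline\mu_\varepsilon)=\Psi_t(\overline\mu)+\varepsilon\langle\mu-\overline\mu,\tfrac{\delta\Psi_t}{\delta\mu}(\overline\mu)\rangle+o(\varepsilon)$ uniformly in $t$. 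Restricting to $\mu$ with $\langle\mu,\zeta_s\rangle=0$ preserves $\overline\mu_\varepsilon\in\Adeq$, and combining such $\mu$ with $\tilde\mu$ via the Slater condition preserves $\Adineq$ at leading order, so that minimality of $\overline\mu$ forces $\langle\mu-\overline\mu,L\rangle\geq 0$ whenever $\sup_{t\in\T}\bigl[\Psi_t(\overline\mu)+\langle\mu-\overline\mu,\tfrac{\delta\Psi_t}{\delta\mu}(\overline\mu)\rangle\bigr]<0$.

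\emph{Separation.} Consider the convex set
\[
\mathcal{K}:=\overline{\bigl\{\bigl(\Psi_\cdot(\overline\mu)+\langle\mu-\overline\mu,\tfrac{\delta\Psi_\cdot}{\delta\mu}(\overline\mu)\rangle+g,\;\langle\mu-\overline\mu,L\rangle+r\bigr):\mu\text{ as above, }\langle\mu,\zeta_s\rangle=0,\;g\geq 0,\;r\geq 0\bigr\}}
\]
in $C(\T)\times\R$. By the previous variational inequality $\mathcal{K}$ is disjoint from the open convex cone $\mathcal{O}:=\{(h,\rho):\sup_\T h<0,\,\rho<0\}$. Hahn-Banach, with topological dual $\M(\T)\times\R$, produces a non-zero $(\overline\lambda,c)\in\M_+(\T)\times\R_+$ separating them, and testing against $\tilde\mu$ (approximated if needed by bounded-density competitors) uses~\ref{ass:regPsi-barmu}-(\ref{ass:regPsi-barmu:5}) to exclude $c=0$. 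After normalising $c=1$ the separation inequality reads
\[
\Bigl\langle\mu-\overline\mu,\;L+\textstyle\int_\T\tfrac{\delta\Psi_t}{\delta\mu}(\overline\mu)\,\overline\lambda(\d t)\Bigr\rangle\ \geq\ -\int_\T\Psi_t(\overline\mu)\,\overline\lambda(\d t)\ \geq\ 0
\]
for every admissible test $\mu$ with $\langle\mu,\zeta_s\rangle=0$.

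\emph{Conclusions.} Setting $\mu=\overline\mu$ forces $\int_\T\Psi_t(\overline\mu)\,\overline\lambda(\d t)=0$, yielding the complementary slackness~\eqref{eq:compSlackness}. The Lagrangian inequality then becomes an equality on the symmetric test cone, so the linear functional $\eta\mapsto\langle\eta,L+\int\tfrac{\delta\Psi_t}{\delta\mu}(\overline\mu)\overline\lambda(\d t)\rangle$ annihilates, modulo constants, every bounded signed $\overline\mu$-density orthogonal to all $\zeta_s$. Duality in $L^1(\overline\mu)$ then produces a constant $-\log\overline{Z}$ and a function $\overline\zeta$ in the $L^1(\overline\mu)$-closure of $\mathrm{Span}(\zeta_s,s\in\S)$ such that
\[
L+\int_\T\tfrac{\delta\Psi_t}{\delta\mu}(\overline\mu)\,\overline\lambda(\d t)+\overline\zeta=-\log\overline{Z}
\]
on $\overline{S}:=\{\d\overline\mu/\d\nu>0\}$, a set of full $\overline\mu$-mass; exponentiating gives the Gibbs density~\eqref{eq:Gibbsmeasure}, and~\eqref{eq:int-zeta-barmu} follows because each element of the span is annihilated by $\overline\mu\in\Adeq$ and this property is preserved under $L^1(\overline\mu)$-closure. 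For absolute continuity (item~\ref{it:thmAbsAbsCont}), one revisits the first-order computation: if $\mu\in\AdF\cap\Adeq$ gave mass to $\{\d\overline\mu/\d\nu=0\}$, the $\log$-term in $\langle\mu-\overline\mu,L\rangle$ would equal $-\infty$, contradicting the Lagrangian inequality after a Slater perturbation mixing $\mu$ with $\tilde\mu$ to restore feasibility in $\Adineq$.

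\emph{Main obstacle.} The decisive difficulty is the separation step: one must carefully establish a tangent-cone / closed-range property for $\mathcal{K}$ in $C(\T)\times\R$, and then combine the uniform differentiability~\ref{ass:regPsi-barmu}-(\ref{ass:regPsi-barmu:2}), the continuity-growth bundle~\ref{ass:regPsi-barmu}-(\ref{ass:regPsi-barmu:1},\ref{ass:regPsi-barmu:3},\ref{ass:regPsi-barmu:4}), and the Slater condition~\ref{ass:regPsi-barmu}-(\ref{ass:regPsi-barmu:5}) to preclude the degenerate multiplier $c=0$. The arbitrary cardinality of $\S$ furthermore obstructs closedness of $\mathrm{Span}(\zeta_s)$ in $L^1(\overline\mu)$, which is precisely why $\overline\zeta$ can only be located in a closure as in item~\ref{it:thmAbsSlack}.
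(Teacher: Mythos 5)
Your route is correct in outline but genuinely different from the paper's, and it is worth spelling out where the two part ways. Both arguments begin with the same linearisation: using the convexity of $\AdF$, the differentiability of $\F$ and the uniform differentiability~\ref{ass:regPsi-barmu}-(\ref{ass:regPsi-barmu:2}), and Lemma~\ref{lem:HDiff}, one shows that $\overline\mu$ minimises $\mu \mapsto \langle\mu, L\rangle$ over $\AdF \cap \Adeq$ subject to the linearised inequality constraints, and that this forces $\langle\mu, |\log\tfrac{\d\overline\mu}{\d\nu}|\rangle < \infty$ for admissible $\mu$ (this is the paper's Lemma~\ref{lem:lin}). From that point, the paper performs a \emph{primal} Hahn--Banach separation: it places the ``adjoint'' function $\Phi = \log\tfrac{\d\overline\mu}{\d\nu} + \tfrac{\delta\F}{\delta\mu}(\overline\mu) - H(\overline\mu|\nu)$ inside the closed convex cone $\C^+ = \C + C_b(E,\R_+)$ in $L^1(\overline\mu) \cap L^1(\tilde\mu)$, where $\C$ already contains both $\pm\zeta_s$ and the linearised-constraint generators; both $\overline\zeta$ and $\overline\lambda$ are then extracted together by decomposing an approximating sequence in $\C$ and bounding the masses via the Slater condition plus Prokhorov. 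Your argument is instead the classical \emph{image-set} (Lagrangian) separation: you push the problem into $C(\T)\times\R$, separate the convex set $\mathcal{K}$ (constraint defect paired with objective increment) from the open cone $\mathcal{O}$, and read off $\overline\lambda \in \M_+(\T)$ directly from the dual pairing. This has the advantage of producing the measure-valued multiplier immediately, without the Prokhorov step, and is conceptually closer to finite-dimensional KKT theory; the price is that you must run a \emph{second} $L^1(\overline\mu)/L^\infty(\overline\mu)$ duality pass to produce $\overline\zeta$ and the normalising constant, whereas the paper gets them in one go. Two details in your sketch need the paper's machinery more than you acknowledge: (a) the non-degeneracy $c \neq 0$ requires $\tilde\mu$ (or an approximant in $\Adeq$) to yield a genuine element of $\mathcal{K}$, i.e.\ $\langle\tilde\mu, L\rangle$ finite, which is exactly the content of Lemma~\ref{lem:lin}-\eqref{it:lemLinInt} --- a plain truncation of $\tilde\mu$ would in general destroy the constraints $\langle\cdot,\zeta_s\rangle=0$ when $\S$ is uncountable, so you must argue finiteness directly from the first-order step; (b) the first-order ``dominated-convergence'' claim should be replaced by the $\R\cup\{-\infty\}$-valued directional derivative of entropy (Lemma~\ref{lem:HDiff}), since the $\log$-term is not a priori dominated for competitors of unbounded density. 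With those two points filled in, the proposal is sound and gives an alternative proof of the theorem.
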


The proof of Theorem~\ref{thm:abstractGibbs} is detailed in Section~\ref{ss:proofs-gibbs-density}. Remark \ref{rem:BoundLambda} there provides a bound on the total mass of $\overline\lambda$ in terms of $\overline{\mu}, \, \tilde\mu, \, \tilde\varepsilon \text{ and } \tfrac{\delta\F}{\delta\mu}(\overline{\mu})$. 
In the particular case $\F \equiv 0$ and $\T = \emptyset$ (no inequality constraint), we recover the result of \cite[Theorem 3]{csiszar1975divergence}.

\begin{rem}[Equivalence between $\overline\mu$ and $\nu$] \label{rem:equivalence}
In general, $\overline{\mu}$ is not equivalent to $\nu$; the set $\overline{S}$ where $\tfrac{\d\overline\mu}{\d\nu}$ is positive is thus needed, contrary to the Gibbs density \eqref{eq:gibbs-density} in Section \ref{subsec:IntroGibbs}.
This loss of equivalence only comes from the equality constraints. 
Indeed, if $\S = \emptyset$, we notice that $\nu \in \AdF \cap \Adeq$, so that $\nu \ll \overline{\mu}$ and $\nu ( \overline{S} ) = 1$ from Theorem \ref{thm:abstractGibbs}-\ref{it:thmAbsAbsCont}. 
If $\S = \emptyset$, we of course have $\overline\zeta \equiv 0$.
The need for $\overline{S}$ for handling equality constraints is well-known in the setting of the Schrödinger bridge, see Section \ref{ss:schro}.
From Theorem \ref{thm:abstractGibbs}-\ref{it:thmAbsAbsCont}, we eventually deduce that a necessary and sufficient condition for $\overline{\mu} \sim \nu$ is the existence of $\tilde\nu \in \AdF \cap \Adeq$ such that $\tilde\nu \sim \nu$.
\end{rem}

\begin{rem}[Assumption~\ref{ass:regPsi-barmu} for linear inequality constraints] \label{rem:LinIneqCons} 
In the case of linear inequality constraints of Remark~\ref{rem:lin-ineq-constr}, Assumption~\ref{ass:regPsi-barmu}-\eqref{ass:regPsi-barmu:2} always holds true. Assumptions~\ref{ass:regPsi-barmu}-(\ref{ass:regPsi-barmu:1},\ref{ass:regPsi-barmu:3},\ref{ass:regPsi-barmu:4}) are satisfied as soon as for any $x \in E$, $t \mapsto \psi_t(x)$ is continuous and there exists $D^\psi \in [0,\infty)$ such that 
\[
    \forall x \in E, \qquad \sup_{t \in \T} |\psi_t(x)| \leq D^\psi[1+\phi(x)].
\]
Last, Assumption~\ref{ass:regPsi-barmu}-\eqref{ass:regPsi-barmu:5} amounts to assuming that there exists $\tilde\mu\in\AdF \cap \Adeq$ such that $\langle \tilde\mu, \psi_t \rangle < 0$ for all $t \in \T$. 
For the present case of linear constraints, the assumptions of Theorem~\ref{thm:abstractGibbs} may be relaxed. 
In particular, no upper bound on the $\psi_t$ needs to be assumed.
This alternative result is detailed in Appendix \ref{sec:appLin}.
\end{rem}

\begin{rem}[Sufficient condition for~\ref{ass:regPsi-barmu}-\eqref{ass:regPsi-barmu:5} without equality constraints]
If $\S = \emptyset$ (no equality constraints), and $\overline{\mu}$ gives a positive measure to the set
\[ U := \bigg\{ x \in E, \; \forall \t \in \T, \, \frac{\delta\Psi_t}{\delta\mu}(\overline\mu, x) < 0 \bigg\}, \]
we notice that $\tilde\mu := \overline{\mu}(\cdot | U)$ satisfies \ref{ass:regPsi-barmu}-\eqref{ass:regPsi-barmu:5}, for any $\tilde\varepsilon>0$, if we know that $\F ( \tilde\mu ) < +\infty$. The latter condition may follow from~\ref{ass:Fboundedens-barmu} since $\tilde\mu$ has a bounded density w.r.t. $\overline\mu$.
\end{rem}

\subsection{Sufficient conditions and stability in the convex case}\label{ss:convex}

In this section, we work under the following assumption.

\begin{assumptiona}[Convexity]\label{ass:diff-glob-conv}
    The functions $\F$ and $\Psi_{\t}$, $t \in \T$, are convex on $\ps_\phi(E)$.
\end{assumptiona}

Then, the function $\mu \mapsto H(\mu|\nu)$ being strictly convex, the sets $\AdF$ and $\Ad$ are convex, and there is at most one minimiser for~\eqref{eq:min-pb}. 

\medskip

In this setting, we first show a converse statement to Theorem~\ref{thm:abstractGibbs}, namely that an admissible measure which satisfies the conditions~\eqref{eq:Gibbsmeasure}-\eqref{eq:compSlackness} is the minimiser for~\eqref{eq:min-pb}.

\begin{theorem}[Sufficient conditions in the convex case]\label{thm:convex-case}
    Let~\ref{ass:FiniteF}-\ref{ass:linear-constr}-\ref{ass:nonlinear-constr}-\ref{ass:diff-glob-conv} hold. Let $\overline\mu \in \AdF \cap \Ad$ satisfy the following conditions.
    \begin{enumerate}[label=(\roman*),ref=\roman*]
        \item\label{it:SuffDiff} The functions $\F$ and $\Psi_t$, $t \in \T$, are differentiable at $\overline\mu$ w.r.t. the set of directions $\AdF \cap \Ad$.
        \item\label{it:SuffDom} The function $(t,x) \mapsto \frac{\delta \Psi_t}{\delta\mu}(\overline\mu,x)$ is measurable, and there exists $D^\Psi \in [0,\infty)$ such that
        \begin{equation*}
            \forall x \in E, \qquad \sup_{t \in \T} \left|\frac{\delta \Psi_t}{\delta\mu}(\overline\mu,x)\right| \leq D^\Psi[1+\phi(x)].
        \end{equation*}
        \item\label{it:SuffEq} There exist $\overline{\zeta}$ in the $L^1(E,\d\overline\mu)$-closure of $\mathrm{Span}(\zeta_s, s \in \S)$ (which necessarily satisfies~\eqref{eq:int-zeta-barmu}), $\overline\lambda \in \M_+(\I)$ and a measurable subset $\overline{S} \subset E$ such that $\overline\mu(\overline{S})=1$ and~\eqref{eq:GibbsMeasure:Z}-\eqref{eq:Gibbsmeasure}-\eqref{eq:compSlackness} hold.
        \item\label{it:SuffSupp} $\nu(\overline{S})=1$ and for any $\mu \in \AdF \cap \Ad$, $\overline{\zeta} \in L^1(E,\d\mu)$ and $\langle \mu, \overline{\zeta}\rangle = 0$.
    \end{enumerate}
    Then, the measure $\overline\mu$ is the unique minimiser for \eqref{eq:min-pb}.
\end{theorem}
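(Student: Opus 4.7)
The goal is to show that $\mathcal{I}(\mu)\ge\mathcal{I}(\overline\mu)$ for every $\mu\in\AdF\cap\Ad$, with strict inequality unless $\mu=\overline\mu$; uniqueness will then follow from the strict convexity of $\mu\mapsto H(\mu\vert\nu)$. The strategy is the usual convex-duality one: use the explicit Gibbs representation~\eqref{eq:Gibbsmeasure} of $\overline\mu$ to rewrite the entropy term along the minimiser, and control the resulting remainder via the tangent inequalities supplied by convexity of $\F$ and of the $\Psi_t$, together with the complementary slackness condition~\eqref{eq:compSlackness}.

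Pick $\mu\in\AdF\cap\Ad$. Since $H(\mu\vert\nu)<+\infty$ we have $\mu\ll\nu$, and the condition $\nu(\overline S)=1$ from~\ref{it:SuffSupp} forces $\mu$ to charge only $\overline S$. As~\eqref{eq:Gibbsmeasure} shows that $\tfrac{\d\overline\mu}{\d\nu}>0$ on $\overline S$, this yields $\mu\ll\overline\mu$. Combining the chain rule $H(\mu\vert\nu)=H(\mu\vert\overline\mu)+\bigl\langle\mu,\log\tfrac{\d\overline\mu}{\d\nu}\bigr\rangle$ with the Gibbs formula~\eqref{eq:Gibbsmeasure} gives
\[
H(\mu\vert\nu)=H(\mu\vert\overline\mu)-\log\overline Z-\bigl\langle\mu,\tfrac{\delta\F}{\delta\mu}(\overline\mu)\bigr\rangle-\langle\mu,\overline\zeta\rangle-\int_\T\bigl\langle\mu,\tfrac{\delta\Psi_t}{\delta\mu}(\overline\mu)\bigr\rangle\,\overline\lambda(\d t),
\]
the Fubini exchange in the last term being licit thanks to the growth bound~\ref{it:SuffDom} combined with $\mu\in\ps_\phi(E)$ and $\overline\lambda(\T)<+\infty$. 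By~\ref{it:SuffSupp}, $\langle\mu,\overline\zeta\rangle=0$. For each $t$, differentiability~\ref{it:SuffDiff} together with the convexity of $\Psi_t$ from~\ref{ass:diff-glob-conv} provides the tangent inequality $\bigl\langle\mu-\overline\mu,\tfrac{\delta\Psi_t}{\delta\mu}(\overline\mu)\bigr\rangle\le\Psi_t(\mu)-\Psi_t(\overline\mu)$. Using the zero-mean convention $\langle\overline\mu,\tfrac{\delta\Psi_t}{\delta\mu}(\overline\mu)\rangle=0$, admissibility $\Psi_t(\mu)\le0$ and slackness $\Psi_t(\overline\mu)=0$ for $\overline\lambda$-a.e.\ $t$, one deduces $\bigl\langle\mu,\tfrac{\delta\Psi_t}{\delta\mu}(\overline\mu)\bigr\rangle\le0$ for $\overline\lambda$-a.e.\ $t$. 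The analogous tangent inequality for $\F$, namely $\F(\mu)\ge\F(\overline\mu)+\bigl\langle\mu,\tfrac{\delta\F}{\delta\mu}(\overline\mu)\bigr\rangle$, combined with these estimates, yields
\[
\mathcal{I}(\mu)\ge H(\mu\vert\overline\mu)-\log\overline Z+\F(\overline\mu).
\]

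It remains to identify the right-hand side as $\mathcal{I}(\overline\mu)+H(\mu\vert\overline\mu)$. Specialising the displayed identity for $H(\mu\vert\nu)$ to $\mu=\overline\mu$, every corrective term vanishes: the $\F$- and $\Psi_t$-derivative terms by the zero-mean convention, the $\overline\zeta$ term by~\eqref{eq:int-zeta-barmu}, and the $\Psi_t$ contribution again via~\eqref{eq:compSlackness}. Hence $H(\overline\mu\vert\nu)=-\log\overline Z$ and $\mathcal{I}(\overline\mu)=-\log\overline Z+\F(\overline\mu)$, so that $\mathcal{I}(\mu)\ge\mathcal{I}(\overline\mu)+H(\mu\vert\overline\mu)\ge\mathcal{I}(\overline\mu)$, with equality forcing $H(\mu\vert\overline\mu)=0$, i.e.\ $\mu=\overline\mu$. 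The main obstacle is not any single step in isolation but the integrability bookkeeping along the way: justifying the Fubini exchange (handled by~\ref{it:SuffDom}), establishing the well-definedness and vanishing of $\langle\mu,\overline\zeta\rangle$ for \emph{every} admissible $\mu$ (which is the non-trivial content of~\ref{it:SuffSupp}, since the $L^1(E,\d\overline\mu)$-closure condition in~\ref{it:SuffEq} alone does not entail integrability against other measures), and making sense of $\bigl\langle\mu,\tfrac{\delta\F}{\delta\mu}(\overline\mu)\bigr\rangle$ in the chain-rule step, which can be circumvented if needed by invoking the variational formula~\eqref{eq:DualEntrop} applied to the test function $\log\tfrac{\d\overline\mu}{\d\nu}$.
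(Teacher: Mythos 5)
Your argument is correct and follows essentially the same route as the paper's proof: expand $H(\mu|\nu)$ via the chain rule (the paper invokes the Nutz lecture notes' Lemma~1.4-(b) for the same identity), insert the Gibbs formula~\eqref{eq:Gibbsmeasure}, kill the $\overline\zeta$ term by~\ref{it:SuffSupp}, control the $\F$ and $\Psi_t$ terms via the convexity tangent inequality~\eqref{eq:convex-diff} plus admissibility and slackness~\eqref{eq:compSlackness}, and conclude $\mathcal{I}(\mu)\ge\mathcal{I}(\overline\mu)+H(\mu|\overline\mu)$. One small remark on your integrability discussion: you do not need to circumvent anything via~\eqref{eq:DualEntrop}, since Definition~\ref{def:diff} already builds $\mu'$-integrability of $\tfrac{\delta\F}{\delta\mu}(\overline\mu)$ into the meaning of differentiability w.r.t.\ the direction set $\AdF\cap\Ad$, which is what the paper uses; likewise, the vanishing of the $\Psi_t$ term when you specialise to $\mu=\overline\mu$ is already a consequence of the zero-mean convention and does not require invoking~\eqref{eq:compSlackness} a second time.
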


When only equality constraints are considered, corresponding to $\T = \emptyset$, Condition \eqref{it:SuffSupp} can be verified using \cite[Lemma 2.18]{nutz2021introduction}.

We now prove stability for~\eqref{eq:min-pb} when moving the inequality constraints $\Psi_t$ or the reference measure. 
To this aim, we will use the following sufficient condition for the qualification condition~\ref{ass:regPsi-barmu}-\eqref{ass:regPsi-barmu:5} to hold. 

\begin{lemma}[Sufficient condition for qualification]\label{lem:EquivQualif}
    Under~\ref{ass:diff-glob-conv}, if there exists $\tilde\mu \in \AdF \cap \Adeq$ such that
    \[
        \forall t \in \T, \qquad \Psi_t(\tilde\mu) < 0,
    \]
    then the condition~\ref{ass:regPsi-barmu}-\eqref{ass:regPsi-barmu:5} holds, with $\tilde\varepsilon=1$, at any $\overline\mu \in \AdF \cap \Ad$ at which all the $\Psi_t$ are differentiable (w.r.t. the set of directions $\AdF \cap \Adeq$).
\end{lemma}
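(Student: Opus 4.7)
The plan is to deduce the qualification inequality directly from the convexity of each $\Psi_{\t}$, combined with the fact that the linear functional derivative yields a supporting hyperplane at $\overline\mu$.

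More precisely, fix $\overline\mu \in \AdF \cap \Ad$ at which all the $\Psi_{\t}$ are differentiable w.r.t.\ the set of directions $\AdF \cap \Adeq$, and let $\tilde\mu \in \AdF \cap \Adeq$ be the measure given by the hypothesis. For every $\varepsilon \in (0,1]$, the convex combination $\mu_\varepsilon := (1-\varepsilon)\overline\mu + \varepsilon \tilde\mu$ belongs to $\AdF \cap \Adeq$ (by convexity of $\AdF$ under~\ref{ass:diff-glob-conv} and linearity of the equality constraints), so the definition of the linear functional derivative applies, giving
\[
    \Psi_{\t}(\mu_\varepsilon) = \Psi_{\t}(\overline\mu) + \varepsilon \bigg\langle \tilde\mu - \overline\mu, \frac{\delta\Psi_{\t}}{\delta\mu}(\overline\mu) \bigg\rangle + o(\varepsilon)
\]
as $\varepsilon \to 0^+$. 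By convexity of $\Psi_{\t}$,
\[
    \Psi_{\t}(\mu_\varepsilon) \leq (1-\varepsilon)\Psi_{\t}(\overline\mu) + \varepsilon \Psi_{\t}(\tilde\mu).
\]
Substituting the first expression, subtracting $\Psi_{\t}(\overline\mu)$, dividing by $\varepsilon>0$ and letting $\varepsilon \to 0^+$ yields the standard subgradient inequality
\[
    \bigg\langle \tilde\mu - \overline\mu, \frac{\delta\Psi_{\t}}{\delta\mu}(\overline\mu) \bigg\rangle \leq \Psi_{\t}(\tilde\mu) - \Psi_{\t}(\overline\mu).
\]
Rearranging gives
\[
    \Psi_{\t}(\overline\mu) + \bigg\langle \tilde\mu - \overline\mu, \frac{\delta\Psi_{\t}}{\delta\mu}(\overline\mu) \bigg\rangle \leq \Psi_{\t}(\tilde\mu) < 0,
\]
where the last strict inequality is the hypothesis on $\tilde\mu$. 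This is exactly~\ref{ass:regPsi-barmu}-\eqref{ass:regPsi-barmu:5} with $\tilde\varepsilon = 1$, and the proof is complete.

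There is essentially no obstacle: the lemma is a one-line consequence of the fact that for a convex function admitting a linear functional derivative, the derivative plays the role of a subgradient. The only point worth flagging is the mild verification that $\tilde\mu \in \AdF \cap \Adeq$ is a legitimate direction in which to differentiate $\Psi_{\t}$ at $\overline\mu$, which is immediate once one observes that $\AdF \cap \Adeq \supset \AdF \cap \Ad$ and that the differentiability hypothesis is stated w.r.t.\ the latter (or alternatively one uses that the line segment $\mu_\varepsilon$ stays inside $\AdF \cap \Adeq$ so that the standard convex-analysis argument applies verbatim).
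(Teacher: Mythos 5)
Your proof is correct and matches the paper's intent: the paper dismisses the lemma as an immediate consequence of the subgradient inequality~\eqref{eq:convex-diff} for convex functions with a linear functional derivative, and you derive exactly that inequality (from the Taylor expansion plus convexity of $\Psi_t$) and apply it. One small imprecision in your closing remark: you say the differentiability hypothesis is stated ``w.r.t. the latter'' where the latter is $\AdF\cap\Ad$, but the lemma actually posits differentiability w.r.t. the larger set $\AdF\cap\Adeq$, which is precisely what makes $\tilde\mu$ a legitimate direction without any further argument (your alternative observation that the segment $\mu_\varepsilon$ stays in $\AdF\cap\Adeq$ is the correct reading). This slip is in the commentary only; the argument itself uses the right set throughout and is sound.
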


Lemma~\ref{lem:EquivQualif} is an immediate consequence of the convexity of the constraints (see e.g.~\eqref{eq:convex-diff} below) and we do not detail it.

\medskip 
We first state a quantitative result when only the inequality constraints are relaxed. To this aim, for any $\varepsilon \geq 0$, we introduce the notation

\newcommand{\Adineqeps}{\mathcal{A}_{\Psi,\varepsilon}}
\newcommand{\Adeps}{\mathcal{A}_{\Psi,\varepsilon}^\zeta}
\newcommand{\Adepsstar}{\mathcal{A}_{\Psi,\varepsilon_*}^\zeta}

\begin{equation*}
    \Adineqeps := \{ \mu \in \ps_\phi(E), \; \forall t \in \T, \;  \Psi_t ( \mu ) \leq \varepsilon \}, \qquad \Adeps := \Adeq \cap \Adineqeps.
\end{equation*}

\begin{proposition}[Quantitative stability w.r.t. inequality constraints] \label{pro:StrongStab}
    Let~\ref{ass:FiniteF}-\ref{ass:linear-constr}-\ref{ass:nonlinear-constr}-\ref{ass:diff-glob-conv} hold, and assume that $\F$ is bounded from below on $\ps_\phi(E)$, and that for any $\mu \in \Adeq$, the function $t \mapsto \Psi_t(\mu)$ is measurable. We consider the minimisation problem 
    \begin{equation} \label{eq:min-pb-eps}
        \overline{\mathcal{I}}^\zeta_{\Psi,\varepsilon} := \inf_{\mu \in \AdF \cap \Adeps} \mathcal{I}(\mu).
    \end{equation} 
    We assume that there is $\varepsilon_*>0$ such that~\ref{ass:DiffF-barmu}, \ref{ass:regPsi-barmu}-(\ref{ass:regPsi-barmu:1},\ref{ass:regPsi-barmu:2},\ref{ass:regPsi-barmu:3},\ref{ass:regPsi-barmu:4}) hold at every $\overline\mu \in \AdF \cap \Adepsstar$, that the condition of Lemma~\ref{lem:EquivQualif} is satisfied, and that~\ref{ass:Fboundedens-barmu} holds for every $\overline\mu \in \AdF \cap \Adepsstar$ and for $\tilde\mu$ given by Lemma~\ref{lem:EquivQualif}.

    \begin{enumerate}
        \item\label{it:StabMin} For every $\varepsilon \in [0,\varepsilon_*]$, \eqref{eq:min-pb-eps} has a unique minimiser $\overline{\mu}_\varepsilon$, and this minimiser satisfies the conclusions of Theorem~\ref{thm:abstractGibbs} for some $(\overline\zeta_\varepsilon,\overline{\lambda}_\varepsilon) \in L^1(E,\d \overline\mu_\varepsilon) \times \M_+(\T)$ and a measurable set $\overline{S}_\varepsilon \subset E$.
        \item\label{it:StabValue} Assume in addition that for any $\varepsilon \in [0,\varepsilon_*]$, $\nu(\overline{S}_\varepsilon)=1$, and that for any $\mu \in \Adeq$, $\overline\zeta_\varepsilon \in L^1(E,\d\mu)$ with $\langle \mu, \overline\zeta_\varepsilon\rangle=0$. Then the function $\varepsilon \mapsto \overline{\mathcal{I}}^\zeta_{\Psi,\varepsilon}$ is absolutely continuous,
        \[ \frac{\d}{\d \varepsilon} \overline{\mathcal{I}}^\zeta_{\Psi,\varepsilon} = -\overline\lambda_\varepsilon (\T), \quad \text{Lebesgue-a.e.}, \]
        and there exists $C_\mathrm{stab} \in [0,\infty)$ such that, for any $\varepsilon \in [0,\varepsilon_*]$,
        \begin{equation}\label{eq:quant-stab-I}
            0 \leq \overline{\mathcal{I}}^\zeta_{\Psi,0} - \overline{\mathcal{I}}^\zeta_{\Psi,\varepsilon} \leq C_\mathrm{stab} \varepsilon.
        \end{equation}
    \end{enumerate}
\end{proposition}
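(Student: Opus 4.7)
The plan is to recast \eqref{eq:min-pb-eps} as an instance of \eqref{eq:min-pb} with the shifted inequality constraints $\tilde\Psi_t^\varepsilon := \Psi_t - \varepsilon$, then exploit convex duality together with the Gibbs form of the minimiser to establish a subgradient-type inequality for the value function $v(\varepsilon) := \overline{\mathcal{I}}^\zeta_{\Psi,\varepsilon}$. For each fixed $\varepsilon \in [0,\varepsilon_*]$, the shifted functionals are still lower semi-continuous and convex, and the admissible set $\AdF \cap \Adeps$ is non-empty because the measure $\tilde\mu$ provided by Lemma~\ref{lem:EquivQualif} satisfies $\Psi_t(\tilde\mu) < 0 \leq \varepsilon$. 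Since $\F$ is bounded from below, Lemma~\ref{lem:Igood} makes $\mathcal{I}$ a good rate function, and the closedness of $\AdF \cap \Adeps$ (Lemma~\ref{lem:AdClosed}) yields existence of a minimiser $\overline\mu_\varepsilon$; uniqueness follows from the strict convexity of $H(\cdot|\nu)$. The hypotheses of Theorem~\ref{thm:abstractGibbs} are assumed to hold at every $\overline\mu \in \AdF \cap \Adepsstar$, and the qualification \ref{ass:regPsi-barmu}-\eqref{ass:regPsi-barmu:5} persists for the shifted problem because $\Psi_t(\tilde\mu) - \varepsilon \leq \Psi_t(\tilde\mu) < 0$ uniformly in $t$ (via Remark~\ref{rem:continuity-new}). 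Applying Theorem~\ref{thm:abstractGibbs} to the shifted problem then gives the Gibbs formula and $(\overline\zeta_\varepsilon, \overline\lambda_\varepsilon, \overline{S}_\varepsilon)$.

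The heart of the argument is to establish, for every $\varepsilon, \varepsilon' \in [0,\varepsilon_*]$, the subgradient inequality
\begin{equation*}
    v(\varepsilon') \geq v(\varepsilon) - \overline\lambda_\varepsilon(\T)(\varepsilon'-\varepsilon).
\end{equation*}
Since $H(\overline\mu_{\varepsilon'}|\nu) < +\infty$, one has $\overline\mu_{\varepsilon'} \ll \nu$, and the hypothesis $\nu(\overline{S}_\varepsilon)=1$ gives $\overline\mu_{\varepsilon'}(\overline{S}_\varepsilon)=1$. The Gibbs formula~\eqref{eq:Gibbsmeasure} for $\overline\mu_\varepsilon$ can thus be substituted into the standard convexity lower bound $H(\mu|\nu) - H(\overline\mu_\varepsilon|\nu) \geq \langle \mu - \overline\mu_\varepsilon, \ln(\d\overline\mu_\varepsilon/\d\nu)\rangle$ applied to $\mu = \overline\mu_{\varepsilon'}$. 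Using the hypothesis $\langle \overline\mu_{\varepsilon'},\overline\zeta_\varepsilon\rangle = 0 = \langle\overline\mu_\varepsilon,\overline\zeta_\varepsilon\rangle$ and the convexity of $\F$, together with a Fubini application to the integral against $\overline\lambda_\varepsilon$ (justified by \ref{ass:regPsi-barmu}-(\ref{ass:regPsi-barmu:3},\ref{ass:regPsi-barmu:4}) and the measurability assumption on $t \mapsto \Psi_t$), one obtains
\begin{equation*}
    \mathcal{I}(\overline\mu_{\varepsilon'}) - \mathcal{I}(\overline\mu_\varepsilon) \geq -\int_\T \Bigl\langle \overline\mu_{\varepsilon'} - \overline\mu_\varepsilon, \tfrac{\delta\Psi_t}{\delta\mu}(\overline\mu_\varepsilon)\Bigr\rangle \overline\lambda_\varepsilon(\d t).
\end{equation*}
The convexity of each $\Psi_t$ bounds the inner bracket above by $\Psi_t(\overline\mu_{\varepsilon'}) - \Psi_t(\overline\mu_\varepsilon)$, and combining with $\Psi_t(\overline\mu_{\varepsilon'}) \leq \varepsilon'$ and the complementary slackness~\eqref{eq:compSlackness} (which gives $\Psi_t(\overline\mu_\varepsilon) = \varepsilon$ for $\overline\lambda_\varepsilon$-a.e.\ $t$) yields the claimed inequality.

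From this inequality, $v$ is convex on $[0,\varepsilon_*]$ with $-\overline\lambda_\varepsilon(\T) \in \partial v(\varepsilon)$; swapping $\varepsilon,\varepsilon'$ and subtracting shows that $\varepsilon \mapsto \overline\lambda_\varepsilon(\T)$ is non-increasing, so $\overline\lambda_\varepsilon(\T) \leq \overline\lambda_0(\T) =: C_\mathrm{stab}$, which is finite by the quantitative bound recorded in Remark~\ref{rem:BoundLambda}. Monotonicity $v(\varepsilon') \leq v(\varepsilon)$ for $\varepsilon \leq \varepsilon'$ is immediate from $\Adeps \subset \mathcal{A}_{\Psi,\varepsilon'}^\zeta$, and combining it with the subgradient inequality at $\varepsilon=0, \varepsilon'=\varepsilon$ produces~\eqref{eq:quant-stab-I}. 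A convex function on a compact interval is absolutely continuous and Lebesgue-a.e.\ differentiable, and at every point of differentiability the unique subgradient must coincide with the derivative, giving $\tfrac{\d}{\d\varepsilon} v(\varepsilon) = -\overline\lambda_\varepsilon(\T)$ a.e. The main technical subtlety lies in the careful tracking of the supports $\overline{S}_\varepsilon$ and of the joint measurability needed to run Fubini when integrating the convexity bounds for $\Psi_t$ against $\overline\lambda_\varepsilon$; once these are in place, the convex-duality bookkeeping essentially forces the conclusion.
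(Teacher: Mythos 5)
Your argument is correct, and it takes a genuinely different route from the paper. For Item 1 you proceed exactly as the paper does (shift $\Psi_t$ to $\Psi_t-\varepsilon$, apply Lemmata~\ref{lem:Igood}-\ref{lem:AdClosed} for existence, strict convexity of $H$ for uniqueness, Lemma~\ref{lem:EquivQualif} for qualification, Theorem~\ref{thm:abstractGibbs} for the Gibbs form). But for Item 2 the paper takes a Lagrangian-duality route: it introduces $\mathcal{L}^\varepsilon(\mu,\lambda)=\mathcal{I}(\mu)+\int_\T[\Psi_t(\mu)-\varepsilon]\lambda(\d t)$, shows that strong duality holds with the pair $(\overline\mu_\varepsilon,\overline\lambda_\varepsilon)$, and then invokes the Milgrom--Segal envelope theorem to get absolute continuity and the derivative formula; it then derives a $\varepsilon$-uniform bound on $\overline\lambda_\varepsilon(\T)$ from Remark~\ref{rem:BoundLambda} by exploiting convexity and $\inf\F>-\infty$. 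You instead run the Pythagorean decomposition of $H$ (as in the proof of Theorem~\ref{thm:convex-case}) against the Gibbs density of $\overline\mu_\varepsilon$ and the $\varepsilon$-shifted slackness condition $\Psi_t(\overline\mu_\varepsilon)=\varepsilon$ to obtain the subgradient inequality $v(\varepsilon')\ge v(\varepsilon)-\overline\lambda_\varepsilon(\T)(\varepsilon'-\varepsilon)$ directly, which yields that $v$ is convex with $-\overline\lambda_\varepsilon(\T)\in\partial v(\varepsilon)$; monotonicity of the subgradient then gives the uniform bound $\overline\lambda_\varepsilon(\T)\le\overline\lambda_0(\T)$, and the Lipschitz (hence absolute continuity) and a.e.\ derivative conclusions follow from elementary convex analysis. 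Your approach avoids both the primal compactness argument needed to make $\inf_\mu\mathcal{L}^\varepsilon(\mu,\lambda)$ well-posed and the external envelope theorem, and it yields convexity of $\varepsilon\mapsto\overline{\mathcal{I}}^\zeta_{\Psi,\varepsilon}$ as a byproduct, a fact the paper's proof does not establish; the trade-off is that your constant $C_{\mathrm{stab}}=\overline\lambda_0(\T)$ is implicit, whereas the paper's Remark~\ref{rem:BoundLambda} gives a constant expressed in terms of $\tilde\mu$, $\eta$, $H(\tilde\mu|\nu)$ and $\F(\tilde\mu)-\inf\F$.
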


In the case $\F \equiv 0$, the estimate~\eqref{eq:quant-stab-I} reads
$0 \leq H(\overline\mu_0|\nu)-H(\overline\mu_\varepsilon|\nu) \leq C_\mathrm{stab} \varepsilon$. 
The Pythagorean identity for entropic projections~\cite[Theorem 2.2]{csiszar1975divergence} then provides the following quantitative stability estimate on the minimiser $\overline\mu_\varepsilon$ of~\eqref{eq:min-pb-eps}.

\begin{corollary}
If $\F \equiv 0$ in the setting of Proposition \ref{pro:StrongStab}, then  
\[
        H(\overline\mu_0|\overline\mu_\varepsilon) \leq C_\mathrm{stab} \varepsilon.
    \]
\end{corollary}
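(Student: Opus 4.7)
The plan is to combine the Pythagorean inequality for entropic projections from \cite[Theorem 2.2]{csiszar1975divergence} with the quantitative value bound~\eqref{eq:quant-stab-I} proved in Proposition~\ref{pro:StrongStab}\ref{it:StabValue}. First I would observe that $\mathcal{A}^\zeta_{\Psi,0} \subset \mathcal{A}^\zeta_{\Psi,\varepsilon}$ for every $\varepsilon \geq 0$, simply because $\Psi_t(\mu) \leq 0$ implies $\Psi_t(\mu) \leq \varepsilon$. Hence $\overline\mu_0$ is admissible for the relaxed problem~\eqref{eq:min-pb-eps}, and in particular $H(\overline\mu_0 \vert \nu) < \infty$. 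The set $\AdF \cap \mathcal{A}^\zeta_{\Psi,\varepsilon}$ is convex by Assumption~\ref{ass:diff-glob-conv} combined with the linearity of the equality constraints, and when $\F \equiv 0$ the unique minimiser $\overline\mu_\varepsilon$ given by Proposition~\ref{pro:StrongStab}\ref{it:StabMin} is exactly the I-projection of $\nu$ onto this convex set.

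Applying Csiszár's Pythagorean inequality to the admissible point $\mu=\overline\mu_0$ and the I-projection $\overline\mu_\varepsilon$ then yields
\[
    H(\overline\mu_0 \vert \nu) \geq H(\overline\mu_0 \vert \overline\mu_\varepsilon) + H(\overline\mu_\varepsilon \vert \nu).
\]
Rearranging, identifying $H(\overline\mu_\varepsilon \vert \nu) = \overline{\mathcal{I}}^\zeta_{\Psi,\varepsilon}$ and $H(\overline\mu_0 \vert \nu) = \overline{\mathcal{I}}^\zeta_{\Psi,0}$, and inserting~\eqref{eq:quant-stab-I}, I would conclude
\[
    H(\overline\mu_0 \vert \overline\mu_\varepsilon) \leq H(\overline\mu_0 \vert \nu) - H(\overline\mu_\varepsilon \vert \nu) = \overline{\mathcal{I}}^\zeta_{\Psi,0} - \overline{\mathcal{I}}^\zeta_{\Psi,\varepsilon} \leq C_\mathrm{stab}\,\varepsilon,
\]
which is exactly the claimed estimate.

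There is essentially no serious obstacle here: the proof reduces to checking that $\overline\mu_\varepsilon$ qualifies as the I-projection of $\nu$ onto $\AdF \cap \mathcal{A}^\zeta_{\Psi,\varepsilon}$ in the sense required by \cite[Theorem 2.2]{csiszar1975divergence}, which is immediate from the strict convexity of $H(\cdot \vert \nu)$ together with the convexity and closedness of $\AdF \cap \mathcal{A}^\zeta_{\Psi,\varepsilon}$ (extending Lemma~\ref{lem:AdClosed} to the relaxed inequality constraint, using lower semi-continuity of each $\Psi_t$).
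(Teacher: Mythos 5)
Your proof is correct and follows exactly the same route the paper intends: the Pythagorean inequality for the I-projection $\overline\mu_\varepsilon$ of $\nu$ onto the convex set $\AdF \cap \mathcal{A}^\zeta_{\Psi,\varepsilon}$, applied to the admissible competitor $\overline\mu_0$, combined with the value estimate~\eqref{eq:quant-stab-I} rewritten as $H(\overline\mu_0|\nu)-H(\overline\mu_\varepsilon|\nu) \leq C_\mathrm{stab}\varepsilon$ when $\F\equiv 0$. You also usefully spell out the checks the paper leaves implicit (inclusion of admissible sets, convexity of the constraint set, identification of $\overline\mu_\varepsilon$ as the I-projection).
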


We finally address weak stability when $\nu$, $\F$ and the $\Psi_t$ are perturbed at the same time. 
Since interesting results already exist in the setting of equality constraints \cite{ghosal2022stability,eckstein2022quantitative,nutz2023stability,chiarini2023gradient,divol2024tight}, we only focus on inequality constraints;
therefore, we assume that $\S = \emptyset$.
Let $(\nu_k)_{k \geq 1}$ and $\nu$ be in $\ps_\phi(E)$, and $(\F_k)_{k \geq 1}$ and $\F$ be functions $\ps_\phi(E) \to (-\infty,+\infty]$.
Let $\mathcal{I}_k : \mu \mapsto H ( \mu \vert \nu_k ) + \F_k ( \mu )$ be the related rate function with domain $\mathcal{D}_{\mathcal{I}_k}$.  
For each $t \in \T$, let $(\Psi_{t,k})_{k \geq 1}$ be functions $\ps_\phi(E) \to \R$.

\begin{assumptiona}[Setting for stability] \label{ass:Stab}
$\phantom{a}$
\begin{enumerate}[label=(\roman*),ref=\roman*]
\item\label{it:StabRate} The functions $\mathcal{I}$, $( \mathcal{I}_k )_{k \geq 1}$, $ ( \Psi_t )_{t \in \T}$ and $ ( \Psi_{t,k} )_{t \in \T, k \geq 1}$ are lower semi-continuous. 
Moreover, for every $k \geq 1$, $\mathcal{I}_k$ has compact level sets in $\ps_\phi (E)$. 
\item\label{it:StabConv} The functions $\F$, $( \F_k )_{k \geq 1}$, $ ( \Psi_t )_{t \in \T}$ and $ ( \Psi_{t,k} )_{t \in \T, k \geq 1}$ are convex.
\item\label{it:StabDensB} For every $k \geq 1$, any measure that has a bounded density w.r.t. $\nu_k$ belongs to $\mathcal{D}_{\mathcal{I}_k}$.
\item\label{it:StabIntExp} $\nu_k \to \nu$ in $\ps_\phi(E)$, and $\sup_{k \geq 1} \langle \nu, e^{\alpha \phi} \rangle + \langle \nu_k, e^{\alpha \phi} \rangle$ is finite, for every $\alpha >0$.
\item\label{it:StabSemCont} For any $\mu \in \ps_\phi(E)$, $\lim_{k \to +\infty} \sup_{t \in \T} \Psi_{t,k}(\mu) \leq \sup_{t \in \T} \Psi_t(\mu)$.
\item\label{it:StabCont} If $( \mu_l )_{l \geq 1}$ converges to $\mu$ in $\ps_\phi(E)$, then $\sup_{k \geq 1} \vert \sup_{t \in \T} \Psi_{t,k}(\mu_l) - \sup_{t \in \T} \Psi_{t,k}(\mu) \vert \to 0$,
and $\vert \sup_{t \in \T} \Psi_{t}(\mu_l) - \sup_{t \in \T} \Psi_{t}(\mu) \vert \to 0$.
\item\label{it:StabQualif} There exists $\tilde\mu \in \AdF$ such that $\sup_{t \in \T} \Psi_t(\tilde\mu) < 0$.
\end{enumerate}
\end{assumptiona}

\newcommand{\Adineqk}{\mathcal{A}_{\Psi,k}}
\newcommand{\AdFk}{\mathcal{D}_{\mathcal{I}_k}}

We notice that \ref{ass:Stab}-(\ref{it:StabRate},\ref{it:StabIntExp}) make $\mathcal{I}_k$, $\nu_k$ and $( \Psi_{t,k} )_{t \in \T}$ enter the framework of \ref{ass:FiniteF}-\ref{ass:nonlinear-constr}.
We define the sets $(\Adineqk)_{k \geq 1}$ accordingly.
As previously, \ref{ass:Stab}-\eqref{it:StabCont} can be verified using Proposition \ref{prop:ass-ldp-diff}.
Similarly, a sufficient condition for the compactness of level sets in \ref{ass:Stab}-\eqref{it:StabRate} is given by Lemma~\ref{lem:Igood}.
From \ref{ass:Stab}-\eqref{it:StabRate}, the $(\mathcal{I}_k)_{k \geq 1}$ are good rate functions.
Therefore, using the the convexity assumption \ref{ass:Stab}-\eqref{it:StabConv} (which corresponds to ~\ref{ass:diff-glob-conv}), the minimisation problem
\[
    \overline{\mathcal{I}}_{\Psi,k} := \inf_{\mu \in \AdFk \cap \Adineqk} \mathcal{I}_k(\mu), \] 
has a unique minimiser denoted by $\overline\mu_k$, for every $k \geq 1$.
Let us now verify the qualification condition in Lemma \ref{lem:EquivQualif}. 

\begin{lemma}[Qualification] \label{lem:StabQualif}
Under \ref{ass:Stab}, there exist $( \tilde\mu_k )_{k \geq 1}$ and $k_0 \geq 1$ such that $\tilde\mu_k \in \mathcal{D}_{\mathcal{I}_k}$, $( H(\tilde\mu_k|\nu_k) )_{ k \geq 1}$ is bounded, and
\[ \sup_{k \geq k_0} \sup_{t \in \T} \Psi_{t,k} (\tilde\mu_k) < 0. \]
\end{lemma}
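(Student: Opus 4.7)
The plan is to regularise the qualified measure from \ref{ass:Stab}-\eqref{it:StabQualif} into one with bounded continuous density w.r.t.~$\nu$, and then to transport this density onto each $\nu_k$. Let $\tilde\mu \in \AdF$ be given by \ref{ass:Stab}-\eqref{it:StabQualif} and set $3\eta := -\sup_{t\in\T}\Psi_t(\tilde\mu) > 0$. Since $H(\tilde\mu|\nu)<\infty$ one has $\tilde\mu \ll \nu$ with density $g := \tfrac{\d\tilde\mu}{\d\nu}$, and $\tilde\mu \in \ps_\phi(E)$ yields $g \in L^1((1+\phi)\,\d\nu)$. As $(1+\phi)\,\d\nu$ is a finite Borel measure on a Polish space, $C_b(E)$ is dense in $L^1((1+\phi)\,\d\nu)$, so I pick non-negative $g_n \in C_b(E)$ with $g_n \to g$ in $L^1((1+\phi)\,\d\nu)$. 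The measures $\hat\mu_n := (\int g_n\,\d\nu)^{-1} g_n \cdot \nu$ then converge to $\tilde\mu$ in $\ps_\phi(E)$, whence \ref{ass:Stab}-\eqref{it:StabCont} gives $\sup_{t\in\T}\Psi_t(\hat\mu_n) \to -3\eta$. I fix some $n$ with $\sup_{t\in\T}\Psi_t(\hat\mu_n) \leq -2\eta$ and write $g^* := g_n$, $\hat\mu^* := \hat\mu_n$.

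Next I set $\tilde\mu_k := Z_k^{-1} g^* \cdot \nu_k$ with $Z_k := \int g^*\,\d\nu_k$. Since $g^*$ is bounded continuous and $\nu_k \to \nu$ in $\ps_\phi(E)$, one has $Z_k \to Z^* := \int g^*\,\d\nu > 0$, so for $k$ large the density $g^*/Z_k$ w.r.t.~$\nu_k$ is bounded by $\|g^*\|_\infty/Z_k$. By~\ref{ass:Stab}-\eqref{it:StabDensB} this gives $\tilde\mu_k \in \AdFk$, together with the uniform bound $0 \leq H(\tilde\mu_k|\nu_k) \leq \log(\|g^*\|_\infty/Z_k)$ (the finitely many small $k$ can be handled by setting $\tilde\mu_k := \nu_k$, which has zero relative entropy and lies in $\AdFk$ by \ref{ass:Stab}-\eqref{it:StabDensB}). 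The same bounded-continuous argument, combined with the uniform integrability of $\phi$ under $(\nu_k)_{k\geq 1}$ granted by \ref{ass:Stab}-\eqref{it:StabIntExp}, also yields $\tilde\mu_k \to \hat\mu^*$ in $\ps_\phi(E)$ as $k \to \infty$.

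To conclude, applying \ref{ass:Stab}-\eqref{it:StabCont} to this last convergence gives $\sup_{k' \geq 1}|\sup_{t\in\T}\Psi_{t,k'}(\tilde\mu_k) - \sup_{t\in\T}\Psi_{t,k'}(\hat\mu^*)| \to 0$ as $k \to \infty$. Evaluated along the diagonal $k'=k$ and combined with \ref{ass:Stab}-\eqref{it:StabSemCont}, which asserts $\limsup_k \sup_{t\in\T}\Psi_{t,k}(\hat\mu^*) \leq \sup_{t\in\T}\Psi_t(\hat\mu^*) \leq -2\eta$, this yields $\limsup_k \sup_{t\in\T}\Psi_{t,k}(\tilde\mu_k) \leq -2\eta$. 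In particular there exists $k_0$ with $\sup_{k \geq k_0}\sup_{t\in\T}\Psi_{t,k}(\tilde\mu_k) \leq -\eta < 0$, as required.

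The main subtlety is the transfer from $\nu$ to $\nu_k$: the original $\tilde\mu$ need not lie in $\AdFk$ (its density w.r.t.~$\nu_k$ is not controlled), while simply truncating $g$ at some level would produce a discontinuous function incompatible with the topology of $\ps_\phi(E)$. Choosing a bounded \emph{continuous} density $g^*$ simultaneously delivers the uniform entropy bound, membership in $\AdFk$, and the convergence $\tilde\mu_k \to \hat\mu^*$ needed to feed \ref{ass:Stab}-\eqref{it:StabCont}.
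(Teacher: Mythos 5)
Your proof is correct and leads to the same endgame as the paper's: a bounded continuous density $g^*$ with respect to $\nu$, transported onto $\nu_k$ as $\tilde\mu_k = Z_k^{-1} g^*\cdot\nu_k$, membership in $\AdFk$ via~\ref{ass:Stab}-\eqref{it:StabDensB}, the uniform entropy bound $H(\tilde\mu_k|\nu_k)\leq\log(\|g^*\|_\infty/Z_k)$, and the split $\sup_t\Psi_{t,k}(\tilde\mu_k)\leq|\sup_t\Psi_{t,k}(\tilde\mu_k)-\sup_t\Psi_{t,k}(\hat\mu^*)|+\sup_t\Psi_{t,k}(\hat\mu^*)$ controlled by~\ref{ass:Stab}-(\ref{it:StabCont},\ref{it:StabSemCont}). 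Where you differ is in how you produce the bounded continuous $g^*$. The paper proceeds in two stages: truncate $\tfrac{\d\tilde\mu}{\d\nu}$ at level $M$, invoke~\ref{ass:Stab}-\eqref{it:StabCont} to show the truncation preserves strict negativity of $\sup_t\Psi_t$, then apply Lusin together with the Tietze--Urysohn extension (and add a small constant $\varepsilon$) to get a bounded, continuous, \emph{positive} density, invoking~\ref{ass:Stab}-\eqref{it:StabCont} a second time. You collapse the truncation and Lusin/Tietze stages into a single $L^1((1+\phi)\,\d\nu)$-density argument, picking non-negative $g_n\in C_b(E)$ converging to $g$, and then invoke~\ref{ass:Stab}-\eqref{it:StabCont} only once. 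Both are valid; the $L^1$-density fact is itself essentially a packaged Lusin--Tietze statement, so your version is a streamlining rather than a genuinely independent mechanism. One cosmetic difference: the paper's $\varrho^\star$ is strictly positive while your $g^*$ is merely non-negative, but since~\ref{ass:Stab}-\eqref{it:StabDensB} only requires a bounded density (not equivalence to $\nu_k$), positivity is not needed and your construction is fine. Your handling of the finitely many small $k$ by taking $\tilde\mu_k=\nu_k$ is a clean detail that the paper leaves implicit.
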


To obtain Gibbs measures and show their stability, we need differentiability assumptions.

\begin{assumptiona}[Local regularity] \label{ass:StabReg}
$\phantom{a}$
\begin{enumerate}[label=(\roman*),ref=\roman*]
\item\label{it:StabRegalif} For every large enough $k$, the regularity assumptions~\ref{ass:DiffF-barmu} and \ref{ass:regPsi-barmu}-(\ref{ass:regPsi-barmu:1},\ref{ass:regPsi-barmu:2},\ref{ass:regPsi-barmu:3},\ref{ass:regPsi-barmu:4}) hold at $\overline\mu_k$ for $(\mathcal{I}_k, (\Psi_{t,k})_{t \in \T})$.
\item\label{it:StabTech} For every $k \geq k_0$, \ref{ass:Fboundedens-barmu} holds at $\overline\mu_k$ and $\tilde{\mu}_k$, given by Lemma \ref{lem:StabQualif}, for $(\mathcal{I}_k, (\Psi_{t,k})_{t \in \T})$.
\item\label{itStabDom} There exists $D^{\F,\Psi} \in [0,\infty)$ such that
\[ \forall x \in E, \qquad \sup_{k \geq 1} \left\{\left|\frac{\delta\F_k}{\delta\mu}(\overline\mu_k,x)\right| + \sup_{t \in \T} \left|\frac{\delta\Psi_{t,k}}{\delta\mu}(\overline\mu_k,x)\right|\right\} \leq D^{\F,\Psi}[1+\phi(x)]. \]
\item\label{it:StabEqui}
For any sub-sequence $(\overline\mu_{l_k})_{k \geq 1}$ that converges towards some $\overline\mu_\infty$ in $\ps_\phi (E)$, $\overline\mu_\infty$ belongs to $\AdF$, \ref{ass:DiffF-barmu} and \ref{ass:regPsi-barmu}-(\ref{ass:regPsi-barmu:1},\ref{ass:regPsi-barmu:2},\ref{ass:regPsi-barmu:3},\ref{ass:regPsi-barmu:4}) hold at $\overline\mu_\infty$ for $(\mathcal{I},(\Psi_t)_{t \in \T})$, and for every $x \in E$,
\[ \bigg\vert \frac{\delta \F_{l_k}}{\delta\mu}(\overline\mu_{l_k},x)-\frac{\delta \F}{\delta\mu}(\overline\mu_\infty,x) \bigg\vert \xrightarrow[k \rightarrow +\infty]{} 0, \] 
\[ \sup_{t \in \T} \big \vert \Psi_{t,l_k}(\overline\mu_{l_k} )-\Psi_t (\overline\mu_\infty ) \big\vert +
\bigg\vert \frac{\delta \Psi_{t,l_k}}{\delta\mu}(\overline\mu_{l_k},x)-\frac{\delta \Psi_t}{\delta\mu}(\overline\mu_\infty,x) \bigg\vert \xrightarrow[k \rightarrow +\infty]{} 0. \]
Moreover, $( x \mapsto \tfrac{\delta \F_{l_k}}{\delta\mu} (\overline\mu_{l_k}, x ) )_{k \geq 1}$, $( x \mapsto \tfrac{\Psi_{t,l_k}}{\delta\mu} ( \overline\mu_{l_k}, x) )_{t \in \T, k \geq 1}$ are equi-continuous on $E$.
\end{enumerate}
\end{assumptiona}

The regularity assumptions in \ref{ass:StabReg}-\eqref{it:StabEqui} are similar to the ones in Proposition \ref{prop:ass-ldp-diff}.

\begin{theorem}[Weak stability] \label{thm:WeakStab} 
$\phantom{a}$
\begin{enumerate}
    \item\label{it:StabComp} Under \ref{ass:Stab} and \ref{ass:StabReg}-(\ref{it:StabRegalif},\ref{it:StabTech},\ref{itStabDom}), for every large enough $k$, $\overline\mu_k$ writes as Gibbs measure~\eqref{eq:Gibbsmeasure}-\eqref{eq:compSlackness} for some $\overline\lambda_k \in \M_+(\T)$. Moreover, the sequence $(\overline\lambda_k)_{k}$ is weakly precompact in $\M_+(\T)$.
    \item\label{it:StabWeakCV} Under \ref{ass:Stab}-\ref{ass:StabReg}, $\mathcal{I}$ has a unique minimiser $\overline{\mu}$ in $\AdF \cap \Adineq$, which is the limit of $( \overline\mu_k )_{k \geq 1}$ in $\ps_\phi(E)$. Moreover, \eqref{eq:Gibbsmeasure}-\eqref{eq:compSlackness} hold at $\overline{\mu}$ for any limit point $\overline\lambda_\infty$ of $(\overline\lambda_k)_{k}$.
\end{enumerate}
\end{theorem}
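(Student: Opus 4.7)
\emph{Plan.} My overall strategy is: (i) apply Theorem~\ref{thm:abstractGibbs} to each $\overline\mu_k$ to obtain a Gibbs representation with Lagrange multiplier $\overline\lambda_k\in\M_+(\T)$; (ii) derive uniform bounds on $\overline\lambda_k(\T)$ and on $H(\overline\mu_k\vert\nu_k)$ yielding joint precompactness of $(\overline\mu_k,\overline\lambda_k)$ in $\ps_\phi(E)\times\M_+(\T)$; (iii) pass to the limit in the Gibbs formula and apply Theorem~\ref{thm:convex-case} to identify the limit as the unique minimiser of $\mathcal{I}$ on $\AdF\cap\Adineq$. Step~(i) is essentially immediate: Lemma~\ref{lem:StabQualif} and Lemma~\ref{lem:EquivQualif} supply the qualification condition~\ref{ass:regPsi-barmu}-\eqref{ass:regPsi-barmu:5} uniformly in $k$, while \ref{ass:StabReg}-(\ref{it:StabRegalif},\ref{it:StabTech}) provide the remaining hypotheses of Theorem~\ref{thm:abstractGibbs}; since $\S=\emptyset$, each $\overline\mu_k$ admits the Gibbs representation~\eqref{eq:Gibbsmeasure} with some $\overline\lambda_k\in\M_+(\T)$ and complementary slackness~\eqref{eq:compSlackness}.

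The uniform bound on $\overline\lambda_k(\T)$ is the quantitative estimate of Remark~\ref{rem:BoundLambda}, which in our convex setting~\ref{ass:Stab}-\eqref{it:StabConv} reduces to a Lagrangian saddle-point argument. The first-order condition makes $\overline\mu_k$ an unconstrained minimiser over $\AdFk$ of $\mu\mapsto\mathcal{I}_k(\mu)+\int_\T\Psi_{t,k}(\mu)\overline\lambda_k(\d t)$; testing at $\tilde\mu_k$ from Lemma~\ref{lem:StabQualif} together with the complementary slackness at $\overline\mu_k$ yields
\[
    c\,\overline\lambda_k(\T)\;\leq\;-\int_\T\Psi_{t,k}(\tilde\mu_k)\,\overline\lambda_k(\d t)\;\leq\;\mathcal{I}_k(\tilde\mu_k)-\mathcal{I}_k(\overline\mu_k),
\]
where $c:=-\sup_{k\geq k_0,\,t\in\T}\Psi_{t,k}(\tilde\mu_k)>0$. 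The right-hand side is uniformly bounded by combining Lemma~\ref{lem:StabQualif} with~\ref{ass:StabReg}-\eqref{itStabDom} (the latter controlling $\F_k(\tilde\mu_k)$ by convexity once the $\phi$-moment of $\tilde\mu_k$ is bounded, which itself comes from the entropy bound). Prokhorov's theorem on the compact $\T$ then yields weak precompactness of $(\overline\lambda_k)_k$, completing~\ref{it:StabComp}.

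For~\ref{it:StabWeakCV}, precompactness of $(\overline\mu_k)_k$ in the topology of Definition~\ref{def:weakCVphi} follows from the uniform bound $H(\overline\mu_k\vert\nu_k)\leq C$ combined with~\ref{ass:Stab}-\eqref{it:StabIntExp}: the dual formula~\eqref{eq:DualEntrop} applied to $\Phi=\alpha\phi\1_{\phi\geq M}$ shows $\sup_k\int\phi\1_{\phi\geq M}\,\d\overline\mu_k\to 0$ as $M\to\infty$, upgrading standard weak tightness to $\ps_\phi(E)$-precompactness. Extract a joint subsequence $(\overline\mu_{l_k},\overline\lambda_{l_k})\to(\overline\mu_\infty,\overline\lambda_\infty)$; admissibility $\overline\mu_\infty\in\AdF\cap\Adineq$ comes from~\ref{ass:StabReg}-\eqref{it:StabEqui} (for $\AdF$) and from $\sup_{t\in\T}\vert\Psi_{t,l_k}(\overline\mu_{l_k})-\Psi_t(\overline\mu_\infty)\vert\to 0$ combined with $\Psi_{t,l_k}(\overline\mu_{l_k})\leq 0$. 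The same uniform-in-$t$ convergence paired with $\overline\lambda_{l_k}\rightharpoonup\overline\lambda_\infty$ yields the complementary slackness $\int_\T\Psi_t(\overline\mu_\infty)\,\overline\lambda_\infty(\d t)=0$ at the limit.

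The final step is to pass to the limit in~\eqref{eq:Gibbsmeasure}. Assumption~\ref{ass:StabReg}-\eqref{it:StabEqui} gives pointwise-in-$x$ convergence of $\tfrac{\delta\F_{l_k}}{\delta\mu}(\overline\mu_{l_k},x)$ and uniform-in-$t$ convergence of $\tfrac{\delta\Psi_{t,l_k}}{\delta\mu}(\overline\mu_{l_k},x)$ to a function of $t$ that is continuous by~\ref{ass:regPsi-barmu}-(\ref{ass:regPsi-barmu:3},\ref{ass:regPsi-barmu:4}) at $\overline\mu_\infty$; combined with $\overline\lambda_{l_k}\rightharpoonup\overline\lambda_\infty$, the whole Gibbs exponent converges pointwise. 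The bound in~\ref{ass:StabReg}-\eqref{itStabDom} together with $\sup_k\overline\lambda_k(\T)<\infty$ dominates this exponent by $C(1+\phi(x))$; the uniform exponential integrability~\ref{ass:Stab}-\eqref{it:StabIntExp} of $\phi$ under $\nu_k$ then allows dominated convergence to give $\overline Z_{l_k}\to\overline Z_\infty\in(0,\infty)$ and, for every bounded continuous $\varphi$,
\[
    \int\varphi\,\d\overline\mu_{l_k}\;=\;\int\varphi\,\tfrac{\d\overline\mu_{l_k}}{\d\nu_{l_k}}\,\d\nu_{l_k}\;\xrightarrow[k\to\infty]{}\;\int\varphi\,f_\infty\,\d\nu,
\]
identifying $\overline\mu_\infty$ as the Gibbs measure built from $(\overline\lambda_\infty,\F,\nu)$. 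All hypotheses of Theorem~\ref{thm:convex-case} are then fulfilled at $\overline\mu_\infty$, which is therefore the unique minimiser of $\mathcal{I}$ on $\AdF\cap\Adineq$; uniqueness of the limit promotes subsequential to full convergence. \textbf{The main obstacle} is precisely this last uniform-integrability step: justifying convergence of integrals of continuous functions of exponential-in-$\phi$ growth against $\nu_k$ goes strictly beyond the $\ps_\phi(E)$ topology and leans crucially on the full strength of~\ref{ass:Stab}-\eqref{it:StabIntExp}.
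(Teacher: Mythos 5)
Your overall strategy coincides with the paper's: apply Theorem~\ref{thm:abstractGibbs} to each $\overline\mu_k$ via Lemma~\ref{lem:StabQualif} and Lemma~\ref{lem:EquivQualif}, bound $\overline\lambda_k(\T)$ uniformly, extract a joint subsequential limit $(\overline\mu_\infty,\overline\lambda_\infty)$, pass to the limit in the Gibbs density, and conclude by the sufficiency result Theorem~\ref{thm:convex-case}. Two steps, however, contain genuine gaps.

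\textbf{Bound on $\overline\lambda_k(\T)$.} Your chain $c\,\overline\lambda_k(\T)\leq -\int_\T\Psi_{t,k}(\tilde\mu_k)\,\overline\lambda_k(\d t)\leq \mathcal{I}_k(\tilde\mu_k)-\mathcal{I}_k(\overline\mu_k)$ is correct, but the right-hand side is not controllable by the stated assumptions: you need an \emph{upper} bound on $\F_k(\tilde\mu_k)-\F_k(\overline\mu_k)$, whereas convexity through $\tfrac{\delta\F_k}{\delta\mu}(\overline\mu_k)$ only gives the \emph{lower} bound $\F_k(\tilde\mu_k)-\F_k(\overline\mu_k)\geq\langle\tilde\mu_k,\tfrac{\delta\F_k}{\delta\mu}(\overline\mu_k)\rangle$. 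Assumption~\ref{ass:StabReg}-\eqref{itStabDom} controls the derivative at $\overline\mu_k$ only, not at $\tilde\mu_k$ and not the function value $\F_k(\tilde\mu_k)$. The fix, which is what the paper does via Remark~\ref{rem:BoundLambda}, is to stop at the \emph{linearised} Lagrangian and never return to $\mathcal{I}_k$: using the slackness and convexity one gets
\begin{equation*}
    c\,\overline\lambda_k(\T)\;\leq\;-\int_\T\Big\langle\tilde\mu_k,\tfrac{\delta\Psi_{t,k}}{\delta\mu}(\overline\mu_k)\Big\rangle\overline\lambda_k(\d t)\;\leq\; H(\tilde\mu_k\vert\nu_k)+\Big\langle\tilde\mu_k,\tfrac{\delta\F_k}{\delta\mu}(\overline\mu_k)\Big\rangle,
\end{equation*}
and the right-hand side is bounded by Lemma~\ref{lem:StabQualif} (entropy bound), the domination~\ref{ass:StabReg}-\eqref{itStabDom}, the dual representation~\eqref{eq:DualEntrop} and~\ref{ass:Stab}-\eqref{it:StabIntExp}.

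\textbf{Passage to the limit in the Gibbs density.} The step $\int\varphi\,\tfrac{\d\overline\mu_{l_k}}{\d\nu_{l_k}}\,\d\nu_{l_k}\to\int\varphi\,f_\infty\,\d\nu$ is not a dominated convergence argument, because the reference measure itself varies with $k$. The uniform exponential integrability~\ref{ass:Stab}-\eqref{it:StabIntExp} alone is not sufficient: you also need the equi-continuity in $x$ of the integrands $\varphi P_k$, which is exactly what~\ref{ass:StabReg}-\eqref{it:StabEqui} provides and which the paper exploits through Lemma~\ref{lem:CVUnif} (a Skorokhod-representation lemma combining equi-continuity, pointwise convergence, and uniform integrability). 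You rightly flag this step as the main obstacle, but the resolution you sketch omits the equi-continuity input that makes it work. Relatedly, the tightness of $(\overline\mu_k)_k$ in $\ps_\phi(E)$ that you deduce from ``the uniform bound $H(\overline\mu_k\vert\nu_k)\leq C$'' is correct but circular as written: that entropy bound itself requires the two-sided density estimate $(D^P)^{-1}e^{-\beta\phi}\leq P_k\leq D^Pe^{\beta\phi}$, i.e.\ the bound on $\overline\lambda_k(\T)$ plus~\eqref{itStabDom}, which is the route the paper takes directly.
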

Since we are working with $\mathcal{S}=\emptyset$, following Remark~\ref{rem:equivalence}, in the statement of Theorem \ref{thm:WeakStab}-\ref{it:StabComp}, there is no need to for introducing neither the set $\overline S_k$ nor the function $\overline\zeta_k$.

Theorem~\ref{thm:convex-case}, Proposition~\ref{pro:StrongStab}, Lemma~\ref{lem:StabQualif} and Theorem~\ref{thm:WeakStab} are proved in Section~\ref{ss:proofs-convex}.

\section{Application to stochastic processes}\label{s:processes}

In this section we fix $T>0$ and $d \geq 1$.
We consider the case where $E$ is the set of continuous trajectories $C([0,T],\R^d)$, endowed with the sup norm. 
In this setting, we denote by $x_{[0,T]} = (x_t)_{t \in [0,T]}$ typical elements of $C([0,T],\R^d)$, and by $\mu_{[0,T]}$ typical elements of $\ps(C([0,T],\R^d))$. If $\mathsf{X}_{[0,T]}$ is a random variable in $C([0,T],\R^d)$ with law $\mu_{[0,T]}$, then the marginal distribution of $\mathsf{X}_t$ is denoted by $\mu_t$. 
We focus on inequality constraints of the form 
\begin{equation}\label{eq:constr-time-marg}
    \forall t \in \T = [0,T], \qquad \Psi_t(\mu_{[0,T]}) = \Psi(\mu_t),
\end{equation}
for some function $\Psi : \ps(\R) \to \R$.
We moreover focus on the case $\F \equiv 0$, so that minimisers of~\eqref{eq:min-pb} describe the asymptotic behaviour of large systems of independent and identically distributed continuous stochastic processes, conditionally on the constraint~\eqref{eq:constr-time-marg} on their empirical distribution at all times $t \in [0,T]$.

In Section~\ref{ss:Psi-Psit}, we provide conditions on the function $\Psi$ and on the reference measure $\nu_{[0,T]}$ ensuring that the assumptions of the main theorems of Section~\ref{s:abstract-results} are satisfied. We next present two examples of applications of these theorems: in Section~\ref{ss:schro}, we construct and study a constrained version of the Schrödinger bridge, and in Section~\ref{sec:Gibbsdif}, we state a Gibbs principle for i.i.d. diffusion processes under constraints of the form~\eqref{eq:constr-time-marg}. In particular, we provide a detailed description of the law of the minimiser $\overline \mu_{[0,T]}$ as the law of a diffusion process with tilted initial condition and modified drift coefficient with respect to the reference measure $\nu_{[0,T]}$. To illustrate the latter result, we present simple examples in Section~\ref{ss:brown-expl}.

\subsection{Constraints on time marginal laws}\label{ss:Psi-Psit}

\subsubsection{Global setting}

Throughout Section~\ref{s:processes}, we work with $E = C([0,T],\R^d)$ and set $\phi(x_{[0,T]}) := \sup_{0 \leq t \leq T} \vert x_t \vert^p$ for some $p \in [1,+\infty)$, so that the topology on $\ps_\phi (E) = \ps_p ( C ( [0,T], \R^d) )$ corresponds to the one induced by the $p$-Wasserstein distance $W_p$. 
In this context, we have the following standard properties, which follow from elementary coupling arguments.

\begin{lemma}[Metric properties of time marginal distributions]\label{lem:mutW1} $\phantom{a}$
    \begin{enumerate}[ref=\roman*,label=(\roman*)]
        \item\label{it:mutW1:1} For any $t \in [0,T]$, the map 
        \[
            \begin{array}{ccc}
                \left(\ps_p(C ( [0,T], \R^d) ), W_p\right) & \to & \left(\ps_p( \R^d ), W_p\right)\\
                \mu_{[0,T]} & \mapsto & \mu_t
            \end{array}
        \]
        is $1$-Lipschitz continuous.
        \item\label{it:mutW1:2} For any $\mu_{[0,T]} \in \ps_p(C ( [0,T], \R^d) )$, the map $t \mapsto \mu_t$ is continuous.
    \end{enumerate}
\end{lemma}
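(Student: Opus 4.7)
Both parts rest on the same coupling idea, together with the fact that $\E[\sup_{t \in [0,T]} |X_t|^p] < \infty$ whenever $X_{[0,T]} \sim \mu_{[0,T]} \in \ps_p(C([0,T],\R^d))$.

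For part~\eqref{it:mutW1:1}, I would start from two path measures $\mu_{[0,T]}, \mu'_{[0,T]} \in \ps_p(C([0,T],\R^d))$ and pick an optimal coupling $\Pi \in \ps_p(C([0,T],\R^d)^2)$ achieving $W_p(\mu_{[0,T]},\mu'_{[0,T]})$ for the sup-norm cost (its existence is standard since $C([0,T],\R^d)$ is Polish and the cost is lower semi-continuous). Pushing $\Pi$ forward by the coordinate evaluation $(\mathsf{X}_t,\mathsf{X}'_t)$ produces an admissible coupling between $\mu_t$ and $\mu'_t$ on $\R^d$, and the monotonicity
\[
    \int |x_t-x'_t|^p \, \Pi(\d x_{[0,T]},\d x'_{[0,T]}) \leq \int \sup_{s \in [0,T]} |x_s-x'_s|^p \, \Pi(\d x_{[0,T]},\d x'_{[0,T]})
\]
together with the definition of $W_p$ yields $W_p(\mu_t,\mu'_t) \leq W_p(\mu_{[0,T]},\mu'_{[0,T]})$, which is the claimed $1$-Lipschitz bound.

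For part~\eqref{it:mutW1:2}, I would fix $\mu_{[0,T]} \in \ps_p(C([0,T],\R^d))$ and, for $s,t \in [0,T]$, use the ``diagonal'' coupling given by the law of $(\mathsf{X}_s,\mathsf{X}_t)$ under $\mu_{[0,T]}$; it is admissible between $\mu_s$ and $\mu_t$, hence
\[
    W_p(\mu_s,\mu_t)^p \leq \int |x_s-x_t|^p \, \mu_{[0,T]}(\d x_{[0,T]}).
\]
As $s \to t$, the integrand converges pointwise to zero by continuity of the trajectories and is dominated by $2^p \sup_{u \in [0,T]} |x_u|^p$, which is $\mu_{[0,T]}$-integrable by assumption. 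Dominated convergence then gives continuity of $s \mapsto \mu_s$ in $W_p$.

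No serious obstacle is expected; the only point deserving care is the existence of the optimal coupling in part~\eqref{it:mutW1:1}, which is standard (see e.g. \cite[Theorem 4.1]{villani2009optimal}), and the domination step in part~\eqref{it:mutW1:2}, which only uses $\mu_{[0,T]} \in \ps_\phi(E)$ with the chosen $\phi$.
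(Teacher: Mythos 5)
Your proof is correct and is precisely the ``elementary coupling argument'' the paper alludes to when it omits the proof: for part (i) you push forward an optimal sup-norm coupling through the coordinate evaluation, and for part (ii) you use the diagonal coupling and dominated convergence with the envelope $2^p \sup_u |x_u|^p$. Nothing further is needed.
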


We shall always take $\F \equiv 0$, so that given a reference probability measure $\nu_{[0,T]}$ on $C ( [0,T], \R^d)$, Assumption~\ref{ass:FiniteF} holds as soon as
\begin{equation}\label{eq:exp-integ-proc}
    \forall \alpha > 0, \qquad \E_{\nu_{[0,T]}} \left[e^{\alpha \sup_{0 \leq t \leq T} \vert {\sf X}_t \vert^p}\right] < \infty,
\end{equation}
where the notation $\E_{\nu_{[0,T]}}[\cdots]$ means that the process $\mathsf{X}_{[0,T]}$ has distribution $\nu_{[0,T]}$. Moreover, the set $\AdF$ rewrites
\[
    \AdF = \left\{\mu_{[0,T]} \in \ps_p(C([0,T],\R^d)), H(\mu_{[0,T]}|\nu_{[0,T]}) < \infty\right\}.
\]

Equality constraints are represented by a family $( \zeta_s )_{s \in \S}$ of functions $C([0,T],\R^d) \rightarrow \R$ which satisfy~\ref{ass:linear-constr}. Last, we fix a function $\Psi : \ps_p(\R^d) \to \R$ and define the functions $\Psi_t$, $t \in [0,T]$, by~\eqref{eq:constr-time-marg}. Then~\ref{ass:nonlinear-constr} holds as soon as $\Psi$ is lower semi-continuous on $\ps_p(\R^d)$. Overall, we deduce that in this setting, \ref{ass:FiniteF}-\ref{ass:linear-constr}-\ref{ass:nonlinear-constr} hold under the following condition.

\begin{assumptionb}[Condition for~\ref{ass:FiniteF}-\ref{ass:linear-constr}-\ref{ass:nonlinear-constr}]\label{ass-b:global}
    $\F \equiv 0$, \eqref{eq:exp-integ-proc} holds, $( \zeta_s )_{s \in \S}$ satisfies~\ref{ass:linear-constr} and $\Psi : \ps_p(\R^d) \to \R$ is lower semi-continuous.
\end{assumptionb}

Notice that under~\ref{ass-b:global}, $\mathcal{I}(\mu_{[0,T]}) = H(\mu_{[0,T]}|\nu_{[0,T]})$ is always a good rate function thanks to Lemma~\ref{lem:Igood}, so minimisers to~\eqref{eq:min-pb} exist as soon as $\AdF \cap \Ad$ is nonempty.

\subsubsection{Differentiability of the constraints}

In the assumptions of Theorem~\ref{thm:AbsLDP} (via Proposition~\ref{prop:ass-ldp-diff}) and Theorem~\ref{thm:abstractGibbs}, the linear functional derivative of the function $\Psi_t$ plays a central role. The next lemma, whose proof is straightforward, links the linear functional derivatives of $\Psi_t$ (defined on $\ps_p(C([0,T],\R^d))$) to that of $\Psi$ (defined on $\ps_p(\R^d)$).

\begin{lemma}[Linear functional derivative of $\Psi_t$]\label{lem:diff-Psi-Psit}
    Let $\mu \in \ps_p(\R^d)$ be such that $\Psi$ is differentiable at $\mu$ w.r.t. the set of directions $\ps_p(\R^d)$, and let $t \in [0,T]$. For any $\mu_{[0,T]} \in \ps_p(C([0,T],\R^d))$ such that $\mu_t=\mu$, $\Psi_t$ is differentiable at $\mu_{[0,T]}$ w.r.t. the set of directions $\ps_p(C([0,T],\R^d))$, and for any $x_{[0,T]} \in C([0,T],\R^d)$,
    \[
        \frac{\delta \Psi_t}{\delta\mu_{[0,T]}}\left(\mu_{[0,T]},x_{[0,T]}\right) = \frac{\delta \Psi}{\delta\mu}(\mu_t,x_t).
    \]
\end{lemma}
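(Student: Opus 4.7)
The plan is to unfold the definition of the linear functional derivative and exploit the linearity at time $t$ of the marginal map $\mu_{[0,T]} \mapsto \mu_t$. Fix $\mu \in \ps_p(\R^d)$ at which $\Psi$ is differentiable, fix $t \in [0,T]$, and let $\mu_{[0,T]} \in \ps_p(C([0,T],\R^d))$ with $\mu_t = \mu$. For any $\nu_{[0,T]} \in \ps_p(C([0,T],\R^d))$ and any $\varepsilon \in [0,1]$, the time-$t$ marginal of $(1-\varepsilon)\mu_{[0,T]} + \varepsilon \nu_{[0,T]}$ equals $(1-\varepsilon)\mu_t + \varepsilon \nu_t$, so~\eqref{eq:constr-time-marg} yields
\[
    \Psi_t\bigl((1-\varepsilon)\mu_{[0,T]} + \varepsilon \nu_{[0,T]}\bigr) = \Psi\bigl((1-\varepsilon)\mu_t + \varepsilon \nu_t\bigr).
\]

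Next, I would invoke the differentiability of $\Psi$ at $\mu_t$ in the direction $\nu_t$ (which lies in $\ps_p(\R^d)$ by Lemma~\ref{lem:mutW1}-\eqref{it:mutW1:1}) to conclude that the incremental ratio in $\varepsilon$ converges to $\bigl\langle \nu_t - \mu_t, \tfrac{\delta\Psi}{\delta\mu}(\mu_t) \bigr\rangle$. Rewriting each marginal integral against a measurable function $g : \R^d \to \R$ as $\langle \nu_t, g \rangle = \langle \nu_{[0,T]}, g \circ \mathsf{X}_t\rangle$, and applying this with $g(y) = \tfrac{\delta\Psi}{\delta\mu}(\mu_t, y)$, shows that the same ratio converges to $\bigl\langle \nu_{[0,T]} - \mu_{[0,T]}, \tfrac{\delta\Psi}{\delta\mu}(\mu_t, \mathsf{X}_t) \bigr\rangle$, which is exactly the announced formula.

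Two routine checks remain to confirm that the candidate is a genuine linear functional derivative in the sense of Definition~\ref{def:diff}. First, the map $x_{[0,T]} \mapsto \tfrac{\delta\Psi}{\delta\mu}(\mu_t, x_t)$ must obey the growth condition on path space associated with the weight $\phi(x_{[0,T]}) = \sup_{s \in [0,T]} |x_s|^p$; this follows from the analogous bound for $\tfrac{\delta\Psi}{\delta\mu}(\mu_t,\cdot)$ on $\R^d$ together with the pointwise inequality $|x_t|^p \leq \phi(x_{[0,T]})$. Second, the normalisation convention $\langle \mu_{[0,T]}, \tfrac{\delta\Psi_t}{\delta\mu_{[0,T]}}(\mu_{[0,T]}) \rangle = 0$ reduces via the marginal to $\langle \mu_t, \tfrac{\delta\Psi}{\delta\mu}(\mu_t) \rangle = 0$, which holds by the same convention applied to $\Psi$. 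There is no substantial obstacle here, as the content of the lemma is just a chain-rule identity reflecting the factorisation of $\Psi_t$ through the linear evaluation $\mu_{[0,T]} \mapsto \mu_t$.
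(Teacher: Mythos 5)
Your proof is correct; the paper omits this proof as ``straightforward,'' and what you wrote is exactly the chain-rule argument that phrase points to: the convex combination commutes with taking the time-$t$ marginal, differentiability of $\Psi$ at $\mu_t$ in the direction $\nu_t$ gives the limit, and the pushforward identity $\langle \nu_t, g\rangle = \langle \nu_{[0,T]}, g\circ\mathsf{X}_t\rangle$ transports the formula back to path space. One small imprecision in your closing checks: Definition~\ref{def:diff} does not impose a pointwise growth bound on the candidate derivative, only measurability and $\mu'$-integrability in each direction $\mu'$; that integrability already follows from your pushforward identity combined with the observation (which you made) that $\nu_t \in \ps_p(\R^d)$ and the fact that $\tfrac{\delta\Psi}{\delta\mu}(\mu_t)$ is $\nu_t$-integrable by hypothesis, so no growth estimate on $\tfrac{\delta\Psi}{\delta\mu}(\mu_t,\cdot)$ is needed or available at this level of generality.
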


As a consequence of Lemma~\ref{lem:diff-Psi-Psit}, we get that the assumptions of Proposition~\ref{prop:ass-ldp-diff} are satisfied under the following condition on $\Psi$.

\begin{assumptionb}[Global differentiability of $\Psi$]\label{ass-b:diff-Psi-glob}
    For any $\mu \in \ps_p(\R^d)$, $\Psi$ is differentiable at $\mu$ w.r.t. the set of directions $\ps_p(\R^d)$. Moreover,
    \begin{enumerate}[label=(\roman*),ref=\roman*]
        \item\label{it:diff-Psi-glob:1} for any compact set $K \subset \ps_p(\R^d)$, there exists $D_K^\Psi \in [0,\infty)$ such that
        \[
            \forall x \in \R^d, \qquad \sup_{\mu \in K} \left|\frac{\delta \Psi}{\delta\mu}(\mu,x)\right| \leq D_K^\Psi[1+|x|^p];
        \]
        \item\label{it:diff-Psi-glob:2} for any compact set $K \subset \ps_p(\R^d)$, the family of functions $(x \mapsto \frac{\delta \Psi}{\delta \mu}(\mu,x))_{\mu \in K}$ is equi-continuous;
        \item\label{it:diff-Psi-glob:3} for any compact set $\Xi \subset \R^d$, the family of functions $(\mu \mapsto \frac{\delta \Psi}{\delta \mu}(\mu,x))_{x \in \Xi}$ is uniformly equi-continuous on $\ps_p(\R^d)$.
    \end{enumerate}
\end{assumptionb}

The fact that the assumptions of Proposition~\ref{prop:ass-ldp-diff} are satisfied under~\ref{ass-b:diff-Psi-glob} essentially follows from Lemma~\ref{lem:diff-Psi-Psit}, together with the observation that if $K_{[0,T]}$ is a compact subset of $\ps_p(C([0,T],\R^d))$, then $K := \{\mu_t: t \in [0,T], \mu_{[0,T]} \in K_{[0,T]}\}$ is a compact subset of $\ps_p(\R^d)$. The latter statement is an easy consequence of Lemma~\ref{lem:mutW1}~\eqref{it:mutW1:1} and~\eqref{it:mutW1:2}.

Hence, since we are working with $\F\equiv 0$, Theorem~\ref{thm:AbsLDP} applies as soon as~\ref{ass-b:global}-\ref{ass-b:diff-Psi-glob} hold (with $\S=\emptyset)$, together with the constraint qualification condition that for any $\mu_{[0,T]} \in \AdF \cap \Adineq$, there exists $\tilde{\mu}_{[0,T]} \in \AdF$ such that, for any $\varepsilon > 0$ small enough, $\Psi(\mu_t + \varepsilon(\tilde{\mu}_t-\mu_t))<0$ for all $t \in [0,T]$.

\medskip

We now turn our attention to the assumptions of Theorem~\ref{thm:abstractGibbs}. Since $\F\equiv 0$, \ref{ass:DiffF-barmu} and~\ref{ass:Fboundedens-barmu} necessarily hold true, at any $\overline\mu_{[0,T]} \in \AdF \cap \Ad$. On the other hand, Lemmata~\ref{lem:mutW1} and~\ref{lem:diff-Psi-Psit} again show that given $\overline\mu_{[0,T]}$, \ref{ass:regPsi-barmu} is satisfied under the following conditions on $\Psi$. 

\begin{assumptionb}[Regularity of $\Psi$ and constraint qualification at $\overline{\mu}_{[0,T]}$]\label{ass-b:regPsi-barmu}
$\phantom{a}$
    \begin{enumerate}[label=(\roman*),ref=\roman*]
        \item\label{ass-b:regPsi-barmu:1} The function $\Psi$ is continuous on $\ps_p(\R^d)$.
        \item\label{ass-b:regPsi-barmu:2} For any $t \in \T$, $\Psi$ is differentiable at $\overline\mu_t$ w.r.t. the set of directions $\ps_p(\R^d)$, uniformly in $t$, in the sense that for any $\mu_{[0,T]} \in \ps_p(C([0,T],\R^d))$,
        \begin{equation*}
            \lim_{\varepsilon \to 0} \sup_{t \in [0,T]} \left|\frac{\Psi((1-\varepsilon)\overline\mu_t + \varepsilon\mu_t)-\Psi(\overline\mu_t)}{\varepsilon} - \left\langle \mu_t-\overline\mu_t, \frac{\delta \Psi}{\delta\mu}(\overline\mu_t)\right\rangle\right|=0.
        \end{equation*}
        \item\label{ass-b:regPsi-barmu:3} The function $(t,x) \mapsto \frac{\delta \Psi}{\delta \mu}(\overline\mu_t,x)$ is continuous on $[0,T] \times \R^d$.
        \item\label{ass-b:regPsi-barmu:4} There exists $D^\Psi \in [0,\infty)$ such that
        \begin{equation*}
            \forall x \in \R^d, \qquad \sup_{t \in [0,T]} \left|\frac{\delta \Psi}{\delta\mu}(\overline\mu_t,x)\right| \leq D^\Psi [1+|x|^p].
        \end{equation*}
        \item\label{ass-b:regPsi-barmu:5} There exist $\tilde\mu_{[0,T]}$ in $\AdF \cap \Adeq$ and $\tilde\varepsilon > 0$ such that
        \begin{equation*}
            \forall \t \in [0,T], \quad \Psi(\overline\mu_t) + \tilde\varepsilon \bigg\langle \tilde\mu_t - \overline\mu_t, \frac{\delta\Psi}{\delta\mu}(\overline\mu_t) \bigg\rangle < 0.
        \end{equation*}
    \end{enumerate}
\end{assumptionb}

This condition allows to apply Theorem~\ref{thm:abstractGibbs} to characterise minimisers of~\eqref{eq:min-pb}. Let us finally mention that, as soon as $\Psi$ is convex, then~\ref{ass:diff-glob-conv} holds and the minimiser is therefore unique. Moreover, in this case, Lemma~\ref{lem:EquivQualif} implies that~\ref{ass-b:regPsi-barmu}-\eqref{ass-b:regPsi-barmu:5} holds (with $\tilde\varepsilon=1$) as soon as there exists $\tilde\mu_{[0,T]}$ in $\AdF \cap \Adeq$ such that $\Psi(\tilde\mu_t)<0$ for any $t \in [0,T]$.

\subsubsection{Example of a nonlinear inequality constraint}

A typical example of a nonlinear function $\Psi$ is given as follows. Let $W : \R^d \rightarrow \R$ be a measurable function, bounded from below, which satisfies $W(x) \leq C(1+|x|^p)$. For any $\mu \in \ps_p(\R^d)$, set
\begin{equation}\label{eq:Psi-W}
    \Psi(\mu) = \langle \mu , W \star \mu \rangle = \int_{\R^d \times \R^d} W(x-y)\mu(\d x)\mu(\d y).
\end{equation}
Then $\Psi$ is well-defined, and lower semi-continuous, on $\ps_p(\R^d)$. Moreover, it is differentiable on $\ps_p(\R^d)$ w.r.t. the set of directions $\ps_p(\R^d)$, and its linear functional derivative writes
\[ \frac{\delta\Psi}{\delta\mu}(\mu,x) = (W+W_-) \star \mu (x) - 2\Psi(\mu), \qquad W_-(x) := W(-x). \]
However, in general, it is not convex. For example, if $p \geq 2$, then one may take $W(x) = \frac{1}{2}|x|^2 - M$, for some $M \geq 0$, so the condition that $\Psi(\mu) \leq 0$ means that the variance of $\mu$ (understood, for $d \geq 2$, as the trace of the covariance matrix) is bounded from above by $M$. Then $\Psi$ is concave.

\subsection{Constrained Schrödinger bridge}\label{ss:schro}

Given a reference probability measure $\nu_{[0,T]} \in \ps_p(C([0,T],\R^d))$, for example the law of the $d$-dimensional Brownian motion on $[0,T]$, and two probability measures $\mu^{\mathrm{ini}}, \mu^{\mathrm{fin}} \in \ps_p(\R^d)$, such that $\mu^{\mathrm{ini}} \ll \nu_0$ and $\mu^{\mathrm{fin}} \ll \nu_T$, the standard Schrödinger bridge is the continuous process on $[0,T]$ whose law $\overline\mu_{[0,T]}$ is the minimiser of the problem
\[
    \inf_{\substack{\mu_{[0,T]} \in \ps_p(C([0,T],\R^d)), \\ \mu_0 = \mu^{\mathrm{ini}}, \, \mu_T = \mu^{\mathrm{fin}}}} H( \mu_{[0,T]} \vert \nu_{[0,T]} ).
\]
In this section, we address the \emph{dynamically constrained} version of this problem, namely,  for a lower semi-continuous function $\Psi : \ps_p(\R^d) \to \R$, we consider the minimisation problem
\begin{equation} \label{eq:SchrödProb}
\inf_{\substack{\mu_{[0,T]} \in \ps_p(C([0,T],\R^d)), \\ \mu_0 = \mu^{\mathrm{ini}}, \, \mu_T = \mu^{\mathrm{fin}}, \\
\forall t \in [0,T], \; \Psi(\mu_t) \leq 0}} H( \mu_{[0,T]} \vert \nu_{[0,T]} ).
\end{equation}
We call minimisers of this problem \emph{constrained Schrödinger bridges}. 
Our main result for this constrained problem reads as follows.

\begin{theorem}[Constrained Schrödinger bridge] \label{thm:ConsSchröd}
    Let $\nu_{[0,T]} \in \ps_p(C([0,T],\R^d))$ satisfy~\eqref{eq:exp-integ-proc}, and $\Psi : \ps_p(\R^d) \to \R$ be a lower semi-continuous function. Let $\mu^{\mathrm{ini}}, \mu^{\mathrm{fin}} \in \ps_p(\R^d)$ be such that $\nu_{0,T} \sim \mu^{\mathrm{ini}} \otimes \mu^{\mathrm{fin}}$, where $\nu_{0,T}$ denotes the marginal distribution of the pair $(\mathsf{X}_0,\mathsf{X}_T)$ under $\nu_{[0,T]}$.
    \begin{enumerate}
        \item If the infimum in~\eqref{eq:SchrödProb} is finite, then it is reached at some $\overline\mu_{[0,T]}$ which satisfies $\overline\mu_0 = \mu^{\mathrm{ini}}$, $\overline\mu_T = \mu^{\mathrm{fin}}$, and $\Psi(\mu_t) \leq 0$ for any $t \in [0,T]$. Moreover, if $\Psi$ is convex, then this minimiser is unique.
        \item If $\overline\mu_{[0,T]}$ is a minimiser at which~\ref{ass-b:regPsi-barmu} holds, then there exist $\overline{\lambda} \in \M_+([0,T])$ and measurable $\overline\zeta_0, \, \overline\zeta_T : \R^d \rightarrow \R$ such that
        \begin{equation} \label{eq:GibbsSchrod}
            \frac{\d \overline\mu_{[0,T]}}{\d \nu_{[0,T]}} ( x_{[0,T]} ) = \overline{Z}^{-1} \exp \biggl[ - \overline\zeta_0 (x_0) - \overline\zeta_T (x_T) - \int_{[0,T]} \frac{\delta\Psi}{\delta \mu}(\overline\mu_t,x_t) \overline{\lambda}( \d \t) \biggr],
        \end{equation}
        where $\overline{Z} \in (0, \infty)$ is a normalising constant, and the slackness condition holds
        \begin{equation*} 
            \Psi (\overline{\mu}_t) = 0 \quad \text{for } \overline{\lambda}\text{-a.e. } \t\in [0,T].
        \end{equation*}
    \end{enumerate}
\end{theorem}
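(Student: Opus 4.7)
The plan is to apply Theorem~\ref{thm:abstractGibbs} after encoding the bilateral marginal constraints $\mu_0 = \mu^{\mathrm{ini}}$ and $\mu_T = \mu^{\mathrm{fin}}$ as a family of linear equality constraints in the sense of~\ref{ass:linear-constr}. First, pick a countable family $(f_i)_{i \geq 1}$ of bounded continuous functions on $\R^d$ that separates probability measures, index $\S = \N \times \{0,T\}$ in two copies, and set
\[
    \zeta_{(i,0)}(x_{[0,T]}) := f_i(x_0) - \langle \mu^{\mathrm{ini}}, f_i\rangle, \qquad \zeta_{(i,T)}(x_{[0,T]}) := f_i(x_T) - \langle \mu^{\mathrm{fin}}, f_i\rangle.
\]
These are bounded and continuous on $E = C([0,T],\R^d)$, so~\ref{ass:linear-constr} holds and by the separating property $\Adeq$ coincides with the set of path measures having the prescribed endpoint marginals. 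Under~\ref{ass-b:global} and Lemma~\ref{lem:mutW1}, the remaining standing assumptions~\ref{ass:FiniteF}-\ref{ass:nonlinear-constr} are satisfied and $\mathcal{I} = H(\cdot|\nu_{[0,T]})$ is a good rate function by Lemma~\ref{lem:Hgoodratepsphi}.

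When the infimum in~\eqref{eq:SchrödProb} is finite, existence of a minimiser $\overline\mu_{[0,T]}$ follows from a standard compactness argument: Lemma~\ref{lem:AdClosed} gives closedness of $\Ad$ in $\ps_p(C([0,T],\R^d))$, so any minimising sequence admits a subsequential limit which is admissible and, by lower semi-continuity, attains the infimum. Uniqueness when $\Psi$ is convex is immediate from the strict convexity of the relative entropy. For the Gibbs representation, I verify the hypotheses of Theorem~\ref{thm:abstractGibbs} at $\overline\mu_{[0,T]}$: \ref{ass:DiffF-barmu} and~\ref{ass:Fboundedens-barmu} are automatic since $\F \equiv 0$, \ref{ass:regPsi-barmu} is the standing assumption, and Lemma~\ref{lem:diff-Psi-Psit} identifies $\tfrac{\delta\Psi_t}{\delta\mu_{[0,T]}}(\overline\mu_{[0,T]},x_{[0,T]}) = \tfrac{\delta\Psi}{\delta\mu}(\overline\mu_t,x_t)$. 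The theorem then yields $(\overline\zeta, \overline\lambda, \overline S) \in L^1(\overline\mu_{[0,T]}) \times \M_+([0,T]) \times \B(E)$ with $\overline\mu_{[0,T]}(\overline S) = 1$ satisfying~\eqref{eq:Gibbsmeasure} and~\eqref{eq:compSlackness}, where $\overline\zeta$ lies in the $L^1(\overline\mu_{[0,T]})$-closure of $\mathrm{Span}(\zeta_s, s \in \S)$.

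The main remaining step, and the principal obstacle, is to recast this into the factorised form~\eqref{eq:GibbsSchrod}. Every element of $\mathrm{Span}(\zeta_s, s \in \S)$ is of the shape $F(x_0) + G(x_T) - \langle \mu^{\mathrm{ini}}, F\rangle - \langle \mu^{\mathrm{fin}}, G\rangle$ for some bounded $F,G$, hence so is any Cauchy approximation $F_n(x_0)+G_n(x_T) \to \overline\zeta$ in $L^1(\overline\mu_{[0,T]})$. After fixing the additive-constant ambiguity by imposing $\langle \mu^{\mathrm{ini}}, F_n\rangle = 0$, testing against bounded functions depending only on $x_0$ (resp.\ $x_T$) and using $\overline\mu_0 = \mu^{\mathrm{ini}}$, $\overline\mu_T = \mu^{\mathrm{fin}}$ extracts separate limits $\overline\zeta_0 \in L^1(\mu^{\mathrm{ini}})$ and $\overline\zeta_T \in L^1(\mu^{\mathrm{fin}})$ with $\overline\zeta(x_{[0,T]}) = \overline\zeta_0(x_0) + \overline\zeta_T(x_T)$ holding $\overline\mu_{[0,T]}$-almost surely. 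To dispose of the indicator $\1_{\overline S}$ in~\eqref{eq:Gibbsmeasure}, invoke the equivalence $\nu_{0,T} \sim \mu^{\mathrm{ini}} \otimes \mu^{\mathrm{fin}}$: following Remark~\ref{rem:equivalence}, the measure $\tilde\nu := \tfrac{\d(\mu^{\mathrm{ini}} \otimes \mu^{\mathrm{fin}})}{\d\nu_{0,T}}(x_0,x_T)\,\nu_{[0,T]}(\d x_{[0,T]})$ lies in $\Adeq$ and is equivalent to $\nu_{[0,T]}$, so $\overline S$ may be taken of full $\nu_{[0,T]}$-measure and its indicator absorbed into $\overline Z$. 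The slackness condition~\eqref{eq:compSlackness} translates without change since $\Psi_t(\overline\mu_{[0,T]}) = \Psi(\overline\mu_t)$, completing the proof.
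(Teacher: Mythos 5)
Your high-level strategy — encoding the endpoint marginals via a countable separating family, applying Theorem~\ref{thm:abstractGibbs}, and then splitting $\overline\zeta$ into a function of $x_0$ plus a function of $x_T$ — is the right one, but two steps in your argument contain genuine gaps that the paper handles with more care.

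First, your construction of the intermediate measure $\tilde\nu := \tfrac{\d(\mu^{\mathrm{ini}} \otimes \mu^{\mathrm{fin}})}{\d\nu_{0,T}}(x_0,x_T)\,\nu_{[0,T]}(\d x_{[0,T]})$ does lie in $\Adeq$ and is equivalent to $\nu_{[0,T]}$, but you need $\tilde\nu \in \AdF$, i.e.\ $H(\tilde\nu | \nu_{[0,T]}) < \infty$, to invoke Theorem~\ref{thm:abstractGibbs}-\ref{it:thmAbsAbsCont} (via Remark~\ref{rem:equivalence}). A short computation shows $H(\tilde\nu | \nu_{[0,T]}) = H(\mu^{\mathrm{ini}} \otimes \mu^{\mathrm{fin}} | \nu_{0,T})$, and equivalence of $\nu_{0,T}$ and $\mu^{\mathrm{ini}} \otimes \mu^{\mathrm{fin}}$ does not imply finiteness of this entropy. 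The paper instead constructs the competitor by gluing the bridges $\nu^{x_0,x_T}_{[0,T]}$ to the \emph{optimal} static Schrödinger plan $\pi^\star$, whose finiteness of entropy is guaranteed (the static problem is finite by entropy additivity applied to $\overline\mu_{0,T}$) and whose equivalence to $\nu_{0,T}$ is a separate non-trivial input (\cite[Theorem 2.1]{nutz2021introduction}). Your $\tilde\nu$ is not a valid substitute for this object.

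Second, the claim that you can extract separate limits $F_n \to \overline\zeta_0$ in $L^1(\mu^{\mathrm{ini}})$ and $G_n \to \overline\zeta_T$ in $L^1(\mu^{\mathrm{fin}})$ by ``testing against bounded functions depending only on $x_0$ (resp.\ $x_T$)'' does not go through. Pairing $F_n(x_0) + G_n(x_T)$ against a bounded $h(x_0)$ yields $\langle \mu^{\mathrm{ini}}, h F_n\rangle + \langle \overline\mu_{0,T}, h(x_0) G_n(x_T)\rangle$, and the second (mixed) term cannot be isolated since $\overline\mu_{0,T}$ is not a product measure. In fact, convergence of $F_n + G_n$ in $L^1$ of a general joint law does not imply separate convergence of $F_n$ and $G_n$, as one sees by taking $F_n = n a$, $G_n = -n a$ under a diagonal coupling. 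What is true, and what the paper uses, is that after establishing $\overline\mu_{[0,T]} \sim \nu_{[0,T]}$ (via the corrected competitor above), one may pass from $L^1(\overline\mu_{[0,T]})$-convergence of $\xi^k$ to a subsequence converging $\overline\mu_{[0,T]}$-a.s., hence $\nu_{[0,T]}$-a.s., hence $\nu_{0,T}$-a.s., hence $\mu^{\mathrm{ini}} \otimes \mu^{\mathrm{fin}}$-a.s.; only then does the additive decomposition lemma for a.s.\ convergence under a \emph{product} measure (\cite[Corollary 2.12]{nutz2021introduction}) yield the factorisation $\overline\zeta = \overline\zeta_0 + \overline\zeta_T$. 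Your argument skips both the passage to a product reference measure and the invocation of this structural lemma.
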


The proof of Theorem~\ref{thm:ConsSchröd} follows from the application of Theorem~\ref{thm:abstractGibbs} to the case where the equality constraints are defined as follows: let $( \phi^k )_{ k \geq 1}$ be a countable family of bounded continuous functions $\R^d \rightarrow \R$ which separates measures. 
For $k \geq 1$, we define $\phi^k_0, \phi^k_T : C([0,T],\R^d) \rightarrow \R$ by 
\[ \phi^k_0 ( x_{[0,T]} ) := \phi^k ( x_0 ) - \int_{\R^d} \phi^k \d \mu^{\mathrm{ini}}, \qquad \phi^k_T ( x_{[0,T]} ) := \phi^k ( x_T ) - \int_{\R^d} \phi^k \d \mu^{\mathrm{fin}}. \] 
We finally set $(\zeta_s )_{s \in \S} := (\phi^k_0)_{k\geq 1} \cup (\phi^k_T)_{k\geq 1}$. The details of the proof are postponed to Section~\ref{ss:pf-schr}.

Compared to Theorem \ref{thm:abstractGibbs}, a major improvement of Theorem~\ref{thm:ConsSchröd} is that $\overline{\mu}_{[0,T]} \sim \nu_{[0,T]}$, see Remark \ref{rem:equivalence}. 

\begin{rem}[Multi-marginal constraints and support constraints]
We could easily adapt the above result
to impose marginal laws at arbitrarily many instants.
We could also consider time-dependent constraints $\Psi ( t, \mu_t )$.
\end{rem}

\subsection{Gibbs principle for diffusion processes} \label{sec:Gibbsdif}

In this section, we focus on the Gibbs principle for i.i.d. diffusion processes under constraints of the form~\eqref{eq:constr-time-marg} on the empirical distribution.

\begin{assumptionc}[On the reference SDE]\label{ass-c:sde} 
The functions $b : [0,T] \times \R^d \rightarrow \R^d$, $\sigma : [0,T] \times \R^d \rightarrow \R^{d \times d}$ are measurable and locally bounded, $\nu_0 \in \ps ( \R^d )$, and in addition: 
    \begin{enumerate}[label=(\roman*),ref=\roman*]
        \item\label{it:ass-c-sde:1} uniformly in $t \in [0,T]$, $x \mapsto b_t(x)$ and $x \mapsto \sigma_t(x)$ are Lipschitz continuous;
        \item\label{it:ass-c-sde:2} there exists $M_\sigma \geq 0$ such that $|\sigma_t(x)| \leq M_\sigma$ for all $(t,x) \in [0,T] \times \R^d$, and $t \mapsto \sigma_t (x)$ is locally Hölder-continuous;
        \item\label{it:ass-c-sde:3} for all $\alpha > 0$, $\int_{\R^d} e^{\alpha |x|^p}\nu_0(\d x) < \infty$.
    \end{enumerate}
\end{assumptionc}
Under~\ref{ass-c:sde}, for a given $d$-dimensional Brownian motion $(B_t)_{t \in [0,T]}$, it is standard that the SDE
\begin{equation}\label{eq:sde}
    \d X_t = b_t ( X_t ) \d t + \sigma_t (X_t) \d B_t, \qquad X_0 \sim \nu_0,
\end{equation}
has a pathwise unique strong solution $X_{[0,T]}$, whose law in $C([0,T],\R^d)$ is denoted by $\nu_{[0,T]}$. Moreover, if $p<2$ then this measure satisfies~\eqref{eq:exp-integ-proc}. 

As a consequence, the law $\Pi^N$ of the empirical distribution of $N$ iid copies of the SDE~\eqref{eq:sde} satisfies a LDP on $\ps_p(C([0,T],\R^d))$, with good rate function given by $H(\mu_{[0,T]}|\nu_{[0,T]})$~\cite{wang2010sanov}. Thus, if $\Psi$ is such that the functions $(\Psi_t)_{t \in [0,T]}$ satisfy the assumptions of Theorem~\ref{thm:AbsLDP}, the conditional law $\Pi^N(\cdot|\Adineq)$ satisfies a LDP with good rate function given by 
\[
    H(\mu_{[0,T]}|\nu_{[0,T]}) - \inf_{\mu'_{[0,T]}\in \AdF \cap \Adineq} H(\mu'_{[0,T]}|\nu_{[0,T]})
\] 
on $\AdF \cap \Adineq$, and as soon as this good rate function has a unique minimiser $\overline\mu_{[0,T]}$, $\Pi^N(\cdot|\Adineq)$ converges weakly to $\delta_{\overline\mu_{[0,T]}}$. For such a minimiser, if $\Psi$ satisfies~\ref{ass-b:regPsi-barmu}, Theorem~\ref{thm:abstractGibbs} yields the existence of $\overline \lambda \in \M_+([0,T])$ such that
\[
    \frac{\d \overline\mu_{[0,T]}}{\d \nu_{[0,T]}} \bigl( x_{[0,T]} \bigr) = \frac{1}{\overline{Z}} 
        \exp \biggl[ - \int_{[0,T]} \frac{\delta\Psi}{\delta \mu}(\overline\mu_t,x_t) \overline{\lambda}( \d t) \biggr], \qquad \overline Z \in (0,+\infty).
\]
Starting from this expression, the goal of the present section is to express $\overline\mu_{[0,T]}$ as the law of the solution $\overline X_{[0,T]}$ to some SDE of the form
\begin{equation}\label{eq:sde-bar}
    \d \overline X_t = \overline b_t (\overline X_t ) \d t + \sigma_t (\overline X_t) \d B_t, \qquad \overline X_0 \sim \overline \mu_0,
\end{equation}
and to express the drift coefficient $\overline b_t$ and the probability measure $\overline \mu_0$ in terms of the original SDE~\eqref{eq:sde}. We shall actually work in the more general setting in which there exist measurable functions $c : [0,T] \times \R^d \to \R$ and $\psi : [0,T] \times \R^d \to \R$, and a positive Radon measure $\lambda \in \M_+([0,T])$, such that the measure $\overline\mu_{[0,T]}$ satisfies
\begin{equation}\label{eq:overline-mu-proc}
    \frac{\d \overline\mu_{[0,T]}}{\d \nu_{[0,T]}} \bigl( x_{[0,T]} \bigr) = \frac{1}{Z} 
        \exp \biggl[ - \int_0^T c_t(x_t)\d t - \int_{[0,T]} \psi_t(x_t) \lambda( \d t) \biggr], \qquad Z \in (0,+\infty),
\end{equation}
which would thus allow us to take a nonzero interaction functional $\F$ into account. We postpone the detail of our assumptions on $c$ and $\psi$ to~\ref{ass-c:hjb} below, and first provide an informal sketch of our argument. 

Let us assume that $\lambda$ 
has a density $(\lambda_t)_{0 \leq t \leq T}$ w.r.t. to the Lebesgue measure.
Under very weak regularity assumptions,
\cite[Theorem 5.24]{leonard2022feynman} shows that $\overline{\mu}_{[0,T]}$ is the law of the solution to~\eqref{eq:sde-bar}, with
\[
    \overline b_t(x) = b_t(x) + a_t(x) \alpha_t(x), 
\]
for $a_t := \sigma_t \sigma^\top_t$ and a measurable $\alpha : [0,T] \times \R^d \rightarrow \R^d$.
Moreover, 
\[ \alpha_t = - \nabla \varphi_t, \qquad \frac{\d \overline{\mu}_0}{\d \nu_0} (x) = Z^{-1} e^{-\varphi_0(x)}, \]
where $\varphi$ is the solution to the Hamilton-Jacobi equation (HJB) 
\begin{equation} \label{eq:LeoHJB}
\partial_t \varphi_t + b_t \cdot \nabla \varphi_t - \frac{1}{2} \vert \sigma_t^\top \nabla \varphi_t \vert^2 + \frac{1}{2} \mathrm{Tr}[a_t \nabla^2 \varphi_t ] = -c_t - \psi_t \lambda_t,
\qquad \varphi_T = 0, 
\end{equation}
in a generalised sense.
Formally, the Feynman-Kac formula tells that the solution of \eqref{eq:LeoHJB} can be represented as \begin{equation*} 
\varphi_t (x) := - \log \E \exp \bigg[ \int_t^T  c_s (Z^{t,x}_s) \d s + \int_{[t,T]} \psi_s (Z^{t,x}_s) \lambda (\d s) \bigg], 
\end{equation*} 
where $( Z^{t,x}_s )_{t \leq s \leq T}$ is the solution of $\d Z^{t,x}_s = b_s ( Z^{t,x}_s ) \d s + \sigma_s ( Z^{t,x}_s ) \d B_s $, $t \leq s \leq T$,
with $Z^{t,x}_t=x$.
We now prove an analogous of this result in a more classical sense, and without assuming existence of a density for $\lambda$.

\begin{assumptionc}[Regularity of the coefficients of~\eqref{eq:LeoHJB}]\label{ass-c:hjb}
    Assumption~\ref{ass-c:sde} holds with $p=1$, $\psi$, $c : [0,T] \times \R^d \rightarrow \R$ are measurable and locally bounded, and in addition:
    \begin{enumerate}[label=(\roman*),ref=\roman*]
        \item\label{it:ass-c-hjb:1} the function $(t,x) \mapsto \psi_t (x)$ is continuous;
        \item\label{it:ass-c-hjb:2} uniformly in $t \in [0,T]$, $c_t$ and $\psi_t$ are Lipschitz-continuous, and $\sigma_t^{-1}$ is bounded;
        \item\label{it:ass-c-hjb:3} the families $( x \mapsto \nabla b_t (x) )_{0 \leq t \leq T}$, $( x \mapsto \nabla \sigma_t (x) )_{0 \leq t \leq T}$, $( x \mapsto \nabla c_t (x) )_{0 \leq t \leq T}$, and $( x \mapsto \nabla \psi_t (x) )_{0 \leq t \leq T}$ are equi-continuous. 
    \end{enumerate}
\end{assumptionc}

We now fix a positive Radon measure $\lambda \in \M_+ ([0,T])$.
To make sense of the analogous of \eqref{eq:LeoHJB} when $\lambda$ may not have a density, we rely on the integrated version 
\begin{equation} \label{eq:HJBlim} 
- \varphi_t + \int_t^T \left(b_s \cdot \nabla \varphi_s - \frac{1}{2} \vert \sigma_s^\top \nabla \varphi_s \vert^2 + \frac{1}{2} \mathrm{Tr}[a_s \nabla^2 \varphi_s] + c_s\right) \d s
+ \int_{[t,T]} \psi_s \lambda ( \d s) = 0.
\end{equation}
If $\lambda(\{t \}) \neq 0$, an arbitrary choice has been made when considering integrals over $[t,T]$ rather than $(t,T]$.
However, the set of atoms of $\lambda$ is at most countable; hence the choice of the interval does not matter if we only require equality Lebesgue-a.e. 
For approximation and stability purposes, we introduce a specific notion of solution, which relies on the following construction.

Under the assumptions made on $\sigma$ in~\ref{ass-c:sde}-\ref{ass-c:hjb}, a consequence of \cite[Theorem 2.1]{rubio2011existence} is that for any $0 < s \leq T$ and any continuous $\varphi : \R^d \rightarrow \R$ with linear growth, the parabolic equation
\[
\begin{cases}
\partial_t \varphi_t + \tfrac{1}{2} \mathrm{Tr} \big[ a_t \nabla^2 \varphi_t \big] = 0, \quad 0 \leq t \leq s, \\
\varphi_s = \varphi,
\end{cases}
\]
has a unique solution $\varphi \in C([0,s] \times \R^d) \cap C^{1,2}((0,s) \times \R^d)$ with linear growth. 
From this, we define the evolution system $(S_{t,s})_{0\leq t\leq s\leq T}$ by
\[ S_{t,s} [ \varphi ] (x) = \varphi_t (x), \]
for any continuous $\varphi : \R^d \rightarrow \R$ with linear growth, $S_{t,s} [ \varphi ]$ being a $C^2$ function with linear growth as soon as $t < s$.

\begin{definition} \label{def:MildForm}
We say that a measurable $\varphi : [0,T] \times \R^d \rightarrow \R$ is a \emph{mild} solution of \eqref{eq:HJBlim}
if for Lebesgue-a.e. $t \in [0,T]$, $x \mapsto \varphi_t (x)$ is $C^1$, $(t,x) \mapsto \nabla \varphi_t (x)$ is bounded measurable, and for a.e. $t \in [0,T]$,
\begin{equation} \label{eq:MildForm}
\varphi_t = \int_t^T S_{t,s} \big[ b_s \cdot \nabla \varphi_s - \tfrac{1}{2} \lvert \sigma_s^\top \nabla \varphi_s \rvert^2 + c_s \big] \d s
+ \int_{[t,T]} S_{t,s} [ \psi_s ] \lambda (\d s).
\end{equation} 
This implies that $x \mapsto \varphi_t (x)$ is $C^2$ for Lebesgue-a.e. $t$.
\end{definition}

The above definition is a natural extension of the Duhamel formula for perturbations of a linear PDE, which was already used in \cite{daudin2023optimal}.
Lebesgue-almost sure uniqueness always holds for \eqref{eq:HJBlim} in the sense of Definition \ref{def:MildForm} because the difference of two solutions solves a classical linear parabolic equation without source term.

\begin{theorem} \label{thm:LinkPath}
Under~\ref{ass-c:hjb}, let $\overline\mu_{[0,T]}$ be given by~\eqref{eq:overline-mu-proc}.
Then, $\overline{\mu}_{[0,T]}$ is the path-law of a solution to the SDE
\[ \d \overline{X}_t = b_t ( \overline{X}_t ) \d t - a_t(\overline{X}_t) \nabla \varphi_t (\overline{X}_t) \d t + \sigma_t ( \overline{X}_t ) \d B_t, \quad \overline{X}_0 \sim {Z}^{-1} e^{-\varphi_0(x)} \nu_0 ( \d x), \]
where $\varphi_t$ is given by
\begin{equation} \label{eq:RepFK} \varphi_t (x) := - \log \E \exp \bigg[ \int_t^T  c_s (Z^{t,x}_s) \d s + \int_{[t,T]} \psi_s (Z^{t,x}_s) \lambda (\d s) \bigg], 
\end{equation} 
where $( Z^{t,x}_s )_{t \leq s \leq T}$ is the path-wise unique solution to
\begin{equation} \label{eq:Ztx}
\d Z^{t,x}_s = b_s ( Z^{t,x}_s ) \d s + \sigma_s ( Z^{t,x}_s ) \d B_s, \quad t \leq s \leq T,  
\end{equation} 
with $Z^{t,x}_t=x$.
Moreover, $\varphi$ is the mild solution of \eqref{eq:HJBlim} in the sense of Definition \ref{def:MildForm}. 
\end{theorem}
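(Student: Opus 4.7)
The plan is to define $\varphi$ directly by the Feynman--Kac formula~\eqref{eq:RepFK}, establish its regularity and the mild equation~\eqref{eq:MildForm} by smoothing $\lambda$ in time, and then identify the path-law of $\overline X_{[0,T]}$ with $\overline\mu_{[0,T]}$ through a Girsanov--Itô computation carried out at the regularized level and then passed to the limit.

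\textbf{Regularity of $\varphi$ and the mild equation.} Under~\ref{ass-c:hjb}, the SDE~\eqref{eq:Ztx} admits a stochastic flow that is $C^1$ in $x$ with a Jacobian bounded uniformly in $t \leq s \leq T$; differentiating~\eqref{eq:RepFK} under the expectation, using the boundedness and Lipschitz regularity of $c$ and $\psi$, yields that $\varphi_t(x)$ is $C^1$ in $x$ with a gradient bounded uniformly in $(t,x)$, and that $(t,x) \mapsto \varphi_t(x)$ is continuous. To obtain the mild form, I mollify $\lambda$ in time to produce absolutely continuous measures $\lambda^\varepsilon(\d t) = \rho_t^\varepsilon \d t$ with $\lambda^\varepsilon \to \lambda$ weakly and uniformly bounded total mass. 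Let $\varphi^\varepsilon$ be defined by~\eqref{eq:RepFK} with $\lambda^\varepsilon$ in place of $\lambda$. For each $\varepsilon$, \cite[Theorem~5.24]{leonard2022feynman} (or direct parabolic theory, thanks to the Hölder and Lipschitz assumptions in~\ref{ass-c:hjb}) yields that $\varphi^\varepsilon$ is a classical $C^{1,2}$ solution of the HJB equation~\eqref{eq:LeoHJB} with source $c + \psi\rho^\varepsilon$ and terminal value zero, hence solves~\eqref{eq:MildForm} with $\lambda^\varepsilon$ in place of $\lambda$. Dominated convergence in~\eqref{eq:RepFK} and in the explicit expression for $\nabla\varphi^\varepsilon$ gives $\varphi^\varepsilon_t(x) \to \varphi_t(x)$ and $\nabla\varphi^\varepsilon_t(x) \to \nabla\varphi_t(x)$ pointwise with uniform bounds. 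Passing to the limit in the mild equation, using continuity of $s \mapsto S_{t,s}[\psi_s](x)$ and weak convergence of $\lambda^\varepsilon$, yields~\eqref{eq:MildForm} for $\varphi$ at every $t$ outside the (at most countable) set of atoms of $\lambda$.

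\textbf{Girsanov identification.} Since $\nabla\varphi$ is bounded, the SDE for $\overline X$ has a pathwise unique strong solution under~\ref{ass-c:sde}. Denote by $\widetilde\mu_{[0,T]}$ its path-law with $\overline X_0 \sim Z^{-1} e^{-\varphi_0}\nu_0$; I want $\widetilde\mu_{[0,T]} = \overline\mu_{[0,T]}$. Because $\sigma^{-1}$ is bounded by~\ref{ass-c:hjb}-\eqref{it:ass-c-hjb:2} and $\nabla\varphi$ is bounded, a standard decomposition (change of initial law times Girsanov) gives
\[ \frac{\d\widetilde\mu_{[0,T]}}{\d\nu_{[0,T]}}(x_{[0,T]}) = \frac{e^{-\varphi_0(x_0)}}{Z} \exp\bigg[-\int_0^T \nabla\varphi_t(x_t)^\top \sigma_t(x_t)\, \d W_t - \tfrac{1}{2}\int_0^T |\sigma_t^\top\nabla\varphi_t|^2(x_t)\,\d t\bigg], \]
where $W$ is the $\nu_{[0,T]}$-Brownian motion defined by $\d x_t = b_t(x_t)\d t + \sigma_t(x_t)\d W_t$. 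Applying Itô's formula to $\varphi^\varepsilon_t(x_t)$, valid since $\varphi^\varepsilon$ is classical, and using the terminal condition $\varphi^\varepsilon_T=0$ together with the HJB equation for $\varphi^\varepsilon$, turns the exponent corresponding to $\varphi^\varepsilon$ into $\varphi^\varepsilon_0(x_0) - \int_0^T c_t(x_t)\d t - \int_0^T \psi_t(x_t)\rho^\varepsilon_t\,\d t$. Passing to the limit $\varepsilon \to 0$ (using uniform boundedness of $\nabla\varphi^\varepsilon$, pointwise convergence, and weak convergence of $\lambda^\varepsilon$ against the continuous map $t \mapsto \psi_t(x_t)$) shows that the Girsanov exponent for $\varphi$ equals $\varphi_0(x_0) - \int_0^T c_t(x_t)\d t - \int_{[0,T]}\psi_t(x_t)\,\lambda(\d t)$, which combined with the prefactor matches exactly~\eqref{eq:overline-mu-proc}. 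Hence $\widetilde\mu_{[0,T]} = \overline\mu_{[0,T]}$.

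\textbf{Main obstacle.} The critical difficulty is that Itô's formula cannot be applied directly to $\varphi$, which is only $C^1$ in $x$ for Lebesgue-a.e. $t$ and satisfies the HJB only in the mild sense of Definition~\ref{def:MildForm}, and moreover $\lambda$ may carry atoms. The regularization by $\lambda^\varepsilon$ resolves both issues: all calculus is performed on the classical objects $\varphi^\varepsilon$, and the transition to $\varphi$ rests on stability of Girsanov densities under uniformly bounded $L^2$-convergent drift perturbations (equivalently, on the exponential martingale convergence implied by the uniform bound on $\nabla\varphi^\varepsilon$). The atoms of $\lambda$ only affect a countable set of times and are thus invisible at the mild-equation level.
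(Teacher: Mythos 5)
Your strategy — Feynman--Kac representation for $\varphi$, smoothing in time, Itô/Girsanov on the regularized problem, and passing to the limit — is the same high-level route as the paper's proof, which runs through Proposition~\ref{pro:wpHJB}, Lemma~\ref{lem:thmIFCV}, an approximation lemma, and Lemma~\ref{lem:mfRegGrad}. However, two of your steps conceal genuine gaps.

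First, mollifying $\lambda$ alone does not make $\varphi^\varepsilon$ a classical $C^{1,2}$ solution. Under~\ref{ass-c:hjb} the functions $b$ and $c$ are only \emph{measurable} in time (see~\ref{ass-c:sde} and the preamble of~\ref{ass-c:hjb}); Schauder-type parabolic theory requires time-H\"older regularity of all coefficients, and the Cole--Hopf reduction still produces a linear parabolic equation whose drift is $b$ and whose potential is $c + \psi\rho^\varepsilon$, both of which remain merely measurable in $t$. The paper therefore mollifies $b$, $\sigma$, $c$ and $\psi$ in time as well, not just $\lambda$, before applying the classical well-posedness result for the approximated HJB (\cite[Theorem~2.2]{chaintron2023existence}); without this, the Itô computation on $\varphi^\varepsilon_t(x_t)$ — the crux of your Girsanov step — is not justified.

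Second, your claim that ``dominated convergence in the explicit expression for $\nabla\varphi^\varepsilon$ gives $\nabla\varphi^\varepsilon_t(x)\to\nabla\varphi_t(x)$'' presupposes that the pointwise limit is continuous in $x$ and equals the gradient of the limit $\varphi_t$. A pointwise, uniformly bounded limit of gradients a priori only yields a Lipschitz function with a measurable a.e.\ derivative; to conclude $\varphi_t\in C^1$ (indeed $C^2$, which the mild form requires), one needs equi-continuity of $(\nabla\varphi^\varepsilon_t)_\varepsilon$ on compacts. This is exactly what the paper establishes in Lemma~\ref{lem:mfRegGrad} by differentiating the stochastic flow $x\mapsto Z^{t,x}_s$ and running Gronwall/Burkholder--Davis--Gundy estimates under the equi-continuity hypotheses~\ref{ass-c:hjb}-\eqref{it:ass-c-hjb:3} on $\nabla b_t$, $\nabla\sigma_t$, $\nabla c_t$, $\nabla\psi_t$. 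That lemma is the technical heart of the argument; as written, your proposal replaces it with an assertion.

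Both gaps are closable by the paper's construction, so the plan is sound, but as submitted the regularity of $\varphi^\varepsilon$ and the identification of the gradient limit are unproven.
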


Theorem \ref{thm:LinkPath} is proved in Section~\ref{ss:pf-LinkPath}. 

\begin{rem}
If one wants to apply Theorem~\ref{thm:LinkPath} with $\psi_t(x) = \tfrac{\delta \Psi}{\delta \mu}(\overline\mu_t,x)$ in order to describe the measure $\overline\mu_{[0,T]}$ obtained by Theorem~\ref{thm:abstractGibbs}, with a function $\Psi$ that satisfies~\ref{ass-b:regPsi-barmu}, then the global Lipschitz assumption made on $\psi_t$ in~\ref{ass-c:hjb} implies that $\tfrac{\delta \Psi}{\delta \mu}(\overline\mu_t,x)$ must grow at most linearly in $x$. In the case of a linear constraint, where $\Psi(\mu) = \langle \mu, \psi\rangle$ for some function $\psi : \R^d \to \R$, this implies that $\psi$ must grow at most linearly in $x$. In this case, and if $\psi$ is continuous, then~\ref{ass-b:regPsi-barmu} is satisfied already for $p=1$. Therefore, there is no need to require~\ref{ass-c:sde} to hold with some $p>1$ in the statement of~\ref{ass-c:hjb}.
\end{rem}

\subsection{Gaussian examples with inequality constraint on the expectation}\label{ss:brown-expl}

In this section, we provide examples of one-dimensional diffusion processes $X_{[0,T]}$, for which the results of Theorems~\ref{thm:AbsLDP} and~\ref{thm:abstractGibbs} hold, with the linear inequality constraint $\Psi(\mu_{[0,T]}) = \langle \mu_t, \psi\rangle$, where we fix $\psi(x)=x$. In other words, we consider cases where the conditional distribution of $X^{1,N}_{[0,T]}$ given the event
\[
    \forall t \in [0,T], \qquad \frac{1}{N}\sum_{i=1}^N X^{i,N}_t \leq 0
\]
converges as $N \rightarrow +\infty$ towards the measure $\overline\mu_{[0,T]}$ with density w.r.t. to $\nu_{[0,T]} := \L (X_{[0,T]})$ given by
\[
    \frac{\dd \overline\mu_{[0,T]}}{\dd \nu_{[0,T]}}(x_{[0,T]}) = \frac{1}{\overline{Z}} \exp\left[-\int_{[0,T]} x_t\overline\lambda(\d t)\right],
\]
for some Lagrange multiplier $\overline\lambda \in \M_+([0,T])$. 
Then by Theorem~\ref{thm:LinkPath}, $\overline\mu_{[0,T]}$ is the law of a \emph{corrected} diffusion process $\overline X_{[0,T]}$ with modified drift and tilted initial condition. 
On our examples, we are able to give \emph{explicit} formulas for these corrections, as well as for $\overline\lambda$.

\subsubsection{Time-inhomogeneous drifted Brownian motion} \label{sssec:diftedInhomo}

Let $m : [0,T] \rightarrow \R$ be a $C^2$ function such that $m(0) =0$, $\dot{m} \geq 0$ and $\ddot{m} \leq 0$, where $\dot{m} := \tfrac{\d}{\d t} m$. 
Let $\nu_{[0,T]}$ be the path-law of the Gaussian process given by
\[ \d X_t = \dot{m}(t) \d t + \d B_t, \quad X_0 \sim \mathcal{N}(x_0,\sigma^2). \] 
To avoid situations in which $\nu_{[0,T]}$ already satisfies the constraints, we assume that $x_0 + m(T) > 0$ and $\sigma >0$.
The corrected process given by Theorems \ref{thm:abstractGibbs}-\ref{thm:LinkPath} shall be the solution to
\begin{equation} \label{eq:correctDrift}
\d \overline{X}_t = \dot{m}(t) \d t - \nabla \varphi_t (\overline{X}_t) \d t + \d B_t, \quad \overline{X}_0 \sim Z^{-1} e^{-\varphi_0 (x)} \nu_0 ( \d x), 
\end{equation}
where $\varphi$ solves the HJB equation \eqref{eq:HJBlim}.
In the current simple case, we can look for solutions of \eqref{eq:HJBlim} with $\nabla \varphi_t$ independent of $x$.
Several options appear, depending on the function
\[ \overline{m} : t \mapsto x_0 + m(t) - [ \sigma^2 +t ] \dot{m}(t). \]
Our assumptions on $m$ make $\overline{m}$ non-decreasing.

\begin{enumerate}
    \item \underline{If $\overline{m}(T) < 0$:} we can verify that
    \[ \overline{\lambda}(\d t) = \frac{x_0 + m ( T )}{\sigma^2 + T} \delta_T ( \d t), \quad \nabla \varphi_t (x) = \frac{x_0 + m ( T )}{\sigma^2 + T}, \]
    satisfy the HJB equation \eqref{eq:HJBlim}, and that the process $\overline{X}_{[0,T]}$ given by \eqref{eq:correctDrift} satisfies $\overline{X}_0 \sim \mathcal{N}(\tfrac{T}{T+\sigma^2}(x_0-\sigma^2 \dot{m}(T)),\sigma^2)$, $\E[ \overline{X}_t ] < 0$  for $t \in [0,T)$, and $\E [ \overline{X}_T ] = 0$. 
    From Theorem \ref{thm:LinkPath} and the sufficient condition given by Theorem \ref{thm:convex-case}, this shows that $\L ( \overline{X}_{[0,T]} )$ is indeed the corrected law $\overline{\mu}_{[0,T]}$.
    Both the velocity and the initial condition are tilted by the effect of the multiplier at the final time.
    \item \underline{If $\overline{m}(0) \leq 0 \leq \overline{m}(T)$:} let $\overline\tau$ be the first time at which $\overline{m}$ vanishes. 
    We similarly obtain a solution of \eqref{eq:HJBlim} by considering
    \[ \overline{\lambda}(\d t) = - \mathbbm{1}_{[\overline\tau,T]} \ddot{m}(t) \d t + \dot{m}(T) \delta_T ( \d t), \quad \nabla \varphi_t (x) = \begin{cases}
    \dot{m}(\overline\tau), \quad \text{if  } t \leq \overline\tau,   \\
    \dot{m}(t), \quad \text{if  } t > \overline\tau.
    \end{cases} \]
    The related process $\overline{X}_{[0,T]}$ satisfies $\overline{X}_0 \sim \mathcal{N}(x_0 - \sigma^2 \dot{m}(\tau),\sigma^2)$, $\E[\overline{X}_t] < 0$ for $t \in [0,\overline\tau)$, and $\E[\overline{X}_t] = 0$ for $t \in [\overline\tau,T]$.
    As previously, this shows that $\L ( \overline{X}_{[0,T]} ) = \overline{\mu}_{[0,T]}$.
    Both the velocity and the initial condition are tilted.
    The continuous part of the multiplier activates at the first time where $\E [ \overline{X}_t ]$ vanishes.
    Its effect is to decrease the correction $\nabla \varphi_t (x)$ of the velocity, which is generated by the atom at the terminal time.
    The process is corrected at the second order in time (the acceleration is modified), as it is customary in control theory. 
    \item \underline{If $\overline{m}(0)> 0$:} similarly, we verify that the corrected process is given by \eqref{eq:correctDrift} with
    \[ \overline{\lambda}(\d t) =  \frac{x_0 - \sigma^2 \dot{m}(0)}{\sigma^2} \delta_0 ( \d t) - \ddot{m}(t) \d t + \dot{m}(T) \delta_T ( \d t), \quad \nabla \varphi_t (x) = 
    \begin{cases}
    \tfrac{x_0}{\sigma^2}, \quad \text{at  } t =0, \\
    \dot{m}(t), \quad \text{if  } t > 0,
    \end{cases} \]
    together with $\overline{X}_0 \sim \mathcal{N}(0,\sigma^2)$.
    The corrected process now satisfies $\E [ \overline{X}_t ] = 0$ at each time.
    The atom at time $0$ generates a discontinuity that pushes the initial law on the constraint before starting the dynamics.
\end{enumerate}
The transition between the different cases is continuous. 
Interestingly, $\overline\lambda$ having a density on $(0,T)$ is directly related to the (second order) regularity of the input.
If we decrease the $C^2$ regularity of $m$, other atoms may appear within $\overline\lambda$.

\subsubsection{Ornstein-Uhlenbeck process} \label{sssec:ornsetin}

We now assume that $\nu_{[0,T]}$ is the path-law of the Gaussian process given by
\[ \d X_t = ( 1 - X_t ) \d t + \d B_t, \quad X_0 \sim \mathcal{N}(x_0,\sigma^2). \] 
To avoid situations in which $\nu_{[0,T]}$ already satisfies the constraints, we assume that $1 + (x_0 -1) e^{-T} > 0$ and $\sigma >0$. 
The corrected process shall still be the solution of
\eqref{eq:correctDrift} with $\varphi$ solving the HJB equation \eqref{eq:HJBlim}.
In this simple case, we can still look for affine solutions of \eqref{eq:HJBlim} with $\nabla \varphi_t$ independent of $x$.
As in Section \ref{sssec:diftedInhomo}, several options appear, now depending on the functions
\[ m_{\tau,\lambda} : t \in \R_{\geq 0} \mapsto 1 + (x_0 - 1 - \sigma^2 \lambda e^{-\tau} ) e^{-t} - \lambda e^{-\tau} \sinh ( t ), \]
for $\tau \geq 0$ and $\lambda \in [0,1]$.
Since $e^{2t} m_{t,1} (t)$ is a second-order polynomial in $e^t$, we easily get that $x_0 \leq \sigma^2$ implies the existence of a unique $\overline\tau > 0$ such that $m_{\overline{\tau},1} (\overline\tau) = 0$.
Using that $\cosh ( \overline\tau ) \geq 0$, this implies that $x_0 - 1 - \sigma^2 e^{-\overline\tau} \leq 0$. 
We can then check that $t \mapsto m_{\overline{\tau},1} (t)$ is increasing on $[0,\overline{\tau}]$.

\begin{enumerate}
    \item \underline{If $x_0 \leq \sigma^2$ and $T < \overline{\tau}$:} from $m_{\overline{\tau},1} (\overline\tau) = 0$, we get $m_{\overline{\tau},1} (T) < 0$. 
    Since $T < \overline{\tau}$, this implies that $m_{T,1} ( T ) < 0$. 
    Since we assumed that $m_{T,0} (T) > 0$, there exists a unique $\lambda_T \in (0,1)$ such that $m_{T,\lambda_T} ( T ) = 0$.
    Since $e^{t} m_{T,\lambda_T} (t)$ is a second-order polynomial in $e^t$, we easily deduce that $m_{T,\lambda_T} (t) < 0$ for every $t \in [0,T)$.  
    At this stage, we can verify that 
    \[ \overline{\lambda}(\d t) =  \lambda_T \delta_T ( \d t), \quad \nabla \varphi_t (x) = \lambda_T e^{t-T}, \]
    satisfy the HJB equation \eqref{eq:HJBlim}. 
    The process $\overline{X}_{[0,T]}$ given by \eqref{eq:correctDrift} then satisfies $\overline{X}_0 \sim \mathcal{N}(x_0 - \lambda_T \sigma^2 e^{-T},\sigma^2)$, and $\E [ \overline{X}_t ] = m_{T,\lambda_T} (t)$ for every $t \in [0,T]$.
    From Theorem \ref{thm:LinkPath} and the sufficient condition given by Theorem \ref{thm:convex-case}, this shows that $\L ( \overline{X}_{[0,T]} )$ is indeed the corrected law $\overline{\mu}_{[0,T]}$.
    This situation is similar to the first case in Section \ref{sssec:diftedInhomo}.
    \item \underline{$x_0 \leq \sigma^2$ and $\overline{\tau} \leq T$:} we obtain a solution of \eqref{eq:HJBlim} by considering
    \[ \overline{\lambda}(\d t) = \mathbbm{1}_{t \in [\tau,T]} \d t + \delta_T ( \d t), \quad \nabla \varphi_t (x) = \begin{cases}
    e^{t-\tau}, \quad \text{if  } t \leq \tau,   \\
    1, \quad \text{otherwise.}
    \end{cases} \]
    The related process $\overline{X}_{[0,T]}$ satisfies $\overline{X}_0 \sim \mathcal{N}(x_0 - \sigma^2 e^{-\tau},\sigma^2)$, $\E [ \overline{X}_t ] = m_{\overline{\tau},1} (t)$ for $t \in [0,\overline{\tau}]$, and $\E [ \overline{X}_t ] = 0$ for $t \in [\overline{\tau},T]$.
    As previously, this shows that $\L ( \overline{X}_{[0,T]} ) = \overline{\mu}_{[0,T]}$.
    The continuous part of the multiplier activates at the first time where $\E [ \overline{X}_t ]$ vanishes.
    Its effect is to decrease the correction $\nabla \varphi_t (x)$ of the velocity, which is generated by the atom at the terminal time.
    \item \underline{If $x_0 > \sigma^2$:} similarly, we verify that the corrected process is given by \eqref{eq:correctDrift} with
    \[ \overline{\lambda}(\d t) = \frac{x_0 - \sigma^2}{\sigma^2} \delta_0 ( \d t) - \d t + \delta_T ( \d t), \quad \nabla \varphi_t \equiv 1 \text{  for } t >0, \quad  \overline{X}_0 \sim \mathcal{N}(x_0,\sigma^2). \]
    As previously, the atom at time $0$ generates a discontinuity that pushes the initial law on the constraint before starting the dynamics.
    Then, the correction compensates the $+1$ component of the velocity, so that $\E [ \overline{X}_t ] = 0$ for every $t \in [0,T]$. 
\end{enumerate}
The transition between the different cases is continuous. 
Past the first vanishing of $\E [ \overline{X}_t ]$, the corrected process corresponds to a shifted Ornstein-Uhlenbeck process, whose new stationary distribution is $\mathcal{N}(0,\sigma^2)$.
Interestingly, Sections \ref{sssec:diftedInhomo}-\ref{sssec:ornsetin} show that quite different behaviours can happen with specific transitions.

\section{Proofs of the results of Section~\ref{s:abstract-results}} \label{s:proofAbs}
\subsection{Proofs of the results of Section~\ref{ss:LDP}}\label{ss:proofs-LDP}

The proof of Theorem~\ref{thm:AbsLDP} relies on the following preliminary lemma.

\begin{lemma}[Continuity sets] \label{lem:ContSet}
Under the assumptions of Theorem~\ref{thm:AbsLDP}, for any open set $U$ in $\ps_\phi(E)$,
\[ \inf_{\mu \in U \cap \mathring{\Adineq}} \mathcal{I}(\mu) = \inf_{\mu \in U \cap \Adineq} \mathcal{I}(\mu). \]    
\end{lemma}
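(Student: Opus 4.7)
The inequality $\inf_{U \cap \mathring{\Adineq}} \mathcal{I} \geq \inf_{U \cap \Adineq} \mathcal{I}$ is immediate because $\mathring{\Adineq} \subset \Adineq$, so the whole content of the lemma is the reverse inequality. My plan is to show that every $\mu \in U \cap \Adineq$ with $\mathcal{I}(\mu)<\infty$ can be approximated, along a straight line in $\ps_\phi(E)$, by points in $U \cap \mathring{\Adineq}$ whose $\mathcal{I}$-values converge to $\mathcal{I}(\mu)$ from above. (If no such $\mu$ exists, the right-hand infimum is $+\infty$ and there is nothing to prove.)

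Fix such a $\mu$. Since $\mathcal{I}(\mu) < \infty$ we have $\mu \in \AdF \cap \Adineq$, so the qualification assumption~\ref{ass:constr-qual-ldp} supplies some $\tilde\mu \in \AdF$, and I set $\mu_\varepsilon := (1-\varepsilon)\mu + \varepsilon \tilde\mu$. The convex combination $\mu_\varepsilon$ converges to $\mu$ in $\ps_\phi(E)$ as $\varepsilon \to 0$: for any bounded continuous $\varphi$ and for $\phi$ itself, $\langle \mu_\varepsilon,\varphi\rangle = (1-\varepsilon)\langle\mu,\varphi\rangle + \varepsilon\langle\tilde\mu,\varphi\rangle$ converges to $\langle \mu,\varphi\rangle$. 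Since $U$ is open, $\mu_\varepsilon \in U$ for all sufficiently small $\varepsilon$.

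The crux is to check that $\mu_\varepsilon$ lies in the interior of $\Adineq$ for small $\varepsilon$, and that $\limsup_{\varepsilon \to 0} \mathcal{I}(\mu_\varepsilon) \leq \mathcal{I}(\mu)$. For the first point, \ref{ass:constr-qual-ldp}-\eqref{ass:constr-qual-ldp:1} yields $\Psi_t(\mu_\varepsilon)<0$ for every $t \in \T$ and $\varepsilon$ small enough, i.e. $\sup_{t \in \T} \Psi_t(\mu_\varepsilon) < 0$. By~\ref{ass:psi-ldp} the map $\mu' \mapsto \sup_{t \in \T}\Psi_t(\mu')$ is continuous on $\ps_\phi(E)$, so the set $\{\mu' : \sup_{t \in \T} \Psi_t(\mu') < 0\}$ is open and contained in $\Adineq$, hence in $\mathring{\Adineq}$. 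This gives $\mu_\varepsilon \in \mathring{\Adineq}$.

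For the second point, I decompose $\mathcal{I}(\mu_\varepsilon) = H(\mu_\varepsilon|\nu) + \F(\mu_\varepsilon)$. Convexity of $H(\cdot|\nu)$ gives $H(\mu_\varepsilon|\nu) \leq (1-\varepsilon) H(\mu|\nu) + \varepsilon H(\tilde\mu|\nu)$, and $H(\tilde\mu|\nu) < \infty$ since $\tilde\mu \in \AdF$, so $\limsup_{\varepsilon \to 0} H(\mu_\varepsilon|\nu) \leq H(\mu|\nu)$. For the interaction term, \ref{ass:constr-qual-ldp}-\eqref{ass:constr-qual-ldp:2} gives $\limsup_{\varepsilon \to 0} \F(\mu_\varepsilon) \leq \F(\mu)$. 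Adding the two yields $\limsup_{\varepsilon \to 0}\mathcal{I}(\mu_\varepsilon) \leq \mathcal{I}(\mu)$, and since each $\mu_\varepsilon$ lies in $U \cap \mathring{\Adineq}$ for small $\varepsilon$, the reverse inequality follows by taking infima.

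The only real subtlety is the passage $\Psi_t(\mu_\varepsilon) < 0$ for all $t \Rightarrow \mu_\varepsilon \in \mathring{\Adineq}$: without~\ref{ass:psi-ldp} one would only get pointwise-in-$t$ strict inequality, which need not be preserved in a neighbourhood (as $\T$ is only compact, not finite). The uniform continuity provided by~\ref{ass:psi-ldp} is precisely what upgrades strict pointwise inequality to membership in the interior.
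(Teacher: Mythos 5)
Your proof is correct and follows essentially the same route as the paper's: fix $\mu \in U \cap \Adineq \cap \AdF$, use $\tilde\mu$ from~\ref{ass:constr-qual-ldp} to form $\mu_\varepsilon = (1-\varepsilon)\mu + \varepsilon\tilde\mu$, invoke~\ref{ass:psi-ldp} to pass from $\Psi_t(\mu_\varepsilon)<0$ to $\mu_\varepsilon \in \mathring{\Adineq}$, and control $\mathcal{I}(\mu_\varepsilon)$ via convexity of $H(\cdot\vert\nu)$ together with~\ref{ass:constr-qual-ldp}-\eqref{ass:constr-qual-ldp:2}. Your closing remark about why~\ref{ass:psi-ldp} is needed to upgrade pointwise strictness to interior membership is a helpful clarification of the same step the paper makes implicitly.
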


\begin{proof}
Since $U \cap \mathring{\Adineq} \subset U \cap \Adineq$, it is sufficient to show that the l.h.s. is lower than the r.h.s.
Let us fix $\mu$ in $U \cap \Adineq$. 
There is no loss of generality in assuming that $\mu \in \AdF$.
Let $\tilde\mu$ be given by the constraint qualification~\ref{ass:constr-qual-ldp}, and let $\mu_\varepsilon := (1-\varepsilon) \mu + \varepsilon \tilde\mu$.
From~\ref{ass:constr-qual-ldp}-\eqref{ass:constr-qual-ldp:1}, 
\[ \forall \t \in \T, \quad \Psi_t ( \mu_\varepsilon ) < 0, \]
for every small enough $\varepsilon >0$. 
By~\ref{ass:psi-ldp}, the mapping $\mu \in \ps_\phi(E) \mapsto \sup_{\t \in \T} \Psi_t(\mu)$ is continuous and $\mu_\varepsilon$ belongs to the pre-image of the open set $(-\infty,0)$, which is contained in $\Adineq$. 
Therefore, for every $\varepsilon$ small enough, $\mu_\varepsilon \in \mathring{\Adineq}$.
Since $\mu_\varepsilon$ converges towards $\mu$ as $\varepsilon \rightarrow 0$ and $U$ is open, we eventually get that $\mu_\varepsilon$ belongs to $U \cap \mathring{\Adineq}$ for every small enough $\varepsilon$.
Moreover, by convexity of $H$,
\[ \mathcal{I}(\mu_\varepsilon) = H( \mu_\varepsilon \vert \nu ) + \F ( \mu_\varepsilon ) \leq (1-\varepsilon) H(\mu\vert \nu) + \varepsilon H( \tilde{\mu} \vert \nu ) + \F ( \mu_\varepsilon ), \]
and~\ref{ass:constr-qual-ldp}-\eqref{ass:constr-qual-ldp:2} implies that 
\[ \limsup_{\varepsilon \rightarrow 0} H( \mu_\varepsilon \vert \nu ) + \F ( \mu_\varepsilon ) \leq H( \mu \vert \nu ) + \F ( \mu ) = \mathcal{I}(\mu), \] 
completing the proof.
\end{proof}

We may now present the proof of Theorem~\ref{thm:AbsLDP}.

\begin{proof}[Proof of Theorem \ref{thm:AbsLDP}]
For any measurable subset $A$ of $\ps_\phi(E)$, 
\[ \log \Pi^N(A \vert \Adineq) = \log \Pi^N( A \cap \Adineq ) - \log \Pi^N( \Adineq). \] 
If $A$ is closed, we write
\[ \log \Pi^N(A \vert \Adineq) \leq \log \Pi^N( A \cap \Adineq ) - \log \Pi^N( \mathring{\Adineq} ). \] 
Since, by Lemma~\ref{lem:AdClosed}, $A \cap \Adineq$ is closed and $\mathring{\Adineq}$ is open, we get that
\[ \limsup_{N \rightarrow +\infty} N^{-1} \log \Pi^N(A \vert \Adineq) \leq - \inf_{\mu \in A \cap \Adineq} \mathcal{I} ( \mu ) + \inf_{\mu \in \mathring{\Adineq}} \mathcal{I} ( \mu ), \] 
using the LDP satisfied by $( \Pi^N )_{N \geq 1}$.
Similarly, if $A$ is open,
\[ \log \Pi^N (A \vert \Adineq) \geq \log \Pi^N( A \cap \mathring{\Adineq} ) - \log \Pi^N( \Adineq ), \]
and the LDP satisfied by $( \Pi^N )_{N \geq 1}$ yields  
\[ \liminf_{N \rightarrow +\infty} N^{-1} \log \Pi^N(A \vert \mathring{\Adineq}) \geq -\inf_{\mu \in A \cap \mathring{\Adineq}} \mathcal{I} ( \mu ) + \overline{\mathcal{I}}_\Psi. \] 
We get the desired large deviation upper bound by using Lemma~\ref{lem:ContSet} with $U = \ps_\phi(E)$, and the lower bound using Lemma~\ref{lem:ContSet} with $U = A$.
\end{proof}

We now turn to the proof of Proposition~\ref{prop:ass-ldp-diff}. We start with an elementary measure theoretical result.

\begin{lemma} \label{lem:CVUnif}
Let $( \mu_k )_{k \geq 1}$ be a sequence that weakly converges towards $\mu$ in $\ps(E)$.
Let $( f^k_t)_{t \in \T, \, k \geq 1}$ be an equi-continuous family in $C(E,\R)$, such that
\[ \forall x \in E, \quad \sup_{t \in \T} \vert f^k_t (x) - f_t (x) \vert \xrightarrow[k \rightarrow +\infty]{} 0, \]
for some (necessarily equi-continuous) family $(f_t)_{t \in \T}$ in $C(E,\R)$.
If the following uniform integrability criterion holds
\[ \sup_{t \in \T, \, k \geq 1} \; \int_E \vert f_t \rvert \1_{\vert f_t \rvert > M} \d \mu + \int_E \vert f^k_t \rvert \1_{\vert f^k_t \rvert > M} \d \mu_k \xrightarrow[M \rightarrow +\infty]{} 0,   \]
then 
\[ \sup_{t \in \T} \bigg\lvert \int_E f^k_t \d \mu_k - \int_E f_t \d \mu \bigg\rvert \xrightarrow[k \rightarrow +\infty]{} 0. \]
\end{lemma}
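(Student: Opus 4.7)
The plan is to bound
\[
\sup_{t\in\T}\bigg|\int_E f^k_t\,d\mu_k - \int_E f_t\,d\mu\bigg|
\leq \sup_{t\in\T} \int_E |f^k_t-f_t|\,d\mu_k + \sup_{t\in\T} \bigg|\int_E f_t\,d(\mu_k-\mu)\bigg|
\]
and drive both summands to $0$. Fix $\varepsilon>0$. Using the uniform integrability hypothesis, I first choose $M$ so large that $\sup_{t,k}\int_E|f^k_t|\1_{|f^k_t|>M}\,d\mu_k+\sup_t\int_E|f_t|\1_{|f_t|>M}\,d\mu<\varepsilon$, then replace $f^k_t$ and $f_t$ by their truncations $T_M(f^k_t),\,T_M(f_t)$ with $T_M(r):=(r\wedge M)\vee(-M)$. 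Since $T_M$ is $1$-Lipschitz, the truncated families remain equi-continuous, are uniformly bounded by $M$, and still satisfy $\sup_t|T_M(f^k_t)(x)-T_M(f_t)(x)|\to 0$ pointwise. This costs at most $2\varepsilon$ uniformly in $t,k$.

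Next I localise on a compact. By Prokhorov applied to the weakly convergent sequence, $(\mu_k)$ is tight; $\mu$ alone is tight too, so there is a compact $K\subset E$ with $\sup_k\mu_k(K^c)+\mu(K^c)<\varepsilon/M$. Contributions from $K^c$ to integrals of the $M$-bounded truncations are then at most $2\varepsilon$. On $K$, the pointwise equi-continuity of $(T_M(f^k_t))_{t,k}$ becomes uniform equi-continuity by compactness; combined with the hypothesis $\sup_t|T_M(f^k_t)(x)-T_M(f_t)(x)|\to 0$, a standard finite-cover/Arzelà–Ascoli argument upgrades this to $\sup_{t\in\T,\,x\in K}|T_M(f^k_t)(x)-T_M(f_t)(x)|\to 0$, controlling the first summand uniformly in $t$.

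The main obstacle is the second summand: proving $\sup_t|\int_K T_M(f_t)\,d(\mu_k-\mu)|\to 0$ demands uniform-in-$t$ weak convergence. I would exploit that $\{T_M(f_t)|_K:t\in\T\}$ is an equi-continuous and uniformly bounded family on the compact $K$, hence relatively compact in $C(K)$ by Arzelà–Ascoli. Extracting a finite $\varepsilon$-net $g_1,\ldots,g_n$ in this family and extending each $g_i$ to some $\tilde g_i\in C_b(E)$ with $\|\tilde g_i\|_\infty\le M$ (for instance by a bounded Lipschitz extension), for each $t\in\T$ there is $i(t)$ with $\|T_M(f_t)|_K-g_{i(t)}\|_\infty<\varepsilon$. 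Controlling the $K^c$ part by the tightness bound, one obtains
\[
\sup_{t\in\T}\bigg|\int_E T_M(f_t)\,d(\mu_k-\mu)\bigg|\le 4\varepsilon+\max_{1\le i\le n}\bigg|\int_E\tilde g_i\,d(\mu_k-\mu)\bigg|,
\]
and the right-hand maximum is $o(1)$ as $k\to\infty$ by weak convergence $\mu_k\to\mu$ in $\ps(E)$.

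Collecting all contributions yields $\limsup_k\sup_t|\int f^k_t\,d\mu_k-\int f_t\,d\mu|\le C\varepsilon$ for a universal constant $C$, and since $\varepsilon$ was arbitrary the conclusion follows. The key insight is that equi-continuity plus uniform boundedness on a compact set is exactly what converts the pointwise weak convergence of $\mu_k$ into uniform convergence over the compact class of test functions $\{f_t\}_{t\in\T}$, while uniform integrability and Prokhorov tightness handle the reduction to this bounded-compact setting.
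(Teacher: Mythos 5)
Your proof is correct, but it takes a genuinely different route from the paper's. The paper's proof invokes the Skorokhod representation theorem (applicable since $E$ is separable) to realise the weak convergence $\mu_k\to\mu$ as almost-sure convergence $X_k\to X$ of random elements on an auxiliary probability space; it then bounds the quantity of interest by $\E\big[\sup_{t\in\T}|f^k_t(X_k)-f_t(X)|\big]$, splits $\sup_t|f^k_t(X_k)-f_t(X)|\le\sup_{t,k'}|f^{k'}_t(X_k)-f^{k'}_t(X)|+\sup_t|f^k_t(X)-f_t(X)|$, sends both to zero almost surely using equi-continuity and the pointwise hypothesis respectively, and concludes via the uniform-integrability condition as a dominated-convergence substitute. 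Your argument stays entirely on the measure side: truncate at level $M$ (the uniform-integrability hypothesis controls the truncation errors on both $\mu_k$ and $\mu$), localise to a compact $K$ by Prokhorov tightness, and on $K$ upgrade the pointwise convergence to uniform-in-$(t,x)$ convergence through a finite mesh (for the first summand), while for the second summand you use the Arzelà--Ascoli relative compactness of $\{T_M(f_t)|_K\}_{t\in\T}$ in $C(K)$ together with a finite $\varepsilon$-net of test functions to trade a supremum over $t\in\T$ for a finite maximum, to which weak convergence applies directly. Your route is more elementary in that it avoids the probabilistic representation theorem and exhibits the compactness and integrability ingredients explicitly, at the cost of heavier bookkeeping; the paper's route is shorter once Skorokhod is invoked. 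One minor simplification of your net step: you may choose the $\varepsilon$-net inside the family itself, say $g_i=T_M(f_{t_i})|_K$, so that the globally defined $T_M(f_{t_i})\in C_b(E)$ serve directly as the needed extensions, making the appeal to a Tietze or Lipschitz extension unnecessary.
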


\begin{proof}
Since $E$ is separable, the Skorokhod representation theorem provides a probability space $(\Omega,\F,\P)$ together with a sequence $(X_k)_{k \geq 1}$ of random variables on it with $X_k \sim \mu_k$ that $\P$-a.s. converges towards some $X \sim \mu$.
As a consequence:
\begin{equation} \label{eq:ContProjDom}
\sup_{t \in \T} \bigg\lvert \int_E f^k_t \d \mu_k - \int_E f_t \d \mu \bigg\rvert \leq \E \big[ \sup_{t \in \T} \lvert f^k_t (X^k) - f_t ( X) \rvert \big].
\end{equation}
Writing
\[
    \sup_{t \in \T} \lvert f^k_t (X^k) - f_t ( X) \rvert \leq \sup_{t \in \T} \lvert f^k_t (X^k) - f^k_t ( X) \rvert + \sup_{t \in \T} \lvert f^k_t (X) - f_t ( X) \rvert,
\]
we have by equi-continuity that, a.s.,
\[
    \sup_{t \in \T} \lvert f^k_t (X^k) - f^k_t ( X) \rvert \leq \sup_{t \in \T, k' \geq 1} \lvert f^{k'}_t (X^k) - f^{k'}_t ( X) \rvert \xrightarrow[k \rightarrow +\infty]{} 0,
\]
while by assumption, a.s.,
\[
    \sup_{t \in \T} \lvert f^k_t (X) - f_t ( X) \rvert \xrightarrow[k \rightarrow +\infty]{} 0.
\]
As a consequence, defining
\begin{equation*} 
M_k := \sup_{t \in \T} \lvert f^k_t ( X^k) \rvert \vee \lvert f_t ( X ) \rvert,
\end{equation*}
we get, by dominated convergence, for any $M >0$,
\[ \E \big[ \sup_{t \in \T} \lvert f^k_t (X^k) - f_t ( X) \rvert \mathbbm{1}_{M_k \leq M}\big] \xrightarrow[k \rightarrow +\infty]{} 0. \]
On the other hand, the uniform integrability assumption implies that
\[ \sup_{k \geq 1} \E \big[ \sup_{t \in \T} \lvert f^k_t (X^k) - f_t ( X) \rvert \mathbbm{1}_{M_k > M}\big] \xrightarrow[M \rightarrow +\infty]{} 0, \]
concluding the proof.
\end{proof}

We may now prove Proposition~\ref{prop:ass-ldp-diff}.

\begin{proof}[Proof of Proposition~\ref{prop:ass-ldp-diff}]
Let $(\mu_k)_{k \geq 1}$ be a sequence that weakly converges towards $\mu$ in $\ps_\phi(E)$. 
The condition~\eqref{it:ass-ldp-diff:1} in Proposition~\ref{prop:ass-ldp-diff} allows us to apply Lemma~\ref{lem:integ-diff} to get, for any $t \in \T$:
\[ \Psi_t(\mu_k) - \Psi_t(\mu) = \int_0^1 \bigg\langle \mu_k - \mu, \frac{\delta\Psi_t}{\delta\mu}((1-r) \mu + r \mu_k) \bigg\rangle \d r =: \int_E f^k_t \d \mu_k - \int_E f_t \d \mu,  \]
with
\begin{align*}
    f^k_t (x) &:= \int_0^1 \bigg[ \frac{\delta\Psi_t}{\delta\mu}((1-r) \mu + r \mu_k,x) - \bigg\langle \mu, \frac{\delta\Psi_t}{\delta\mu}((1-r) \mu + r \mu_k) \bigg\rangle \bigg] \d r,\\
    f_t(x) &:= \frac{\delta\Psi_t}{\delta\mu}(\mu,x) - \bigg\langle \mu, \frac{\delta\Psi_t}{\delta\mu}(\mu) \bigg\rangle.
\end{align*}

We shall check that the functions $f_t^k$ and $f_t$ satisfy the assumptions of Lemma~\ref{lem:CVUnif}, which thus yields
\[
    \sup_{t \in \T} \bigg\lvert \Psi_t(\mu_k) - \Psi_t(\mu) \bigg\rvert \xrightarrow[k \rightarrow +\infty]{} 0
\]
and completes the proof. We first show that the family $( f^k_t)_{t \in \T, \, k \geq 1}$ is equi-continuous. Let $(x_l)_{l \geq 1}$ be a sequence of elements of $E$ converging to some $x \in E$. For any $t \in \T$ and $k \geq 1$,
\[
    |f^k_t(x_l)- f^k_t(x)| = \left|\int_0^1 \bigg[ \frac{\delta\Psi_t}{\delta\mu}((1-r) \mu + r \mu_k,x_l)-\frac{\delta\Psi_t}{\delta\mu}((1-r) \mu + r \mu_k,x)\bigg] \d r\right|,
\]
so that, setting $K = \cup_{k \geq 1} \{(1-r)\mu + r\mu_k, r \in [0,1]\}$ which is easily seen to be compact,
\[
    \sup_{t \in \T, k \geq 1} |f^k_t(x_l)- f^k_t(x)| \leq \sup_{t \in \T, \rho \in K} \left|\frac{\delta\Psi_t}{\delta\mu}(\rho,x_l)-\frac{\delta\Psi_t}{\delta\mu}(\rho,x)\right| \xrightarrow[l \rightarrow +\infty]{} 0,
\]
thanks to the condition~\eqref{it:ass-ldp-diff:2} in Proposition~\ref{prop:ass-ldp-diff}. Let us now show that $f^k_t(x) \to f_t(x)$, uniformly in $t \in \T$. For any $x \in E$, we have
\[
    \sup_{t \in \T} \left|f^k_t(x) - f_t(x)\right| \leq \int_0^1 [ \varrho_{k,r}(x) + \langle \mu, \varrho_{k,r}\rangle ] \d r,
\]
with
\[
    \varrho_{k,r}(x) := \sup_{t \in \T} \left|\frac{\delta \Psi_t}{\delta\mu}((1-r)\mu+r\mu_k,x)-\frac{\delta \Psi_t}{\delta\mu}(\mu,x)\right|.
\]
The condition~\eqref{it:ass-ldp-diff:3} in Proposition~\ref{prop:ass-ldp-diff} implies that, for any $r \in [0,1]$ and $x \in E$,
\[
    \varrho_{k,r}(x) \xrightarrow[k \rightarrow +\infty]{} 0.
\]
Since, in addition, the condition~\eqref{it:ass-ldp-diff:1} in Proposition~\ref{prop:ass-ldp-diff} yields $\varrho_{k,r}(x) \leq 2D_K^\Psi[1+\phi(x)]$ with the same compact set $K$ as above, we deduce from the dominated convergence theorem that
\[
    \int_0^1 [ \varrho_{k,r}(x) + \langle \mu, \varrho_{k,r}\rangle ] \d r \xrightarrow[k \rightarrow +\infty]{} 0.
\]
It remains to check the uniform integrability condition of Lemma~\ref{lem:CVUnif}. The latter easily follows from the condition~\eqref{it:ass-ldp-diff:1} in Proposition~\ref{prop:ass-ldp-diff} and the fact that, from \cite[Theorem 4.5.6]{bogachev2007measure}, the weak convergence of $\mu_k$ in $\ps_\phi(E)$ implies that 
\[
    \sup_{k \geq 1} \int_E \phi \1_{\phi > M}\d \mu_k \xrightarrow[M \rightarrow +\infty]{} 0,
\]
so the proof is completed.
\end{proof}

\subsection{Proofs of the results of Section~\ref{ss:gibbs-density}}\label{ss:proofs-gibbs-density}

As a preliminary step for the proof of Theorem~\ref{thm:abstractGibbs}, we first show that any minimiser for~\eqref{eq:min-pb} is also a minimiser for a linearised version of this problem. 

\begin{lemma}[Linearisation of $\F$ and $\Psi_t$] \label{lem:lin}
Let $\overline\mu$ be a minimiser for \eqref{eq:min-pb} at which the regularity condition \ref{ass:DiffF-barmu} and the constraint qualification \ref{ass:regPsi-barmu} hold. Let $\tilde\varepsilon > 0$ be given by~\ref{ass:regPsi-barmu}-\eqref{ass:regPsi-barmu:5}.
\begin{enumerate}
    \item\label{it:lemLinInt} For any $\mu \in \AdF \cap \Adeq$ that satisfies 
    \begin{equation} \label{eq:ConsLin}
        \forall \t \in \T, \quad \Psi_{\t}(\overline\mu) + \tilde\varepsilon \bigg\langle \mu - \overline\mu, \frac{\delta\Psi_{\t}}{\delta\mu}(\overline\mu) \bigg\rangle \leq 0,
    \end{equation}
    we have
    \[
        \left\langle \mu, \left|\log \frac{\d\overline\mu}{\d\nu}\right|\right\rangle < +\infty.
    \]
    \item\label{it:lemLinMin} The measure $\overline\mu$ is a minimiser for
    \[ \inf_{\mu \in \AdF \cap \Adeq \text{ s.t.~\eqref{eq:ConsLin} holds} }  \bigg\langle \mu, \log \frac{\d\overline\mu}{\d\nu} + \frac{\delta\F}{\delta\mu}(\overline\mu) \bigg\rangle, \]
    and the minimum value is $H(\overline\mu|\nu)$.
\end{enumerate}
\end{lemma}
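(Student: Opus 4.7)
The strategy is a first-order variation argument based on a double perturbation; Parts~\ref{it:lemLinInt} and~\ref{it:lemLinMin} are proved simultaneously, the integrability being extracted as a byproduct of the finiteness of the limit that yields the linearized inequality. The idea is to mix $\mu$ with the strictly feasible direction $\tilde\mu$ from~\ref{ass:regPsi-barmu}-\eqref{ass:regPsi-barmu:5} to promote the non-strict~\eqref{eq:ConsLin} into a strict inequality, then interpolate back to $\overline\mu$ along an admissible curve so as to invoke its minimality.

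For $\eta\in(0,1)$, set $\mu_\eta := (1-\eta)\mu + \eta\tilde\mu \in \AdF\cap\Adeq$ (using convexity of $\AdF$ from~\ref{ass:DiffF-barmu}) and $\nu_\varepsilon := (1-\varepsilon)\overline\mu + \varepsilon\mu_\eta$. By Remark~\ref{rem:continuity-new} and compactness of $\T$, \ref{ass:regPsi-barmu}-\eqref{ass:regPsi-barmu:5} provides $\delta>0$ with $\sup_{t\in\T}[\Psi_t(\overline\mu)+\tilde\varepsilon\langle\tilde\mu-\overline\mu,\tfrac{\delta\Psi_t}{\delta\mu}(\overline\mu)\rangle] \leq -\delta$; combining linearly with~\eqref{eq:ConsLin} for $\mu$ yields $\sup_{t\in\T}[\Psi_t(\overline\mu)+\tilde\varepsilon\langle\mu_\eta-\overline\mu,\tfrac{\delta\Psi_t}{\delta\mu}(\overline\mu)\rangle] \leq -\eta\delta$. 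For $\varepsilon\in(0,\tilde\varepsilon]$, rewriting $\Psi_t(\overline\mu)+\varepsilon\langle\mu_\eta-\overline\mu,\cdots\rangle$ as a convex combination of $\Psi_t(\overline\mu)\leq 0$ and the previous uniformly negative expression, and then applying the uniform first-order expansion~\ref{ass:regPsi-barmu}-\eqref{ass:regPsi-barmu:2} to $\Psi_t(\nu_\varepsilon)$, one obtains $\sup_{t\in\T}\Psi_t(\nu_\varepsilon) < 0$ for all $\varepsilon$ small enough. Hence $\nu_\varepsilon\in\AdF\cap\Ad$ is admissible for~\eqref{eq:min-pb}.

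Minimality of $\overline\mu$ then gives $\mathcal{I}(\nu_\varepsilon)\geq\mathcal{I}(\overline\mu)$, i.e.\ $\frac{H(\nu_\varepsilon\vert\nu)-H(\overline\mu\vert\nu)}{\varepsilon}\geq -\frac{\F(\nu_\varepsilon)-\F(\overline\mu)}{\varepsilon}$. The right-hand side tends to the finite quantity $-\langle\mu_\eta-\overline\mu,\tfrac{\delta\F}{\delta\mu}(\overline\mu)\rangle$ by~\ref{ass:DiffF-barmu}. The left-hand side is non-decreasing in $\varepsilon$ by convexity of $H(\cdot\vert\nu)$, so its limit exists in $[-\infty,+\infty]$. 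A direct computation splitting the integrand on $A:=\{\tfrac{\d\overline\mu}{\d\nu}>0\}$ and $A^c$, and passing to the limit by monotone convergence, shows that this limit equals $\langle\mu_\eta-\overline\mu,\log\tfrac{\d\overline\mu}{\d\nu}\rangle$ when $\mu_\eta\ll\overline\mu$ and $[\log\tfrac{\d\overline\mu}{\d\nu}]_-$ is $\mu_\eta$-integrable, and equals $-\infty$ otherwise (the divergence being carried by the $\mu_\eta(A^c)\log\varepsilon$ term when $\mu_\eta\not\ll\overline\mu$). Finiteness forced by the minimality inequality thus delivers $\mu_\eta\ll\overline\mu$, integrability of $[\log\tfrac{\d\overline\mu}{\d\nu}]_-$ against $\mu_\eta$, and, using the convention $\langle\overline\mu,\tfrac{\delta\F}{\delta\mu}(\overline\mu)\rangle=0$,
\[
\Big\langle\mu_\eta,\,\log\tfrac{\d\overline\mu}{\d\nu}+\tfrac{\delta\F}{\delta\mu}(\overline\mu)\Big\rangle \geq H(\overline\mu\vert\nu).
\]
Integrability of $[\log\tfrac{\d\overline\mu}{\d\nu}]_+$ against $\mu_\eta$ follows from the dual formula~\eqref{eq:DualEntrop} applied to $\Phi=\log\tfrac{\d\overline\mu}{\d\nu}$, giving $\int[\log\tfrac{\d\overline\mu}{\d\nu}]_+\,\d\mu_\eta\leq H(\mu_\eta\vert\nu)<\infty$.

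Applying the preceding argument directly to $\tilde\mu$ in place of $\mu$ (with $\eta=0$, since $\tilde\mu$ already makes~\eqref{eq:ConsLin} strict) yields $\tilde\mu\ll\overline\mu$ with $|\log\tfrac{\d\overline\mu}{\d\nu}|$ being $\tilde\mu$-integrable; the pointwise inequality $\mu_\eta\geq(1-\eta)\mu$ between measures then propagates this to $\mu\ll\overline\mu$ and $|\log\tfrac{\d\overline\mu}{\d\nu}|\in L^1(\mu)$, proving Part~\ref{it:lemLinInt}. Letting $\eta\to 0^+$ in the displayed inequality, all terms being finite and linear in $\eta$, produces the inequality of Part~\ref{it:lemLinMin}; evaluating at $\mu=\overline\mu$ (which trivially satisfies~\eqref{eq:ConsLin}) shows the infimum equals $H(\overline\mu\vert\nu)$. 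The main technical obstacle is the entropy derivative computation with the $\mu_\eta\ll\overline\mu$ dichotomy: exhibiting the $\mu_\eta(A^c)\log\varepsilon$ divergence on $A^c$ is what turns the minimality inequality into absolute continuity, and the coordination of the two perturbation parameters $\eta,\varepsilon$ is essential precisely to convert the non-strict linearized constraint of the hypothesis into a genuinely admissible feasible direction.
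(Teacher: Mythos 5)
Your proof is correct and follows essentially the same route as the paper: the paper's \emph{\textbf{Step 1}} (linearisation for strictly feasible $\mu$) and \emph{\textbf{Step 2}} (closure by mixing with $\tilde\mu$) are exactly your $\varepsilon$- and $\eta$-perturbations, merged into a single nested interpolation $\nu_\varepsilon=(1-\varepsilon)\overline\mu+\varepsilon\mu_\eta$ with $\mu_\eta=(1-\eta)\mu+\eta\tilde\mu$. The only presentational difference is that where the paper invokes Lemma~\ref{lem:HDiff} for the entropy directional derivative, you re-derive it from scratch via the $\mu_\eta(A^c)\log\varepsilon$ divergence and monotone convergence, which is precisely the classical argument underlying that lemma.
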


\begin{proof}
Let $\tilde\varepsilon$ be given by~\ref{ass:regPsi-barmu}-\eqref{ass:regPsi-barmu:5}. 
Let $\mu$ be any measure in $\AdF \cap \Adeq$ that satisfies~\eqref{eq:ConsLin} with a strict inequality, namely 
\begin{equation} \label{eq:ConsQuali2}
\forall \t \in \T, \quad \Psi_{\t}(\overline\mu) + \tilde\varepsilon \bigg\langle \mu - \overline\mu, \frac{\delta\Psi_{\t}}{\delta\mu}(\overline\mu) \bigg\rangle < 0.
\end{equation}
For $\varepsilon$ in $(0,\tilde\varepsilon]$, we define the probability measure
$\overline\mu_\varepsilon := (1-\varepsilon)\overline{\mu} + \varepsilon {\mu}$, 
which belongs to $\AdF \cap \Adeq$ by \ref{ass:DiffF-barmu}. 
By Remark~\ref{rem:continuity-new} and the choice of $\mu$, we have
\begin{equation*}
    -\eta := \sup_{t \in \T} \left\{\Psi_{\t}(\overline\mu) + \tilde\varepsilon \bigg\langle \mu - \overline\mu, \frac{\delta\Psi_{\t}}{\delta\mu}(\overline\mu) \bigg\rangle\right\} < 0.
\end{equation*}
Moreover, by~\ref{ass:regPsi-barmu}-\eqref{ass:regPsi-barmu:2}, we have, for every small enough $\varepsilon$ and every $t \in \T$,
\begin{align*}
    \Psi_t(\overline\mu_\varepsilon) &\leq \Psi_t(\overline\mu) + \varepsilon \left(\left\langle \mu - \overline\mu, \frac{\delta\Psi_{\t}}{\delta\mu}(\overline\mu) \right\rangle + \frac{\eta}{2\tilde\varepsilon}\right) \\
    &\leq \frac{\varepsilon}{\tilde\varepsilon}\left(\Psi_t(\overline\mu) + \tilde\varepsilon \left\langle \mu - \overline\mu, \frac{\delta\Psi_{\t}}{\delta\mu}(\overline\mu) \right\rangle + \frac{\eta}{2}\right) \leq - \frac{\varepsilon\eta}{2\tilde\varepsilon},
\end{align*}
where we have used the fact that $\varepsilon \leq \tilde\varepsilon$ and $\Psi_t(\overline\mu) \leq 0$, for any $t \in \T$, at the second line. This shows that $\overline\mu_\varepsilon \in \Adineq$ for every small enough $\varepsilon$.

\medskip

\noindent\emph{\textbf{Step 1.} Linearisation.} For small enough $\varepsilon$, $\overline{\mu}_\varepsilon$ is admissible for \eqref{eq:min-pb}, hence 
\[  H(\overline{\mu} \vert \nu) + \F(\overline{\mu}) \leq H(\overline{\mu}_\varepsilon \vert \nu) + \F(\overline{\mu}_\varepsilon) < +\infty, \]
by optimality of $\overline{\mu}$.
We then divide by $\varepsilon$ and we send it to $0$.
Using~\ref{ass:DiffF-barmu},
\begin{equation*} 
\varepsilon^{-1} [ \F(\overline\mu_\varepsilon) - \F(\overline\mu) ] \xrightarrow[\varepsilon \rightarrow 0]{} \biggl\langle \mu - \overline{\mu}, \frac{\delta \F}{\delta\mu}(\overline{\mu}) \biggr\rangle,
\end{equation*} 
and we deduce from Lemma \ref{lem:HDiff} that 
\[ 0 \leq \int_E \log \frac{\d \overline{\mu}}{\d \nu} \d {\mu} - H(\overline{\mu}\vert \nu) + \biggl\langle \mu - \overline{\mu}, \frac{\delta \F}{\delta\mu}(\overline{\mu}) \biggr\rangle, \]
the integral belonging to $\R \cup \{-\infty\}$. However $H(\overline{\mu}\vert \nu) < +\infty$, hence the sign condition gives the finiteness of the integral: we have proved that the first part of Lemma \ref{lem:lin} holds under the condition~\eqref{eq:ConsQuali2}. Moreover, recalling that $\langle \overline\mu, \tfrac{\delta\F}{\delta\mu}(\overline\mu) \rangle =0$, we conclude that
\[ H(\overline\mu \vert \nu) \leq \inf_{\mu \in \AdF \cap \Adeq \text{ s.t.~\eqref{eq:ConsQuali2} holds}} \biggl\langle \mu, \log \frac{\d \overline{\mu}}{\d \nu} + \frac{\delta \F}{\delta\mu}(\overline{\mu}) \biggr\rangle. \]

\noindent\emph{\textbf{Step 2.} Closure.}
Let now $\mu$ be any measure in $\AdF \cap \Adeq$ which satisfies~\eqref{eq:ConsLin}. Let $\tilde{\mu}$ be given by~\ref{ass:regPsi-barmu}-\eqref{ass:regPsi-barmu:5}.
For $\varepsilon$ in $(0,1]$, $\mu_\varepsilon := (1-\varepsilon)\mu + \varepsilon\tilde{\mu}$ belongs to $\AdF \cap \Adeq$ from~\ref{ass:DiffF-barmu},
and since, for any $t \in \T$,
\begin{align*}
    &\Psi_{\t}(\overline\mu) + \tilde\varepsilon \bigg\langle \mu_\varepsilon - \overline\mu, \frac{\delta\Psi_{\t}}{\delta\mu}(\overline\mu) \bigg\rangle\\
    &= (1-\varepsilon)\left(\Psi_{\t}(\overline\mu) + \tilde\varepsilon \bigg\langle \mu - \overline\mu, \frac{\delta\Psi_{\t}}{\delta\mu}(\overline\mu) \bigg\rangle\right) + \varepsilon\left(\Psi_{\t}(\overline\mu) + \tilde\varepsilon \bigg\langle \tilde\mu - \overline\mu, \frac{\delta\Psi_{\t}}{\delta\mu}(\overline\mu) \bigg\rangle\right),
\end{align*}
then $\mu_\varepsilon$ satisfies \eqref{eq:ConsQuali2}. Since $\tilde\mu$ also satisfies \eqref{eq:ConsQuali2}, we deduce from \emph{\textbf{Step 1.}} that $\langle \mu, |\log \frac{\d\overline\mu}{\d\nu}|\rangle < +\infty$, which is the first part of Lemma \ref{lem:lin}. Moreover, using the conclusion of \emph{\textbf{Step 1.}} applied to $\mu_\varepsilon$, we get
\begin{align*} 
H(\overline\mu \vert \nu) &\leq \biggl\langle \mu_\varepsilon, \log \frac{\d \overline{\mu}}{\d \nu} + \frac{\delta \F}{\delta\mu}(\overline{\mu}) \biggr\rangle \\
&= (1-\varepsilon) \biggl\langle \mu, \log \frac{\d \overline{\mu}}{\d \nu} + \frac{\delta \F}{\delta\mu}(\overline{\mu}) \biggr\rangle + \varepsilon \biggl\langle \tilde\mu, \log \frac{\d \overline{\mu}}{\d \nu} + \frac{\delta \F}{\delta\mu}(\overline{\mu}) \biggr\rangle.
\end{align*}
Sending $\varepsilon$ to $0$ eventually yields 
\[
    \bigg\langle \overline\mu, \log \frac{\d\overline\mu}{\d\nu} + \frac{\delta\F}{\delta\mu}(\overline\mu) \bigg\rangle = H(\overline\mu \vert \nu) \leq \bigg\langle \mu, \log \frac{\d\overline\mu}{\d\nu} + \frac{\delta\F}{\delta\mu}(\overline\mu) \bigg\rangle,
\]
which completes the proof of the second part of Lemma \ref{lem:lin}.
\end{proof}

We may now present the proof of Theorem~\ref{thm:abstractGibbs}. 

\begin{proof}[Proof of Theorem \ref{thm:abstractGibbs}]
Since, by definition, $\overline\mu \in \AdF$, $\overline\mu$ is absolutely continuous w.r.t. $\nu$. We fix a (measurable) density $\frac{\d\overline\mu}{\d\nu}$ and define
\[
    \overline S := \left\{x \in E, \; \frac{\d \overline{\mu}}{\d\nu}(x) > 0\right\}
\]
which by construction is measurable and satisfies $\overline\mu(\overline S)=1$.

\medskip

\noindent\emph{\textbf{Step 1.} Absolute continuity.}
Let $\mu$ be any measure in $\AdF \cap \Adeq$. From Remark \ref{rem:continuity-new}, $t \mapsto \Psi_t(\overline\mu) + \tilde\varepsilon\langle \mu - \overline{\mu}, \tfrac{\delta\Psi_t}{\delta\mu}(\overline\mu) \rangle$ is continuous on the compact set $\T$, hence bounded.
As a consequence, $\tilde\mu_\varepsilon := (1-\varepsilon) \tilde\mu + \varepsilon \mu$ satisfies \eqref{eq:ConsQuali2} for $\varepsilon > 0$ small enough, where $\tilde\mu$ is given by the constraint qualification~\ref{ass:regPsi-barmu}-\eqref{ass:regPsi-barmu:5}. Lemma~\ref{lem:lin}-\ref{it:lemLinInt} applied to $\tilde\mu_\varepsilon$ then implies that $\int_E \log \tfrac{\d \overline\mu}{\d\nu} \, \d\tilde\mu_\varepsilon > -\infty$, hence $\tfrac{\d \overline\mu}{\d\nu}$ is $\tilde\mu_\varepsilon$-a.s. positive and then $\mu$-a.s. positive. Any $\mu \in \AdF \cap \Adeq$ is thus absolutely continuous w.r.t. $\overline\mu$, or equivalently $\mu ( \overline{S} ) =1$, proving Theorem \ref{thm:abstractGibbs}-\ref{it:thmAbsAbsCont}.
This holds in particular for $\tilde\mu$. 

\medskip

\noindent\emph{\textbf{Step 2.} Computation of the density.}
Let us consider the map
\[ \Phi :
\begin{cases}
E \rightarrow \R, \\
x \mapsto \log \frac{\d \overline{\mu}}{\d \nu}(x) + \frac{\delta \F}{\delta\mu}(\overline{\mu},x) - H( \overline\mu \vert \nu)
\end{cases}
\]
which belongs to $L^1 ( E , \d \overline{\mu} )$.
We notice that $\Phi$ is finite on $\overline{S}$.
From Lemma~\ref{lem:lin}-\ref{it:lemLinInt}, $\Phi$ also belongs to $L^1 ( E , \d \tilde{\mu} )$. 
We define $\C$ as the convex cone generated by the set
\[ \{ \pm \zeta_s , \, s \in \S \} \cup \bigg\{ - \Psi_\t (\overline{\mu}) - \tilde\varepsilon \frac{\delta\Psi_\t}{\delta \mu}(\overline\mu), \, t \in \T \bigg\}, \]
i.e. the elements of $\C$ are the (finite) linear combinations with non-negative coefficients of elements from the above set.
We then set $\C^+ := \C + C_b (E,\R_+)$.
Let us show that $\Phi$ belongs to the closure of $\C^+$ in $L^1 ( E , \d \overline{\mu} ) \cap L^1 ( E , \d \tilde{\mu} )$.
If it were not the case, using the geometric Hahn-Banach theorem \cite[Corollary 4.5]{barbu2012convexity}, $\Phi$ could be strictly separated from the closure of $\C^+$, which is a closed convex set, by an affine hyperplane.
Since $L^1 ( E , \d \overline{\mu} ) \cap L^1 ( E , \d \tilde{\mu} )$ is dense in $L^1 ( E , \d \overline{\mu} )$ and $L^1 ( E , \d \tilde{\mu} )$, the topological dual of $L^1 ( E , \d \overline{\mu} ) \cap L^1 ( E , \d \tilde{\mu} )$ is $L^\infty ( E , \d \overline{\mu} ) + L^\infty ( E , \d \tilde{\mu} )$ from \cite[Theorem 2.7.1]{bergh2012interpolation}.
The strict separation thus provides $(h_1,h_2) \in L^\infty ( E , \d \overline{\mu} ) \times L^\infty ( E , \d \tilde{\mu} )$ such that
\begin{equation} \label{eq:Separation}
\int_E \Phi h_1 \, \d \overline\mu + \int_E \Phi h_2 \, \d \tilde\mu < \inf_{\varphi \in \C^+} \int_E \varphi h_1 \, \d \overline\mu + \int_E \varphi h_2 \, \d \tilde\mu. 
\end{equation} 
The infimum on the r.h.s. is non-positive because the function $\varphi \equiv 0$ belongs to $\C^+$.
If this infimum were negative, it would be $-\infty$ because, for every $\varphi \in \C^+$, $\alpha \varphi \in \C^+$ for every $\alpha >0$.
Since the l.h.s. of \eqref{eq:Separation} is finite, the infimum on the r.h.s. thus equals $0$.

From \emph{\textbf{Step 1.}}, $\tilde\mu$ is absolutely continuous w.r.t. $\overline{\mu}$, so that we can define
\[ h := h_1 + h_2 \frac{\d \tilde\mu}{\d\overline\mu} \in L^1(E, \d \overline\mu). \]
Since $\C^+$ contains $C_b(E,\R_+)$ and the r.h.s. of \eqref{eq:Separation} is non-negative (it equals $0$), $h$ is $\overline\mu$-a.s. non-negative.
Up to modifying the bounded functions $h_1$ and $h_2$, we can then assume that $h_1$ and $h_2$ are $\overline\mu$-a.s. non-negative. 
Similarly, \eqref{eq:Separation} can be divided by any positive constant; hence we can assume that $\int_E h \, \d \overline\mu = 1$. 

Let $\mu_h$ denote the measure with density $h$ w.r.t. $\overline\mu$.
\textcolor{red}{les h en idnices ici}
Let $\mu_h^1$ (resp. $\mu^2_h$) denote the probability measure with density w.r.t. $\overline{\mu}$ (resp. $\tilde\mu$) proportional to $h_1$ (resp. $h_2$).
From the definition of $h$, $\mu_h$ is a convex combination of $\mu_h^1$ and $\mu_h^2$.
Since $\mu^1_h$ (resp. $\mu^h_2$) has a bounded density w.r.t. to $\overline{\mu}$ (resp. $\tilde\mu$) and $H ( \overline{\mu} \vert \nu )$ (resp. $H ( \tilde{\mu} \vert \nu )$) is finite, this implies that $H( \mu_h \vert \nu )$ is finite.
Using the same argument and \ref{ass:DiffF-barmu}-\ref{ass:Fboundedens-barmu}, we further get that $\mu_h$ belongs to $\AdF$.
For every $s \in \S$, we now use that the r.h.s. of \eqref{eq:Separation} is non-negative when considering the test functions $\varphi = \zeta_s$ and $\varphi = -\zeta_s$, which belong to $\C^+$.
This shows that $\langle \mu_h, \zeta_s \rangle$ = 0 for every $s \in \S$, so that $\mu_h \in \Adeq$.
We can thus apply Lemma \ref{lem:lin}-\ref{it:lemLinMin} to $\mu_h$, yielding
\begin{equation*} 
0 \leq \int_{E} \log \frac{\d \overline{\mu}}{\d\nu} \, \d \mu_h + \left\langle \mu_h, \frac{\delta\F}{\delta\mu}(\overline{\mu})\right\rangle - H(\overline{\mu}|\nu) = \int_E h \Phi \, \d \overline\mu.
\end{equation*}
Since the r.h.s. of \eqref{eq:Separation} is $0$, this gives the desired contradiction.

\medskip

\noindent\emph{\textbf{Step 3.} Lagrange multiplier.}
From \emph{\textbf{Step 2.}}, there exists a sequence $((f_k,g_k))_{k \geq 1}$ in $\C \times C_b(E,\R_+)$ such that 
\[ f_k + g_k \xrightarrow[k \rightarrow + \infty]{L^1(\overline\mu) \cap L^1(\tilde\mu)} \Phi. \]
By definition of $\C$, $\int_E f_k \, \d \overline{\mu} \geq 0$ because  $\overline{\mu} \in \Ad$. 
Since $\int_E \Phi \, \d \overline{\mu} =0$, this yields
\[ \lVert g_k \rVert_{L^1( \overline\mu)} = \int_E g_k \, \d \overline{\mu} \xrightarrow[k \rightarrow + \infty]{} 0, \]
so that $\Phi$ belongs to the closure of $\C$ in $L^1(E,\d \overline\mu)$.
By definition of $\C$, each $f_k$ can be decomposed as
\begin{equation} \label{eq:preMult}
f_k = -\xi_k - \sum_{i = 1}^{n_k} \lambda_{i,k} \bigg[ \Psi_{\t_{i,k}} (\overline{\mu}) + \tilde\varepsilon \frac{\delta\Psi_{\t_{i,k}}}{\delta \mu}(\overline\mu) \bigg] , 
\end{equation}
where $\xi_k \in \mathrm{Span}( \zeta_s , \, s \in \S)$, $\{ t_{i,k}, 1 \leq i \leq n_k \}$ is a finite subset of $\T$ and the $\lambda_{i,k}$ are non-negative real numbers. 
We then define
\[ \lambda_k := \sum_{i = 1}^{n_k} \lambda_{i,k} \delta_{t_{i,k}} \; \in \; \M_+ (\T). \]
By Remark~\ref{rem:continuity-new}, we have
\begin{equation*}
-\eta := \sup_{t \in \T} \bigg\{\Psi_{\t}(\overline\mu) + \tilde\varepsilon \bigg\langle \tilde\mu - \overline\mu, \frac{\delta\Psi_{\t}}{\delta\mu}(\overline\mu) \bigg\rangle\bigg\} < 0.
\end{equation*}
Hence, integrating \eqref{eq:preMult} against $\tilde\mu$ and using that $g_k \geq 0$,
\[ \int_E [ f_k + g_k ] \, \d \tilde\mu = \int_E g_k \d \tilde\mu - \int_\T \Psi_t (\overline{\mu}) + \tilde\varepsilon \bigg\langle \tilde\mu-\overline\mu, \frac{\delta\Psi_t}{\delta\mu} (\overline{\mu}) \bigg\rangle \lambda_k ( \d t ) \geq \eta \lambda_k ( \T ). \]
Since $\int_E [ f_k + g_k] \, \d \tilde\mu$ converges, this shows that $(\lambda_k(\T))_{k \geq 1}$ is bounded.  
Up to extracting a sub-sequence, we can assume that $(\lambda_k(\T) )_{k \geq 1}$ converges towards some $\tilde\lambda(\T) \geq 0$.
If $\tilde\lambda(\T) = 0$, we set $\tilde\lambda := 0$.
Otherwise, $( \tfrac{\lambda_k}{\lambda_k(\I)} )_{k \in \N}$ is a sequence of probability measures over the compact set $\I$; hence, it is relatively compact by the Prokhorov theorem \cite[Theorem 5.1]{billingsley2013convergence}.
Consequently, we can always assume that $( \lambda_k )_{k \in \N}$ weakly converges towards some $\tilde\lambda \in \M_+(\T)$ with mass $\tilde\lambda ( \T)$. Thus, for any $x \in E$, by the continuity assumptions~\ref{ass:regPsi-barmu}-(\ref{ass:regPsi-barmu:1},\ref{ass:regPsi-barmu:3}),
\[
    \int_\T \bigg[ \Psi_t (\overline{\mu}) + \tilde\varepsilon \frac{\delta\Psi_t}{\delta \mu}(\overline\mu,x) \bigg] \lambda_k(\d t) \xrightarrow[k \rightarrow +\infty]{} \int_\T \bigg[ \Psi_t (\overline{\mu}) + \tilde\varepsilon \frac{\delta\Psi_t}{\delta \mu}(\overline\mu,x) \bigg] \tilde\lambda(\d t),
\]
and by the domination~\ref{ass:regPsi-barmu}-\eqref{ass:regPsi-barmu:4}, this convergence holds in $L^1(E,\d\overline\mu)$.
Since $f_k$ converges in $L^1(E,\d \overline\mu)$, \eqref{eq:preMult} now shows that $\xi_k$ converges in $L^1(E,\d \overline\mu)$ towards some $\overline\zeta$ in the closure of $\mathrm{Span}( \zeta_s , \, s \in \S)$. As a consequence, we may choose $\overline\zeta$ such that, for all $x \in \overline S$,
\begin{equation} \label{eq:PreDebsity}
    \Phi(x) = -\overline\zeta(x) - \int_\T \bigg[ \Psi_t (\overline{\mu}) + \tilde\varepsilon \frac{\delta\Psi_t}{\delta \mu}(\overline\mu,x) \bigg] \tilde\lambda(\d t).
\end{equation}
Since $\xi_k \in \mathrm{Span}( \zeta_s , \, s \in \S)$ and $\overline\mu \in \Adeq$, we have $\int_E \xi_k \, \d \overline\mu = 0$; hence we get $\int_E \overline\zeta \, \d \overline\mu = 0$, by $L^1(E,\d \overline\mu)$-convergence of $ \xi_k$ to $\overline\zeta$.
Moreover, $\int_E \Phi \, \d \overline\mu = \int_E \tfrac{\delta \F}{\delta\mu}(\overline\mu) \, \d\overline\mu = 0$ and $\int_E \tfrac{\delta \Psi_t}{\delta\mu}(\overline\mu) \, \d\overline\mu = 0$, so that integrating \eqref{eq:PreDebsity} against $\overline{\mu}$ yields
\[ \int_\T \Psi_t (\overline\mu) \, \tilde\lambda (\d t) = 0. \]
Since $\Psi_t (\overline\mu) \leq 0$ for every $t \in \T$, we get $\Psi_t (\overline\mu) = 0$ for $\tilde\lambda$-a.e. $t \in \T$.
Setting $\overline\lambda := \tilde\varepsilon \tilde\lambda$, this proves Theorem \ref{thm:abstractGibbs}-\ref{it:thmAbsSlack}. 
We now take the exponential of \eqref{eq:PreDebsity} to obtain that
\[ \forall x \in \overline S, \quad \frac{\d \overline{\mu}}{\d\nu} ( x ) = e^{ H( \overline{\mu} \vert \nu ) } \exp \bigg[ - \frac{\delta\F}{\delta\mu} ( \overline{\mu}, x ) - \overline{\zeta} ( x) - \int_\T \frac{\delta\Psi_t}{\delta\mu} ( \overline{\mu}, x ) \overline\lambda ( \d t ) \bigg], \]
while $\frac{\d \overline{\mu}}{\d\nu} ( x )=0$ for $x \not\in \overline S$.
Since 
$\int_E \tfrac{\d \overline{\mu}}{\d\nu} \d\nu = 1$, we get that
\[ \overline{Z} := \int_E \1_{\overline{S}}(x) \exp \biggl[ - \frac{\delta \F}{\delta \mu}(\overline\mu,x) - \overline\zeta (x)- \int_\T \frac{\delta\Psi_{\t}}{\delta \mu}(\overline\mu,x) \overline{\lambda}( \d \t) \biggr] \nu(\d x) = e^{- H( \overline{\mu} \vert \nu )} \in (0,\infty). \]
This implies that $\overline{\mu}$ is the Gibbs measure~\eqref{eq:Gibbsmeasure}, completing the proof of Theorem \ref{thm:abstractGibbs}-\ref{it:thmAbsGibbs}.
\end{proof}

\begin{rem}[Bound on the Lagrange multiplier] \label{rem:BoundLambda} 
The computations in \emph{\textbf{Step 3.}} show that 
\begin{equation*}
\overline{\lambda}(\I) \leq \tilde\varepsilon \eta^{-1} \biggl\langle \tilde\mu, \log \frac{\d \overline{\mu}}{\d \nu} + \frac{\delta \F}{\delta\mu}(\overline{\mu}) \biggr\rangle - \tilde\varepsilon \eta^{-1} H(\overline\mu \vert \nu),
\end{equation*}
where $\eta := -\sup_{t \in \T} [ \Psi_{\t}(\overline\mu) + \tilde\varepsilon \langle \tilde\mu - \overline\mu, \frac{\delta\Psi_{\t}}{\delta\mu}(\overline\mu) \rangle ]$.
This gives a bound on the mass of $\overline{\lambda}$.
\end{rem}

\subsection{Proofs of the results of Section~\ref{ss:convex}}\label{ss:proofs-convex}

\begin{proof}[Proof of Theorem~\ref{thm:convex-case}] 

Let $\mu \in \AdF \cap \Ad$. Since $H(\mu|\nu) < \infty$ and $\nu(\overline{S})=1$, \cite[Lemma 1.4-(b)]{nutz2021introduction} ensures that the identity
\begin{equation*}
    H(\mu|\nu) - H(\mu|\overline\mu) = \int_E \bigg[ -\log \overline{Z} - \frac{\delta\F}{\delta\mu}(\overline\mu,x) -\overline\zeta(x) - \int_\T \frac{\delta\Psi_t}{\delta\mu}(\overline\mu,x)\overline\lambda(\d t) \bigg] \d\mu(x)
\end{equation*}
holds in $[-\infty,+\infty)$. 
The differentiability assumption \eqref{it:SuffDiff} on $\F$ and $\Psi_t$, together with domination \eqref{it:SuffDom} and the assumption \eqref{it:SuffSupp} on $\overline\zeta$ imply that 
\begin{equation*}
    \left\langle \mu, \left|\frac{\delta\F}{\delta\mu}(\overline\mu)\right|\right\rangle < \infty, \qquad \left\langle \mu, |\overline\zeta| \right\rangle < \infty, \qquad  \int_\T\left\langle \mu, \left|\frac{\delta\Psi_t}{\delta\mu}(\overline\mu)\right|\right\rangle\overline\lambda(\d t) < \infty,
\end{equation*}
which shows that $H(\mu|\overline\mu)<\infty$ and that
\begin{equation*}
    H(\mu|\nu) - H(\mu|\overline\mu) = -\log \overline{Z} - \left\langle \mu, \frac{\delta\F}{\delta\mu}(\overline\mu)\right\rangle - \int_\T\left\langle \mu, \frac{\delta\Psi_t}{\delta\mu}(\overline\mu)\right\rangle\overline\lambda(\d t),
\end{equation*}
since $\langle \mu, \overline\zeta \rangle = 0$ from \eqref{it:SuffSupp}. On the other hand, using that $\overline{\mu}$ is a Gibbs measure from \eqref{it:SuffEq},
\begin{equation*}
    H(\overline\mu|\nu) = -\log \overline{Z} - \left\langle \overline\mu, \frac{\delta\F}{\delta\mu}(\overline\mu)\right\rangle - \int_\T \left\langle \overline\mu,\frac{\delta\Psi_t}{\delta\mu}(\overline\mu)\right\rangle\overline\lambda(\d t).
\end{equation*}
We deduce that
\begin{equation*}
    H(\mu|\nu) = H(\mu|\overline\mu) + H(\overline\mu|\nu) + \left\langle \overline\mu-\mu, \frac{\delta\F}{\delta\mu}(\overline\mu)\right\rangle + \int_\T \left\langle \overline\mu-\mu, \frac{\delta\Psi_t}{\delta\mu}(\overline\mu)\right\rangle\overline\lambda(\d t).
\end{equation*}
By the convexity assumption~\ref{ass:diff-glob-conv}, the differentiability \eqref{it:SuffDiff} of $\F$ and $\Psi_t$ at $\overline\mu$, and~\eqref{eq:convex-diff},
\begin{equation*}
    \F(\mu) \geq \F(\overline{\mu}) + \left\langle \mu-\overline\mu, \frac{\delta\F}{\delta\mu}(\overline{\mu})\right\rangle, \qquad \Psi_t(\mu) \geq \Psi_t(\overline{\mu}) + \left\langle \mu-\overline\mu, \frac{\delta\Psi_t}{\delta\mu}(\overline{\mu})\right\rangle.
\end{equation*}
Since $\mu$ is admissible and the slackness condition~\eqref{eq:compSlackness} holds, we have
\begin{equation*}
    \left\langle \mu-\overline\mu, \frac{\delta\Psi_t}{\delta\mu}(\overline{\mu})\right\rangle \leq \Psi_t(\mu) - \Psi_t(\overline{\mu}) \leq 0, \quad \text{   for $\overline\lambda(\d t)$-a.e.   } t \in \T,
\end{equation*}
so that
\begin{equation*}
    \int_\T \left\langle \mu-\overline\mu, \frac{\delta\Psi_t}{\delta\mu}(\overline{\mu})\right\rangle \overline\lambda(\d t) \leq 0
\end{equation*}
and therefore
\begin{equation*}
    H(\mu|\nu) + \F(\mu) \geq H(\mu|\overline\mu) + H(\overline\mu|\nu) + \F(\overline{\mu}),
\end{equation*}
which concludes because $H(\mu|\overline\mu) \geq 0$
\end{proof}

\begin{proof}[Proof of Proposition~\ref{pro:StrongStab}]
    For any $\varepsilon>0$, the minimisation problem~\eqref{eq:min-pb-eps} is the same as~\eqref{eq:min-pb}, with $\Psi_t$ replaced by $\Psi_t-\varepsilon$. 
    Therefore, since $\F$ is bounded from below, Lemmata~\ref{lem:Igood} and~\ref{lem:AdClosed} for~\eqref{eq:min-pb-eps} show that this problem admits a minimiser $\overline\mu_\varepsilon$, which is unique by convexity of $\F$ and $\Psi_t-\varepsilon$. Since $\Adeps \subset \Adepsstar$ for $\varepsilon \in [0,\varepsilon_*]$, the regularity assumptions~\ref{ass:DiffF-barmu}, \ref{ass:regPsi-barmu}-(\ref{ass:regPsi-barmu:1},\ref{ass:regPsi-barmu:2},\ref{ass:regPsi-barmu:3},\ref{ass:regPsi-barmu:4}) and~\ref{ass:Fboundedens-barmu} are satisfied at $\overline\mu_\varepsilon$ for $\Psi_t-\varepsilon$. 
    Last, Lemma~\ref{lem:EquivQualif} trivially implies that the qualification condition~\ref{ass:regPsi-barmu}-\eqref{ass:regPsi-barmu:5} holds for $\Psi_t-\varepsilon$, therefore Theorem~\ref{thm:abstractGibbs} applies to $\overline\mu_\varepsilon$.
    This proves Theorem \ref{thm:convex-case}-\ref{it:StabMin}.
    
    For $\varepsilon \in [0,\varepsilon_*]$ and $(\mu,\lambda) \in \Adeq \times \M_+ (\T)$, we define
    \[ \mathcal{L}^\varepsilon ( \mu, \lambda) := \mathcal{I}(\mu) + \int_\T [ \Psi_t ( \mu ) - \varepsilon ] \lambda ( \d t ). \]
    By the convexity assumption~\ref{ass:diff-glob-conv} and~\eqref{eq:convex-diff}, for any $(\mu,\lambda) \in \Adeq \times \M_+ (\T)$,
    \[
        \Psi_t(\mu) \geq \Psi_t(\overline\mu_\varepsilon) + \left\langle \mu-\overline\mu_\varepsilon, \frac{\delta\Psi_t}{\delta\mu}(\overline\mu_\varepsilon)\right\rangle,
    \]
    and by the domination assumption~\ref{ass:regPsi-barmu}-\eqref{ass:regPsi-barmu:4} for $\overline\mu_\varepsilon$, there is $D^\Psi_\varepsilon \in [0,\infty)$ such that
    \[
        \int_\T \left|\left\langle \mu-\overline\mu_\varepsilon, \frac{\delta\Psi_t}{\delta\mu}(\overline\mu_\varepsilon)\right\rangle\right|\lambda(\d t)  \leq \lambda(\T) D^\Psi_\varepsilon \langle \mu, 1+\phi\rangle.
    \]
    We deduce that
    \[
        \mathcal{L}^\varepsilon ( \mu, \lambda) \geq H(\mu|\nu) + \inf \F + \int_\T [\Psi_t(\overline\mu_\varepsilon)-\varepsilon]\lambda(\d t) - \lambda(\T) D^\Psi_\varepsilon \langle \mu, 1+\phi\rangle.
    \]
    By the continuity assumption~\ref{ass:regPsi-barmu}-\eqref{ass:regPsi-barmu:1} for $\overline\mu_\varepsilon$, the integral on the r.h.s. is finite, and by the dual representation for entropy~\eqref{eq:DualEntrop}, $H(\mu|\nu)-\lambda(\T) D^\Psi_\varepsilon \langle \mu, 1+\phi\rangle$ is bounded from below, uniformly in $\mu \in \ps_\phi(E)$. We conclude that, for any $\lambda \in \M_+ (\T)$,
    \[
        \inf_{\mu \in \Adeq} \mathcal{L}^\varepsilon ( \mu, \lambda) > -\infty.
    \]
    Moreover, it follows from the semi-continuity assumption~\ref{ass:nonlinear-constr}, the convexity \ref{ass:diff-glob-conv}, \eqref{eq:convex-diff} and Fatou's lemma that $\mu \mapsto \int_\T [\Psi_t(\mu)-\varepsilon]\lambda(\d t)$ is lower semi-continuous on $\ps_\phi(E)$. Therefore, since $\F$ is bounded from below, by Lemma~\ref{lem:Igood}, the level sets of $\mu \mapsto \mathcal{L}^\varepsilon ( \mu, \lambda)$ are compact in $\ps_\phi(E)$. Since $\Adeq$ is closed by Lemma~\ref{lem:AdClosed}, we deduce that there is a minimiser for $\inf_{\mu \in \Adeq} \mathcal{L}^\varepsilon ( \mu, \lambda)$, and this minimiser does not depend on $\varepsilon$ because of the shape $\mathcal{L}^\varepsilon$. 
    Moreover,
    $\varepsilon \mapsto \inf_{\mu \in \Adeq} \mathcal{L}^\varepsilon ( \mu, \lambda)$ is continuously differentiable with constant derivative $-\lambda (\T)$.

    The assumptions on $\overline\zeta_\varepsilon$ and $\overline{S}_\varepsilon$ now allow us to use the same arguments as in the proof of Theorem~\ref{thm:convex-case} to get that
    \[
        \inf_{\mu \in \Adeq} \mathcal{L}^\varepsilon ( \mu, \overline\lambda_\varepsilon) = \mathcal{L}^\varepsilon (\overline\mu_\varepsilon, \overline\lambda_\varepsilon),
    \]
    which then yields that $\sup_\lambda \inf_\mu \mathcal{L}^\varepsilon(\mu,\lambda)$ is reached at $\overline\lambda_\varepsilon$ and takes the value $\mathcal{L}^\varepsilon (\overline\mu_\varepsilon, \overline\lambda_\varepsilon)=\overline{\mathcal{I}}^\zeta_{\Psi,\varepsilon}$. 
    The envelope theorem \cite[Theorem 2]{milgrom2002envelope} now states that $\varepsilon \mapsto \overline{\mathcal{I}}^\zeta_{\Psi,\varepsilon}$ is absolutely continuous with derivative a.e. equal to $-\overline\lambda^\varepsilon (\T)$. 

    To show that $\overline\lambda^\varepsilon (\T)$ is bounded from above uniformly in $\varepsilon$, we recall that, using Lemma~\ref{lem:EquivQualif} and Remark \ref{rem:BoundLambda}, 
    \[
        \overline\lambda_\varepsilon(\T) \leq \eta^{-1} \biggl\langle \tilde\mu, \log \frac{\d \overline{\mu}_\varepsilon}{\d \nu} + \frac{\delta \F}{\delta\mu}(\overline{\mu}_\varepsilon) \biggr\rangle,
    \]
    where
    \[ \eta := -\sup_{t \in \T} \Psi_t ( \overline{\mu} ) + \bigg\langle \tilde\mu - \overline{\mu}, \tfrac{\delta\Psi_t}{\delta\mu} ( \overline{\mu} ) \bigg\rangle. \]
    By convexity of $\Psi$, the bound still holds with $\eta := -\sup_{t \in \T} \Psi_t ( \tilde\mu )$, which is positive from \ref{ass:regPsi-barmu}-(\ref{ass:regPsi-barmu:1}) and does not depend on $\varepsilon$. 
    Since $H ( \tilde\mu \vert \nu )$ is finite, \cite[Lemma 1.4-(b)]{nutz2021introduction} yields
    \[
        \biggl\langle \tilde\mu, \log \frac{\d \overline{\mu}_\varepsilon}{\d \nu}\biggr\rangle \leq H(\tilde\mu|\nu) < +\infty,
    \]
    while using the convexity~\ref{ass:diff-glob-conv} and~\eqref{eq:convex-diff},
    \[
        \biggl\langle \tilde\mu, \frac{\delta \F}{\delta\mu}(\overline{\mu}_\varepsilon) \biggr\rangle \leq \F(\tilde\mu) - \F(\overline{\mu}_\varepsilon) \leq \F(\tilde\mu) - \inf\F < +\infty.
    \]
    We therefore deduce that there exists $C_\mathrm{stab} \in [0,\infty)$ such that for any $\varepsilon \in [0,\varepsilon_*]$, $\overline\lambda_\varepsilon(\T) \leq C_\mathrm{stab}$. This yields the upper bound in~\eqref{eq:quant-stab-I}, while the lower bound simply follows from the fact that $\Adineq \subset \Adineqeps$.
\end{proof}

\begin{rem}[Towards improved bounds on $\lambda^\varepsilon$] 
The mass of $\overline\lambda^\varepsilon$ should be very small in regions where $\Psi_t (\overline\mu_\varepsilon) = \varepsilon$ and $\Psi_t (\overline\mu_0) < 0$, because the $\varepsilon$-relaxed problem should not correct much when the non-relaxed problem needs no correction. 
This can been seen by writing the Lagrangian duality: we have shown in the proof of Proposition \ref{pro:StrongStab} that $\overline\mu_\varepsilon$ is a minimiser for
\[ \inf_{\mu \in \Adeq} \mathcal{I} (\mu ) + \int_\T [\Psi_t ( \mu ) - \varepsilon] \overline\lambda^\varepsilon (\d t). \] 
Let us use $\overline\mu_0$ as a competitor for this problem: using the complementary slackness~\eqref{eq:compSlackness} for $\overline\mu_\varepsilon$, we get
\[ - \int_\T \Psi_t ( \overline\mu_0 ) \overline\lambda^\varepsilon (\d t) \leq \mathcal{I}( \overline\mu_0) - \mathcal{I}( \overline\mu_\varepsilon) - \varepsilon \overline\lambda^\varepsilon (\T), \]
and the r.h.s. is negligible in front of $\varepsilon$ because $\tfrac{\d}{\d \varepsilon} \mathcal{I}( \overline\mu_\varepsilon) = - \overline\lambda^\varepsilon (\T)$. However, deducing local bounds on $\overline\lambda^\varepsilon$ from this seems non-obvious.
\end{rem}

\begin{proof}[Proof of Lemma \ref{lem:StabQualif}]
From \ref{ass:Stab}-\eqref{it:StabQualif}, there exists $\tilde\mu \in \AdF$ such that $\sup_{t \in \T} \Psi_t(\tilde\mu) < 0$.
For $M > 1$, let us define the probability measure $\tilde\mu^M$ by
\[
    \frac{\d\tilde\mu^M}{\d\nu}(x) := \frac{1}{Z_M} \bigg[ \frac{\d\tilde\mu}{\d\nu}(x) \wedge M \bigg], \quad \text{where} \quad Z_M := \int_E \bigg[ \frac{\d\tilde\mu}{\d\nu}(x) \wedge M \bigg] \d\nu(x) \in (0,\infty).
\]
By dominated convergence, $\tilde\mu^M$ converges towards $\tilde\mu$ in $\ps_\phi (E)$ as $M \rightarrow +\infty$. 
Using the continuity assumption \ref{ass:Stab}-\eqref{it:StabCont}, this implies that $\sup_{t \in \T} \Psi_t(\tilde\mu^M) < 0$, for every large enough $M$.
For every small enough $\varepsilon >0$, the Lusin theorem for Borel measures (see e.g. \cite[Theorem 7.1.13]{bogachev2007measure}) provides a closed set $C_{M,\varepsilon}$ such that $\tfrac{\d \tilde\mu^M}{\d\nu} \vert_{C_{M,\varepsilon}}$ is continuous and $\nu(E \setminus C_{M,\varepsilon}) < \varepsilon$.
The Tietze-Urysohn extension theorem 
\cite[Theorem 35.1]{munkres2018elements} then provides a continuous $\rho_{M,\varepsilon} : E \to [0, M/Z_M]$ that coincides with $\tfrac{\d \tilde\mu^M}{\d\nu}$ on $C_{M,\varepsilon}$.
In particular, $\rho_{M,\varepsilon}$ converges to $\tfrac{\d \tilde\mu^M}{\d\nu}$, $\nu$-a.e., as $\varepsilon \to 0$. We then define the probability measure $\tilde\mu^{M,\varepsilon}$ by
\[ \frac{\d\tilde\mu^{M,\varepsilon}}{\d\nu}(x) := \frac{1}{Z_{M,\varepsilon}}[ \varepsilon+\rho_{M,\varepsilon}(x) ], \quad \text{where} \quad Z_{M,\varepsilon} := \int_E [ \varepsilon+\rho_{M,\varepsilon}(x) ] \d\nu(x) \in (0,+\infty).
\]
By dominated convergence, $\tilde\mu^{M,\varepsilon}$ converges to $\tilde\mu^M$ in $\ps_\phi(E)$, as $\varepsilon \to 0$. 
Using~\ref{ass:Stab}-\eqref{it:StabCont} again, we deduce that $\sup_{t \in \T} \Psi_t(\tilde\mu^{M,\varepsilon}) < 0$, for every small enough $\varepsilon$. 
From now on, we fix $(M,\varepsilon)$ such that this property holds.
Let $\tilde\mu^\star$ denote the related measure $\tilde\mu^{M,\varepsilon}$, and let 
$\varrho^\star$ denote its density w.r.t. $\nu$, which is positive, continuous and bounded.

For any $k \geq 1$, we define the probability measure $\tilde\mu_k$ by
\[
    \frac{\d\tilde\mu_k}{\d\nu_k} := \frac{1}{Z_k}\varrho^\star (x), \quad \text{where} \quad Z_k := \int_E \varrho^\star(x)\d\nu_k(x) \in (0,+\infty).
\]
Since $\nu_k$ converges to $\nu$ in $\ps_\phi(E)$ from \ref{ass:Stab}-\eqref{it:StabIntExp}, and $\varrho^\star$ is continuous and bounded, $\tilde\mu_k$ converges to $\tilde\mu^\star$ in $\ps_\phi(E)$. 
Similarly, $H(\tilde\mu_k|\nu_k)$ converges towards $H(\tilde\mu^\star |\nu)$; hence $( H(\tilde\mu_k|\nu_k) )_{ k \geq 1}$ is bounded.
Since $\varrho^\star$ is bounded, \ref{ass:Stab}-\eqref{it:StabDensB} gives that $\tilde\mu_k \in \AdFk$.
We now write
\[
    \sup_{t \in \T} \Psi_{t,k}(\tilde\mu_k) \leq \left|\sup_{t \in \T}\Psi_{t,k}(\tilde\mu_k) - \sup_{t \in \T}\Psi_{t,k}(\tilde\mu^\star)\right| + \sup_{t \in \T} \Psi_{t,k}(\tilde\mu^\star).
\]
From \ref{ass:Stab}-\eqref{it:StabCont}, the first term on the r.h.s. vanishes as $k \to +\infty$. 
From \ref{ass:Stab}-\eqref{it:StabSemCont}, the limsup of the second term is lower than $\sup_{t \in \T} \Psi_t(\tilde\mu^\star)$.
Since this last quantity is negative by construction of $\tilde\mu^\star$, this completes the proof. 
\end{proof}

\begin{proof}[Proof of Theorem \ref{thm:WeakStab}-\ref{it:StabComp}]
From \ref{ass:Stab}, Lemma \ref{lem:StabQualif} and the regularity assumptions \ref{ass:StabReg}-(\ref{it:StabRegalif},\ref{it:StabTech}), $\overline{\mu}_k$ satisfies the assumptions of Theorem~\ref{thm:abstractGibbs}, for every large enough $k$.
As a consequence, $\overline{\mu}_k$ has the Gibbs form~\eqref{eq:Gibbsmeasure}-\eqref{eq:compSlackness} for some $\overline\lambda_k \in \M_+(\T)$. 
It remains to show that $(\overline\lambda_k)_{k \geq 1}$ is weakly pre-compact.
Using the Prokhorov theorem as in the proof of Theorem \ref{thm:abstractGibbs}, it is sufficient to show that $(\overline\lambda_k ( \T ) )_{k \geq 1}$ is bounded.
Let $( \tilde{\mu}_k )_{k \geq k_0}$ be given by Lemma \ref{lem:StabQualif}.
Using Remark~\ref{rem:BoundLambda},
\begin{equation*}
    \overline\lambda_k(\T) \leq \eta_k^{-1}\left\langle \tilde\mu_k, \log\frac{\d\overline\mu_k}{\d\nu_k} + \frac{\delta \F_k}{\delta\mu}(\overline\mu_k)\right\rangle,
\end{equation*}
where $\eta_k := - \sup_{t \in \T} \Psi_{t,k} ( \overline\mu_k ) + \langle \tilde\mu_k - \overline{\mu}_k, \tfrac{\delta \Psi_{t,k}}{\delta\mu} (\overline\mu_k) \rangle$.
By convexity of $\Psi_{t,k}$ and~\eqref{eq:convex-diff}, the bound still holds with $\eta := - \sup_{k \geq k_0} \sup_{t \in \T} \Psi_{t,k} ( \tilde\mu_k )$ instead of $\eta_k$, which is positive by Lemma \ref{lem:StabQualif} and independent of $k$.
By optimality of $\overline\mu_k$, \cite[Lemma 1.4-(b)]{nutz2021introduction} gives the estimate
\[
    \left\langle \tilde\mu_k, \log\frac{\d\overline\mu_k}{\d\nu_k} \right\rangle \leq H(\tilde\mu_k|\nu_k),
\]
and $( H(\tilde\mu_k|\nu_k) )_{ k \geq k_0}$ is bounded from Lemma \ref{lem:StabQualif}.
The bound on $( H(\tilde\mu_k|\nu_k) )_{ k \geq k_0}$ together with the dual representation for entropy~\eqref{eq:DualEntrop}, the domination \ref{ass:StabReg}-\eqref{itStabDom} on $\tfrac{\delta \F_k}{\delta\mu}(\overline\mu_k)$, and the exponential integrability \ref{ass:Stab}-\eqref{it:StabIntExp} then give that $\langle \tilde\mu_k, \tfrac{\delta \F_k}{\delta\mu}(\overline\mu_k) \rangle$ is bounded uniformly in $k$.  
This shows that $(\overline\lambda_k ( \T ) )_{k \geq 1}$ is indeed bounded.
\end{proof}

\begin{proof}[Proof of Theorem \ref{thm:WeakStab}-\ref{it:StabWeakCV}]
For $k \geq 1$, we set
\[
    P_k(x) := \exp\bigg[ -\frac{\delta\F_k}{\delta\mu}(\overline\mu_k,x) - \int_\T \frac{\delta\Psi_{t,k}}{\delta\mu}(\overline\mu_k,x)\overline\lambda_k(\d t)\bigg], \qquad \overline Z_k := \int_E P_k \d\nu_k,
\]
so that $\tfrac{\d \overline{\mu}_k}{\d \nu_k} = \overline{Z}^{-1}_k P_k$.
Using the domination \ref{ass:StabReg}-\eqref{itStabDom} and the bound on $( \overline\lambda_k(\T) )_{k \geq 1}$ given by Theorem \ref{thm:WeakStab}-\ref{it:StabComp}, there exist $D^P, \beta > 0$ such that
\begin{equation} \label{eq:Pkbound}
\forall x \in E, \; \forall k \geq 1, \qquad ( D^P )^{-1} \exp [ - \beta \phi ( x) ] \leq  P_k ( x ) \leq D^P \exp [ \beta \phi ( x) ]. 
\end{equation} 
Since $( \nu_k )_{k \geq 1}$ weakly converges to $\nu$, the lower bound in \eqref{eq:Pkbound} 
shows that $( \overline{Z}^{-1}_k )_{k \geq 1}$ is bounded. 
Since $( \nu_k )_{ k \geq 1}$ converges in $\ps_\phi (E)$, it is a tight sequence. 
Using the upper bound in \eqref{eq:Pkbound} and the exponential integrability \ref{ass:Stab}-\eqref{it:StabIntExp}, we deduce that $( \overline\mu_k )_{k \geq 1}$ is also tight, and thus pre-compact in $\ps(E)$. 
The same argument yields
\[ \lim_{M \to \infty} \sup_{k \geq 1} \int_{\phi \geq M} \phi \, \d\overline\mu_k = 0, \]
which eventually gives that $( \overline\mu_k )_{k \geq 1}$ is pre-compact in $\ps_\phi (E)$.
Up to re-indexing, we can assume that $( \overline\mu_k )_{k \geq 1}$ converges towards some $\overline\mu_\infty$ in $\ps_\phi (E)$, and that $(\overline \lambda_k )_{k \geq 1}$ weakly converges towards some $\overline\lambda_\infty \in \M_+ ( \T )$.
From \ref{ass:StabReg}-\eqref{it:StabEqui}, $\overline{\mu}_\infty \in \AdF$. 

We now show that, for all $x \in E$, $P_k(x)$ converges towards
\[ P_\infty(x) := \exp\bigg[-\frac{\delta\F}{\delta\mu}(\overline\mu_\infty,x) - \int_\T \frac{\delta\Psi_t}{\delta\mu}(\overline\mu_\infty,x)\overline\lambda_\infty(\d t)\bigg]. \]
The convergence of $\frac{\delta\F_k}{\delta\mu}(\overline\mu_k,x)$ to $\frac{\delta\F}{\delta\mu}(\overline\mu_\infty,x)$ is given by~\ref{ass:StabReg}-\eqref{it:StabEqui}. Besides, for every $x \in E$,
\begin{align*}
    & \bigg\vert \int_\T \frac{\delta\Psi_{t,k}}{\delta\mu}(\overline\mu_k,x)\overline\lambda_k (\d t) - \int_\T \frac{\delta\Psi_t}{\delta\mu}(\overline\mu_\infty,x)\overline\lambda_\infty(\d t) \bigg\vert\\
    &\leq \overline{\lambda}_k ( \T ) \sup_{t \in \T} \bigg\vert \frac{\delta\Psi_{t,k}}{\delta\mu}(\overline\mu_k,x) - \frac{\delta\Psi_{t}}{\delta\mu}(\overline\mu_\infty,x) \bigg\vert + \bigg\vert \int_\T \frac{\delta\Psi_t}{\delta\mu}(\overline\mu_\infty,x) [ \overline\lambda_k(\d t) - \overline\lambda_\infty(\d t) ] \bigg\vert,
\end{align*}
and the second term goes to $0$ using the weak convergence of $( \lambda_k )_{k \geq 1}$, because $t \mapsto \tfrac{\delta\Psi_{t}}{\delta\mu} (\overline\mu_\infty,x)$ is continuous using \ref{ass:StabReg}-\eqref{it:StabEqui}.
Since $( \overline\lambda_k(\T) )_{k \geq 1}$ is bounded, the point-wise convergence to $0$ of the first term (along the considered sub-sequence) then follows from \ref{ass:StabReg}-\eqref{it:StabEqui}. 

Let us show that for any bounded continuous $\varphi : E \rightarrow \R$,
\begin{equation} \label{eq:LastEqui}
\int_E \varphi {P_k} \d\nu_k \xrightarrow[k \rightarrow +\infty]{} \int_E \varphi P_\infty \d\nu. 
\end{equation} 
To do so, we set $f^k := \varphi P_k$. 
The upper bound in \eqref{eq:Pkbound} and the equi-continuity assumption \ref{ass:StabReg}-\eqref{it:StabEqui} w.r.t. $x$ give that $( f^k )_{k \geq 1}$ is equi-continuous. 
Moreover, the upper bound in \eqref{eq:Pkbound} and the integrability assumption \ref{ass:Stab}-\eqref{it:StabIntExp} ensure that
\[ \sup_{k \geq 1} \int_E \1_{|f_k| \geq M} |f_k| \d \nu_k \xrightarrow[M \rightarrow + \infty]{} 0. \]
Lemma \ref{lem:CVUnif} (with no dependence on $t$) then provides \eqref{eq:LastEqui}.
Taking $\varphi = 1$ further shows that $\overline Z_k$ converges to $\overline Z_\infty := \int_E P_\infty \d\nu$. 

We now write
\[ \sup_{t \in \T} \Psi_t(\overline\mu_\infty) = \big[ \sup_{t \in \T} \Psi_t(\overline\mu_\infty) - \sup_{t \in \T} \Psi_{t,k}(\overline\mu_k) \big] + \sup_{t \in \T} \Psi_{t,k}(\overline\mu_k). \]
By \ref{ass:StabReg}-\eqref{it:StabEqui}, the first term on the r.h.s. vanishes as $k \rightarrow +\infty$.  
By definition of $\overline\mu_k$, the second term is non-positive. 
Therefore, $\sup_{t \in \T} \Psi_t(\overline\mu_\infty) \leq 0$.
Using \ref{ass:StabReg}-\eqref{it:StabEqui} and the bound on $( \lambda_k ( \T ) )_{k \geq 1}$, 
\[  0 = \int_\T \Psi_{t,k} ( \overline\mu_k ) \overline\lambda_k ( \d t ) \xrightarrow[k \rightarrow +\infty]{} \int_\T \Psi_t ( \overline\mu_\infty ) \overline{\lambda}_\infty ( \d t), \]
where we also used that $t \mapsto \Psi_t ( \overline\mu_\infty ) $ is continuous from \ref{ass:StabReg}-\eqref{it:StabEqui}.
This shows that $( \overline{\mu}_\infty, \overline\lambda_\infty )$ satisfies \eqref{eq:Gibbsmeasure}-\eqref{eq:compSlackness}. 
Since $\overline{\mu}_\infty \in \AdF$ from \ref{ass:StabReg}-\eqref{it:StabEqui}, Theorem~\ref{thm:convex-case} now proves that $\overline{\mu}_\infty$ is the unique minimiser $\overline\mu$ of $\mathcal{I}$ in $\AdF \cap \Adineq$.
In particular, $\overline{\mu}$ satisfies \eqref{eq:Gibbsmeasure}-\eqref{eq:compSlackness} for any limit point $\overline\lambda_\infty$ of $(\overline\lambda_k)_{k \geq 1}$.
Since $\overline{\mu}_\infty$ was any limit point of $(\overline\mu_k)_{k \geq 1}$, this finally proves the convergence of the full sequence $(\overline\mu_k)_{k \geq 1}$ towards $\overline{\mu}$.
\end{proof}

\section{Proofs of the results of Section~\ref{s:processes}} \label{sec:Difproofs}

\subsection{Proof of Theorem~\ref{thm:ConsSchröd}}\label{ss:pf-schr}

With the family of functions $(\zeta_s)_{s \in \S}$ introduced after the statement of Theorem~\ref{thm:ConsSchröd}, the minimisation problem~\eqref{eq:SchrödProb} is exactly~\eqref{eq:min-pb} in the setting of the condition~\ref{ass-b:global}. As a consequence, the first part of Theorem~\ref{thm:ConsSchröd} is straightforward.

Let us now fix a minimiser $\overline\mu_{[0,T]}$ for~\eqref{eq:SchrödProb}, at which~\ref{ass-b:regPsi-barmu} holds. Let $\overline\zeta$, $\overline\lambda$ and $\overline S$ be given by Theorem~\ref{thm:abstractGibbs}. To complete the proof of the second part of Theorem~\ref{thm:ConsSchröd}, we need to prove the following two statements.

\begin{lemma}[Completion of the proof of Theorem~\ref{thm:ConsSchröd}] $\phantom{a}$
    \begin{enumerate}[label=(\roman*),ref=\roman*]
        \item $\nu_{[0,T]}(\overline S)=1$;
        \item there exist $(\overline\zeta_0,\overline\zeta_T) \in L^1(\R^d,\d \mu^{\mathrm{ini}}) \times L^1(\R^d,\d \mu^{\mathrm{fin}})$ such that $\overline \zeta(x_{[0,T]}) = \overline\zeta_0(x_0) + \overline\zeta_T(x_T)$, for $\nu_{[0,T]}$-a.e. $x_{[0,T]}$.
    \end{enumerate}
\end{lemma}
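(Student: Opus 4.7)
For (i), my plan is to invoke Remark~\ref{rem:equivalence}: it suffices to exhibit some $\tilde\nu \in \AdF \cap \Adeq$ equivalent to $\nu_{[0,T]}$. Under the hypothesis $\nu_{0,T} \sim \mu^{\mathrm{ini}} \otimes \mu^{\mathrm{fin}}$, the classical construction proceeds by truncating the density $p := \d(\mu^{\mathrm{ini}} \otimes \mu^{\mathrm{fin}})/\d\nu_{0,T}$ (positive $\nu_{0,T}$-a.s.\ by equivalence) into a bounded, strictly positive $\tilde p(x_0, x_T)$, yielding the auxiliary reference measure $\tilde p \cdot \nu_{[0,T]}/Z$, equivalent to $\nu_{[0,T]}$ with finite relative entropy. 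A Sinkhorn-type marginal correction, initialised from the admissible $\mu_\star$ provided by the finiteness of the infimum in~\eqref{eq:SchrödProb}, then yields $\tilde\nu$ with exact marginals $\mu^{\mathrm{ini}}$ and $\mu^{\mathrm{fin}}$, preserving equivalence with $\nu_{[0,T]}$ and finite entropy. Remark~\ref{rem:equivalence} then gives $\overline\mu_{[0,T]} \sim \nu_{[0,T]}$, and in particular $\nu_{[0,T]}(\overline S) = 1$.

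For (ii), I first note that each $\zeta_s$ is by construction $\sigma(\mathsf{X}_0, \mathsf{X}_T)$-measurable, hence so is every element of $\mathrm{Span}(\zeta_s, s \in \S)$. Since $\sigma(\mathsf{X}_0, \mathsf{X}_T)$-measurable functions form a closed subspace of $L^1(\d\overline\mu_{[0,T]})$, Theorem~\ref{thm:abstractGibbs}-\ref{it:thmAbsSlack} produces a measurable $g : \R^d \times \R^d \to \R$ with $\overline\zeta(x_{[0,T]}) = g(x_0, x_T)$ $\overline\mu_{[0,T]}$-a.s.\ (and hence $\nu_{[0,T]}$-a.s.\ by~(i)). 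To split $g$, I would introduce the probability measure $\hat\nu_{[0,T]}$ whose density w.r.t.\ $\nu_{[0,T]}$ is proportional to $\exp[-\int_{[0,T]} \tfrac{\delta\Psi}{\delta\mu}(\overline\mu_t, x_t) \overline\lambda(\d t)]$. Formula~\eqref{eq:GibbsSchrod} then rewrites as $\d\overline\mu_{[0,T]}/\d\hat\nu_{[0,T]} \propto e^{-g(x_0, x_T)}$, depending only on $(\mathsf{X}_0, \mathsf{X}_T)$; the chain rule for relative entropy thus yields $H(\overline\mu_{[0,T]} | \hat\nu_{[0,T]}) = H(\overline\mu_{0,T} | \hat\nu_{0,T})$. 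Using the linearised optimality of Lemma~\ref{lem:lin} and the complementary slackness~\eqref{eq:compSlackness}, I would then identify $\overline\mu_{[0,T]}$ with the unique classical two-marginal Schrödinger bridge for $(\hat\nu_{0,T}, \mu^{\mathrm{ini}}, \mu^{\mathrm{fin}})$; the existence of Schrödinger potentials, see e.g.\ \cite{nutz2021introduction}, then provides the desired measurable $\overline\zeta_0, \overline\zeta_T$. Their $L^1$-integrability follows from $H(\overline\mu_{[0,T]} | \hat\nu_{[0,T]}) < \infty$ via the data-processing inequality $\max\{H(\mu^{\mathrm{ini}} | \hat\nu_0), H(\mu^{\mathrm{fin}} | \hat\nu_T)\} \leq H(\overline\mu_{[0,T]} | \hat\nu_{[0,T]})$.

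The principal obstacle is identifying $\overline\mu_{[0,T]}$ with the Schrödinger bridge for $(\hat\nu_{0,T}, \mu^{\mathrm{ini}}, \mu^{\mathrm{fin}})$ without any convexity assumption on $\Psi$: the inequality $H(\mu_{[0,T]} | \hat\nu_{[0,T]}) \geq H(\overline\mu_{[0,T]} | \hat\nu_{[0,T]})$ for marginally admissible competitors does not follow from convex duality alone, since the $\Psi$ constraints do not appear in the Schrödinger subproblem. I expect that the linearised problem of Lemma~\ref{lem:lin}, specialised to competitors $\mu_{[0,T]}$ whose conditional law $\mu_{[0,T]}(\cdot | \mathsf{X}_0, \mathsf{X}_T)$ agrees with $\hat\nu_{[0,T]}(\cdot | \mathsf{X}_0, \mathsf{X}_T)$, combined with complementary slackness, will be the right tool to bridge this gap.
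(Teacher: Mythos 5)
Your proof of part~(i) takes essentially the same route as the paper: exhibit an element of $\AdF \cap \Adeq$ equivalent to $\nu_{[0,T]}$ via the static Schrödinger problem for $(\nu_{0,T}, \mu^{\mathrm{ini}}, \mu^{\mathrm{fin}})$, then invoke Theorem~\ref{thm:abstractGibbs}-\ref{it:thmAbsAbsCont} (equivalently Remark~\ref{rem:equivalence}). The paper simply cites \cite[Theorem 2.1]{nutz2021introduction} for existence of an optimal static coupling $\pi^\star \sim \nu_{0,T}$ with finite entropy, and glues it with the conditional law $\nu^{x_0,x_T}_{[0,T]}$. Where you gesture at ``a Sinkhorn-type marginal correction preserving equivalence'' you are, in effect, re-deriving that theorem without justification; the equivalence of the limit to $\nu_{[0,T]}$ (finiteness of the Schrödinger potentials a.e.) is precisely the content of the result you should just cite.

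For part~(ii) there is a genuine gap, and you identify it yourself. Your plan is to absorb the $\Psi$-term into a tilted reference $\hat\nu_{[0,T]}$ and then argue that $\overline\mu_{[0,T]}$ is the classical two-marginal Schrödinger bridge for $(\hat\nu_{[0,T]}, \mu^{\mathrm{ini}}, \mu^{\mathrm{fin}})$, so as to import the existence of Schrödinger potentials. But as you note, there is no reason for $\overline\mu_{[0,T]}$ to minimise $H(\cdot|\hat\nu_{[0,T]})$ over the two-marginal class: that would follow from a saddle-point identity which is not available here without convexity of $\Psi$, and the sufficiency characterisations of Schrödinger bridges that you would want to invoke themselves require knowing the density factorises as $e^{-\overline\zeta_0(x_0)-\overline\zeta_T(x_T)}$ -- which is exactly what you are trying to prove, so the argument is circular. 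The correct (and much shorter) route is the one the paper takes, starting from the observation you already made: $\overline\zeta$ lies in the $L^1(\d\overline\mu_{[0,T]})$-closure of $\mathrm{Span}((\phi^k_0)\cup(\phi^k_T))$, whose elements are by construction of the form $a(x_0)+b(x_T)$. Extract a subsequence converging $\overline\mu_{[0,T]}$-a.s., transfer to $\nu_{[0,T]}$-a.s.\ using part~(i), then to $\nu_{0,T}$-a.s.\ (the functions depend only on $(x_0,x_T)$), and finally to $\mu^{\mathrm{ini}}\otimes\mu^{\mathrm{fin}}$-a.s.\ by the equivalence assumption. The key closing fact -- that an a.s.\ limit, under a product measure, of functions of the form $a_k(x_0)+b_k(x_T)$ is again of that form, with summands in $L^1$ of the respective marginals -- is exactly \cite[Corollary 2.12]{nutz2021introduction}. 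No identification with a Schrödinger bridge is needed.
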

\begin{proof}
To prove the first claim, we consider the static Schrödinger problem
\[ \inf_{\substack{\mu \in \ps_p ( \R^d \times \R^d ) \\ \mu_1 = \mu^{\mathrm{ini}}, \, \mu_2 = \mu^{\mathrm{fin}}}} H ( \mu \vert \nu_{0,T} ). \]
The above infimum is finite, because the law of $({\sf X}_0, {\sf X}_T)$ when $\mathsf{X}_{[0,T]} \sim \overline\mu_{[0,T]}$ has finite entropy w.r.t. $\nu_{0,T}$, using the additivity property of entropy \cite[Theorem D.13]{dembo2009large}.
Since $\nu_{0,T} \sim \mu^{\mathrm{ini}} \otimes \mu^{\mathrm{fin}}$, \cite[Theorem 2.1]{nutz2021introduction} provides an optimal measure $\pi^\star \sim \nu_{0,T}$ for this static problem.
From e.g. \cite[Theorem D.3]{dembo2009large}, the disintegration $\nu^{x,y}_{[0,T]}$ of $\nu_{[0,T]}$ given $({\sf X}_0, {\sf X}_T) =(x,y)$ is well-defined. 
The path-measure $\nu^{x_0,x_T}_{[0,T]} ( \d x_{[0,T]} ) \pi^\star ( \d x_0, \d x_T )$ is then an admissible measure for \eqref{eq:SchrödProb} that is equivalent to $\nu_{[0,T]}$.
From Theorem \ref{thm:AbsLDP}-\ref{it:thmAbsAbsCont}, this shows that $\overline{\mu}_{[0,T]}$ is equivalent to $\nu_{[0,T]}$, and thus that $\nu_{[0,T]}(\overline S)=1$.

We now prove the existence of $(\overline{\zeta_0},\overline{\zeta}_T)$.
Let $( \xi^k )_{k \geq 1}$ be a sequence in $\mathrm{Span} ( (\phi^k_0)_{k\geq 1} \cup (\phi^k_T)_{k\geq 1} )$ that converges in $L^1 ( C ( [0,T], \R^d ),\d \overline\mu_{[0,T]} )$ towards $\overline{\zeta}$. 
Up to extracting a sub-sequence, we can assume that this convergence holds $\overline\mu_{[0,T]}$-a.s.
Since $\overline\mu_{[0,T]} \sim \nu_{[0,T]}$, this convergence also holds $\nu_{[0,T]}$-a.s.
By definition, $\xi^k$ only depends on $({\sf X}_0, {\sf X}_T)$; hence, the convergence holds $\nu_{0,T}$-a.s.
Since $\nu_{0,T} \sim \mu^{\mathrm{ini}} \otimes \mu^{\mathrm{fin}}$, the convergence eventually holds $\mu^{\mathrm{ini}} \otimes \mu^{\mathrm{fin}}$-a.s.
As a consequence,
\cite[Corollary 2.12]{nutz2021introduction} gives that $\overline{\zeta} = \overline\zeta_0 + \overline\zeta_T$ as desired, completing the proof.
\end{proof}

\subsection{Proof of Theorem~\ref{thm:LinkPath}}\label{ss:pf-LinkPath}

The main obstacle in the proof of Theorem \ref{thm:LinkPath} being the time-regularity of the coefficients, we are going to smooth them to produce solutions $(\varphi^k)_{k \geq 1}$ of approximated equations.
For every $t\in [0,T]$, we will then show local compactness estimates for $( \nabla \varphi^k_t )_{k \geq 1}$ in $C^1(\R^d,\R^d)$.
To identify the limit as a function of $t$, the representation formula \eqref{eq:RepFK} will be needed.
We first prove a general stability result.

\begin{proposition}[Limit HJB equation] \label{pro:wpHJB}
Under \ref{ass-c:hjb}-(\ref{it:ass-c-hjb:1}),
let
$(b^k)_{k \geq 1}$, $(\sigma^k)_{k \geq 1}$, $( c^k )_{k \geq 1}$, $( \psi^k )_{k \geq 1}$ be measurable functions 
$[0,T] \times \R^d \rightarrow \R^d$, $[0,T] \times \R^d \rightarrow \R^{d \times d}$, $[0,T] \times \R^d \rightarrow \R$, $[0,T] \times \R^d \rightarrow \R$, and $( \lambda^k )_{k \geq 1}$ be a sequence in $\M_+ ( [0,T] )$, satisfying the following conditions.
\begin{enumerate}[label=(\roman*),ref=\roman*]
\item\label{it:EDStabLoc} $t \mapsto b^k_t (0)$, $t \mapsto \sigma^k_t (0)$, $t \mapsto c^k_t (0)$, $t \mapsto \psi^k_t (0 )$ are bounded uniformly in $k \geq 1$.
\item\label{it:EDStabLip} $b^k$, $\sigma^k $, $ c^k $, $ \psi^k $ satisfy~\ref{ass-c:sde}-(\ref{it:ass-c-sde:1},\ref{it:ass-c-sde:2}) and~\ref{ass-c:hjb}-(\ref{it:ass-c-hjb:2}) uniformly in $k \geq 1$.
\item\label{it:EDStabL1}
For Lebesgue-a.e. $t \in [0,T]$, for every $x \in \R^d$, $(b^k_t(x),\sigma^k_t(x),c^k_t(x))$ converges towards $(b_t(x),\sigma_t(x),c_t(x))$ as $k \rightarrow +\infty$. 
\item\label{it:EDStabPoint} $(\psi^k)_{k \geq 1}$ converges towards $\psi$ uniformly on every compact set of $[0,T] \times \R^d$.
\item\label{it:EDStabLamb} $(\lambda^k)_{k \geq 1}$ weakly converges towards $\lambda$ in $\M_+ ( [0,T] )$.
\end{enumerate}
Let $\varphi^k$ be given by
\begin{equation} \label{eq:RepFKk}
\varphi^k_t (x) := - \log \E \exp \bigg[ \int_t^T  c^k_s (Z^{t,x,k}_s) \d s + \int_{[t,T]} \psi^k_s (Z^{t,x,k}_s) \lambda^k(\d s) \bigg], 
\end{equation} 
where $( Z^{t,x,k}_s )_{t \leq s \leq T}$ is the path-wise unique solution to
\[ \d Z^{t,x,k}_s = b_s^k ( Z^{t,x,k}_s ) \d s + \sigma_s^k ( Z^{t,x,k}_s ) \d B_s, \quad t \leq s \leq T, \qquad Z^{t,x,k}_t = x.  \]
For every $k \geq 1$, we assume that $\varphi^k$ is the mild solution of 
\begin{equation} \label{eq:approxmild}
- \varphi^k_t + \int_t^T \left(b^k_s \cdot \nabla \varphi^k_s - \frac{1}{2} \vert ( \sigma^k_s )^\top \nabla \varphi^k_s \vert^2 + \frac{1}{2} \mathrm{Tr}[ \sigma^k_s (\sigma^k_s)^\top \nabla^2 \varphi^k_s] + c^k_s\right) \d s
+ \int_{[t,T]} \psi^k_s \lambda^k ( \d s) = 0,
\end{equation}
in the sense of Definition~\ref{def:MildForm}, and that $( \nabla \varphi^k )_{k \geq 1}$ is uniformly bounded.
For every $t \in [0,T]$ and every compact set $K \subset \R^d$, we assume that $( \nabla \varphi^k_t )_{k \geq 1}$ is pre-compact in $C(K,\R^d)$. 
Then, for $t = 0$ and every $t \in (0,T]$ with $\lambda ( \{ t \} ) = 0$, $(\varphi^k_t)_{k \geq 1}$ point-wise converges towards $\varphi_t$ given by \eqref{eq:RepFK}, and $\varphi$ is the solution of \eqref{eq:HJBlim} in the sense of Definition \ref{def:MildForm}.
\end{proposition}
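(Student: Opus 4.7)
My plan is to pass to the limit both in the Feynman--Kac representation~\eqref{eq:RepFKk} and in the mild formulation~\eqref{eq:approxmild}, using the pre-compactness of the gradients as the bridge between the two.

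First, I would establish stability of the reference flows. Under the uniform Lipschitz and linear growth controls of~\ref{it:EDStabLoc}-\ref{it:EDStabLip}, a standard Gronwall argument applied to $Z^{t,x,k}_s - Z^{t,x}_s$, combined with the pointwise convergence~\ref{it:EDStabL1} and dominated convergence, yields $\E[\sup_{s \in [t,T]}|Z^{t,x,k}_s - Z^{t,x}_s|^2] \to 0$, hence a.s.\ uniform convergence on $[t,T]$ along a subsequence. The uniform Lipschitz control and pointwise convergence of $c^k$, together with the locally uniform convergence~\ref{it:EDStabPoint} of $\psi^k$, then produce a.s.\ uniform convergence on $[t,T]$ of the integrands $c^k_s(Z^{t,x,k}_s)$ and $\psi^k_s(Z^{t,x,k}_s)$. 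For any $t$ with $\lambda(\{t\})=0$ (and for $t=0$), weak convergence of $\lambda^k$ restricted to $[t,T]$ allows to pass to the limit in the exponent of~\eqref{eq:RepFKk}; uniform boundedness inherited from~\ref{it:EDStabLoc} and the uniform Lipschitz bounds, together with the bounded mass of $\lambda^k$, provides the uniform integrability needed to exchange expectation and logarithm, yielding the pointwise convergence $\varphi^k_t(x) \to \varphi_t(x)$.

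Next, I would identify the gradient of the limit. The uniform bound on $\nabla \varphi^k$ shows that $\varphi_t$ is Lipschitz in $x$; the local pre-compactness of $(\nabla \varphi^k_t)_k$ in $C(K,\R^d)$ then forces every subsequential limit to coincide with $\nabla \varphi_t$, so that the full sequence converges locally uniformly. In particular $\nabla \varphi$ is bounded and measurable on $[0,T] \times \R^d$, as required by Definition~\ref{def:MildForm}.

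Finally, I would pass to the limit term by term in the mild equation~\eqref{eq:MildForm} written for $\varphi^k$. The probabilistic representation of the evolution system $S^k_{t,s}$ based on the SDE with zero drift and diffusion $\sigma^k$, combined with the flow stability of Step~1 adapted to this setting, gives $S^k_{t,s}[f] \to S_{t,s}[f]$ locally uniformly for continuous $f$ of linear growth. Together with the locally uniform convergence of $\nabla \varphi^k$ and the pointwise/uniformly-Lipschitz control on $c^k$, this lets one pass to the limit in the first integral. Weak convergence of $\lambda^k$ handles the second once joint-in-$s$ convergence of $S^k_{t,s}[\psi^k_s](x)$ to $S_{t,s}[\psi_s](x)$, continuous in $s$, is established. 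The main obstacle is precisely this coupling: one needs a stability statement on $S^k_{t,s}[\psi^k_s]$ strong enough to combine with weak convergence of $\lambda^k$, which is delicate because the time regularity of $\sigma^k$ and $\psi^k$ is only locally Hölder, while the restriction $\lambda(\{t\})=0$ is exactly what is needed to absorb any residual atom at the endpoint $t$.
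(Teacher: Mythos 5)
Your plan tracks the paper's proof closely through the first part: pointwise convergence $Z^{t,x,k} \to Z^{t,x}$, uniform integrability to pass to the limit inside the expectation and then the logarithm in~\eqref{eq:RepFKk}, restriction to $t=0$ or $\lambda(\{t\})=0$ so that weak convergence of $\lambda^k$ on $[t,T]$ is usable, identification of the limit gradient from the pre-compactness assumption. All of this is sound and is essentially what the paper does.

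The point you flag as ``the main obstacle'' is, however, where the paper's argument has a specific and decisive trick that your sketch leaves unresolved. You frame the issue as needing ``joint-in-$s$ convergence of $S^k_{t,s}[\psi^k_s](x)$ to $S_{t,s}[\psi_s](x)$, continuous in $s$'' and worry about the locally Hölder time regularity of $\sigma^k$, $\psi^k$. Neither the joint convergence nor the Hölder regularity is the right lens. The paper decouples the two weakly convergent ingredients by adding and subtracting:
\begin{equation*}
\int_{[t,T]} S^k_{t,s}[\psi^k_s](x)\,\lambda^k(\d s) - \int_{[t,T]} S_{t,s}[\psi_s](x)\,\lambda(\d s)
= \int_{[t,T]} \E\bigl[\psi^k_s(\xi^{t,x,k}_s) - \psi_s(\xi^{t,x}_s)\bigr]\lambda^k(\d s) + \int_{[t,T]} \E\bigl[\psi_s(\xi^{t,x}_s)\bigr]\bigl[\lambda^k - \lambda\bigr](\d s),
\end{equation*}
using the Feynman--Kac representation $S^k_{t,s}[f](x)=\E[f(\xi^{t,x,k}_s)]$. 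The first term goes to zero because the integrand vanishes uniformly in $s$ (pathwise convergence of $\xi^{t,x,k}$, locally uniform convergence of $\psi^k$, uniform linear growth, dominated convergence) while $\sup_k\lambda^k([0,T]) < \infty$; no continuity in $s$ of the approximating integrand is needed. The second term uses only the weak convergence of $\lambda^k$ against a fixed integrand $s \mapsto \E[\psi_s(\xi^{t,x}_s)]$, whose continuity follows from~\ref{ass-c:hjb}-(\ref{it:ass-c-hjb:1}) and path continuity of $\xi^{t,x}$, together with the fact that $[t,T]$ is a $\lambda$-continuity set (this is exactly where $\lambda(\{t\})=0$ enters, as you correctly anticipated). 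The same add-and-subtract pattern handles the first (Lebesgue) integral in~\eqref{eq:MildForm}, where equi-continuity of $x\mapsto b^k_s\cdot\nabla\varphi^k_s - \tfrac12|(\sigma^k_s)^\top\nabla\varphi^k_s|^2 + c^k_s$, inherited from the pre-compactness assumption and~\ref{it:EDStabLip}, plays the role that~\ref{it:EDStabPoint} played above. So your overall route is the right one, but the last step needs this split rather than a bundled ``joint stability'' statement, and the Hölder regularity is not what makes it work.
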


\begin{proof}
Using (\ref{it:EDStabLoc},\ref{it:EDStabLip}), the coefficients have linear growth uniformly in $k$, and $(\sigma^k )_{k \geq 1}$ is uniformly bounded. 
For every $(t,x) \in [0,T] \times \R^d$, this implies that
\[ \forall \alpha >0, \qquad \sup_{k \geq 1} \E \exp \bigg[ \alpha \sup_{t \leq s \leq T} \vert Z^{t,x,k}_s \vert \bigg] < +\infty. \]
From (\ref{it:EDStabLip},\ref{it:EDStabL1}), a standard coupling argument (a variation of the proof of \cite[Chapter 5, Theorem 5.2]{friedman1975stochastic}) implies that
\[ \E \big[ \sup_{t \leq s \leq T} \vert Z^{t,x,k}_s - Z^{t,x}_s \vert \big] \xrightarrow[k \rightarrow +\infty]{} 0, \]
where $( Z^{t,x}_s)_{t \leq s \leq T}$ is given by \eqref{eq:Ztx}.
Let us fix $t =0$ or any $t \in (0,T]$ that is not an atom of $\lambda$. 
Since the number of atom is always countable, this includes Lebesgue a.e. $t \in [0,T]$.
For such a $t$, \eqref{it:EDStabLamb} and \cite[Theorem 2.7]{billingsley2013convergence} imply that
\begin{equation*} 
\forall \phi \in C([0,T],\R), \quad \int_{[t,T]} \phi(s) \lambda^k( \d s) \xrightarrow[k \rightarrow +\infty]{} \int_{[t,T]} \phi(s) \lambda (\d s).
\end{equation*}
As a consequence, using (\ref{it:EDStabLip},\ref{it:EDStabL1},\ref{it:EDStabPoint}) and the continuity~\ref{ass-c:hjb}-\eqref{it:ass-c-hjb:1} of $\psi$, \eqref{eq:RepFKk} shows that $( \varphi^k_t )_{k \geq 1}$ point-wise converges towards $\varphi_t$ given by \eqref{eq:RepFK}.
The pre-compactness of $( x \mapsto \nabla \varphi^k_t (x) )_{k \geq 1}$ on every compact set further shows that $\varphi_t$ is $C^1$.
We then deduce that $( \nabla \varphi^k_t )_{k \geq 1}$ converges towards $\nabla \varphi_t$, uniformly on every compact set.
The uniform bound on $\nabla \varphi_t$ is inherited from the one on $( \nabla \varphi^k_t )_{k \geq 1}$.

From Definition \ref{def:MildForm}, for Lebesgue-a.e. $t \in [0,T]$,
\begin{equation} \label{eq:TempmfHJBbarApprox}
-\varphi^k_t + \int_t^T S^k_{t,s} \big[ b^k_s \cdot \nabla \varphi^k_s - \frac{1}{2} \lvert ( \sigma^k_s )^\top \nabla \varphi^k_s \rvert^2 + c^k_s \big] \d s + \int_{[t,T]} S^k_{t,s} \big[ \psi^k_s \big] \lambda^k( \d s) = 0.  
\end{equation}
The Feynman-Kac representation formula (see e.g. \cite[Chapter 6.5]{friedman1975stochastic}) yields
\[ \forall x \in \R^d, \qquad S^k_{t,s}[\phi](x) = \E [ \phi ( \xi^{t,x,k}_s ) ], \]
for every continuous $\phi : \R^d \rightarrow \R$ with linear growth, where $\xi^{t,x,k}$ is the solution of $\d \xi_s^{t,x,k} = \sigma^k_s ( \xi_s^{t,x,k}) \d B_s$, $\xi_t^{t,x,k} = x$. As for $Z^{t,x,k}$, we get that
\[ \E \big[ \sup_{t \leq s \leq T} \vert \xi^{t,x,k}_s - \xi^{t,x}_s \vert \big] \xrightarrow[k \rightarrow +\infty]{} 0,  \]
where the definition of $\xi^{t,x}$ involves $\sigma$ instead of $\sigma^k$.
Setting $\phi^k_s := b^k_s \cdot \nabla \varphi^k_s - \tfrac{1}{2} \lvert ( \sigma^k_s )^\top \nabla \varphi^k_s \rvert^2 + c^k_s$, the pre-compactness assumption, the bound on $( \nabla \varphi^k )_{k \geq 1}$ and \eqref{it:EDStabLip} show that $(x \mapsto \phi^k_s (x) )_{k \geq 1}$ is equi-continuous.
By dominated convergence, this provides the point-wise convergence of $S^k_{t,s} [ \phi^k_s ]$ as $k \rightarrow +\infty$, and then of the middle term in \eqref{eq:TempmfHJBbarApprox}.
Similarly, we split
\begin{multline*}
\int_{[t,T]} S^k_{t,s} \big[ \psi^k_s \big] (x) \lambda^k( \d s) - \int_{[t,T]} S_{t,s} \big[ \psi_s \big] (x) \lambda ( \d s ) =\\
\int_{[t,T]} \E \big[ \psi^k_s ( \xi^{t,x,k}_s ) - \psi_s ( \xi^{t,x}_s ) \big] \lambda^k( \d s) + \int_{[t,T]} \E \big[ \psi_s ( \xi^{t,x}_s ) \big] [ \lambda^k - \lambda ] ( \d s).
\end{multline*} 
Since $(\lambda^k ([0,T]) )_{k \geq 1}$ is bounded, the first term on the r.h.s. goes to $0$ using the path-wise convergence of $\xi^{t,x,k}$ and the uniform convergence \eqref{it:EDStabPoint} of $\psi^k$ on compact sets.
From the continuity~\ref{ass-c:hjb}-\eqref{it:ass-c-hjb:1} of $\psi$ and the weak convergence of $(\lambda^k)_{k \geq 1}$, the second term goes to $0$ too as $k \rightarrow +\infty$.
Taking the limit in \eqref{eq:TempmfHJBbarApprox} now shows that $\varphi$ is the solution of \eqref{eq:HJBlim} in the sense of Definition \ref{def:MildForm}.
\end{proof}

\begin{lemma} \label{lem:thmIFCV}
Under \ref{ass-c:hjb}-(\ref{it:ass-c-hjb:1}) and (\ref{it:EDStabLoc},\ref{it:EDStabLip},\ref{it:EDStabL1},\ref{it:EDStabPoint},\ref{it:EDStabLamb}) in Proposition \ref{pro:wpHJB}, let us assume that $t \mapsto b^k_t (x)$, $\sigma^k_t (x)$, $c^k_t (x)$, $\psi^k_t (x)$ are $C^1$, and that $\lambda^k (\d t) = \lambda^k_t \d t + \overline\lambda_T \delta_T (\d t)$ for some $t \mapsto \lambda^k_t$ in $C^1([0,T],\R_+)$.
Then, the function $\varphi^k$ given by \eqref{eq:RepFKk} belongs to $C ( [0,T] \times \R^d) \cap C^{1,2} ( (0,T) \times \R^d )$ and is the solution of \eqref{eq:approxmild} in the sense of Definition \ref{def:MildForm}.
If $( \nabla \varphi^k_t )_{k \geq 1}$ is furthermore pre-compact in $C(K, \R^d)$, for every compact $K \subset \R^d$ and $t \in [0,T]$, then the conclusion of Theorem \ref{thm:LinkPath} holds.
\end{lemma}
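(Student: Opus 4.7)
The proof splits into two parts, matching the two statements of the lemma.

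\emph{Regularity and mild form of $\varphi^k$.} Under the added smoothness in $t$ of the coefficients and of the density $\lambda^k_t$, I apply the Cole--Hopf transform $u^k_t := e^{-\varphi^k_t}$, which linearises the HJB equation underlying~\eqref{eq:approxmild} into
\[ \partial_t u^k_t + L^k_t u^k_t = \bigl(c^k_t + \psi^k_t \lambda^k_t\bigr) u^k_t, \qquad u^k_T = e^{-\overline\lambda_T \psi^k_T}, \]
where $L^k_t := b^k_t \cdot \nabla + \tfrac12 \mathrm{Tr}[a^k_t \nabla^2]$ and the atom of $\lambda^k$ at $T$ is absorbed into the terminal condition. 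Under~\ref{ass-c:hjb}, in particular the uniform non-degeneracy from the boundedness of $\sigma^{-1}$ together with the H\"older/$C^1$ regularity of the coefficients, classical linear parabolic theory provides a strictly positive $u^k \in C([0,T]\times\R^d)\cap C^{1,2}((0,T)\times\R^d)$, which the Feynman--Kac formula identifies with the expectation in~\eqref{eq:RepFKk}. Taking $-\log$ yields the desired regularity of $\varphi^k$ together with the HJB in classical form; Duhamel's principle, applied with the semigroup $S^k_{t,s}$ of $\tfrac12\mathrm{Tr}[a^k\nabla^2]$, then produces the mild form~\eqref{eq:approxmild} in the sense of Definition~\ref{def:MildForm}.

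\emph{Limit and path-wise representation.} The additional pre-compactness hypothesis is exactly what is needed to invoke Proposition~\ref{pro:wpHJB}, which delivers point-wise convergence $\varphi^k_t \to \varphi_t$ at $t=0$ and at every non-atomic time of $\lambda$, and identifies $\varphi$ as the mild solution of~\eqref{eq:HJBlim} given by~\eqref{eq:RepFK}. It remains to show that $\overline{\mu}_{[0,T]}$ defined by~\eqref{eq:overline-mu-proc} is the path-law of the corrected diffusion. I would first do this at the smooth level: the SDE
\[ \d \overline{X}^k_t = \bigl(b^k_t - a^k_t \nabla \varphi^k_t\bigr)(\overline{X}^k_t)\,\d t + \sigma^k_t(\overline{X}^k_t)\,\d B_t, \qquad \overline{X}^k_0 \sim (Z^k)^{-1}\, e^{-\varphi^k_0}\,\nu_0, \]
has a unique strong solution thanks to the $C^{1,2}$ regularity of $\varphi^k$ and the Lipschitz assumption on the coefficients. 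Combining Girsanov's theorem with It\^o's formula applied to $\varphi^k_t(X^k_t)$ under the approximating reference law of $\d X^k_t = b^k_t(X^k_t)\d t + \sigma^k_t(X^k_t)\d B_t$ with $X^k_0\sim\nu_0$, the classical HJB forces the quadratic terms $\tfrac12\lvert(\sigma^k)^\top \nabla \varphi^k\rvert^2$ arising from the stochastic exponential and from It\^o to cancel exactly; the terminal value $\varphi^k_T = \overline{\lambda}_T \psi^k_T$ matches the atom at $T$ and the initial weight $e^{-\varphi^k_0}$ produces the boundary term. One obtains
\[ \frac{\d \L(\overline{X}^k_{[0,T]})}{\d \L(X^k_{[0,T]})}(x_{[0,T]}) = \frac{1}{Z^k}\exp\!\Bigl[-\!\int_0^T c^k_t(x_t)\,\d t - \int_{[0,T]} \psi^k_t(x_t)\, \lambda^k(\d t)\Bigr]. \]
Passing to the limit $k \to \infty$, this density converges in $L^1$ to $\d\overline{\mu}_{[0,T]}/\d\nu_{[0,T]}$ by dominated convergence, using the convergence hypotheses of Proposition~\ref{pro:wpHJB} and the Lebesgue-a.e. convergence of $\nabla\varphi^k_t$; simultaneously, stability of SDEs under point-wise convergence of bounded drifts yields $\overline{X}^k_{[0,T]} \to \overline{X}_{[0,T]}$ in law, where $\overline{X}$ solves the limit corrected SDE of Theorem~\ref{thm:LinkPath}. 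Combining the two convergences concludes that $\L(\overline{X}_{[0,T]}) = \overline{\mu}_{[0,T]}$.

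\emph{Main obstacle.} The chief technical hurdle is the passage to the limit for the corrected SDEs, since $b - a\nabla\varphi$ is only bounded measurable and need not be Lipschitz, so strong well-posedness at the limit is not directly granted. I would bypass this by using Girsanov's theorem directly on the reference process under $\overline{\mu}_{[0,T]}$: the density is the $L^1$-limit of the explicit exponential martingales obtained above, and the semi-martingale decomposition of the canonical process under $\overline{\mu}_{[0,T]}$ then carries the drift $b - a\nabla\varphi$ in the weak sense, which is sufficient for Theorem~\ref{thm:LinkPath}.
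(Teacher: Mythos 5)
Your overall strategy matches the paper's: Cole--Hopf linearisation and Feynman--Kac for the first part, Proposition~\ref{pro:wpHJB} for the limit, and a Girsanov/It\^o argument for the identification. However, there is one genuine gap and one organisational issue worth flagging.

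\emph{The uniform gradient bound is missing.} Proposition~\ref{pro:wpHJB} requires, in addition to the compact-set pre-compactness that you take as a hypothesis, that $(\nabla\varphi^k)_{k\geq 1}$ be \emph{uniformly bounded} on $[0,T]\times\R^d$. This bound is not a hypothesis of the lemma and must be established inside the proof. The paper obtains it by treating the HJB equation directly as a quasilinear parabolic PDE and invoking a well-posedness theorem (from a companion paper) that comes with a gradient estimate $\sup_{t,x}|\nabla u^k_t(x)| \leq C$ uniform in $k$, and only afterwards applies the Cole--Hopf change of variable to produce the Feynman--Kac identification. Your route goes the other way --- linearise first via Cole--Hopf, then apply classical linear parabolic theory to $u^k = e^{-\varphi^k}$ --- which delivers existence and $C^{1,2}$ regularity, but does not, by itself, give a $k$-uniform bound on $\nabla\varphi^k = -\nabla u^k/u^k$. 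You would need an additional estimate (either a $k$-uniform lower bound for $u^k$ paired with a $k$-uniform gradient bound for $u^k$, or a direct probabilistic gradient estimate from the Feynman--Kac representation). As written, the invocation of Proposition~\ref{pro:wpHJB} is unjustified.

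\emph{Identification of the limit law.} Your principal argument for the second part --- solving the corrected SDE at each finite $k$ and trying to pass to the limit both in the densities and in the laws of $\overline X^k$ --- runs into exactly the well-posedness/stability issue you diagnose at the end (the limit drift $b - a\nabla\varphi$ is only bounded measurable). Your closing ``workaround'' is in fact the paper's approach, not an alternative: the paper never introduces the corrected processes $\overline X^k$. Instead it defines $\tilde\mu_{[0,T]}$ directly as the Girsanov transform of $\nu_{[0,T]}$ driven by the bounded limit gradient $\nabla\varphi$ (Novikov holds by boundedness, and uniqueness in law holds for the corrected SDE with bounded drift), applies It\^o's formula to $\varphi^k(X^k)$ \emph{under the reference law} using the classical HJB to rewrite the stochastic exponential as a non-stochastic exponential in $\varphi^k_0$, $c^k$, $\psi^k$, $\lambda^k$, $\varphi^k_T$, and passes to the limit in that identity to recognise $\frac{\d\tilde\mu_{[0,T]}}{\d\nu_{[0,T]}}$ as the tilted density~\eqref{eq:overline-mu-proc}. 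This sidesteps any limit-SDE stability question entirely. You should promote your ``bypass'' to the main argument, and in doing so make precise how the stochastic integral terms are eliminated (It\^o under $\nu_{[0,T]}$, not a vague $L^1$ dominated convergence of stochastic exponentials) and why uniqueness in law for the limit corrected SDE lets you identify the resulting measure with $\overline\mu_{[0,T]}$.
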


\begin{proof}
From \cite[Theorem 2.2]{chaintron2023existence}, the Hamilton-Jacobi-Bellman (HJB) equation
\begin{equation} \label{eq:ApproxHJB}
\partial_t u^k_t + b^k_t \cdot \nabla u^k_t + \frac{1}{2} \mathrm{Tr}[ a^k_t \nabla^2 u^k_t ] - \frac{1}{2} \lvert ( \sigma^k_t )^\top \nabla u^k_t \rvert^2 = -c^k_t -\lambda^k_t \psi^k_t, \quad
u^k_T = \overline{\lambda}_T \psi^k_T,
\end{equation}
where $a^k_t := \sigma^k_t ( \sigma^k_t )^\top$,
has a unique solution $u^k$ in $C ( [0,T] \times \R^d) \cap C^{1,2} ( (0,T) \times \R^d )$, and
\[ \sup_{(t,x) \in [0,T] \times \R^d} \lvert \nabla u^k_t (x) \rvert \leq C, \]
for a constant $C$ that does not depend on $k$, because $( \lambda^k ( [0,T] ) )_{k \geq 1}$ is bounded.
Using the Cole-Hopf transform $v^k := e^{-u^k}$, $v^k$ is a $C^{1,2}$ solution of the linear parabolic equation
\[ \partial_t v^k_t + b^k_t \cdot \nabla v^k_t + \frac{1}{2} \mathrm{Tr}[ a^k_t \nabla^2 v^k_t ] = c^k_t v^k_t + \lambda^k_t \psi^k_t v^k_t. \]
From the Feynman-Kac representation formula \cite[Chapter 6.5]{friedman1975stochastic}, for $(t,x) \in [0,T] \times \R^d$,
\begin{equation} \label{eq:ApproxFK}
v^k_t(x) = \E \exp \bigg[ \overline\lambda_T \psi^k_T ( Z^{t,x,k}_T ) + \int_t^T  [ c^k_s (Z^{t,x,k}_s) + \psi^k_s (Z^{t,x,k}_s) \lambda^k_s ] \d s \bigg].
\end{equation} 
From \eqref{eq:RepFKk}, this yields $u^k = \varphi^k$.
Classically, the solution $u^k$ of \eqref{eq:ApproxHJB} is the mild solution of \eqref{eq:approxmild} in the sense of Definition \ref{def:MildForm}.
Under the pre-compactness assumption, $( \varphi^k )_{k \geq 1}$ thus satisfies the assumptions of Proposition \ref{pro:wpHJB}, yielding that $\varphi$ given by \eqref{eq:RepFK} is the solution of \eqref{eq:HJBlim} in the sense of Definition \ref{def:MildForm}.
Let us now finish the proof of Theorem \ref{thm:LinkPath} by identifying the process with law $\overline{\mu}_{[0,T]}$.

On the canonical space $\Omega = C([0,T],\R^d)$, the canonical process satisfies 
\[ \d {\sf X}_t = b_t ( {\sf X}_t ) \d t + \sigma_t ( {\sf X}_t ) \d {\sf B}_t, \quad \nu_{[0,T]}\text{-a.s.} \]
where $( {\sf B}_t )_{0 \leq t \leq T}$ is a Brownian motion under $\nu_{[0,T]}$. Let $\tilde\mu_{[0,T]}$ be the measure over $C([0,T],\R^d)$ defined by
\begin{equation} \label{eq:mfGirsanovTransform}
\frac{\d \tilde\mu_{[0,T]}}{\d \nu_{[0,T]}} ({\sf X}_{[0,T]}) := \exp \bigg[ -\int_0^T \nabla \varphi_t ({\sf X}_t) \cdot \sigma_t ({\sf X}_t) \d {\sf B}_t - \frac{1}{2} \int_0^T \lvert \sigma_t^\top({\sf X}_t) \nabla \varphi_t ({\sf X}_t) \rvert^2 \d t \bigg].
\end{equation} 
The bounds on $\sigma$ and $\nabla {\varphi}$, inherited from the one $(\nabla \varphi^k )_{k \geq 1}$, guarantee that the Novikov condition holds
\[ \E_{\nu_{[0,T]}} \, \exp \bigg[ \frac{1}{2} \int_0^T \lvert \sigma^\top_t ({\sf X}_t) \nabla \varphi_t ({\sf X}_t) \rvert^2 \d t \bigg] < +\infty, \]
so that the Girsanov transform (see e.g. \cite[Theorem 2.3]{leonard2012girsanov}) tells us that $\tilde\mu_{[0,T]}$ is a probability measure such that
\[ \d {\sf X}_t = b_t ({\sf X}_t) \d t - a_t ( {\sf X}_t ) \nabla \varphi_t ({\sf X}_t) \d t + \sigma_t ( {\sf X}_t ) \d {\sf \tilde{B}}_t, \quad \tilde\mu_{[0,T]}\text{-a.s.}, \]
the process $({\sf \tilde{B}}_t)_{0\leq t\leq T}$ being a Brownian motion under $\tilde\mu_{[0,T]}$.
Moreover, uniqueness in law holds for the above SDE.

Under $\nu_{[0,T]}$, let now $X^k$ denote the strong solution of the SDE 
\[ \d X^k_t = b^k_t ( X^k_t ) \d t + \sigma^k_t ( X^k_t ) \d {\sf B}_t, \qquad X^k_0 = {\sf X}_0. \]
Using \eqref{eq:ApproxHJB}, Ito's formula applied to $\varphi^k = u^k$ yields
\begin{multline*}
{\varphi}^k_0 (X^k_0) - \int_0^T [ {c}^k_t (X^k_t) + \psi^k_t ( X^k_t ) \lambda^k_t ] \d t - \overline{\lambda}_T \psi^k_T ( X^k_T ) = \\
-\int_0^T \nabla \varphi^k_t ( X^k_t) \cdot \sigma^k_t ( X^k_t) \d {\sf B}_t - \frac{1}{2} \int_0^T \lvert ( \sigma^k_t )^\top ( X^k_t) \nabla \varphi^k_t ( X^k_t) \rvert^2 \d t. 
\end{multline*}
As previously, we can show that $\E [ \sup_{0 \leq t \leq T} \vert X^k_t - {\sf X}_t \vert ] \to 0$. 
We then take the $k \rightarrow + \infty$ limit (in probability) in the above equality, as we did in the proof of Proposition \ref{pro:wpHJB},
to get
\begin{equation*}
\frac{\d \tilde\mu_{[0,T]}}{\d \nu_{[0,T]}} ({\sf X}_{[0,T]}) = \exp \bigg[ {\varphi}_0 ({\sf X}_0) - \int_0^T c_t ({\sf X}_t) \d t - \int_{[0,T]} \psi_t ({\sf X}_t ) \lambda ( \d t ) \bigg]. 
\end{equation*}
Noticing that $\overline\mu ( \d x_{[0,T]}) = Z^{-1} e^{-\varphi_0 (x_0)} \tilde\mu ( \d x_{[0,T]})$, this concludes.
\end{proof}

To complete the proof of Theorem \ref{thm:LinkPath}, it remains to build approximations of the coefficients that are $C^1$ in time and satisfy the assumptions of Lemma \ref{lem:thmIFCV}.

\begin{lemma}
By convolution with a non-negative approximation of unity with compact support, let $( \lambda^k )_{k \geq 1}$ be a sequence of functions in $C^1([0,T],\R_+)$ that weakly converges towards $\lambda$.   
Similarly, let $t \mapsto c_t(x)$, $t \mapsto b_t(x)$, $t \mapsto \psi_t(x)$ be regularisations by convolution of $t \mapsto c^k_t (x)$, $t \mapsto b^k_t (x)$, $t \mapsto \psi^k_t (x)$, which are now $C^1$ in time.
Then
(\ref{it:EDStabLoc},\ref{it:EDStabLip},\ref{it:EDStabL1},\ref{it:EDStabPoint},\ref{it:EDStabLamb}) in Proposition \ref{pro:wpHJB} are satisfied. 
\end{lemma}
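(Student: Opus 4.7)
The plan is to verify each of the five conditions \eqref{it:EDStabLoc}--\eqref{it:EDStabLamb} of Proposition~\ref{pro:wpHJB} by direct calculation, exploiting the standard properties of convolution with an approximation of unity. First I would fix the setup: extend each coefficient $b_\cdot(x), \sigma_\cdot(x), c_\cdot(x), \psi_\cdot(x)$ from $[0,T]$ to $\R$ by constant extension at the endpoints (taking the value at $0$ for $t<0$ and at $T$ for $t>T$), and choose non-negative $C^1$ kernels $\rho_k:\R\to\R_+$ with $\int_\R\rho_k=1$ and $\mathrm{supp}(\rho_k)\subset[-1/k,0]$. Then set $b^k_t(x):=\int_\R b_{t-s}(x)\rho_k(s)\,\d s$, and analogously $\sigma^k_t(x)$, $c^k_t(x)$, $\psi^k_t(x)$, which are now $C^1$ in $t$. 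The atom of $\lambda$ at $T$ is preserved as $\overline\lambda_T\delta_T$, and the remainder $\lambda-\overline\lambda_T\delta_T$ (extended by zero outside $[0,T]$) is convolved with $\rho_k$ to yield the density $\lambda^k_t$.

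Condition \eqref{it:EDStabLoc} follows immediately from $\|\rho_k\ast f\|_\infty\le\|f\|_\infty$, applied to $f=b_\cdot(0),\sigma_\cdot(0),c_\cdot(0),\psi_\cdot(0)$, which are bounded on $[0,T]$ by the local boundedness assumptions in~\ref{ass-c:sde}-\ref{ass-c:hjb}. For Condition~\eqref{it:EDStabLip}, the Lipschitz bounds in $x$, the bound $|\sigma^k_t(x)|\le M_\sigma$, and the local Hölder-continuity in $t$ all pass under the integral sign: e.g.\ $|b^k_t(x)-b^k_t(y)|\le L|x-y|$ with $L$ the uniform-in-$t$ Lipschitz constant of $b_t$. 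Uniform boundedness of $(\sigma^k_t)^{-1}$ requires more care: combining Lipschitz-in-$x$ and Hölder-in-$t$, $(t,x)\mapsto\sigma_t(x)$ is jointly continuous, so $\sigma^k\to\sigma$ uniformly on compact sets; for $k$ large, $\sigma^k_t(x)$ is so close to $\sigma_t(x)$ that $|(\sigma^k_t)^{-1}|\le 2\sup_{t,x}|\sigma_t^{-1}|$, and one discards finitely many $k$ by re-indexing.

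For Condition~\eqref{it:EDStabL1}, $t\mapsto b_t(x)$ and $c_t(x)$ are in $L^1([0,T])$ for each $x$, so the Lebesgue differentiation theorem gives $b^k_t(x)\to b_t(x)$ and $c^k_t(x)\to c_t(x)$ for Lebesgue-a.e.\ $t$; the Hölder continuity of $\sigma_t(x)$ in $t$ yields pointwise convergence $\sigma^k_t(x)\to\sigma_t(x)$ everywhere. Condition~\eqref{it:EDStabPoint} follows from the joint continuity of $\psi$ given by~\ref{ass-c:hjb}-\eqref{it:ass-c-hjb:1}: on each compact $K\subset[0,T]\times\R^d$, $\psi$ is uniformly continuous, so the modulus of continuity controls $|\psi^k_t(x)-\psi_t(x)|\le\omega_K(1/k)$ uniformly over $K$. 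For Condition~\eqref{it:EDStabLamb}, one writes, for any $\phi\in C([0,T],\R)$,
\[
\int_{[0,T]}\phi(t)\lambda^k(\d t)=\int_\R(\check\rho_k\ast\phi)(s)\,(\lambda-\overline\lambda_T\delta_T)(\d s)+\overline\lambda_T\phi(T),
\]
where $\check\rho_k(s):=\rho_k(-s)$, and uses $\check\rho_k\ast\phi\to\phi$ uniformly on $[0,T]$ (after extending $\phi$ continuously) to conclude $\int\phi\,\d\lambda^k\to\int\phi\,\d\lambda$.

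The main obstacle is Condition~\eqref{it:EDStabLip}, specifically the uniform-in-$k$ lower bound on $\sigma^k_t$: the boundedness of $\sigma_t^{-1}$ is an assumption about the ``shape'' of $\sigma_t$ at each $(t,x)$, and convolution in $t$ does not directly preserve such lower bounds unless the perturbation $\sigma^k-\sigma$ is controlled in a uniform way. Resolving this relies crucially on joint continuity of $(t,x)\mapsto\sigma_t(x)$ and a re-indexing argument. Once this delicate point is handled, the remaining checks are standard properties of mollification.
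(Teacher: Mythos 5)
Your overall approach matches the paper's — mollify the coefficients in time with a non-negative approximate identity, then verify the five conditions of Proposition~\ref{pro:wpHJB} from standard facts about convolution — but two of the checks are handled more loosely than they should be.

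For condition~\eqref{it:EDStabL1}, the required quantifier order is ``for Lebesgue-a.e.\ $t$, for \emph{every} $x\in\R^d$'', i.e.\ a single null set of exceptional times valid simultaneously for all $x$. The Lebesgue differentiation theorem you invoke only gives ``for every $x$, for a.e.\ $t$'', with the null set depending on $x$. The paper resolves this by exploiting the uniform-in-$(k,t)$ Lipschitz regularity in $x$, which is preserved because time-convolution does not touch the $x$-dependence: apply pointwise convergence along a countable dense set of $x$'s, take the (still null) union of the corresponding exceptional sets, and extend to all $x$ by Lipschitz continuity of $b^k_t(\cdot)$, $c^k_t(\cdot)$ and $b_t(\cdot)$, $c_t(\cdot)$. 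This step is absent from your proposal.

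For condition~\eqref{it:EDStabLip}, you correctly single out the uniform lower bound on $\sigma^k_t$ (i.e.\ the boundedness of $(\sigma^k_t)^{-1}$ required by~\ref{ass-c:hjb}-\eqref{it:ass-c-hjb:2}) as the delicate point, but your re-indexing argument does not close it. Uniform convergence $\sigma^k\to\sigma$ on compact sets controls $(\sigma^k_t)^{-1}$ only on those compacts, whereas the assumption requires a bound over all of $[0,T]\times\R^d$; discarding finitely many $k$ does not help at large $|x|$ for the $k$ that remain. The perturbation argument you have in mind needs $\sup_{(t,x)\in[0,T]\times\R^d}\lvert\sigma^k_t(x)-\sigma_t(x)\rvert\to 0$, i.e.\ a modulus of continuity of $t\mapsto\sigma_t(x)$ that is \emph{uniform in $x$}. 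That uniformity, and the resulting Neumann-series bound on $(\sigma^k_t)^{-1}$, deserve to be stated explicitly rather than subsumed in a ``discard finitely many $k$'' step.
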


\begin{proof}
The convolution in time does not change the $x$-dependence: $x \mapsto c^k_t (x)$, $x \mapsto b^k_t (x)$ and $x \mapsto \psi^k_t (x)$ are globally Lipschitz uniformly in $(k,t)$.
From \ref{ass-c:sde}-\ref{ass-c:hjb}, the coefficients were locally bounded; hence, for every $x \in \R^d$, for Lebesgue-a.e. $t \in [0,T]$, $(b^k_t (x),\sigma^k_t(x),c^k_t(x))$ converges to $(b_t (x),\sigma_t(x),c_t(x))$. 
Using the uniform Lipschitz regularity, this implies that for Lebesgue-a.e. $t \in [0,T]$ and every $x \in \R^d$, $(b^k_t (x),\sigma^k_t(x),c^k_t(x))$ converges to $(b_t (x),\sigma_t(x),c_t(x))$, giving \eqref{it:EDStabL1} in Proposition \ref{pro:wpHJB}.
From \ref{ass-c:hjb}-\eqref{it:ass-c-hjb:1}, $(t,x) \mapsto \psi_t (x)$ is uniformly continuous and bounded on every compact set; hence, $\psi^k$ converges towards $\psi$ uniformly on every compact set.
The uniform bounds on $\sigma^k_t$ are similarly obtained.
We thus verified items (\ref{it:EDStabLip},\ref{it:EDStabL1},\ref{it:EDStabPoint},\ref{it:EDStabLamb}) in Proposition \ref{pro:wpHJB}.
Item (\ref{it:EDStabLoc}) further holds because the coefficients are locally bounded.
\end{proof}

To conclude the proof of Theorem \ref{thm:LinkPath} using Lemma \ref{lem:thmIFCV}, it remains to show pre-compactness.

\begin{lemma}[Equi-continuity for the gradients] \label{lem:mfRegGrad}
Under \ref{ass-c:hjb}, for every $t$ in $[0,T]$ and any compact $K \subset \R^d$, $( x \mapsto \nabla \varphi^k_t (x) )_{k \geq 1}$ is pre-compact in $C( K, \R)$, where $\varphi^k$ is defined by \eqref{eq:RepFKk}.   
\end{lemma}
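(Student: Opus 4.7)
\emph{Strategy.} By the Arzel\`a--Ascoli theorem, it suffices to prove that the family $(\nabla \varphi^k_t)_{k \geq 1}$ is both uniformly bounded and equi-continuous on $K$. I would work through the Cole--Hopf representation $\varphi^k_t = -\log v^k_t$, where $v^k_t$ is given by \eqref{eq:ApproxFK}, so that $\nabla \varphi^k_t(x) = - \nabla v^k_t(x) / v^k_t(x)$. The case $t = T$ is immediate from $\varphi^k_T = \overline{\lambda}_T \psi^k_T$ and the equi-continuity of $\nabla \psi^k_T$ provided by~\ref{ass-c:hjb}-\eqref{it:ass-c-hjb:3}, so I fix $t \in [0,T)$.

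\emph{Uniform bounds and differentiation.} The uniform Lipschitz continuity and linear growth of $c^k$ and $\psi^k$ (from~\ref{ass-c:hjb}-\eqref{it:ass-c-hjb:2}), combined with standard uniform $L^p$ moment estimates for $Z^{t,x,k}$ on $K$ and the boundedness of $\lambda^k([0,T])$ (inherited from the weak convergence of $(\lambda^k)_{k \geq 1}$), yield uniform positive lower and upper bounds on $v^k_t(x)$ for $x \in K$. Under~\ref{ass-c:hjb}-\eqref{it:ass-c-hjb:3}, the Jacobian flow $J^{t,x,k}_s := \nabla_x Z^{t,x,k}_s$ solves the linear SDE
\[ \d J^{t,x,k}_s = \nabla b^k_s(Z^{t,x,k}_s) J^{t,x,k}_s \, \d s + \nabla \sigma^k_s(Z^{t,x,k}_s) J^{t,x,k}_s \, \d B_s, \quad J^{t,x,k}_t = I, \]
and since $\nabla b^k, \nabla \sigma^k$ are bounded uniformly in $(s,k)$, standard estimates give $\sup_{s \in [t,T]} \E|J^{t,x,k}_s|^p < +\infty$ uniformly in $k$ and in $x \in K$. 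The chain rule then provides an expression for $\nabla v^k_t$ as an expectation involving $\nabla c^k, \nabla \psi^k$ and $J^{t,x,k}$, and leads to $|\nabla v^k_t(x)| \leq C v^k_t(x)$ uniformly in $k$, hence to a uniform $L^\infty(K)$ bound for $\nabla \varphi^k_t$.

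\emph{Equi-continuity (main obstacle).} Expanding $\nabla \varphi^k_t(x) - \nabla \varphi^k_t(x')$ via the ratio formula reduces the task to proving that both $v^k_t$ and $\nabla v^k_t$ are equi-continuous on $K$. Equi-continuity of $v^k_t$ follows from the standard SDE coupling estimate $\E|Z^{t,x,k}_s - Z^{t,x',k}_s|^p \leq C_p |x-x'|^p$ (uniform in $k$ by~\ref{ass-c:sde}-\eqref{it:ass-c-sde:1}) together with the uniform Lipschitz regularity of $c^k, \psi^k$. For $\nabla v^k_t$ one must also control $x \mapsto J^{t,x,k}_s$: the difference $\Delta J^k_s := J^{t,x,k}_s - J^{t,x',k}_s$ satisfies
\[ \d \Delta J^k_s = \nabla b^k_s(Z^{t,x,k}_s) \Delta J^k_s \, \d s + \big[\nabla b^k_s(Z^{t,x,k}_s) - \nabla b^k_s(Z^{t,x',k}_s)\big] J^{t,x',k}_s \, \d s + (\cdots)\, \d B_s, \]
with an analogous diffusion term. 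The delicate step is the source: by~\ref{ass-c:hjb}-\eqref{it:ass-c-hjb:3}, $\nabla b^k$ and $\nabla \sigma^k$ share a common modulus of continuity $\omega$ independent of $(s,k)$, and combined with the uniform Lipschitz bound $|Z^{t,x,k}_s - Z^{t,x',k}_s| \leq C|x-x'|$ this gives $|\nabla b^k_s(Z^{t,x,k}_s) - \nabla b^k_s(Z^{t,x',k}_s)| \leq \omega(C|x-x'|)$ uniformly in $(s,k)$. Gr\"onwall's inequality then yields $\sup_{s \in [t,T]} \E|\Delta J^k_s|^2 \leq C' \omega(C|x-x'|)^2$ uniformly in $k$, which, combined with the equi-continuity of $\nabla c^k, \nabla \psi^k$ evaluated along $Z^{t,x,k}$, produces the required equi-continuity of $\nabla v^k_t$. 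Arzel\`a--Ascoli concludes.
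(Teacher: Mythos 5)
Your overall strategy -- Arzel\`a--Ascoli, Cole--Hopf, uniform bounds plus equi-continuity of $v^k_t$ and $\nabla v^k_t$, controlling the Jacobian flow $J^{t,x,k}$ via a linear SDE -- matches the paper's. The difficulty, and where your argument breaks down, is precisely the step you flag as the ``delicate'' one.

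You invoke a pathwise bound $\lvert Z^{t,x,k}_s - Z^{t,x',k}_s\rvert \leq C\lvert x - x'\rvert$. This is not available: the difference process solves an SDE whose diffusion coefficient $\sigma^k_s(Z^{t,x,k}_s)-\sigma^k_s(Z^{t,x',k}_s)$ contributes genuine stochastic fluctuations, so Gr\"onwall and BDG only give $\E\bigl[\sup_s \lvert Z^{t,x,k}_s - Z^{t,x',k}_s\rvert^p\bigr] \leq C_p \lvert x-x'\rvert^p$, not an almost-sure Lipschitz estimate. You also claim that \ref{ass-c:hjb}-\eqref{it:ass-c-hjb:3} furnishes a \emph{global} modulus of continuity $\omega$ for $\nabla b^k$, $\nabla\sigma^k$, $\nabla c^k$, $\nabla\psi^k$ that is independent of the spatial location. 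Equi-continuity only gives uniform continuity on compact subsets of $\R^d$; the processes $Z^{t,x,k}_s$ roam over all of $\R^d$, so the inequality $\lvert\nabla b^k_s(Z^{t,x,k}_s)-\nabla b^k_s(Z^{t,x',k}_s)\rvert \leq \omega(C\lvert x-x'\rvert)$ is not justified either. Both issues feed into your Gr\"onwall estimate for $\Delta J^k_s$, which therefore does not hold as written.

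The paper circumvents both issues by a truncation/conditioning argument: split each expectation according to the event $\{\sup_s \lvert Z^{t,x_l,k}_s\rvert + \lvert Z^{t,x,k}_s\rvert + \lvert \nabla Z^{t,x,k}_s\rvert \leq M\}$. On this event, the processes stay in a fixed compact set depending only on $M$, and equi-continuity gives a \emph{uniform} modulus there; the $L^2$ convergence $\E[\sup_s\lvert Z^{t,x_l,k}_s - Z^{t,x,k}_s\rvert^2]\to 0$ (uniform in $k$, from \cite[Chapter 5, Lemma 3.3]{friedman1975stochastic}) then controls the source term. On the complement, uniform-in-$(k,l)$ moment and exponential moment bounds make the contribution small for $M$ large. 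One then sends $M\to\infty$ after $l\to\infty$ inside Gr\"onwall. Incorporating this truncation is not cosmetic; it is what makes the estimate on $\E[\sup_s\lvert\Delta J^k_s\rvert^2]$ go through, and your proposal needs it. A related but smaller caveat: your derivation of the uniform $L^\infty$ bound on $\nabla\varphi^k_t$ via ``$\lvert\nabla v^k_t\rvert\leq Cv^k_t$'' also implicitly assumes $\lvert\nabla\chi^k_t(x)\rvert\leq C$ a.s., which fails since $\nabla Z^{t,x,k}$ is only bounded in moments; the paper instead imports the global gradient bound on $\varphi^k$ from the HJB literature (\cite[Theorem 2.2]{chaintron2023existence}), and you should either cite that or use a H\"older-type splitting of $\E[e^{\chi^k}\nabla\chi^k]$.
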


\begin{proof}
From the proof of Lemma \ref{lem:thmIFCV}, $(\nabla \varphi^k_t )_{k \geq 1}$ is uniformly bounded.
From the Arzelà-Ascoli theorem, pre-compactness will follow if we show that $(x \mapsto \nabla \varphi^k_t (x) )_{k \geq 1}$ is equi-continuous.

Let us fix $x \in \R^d$.
From \ref{ass-c:hjb}-(\ref{it:ass-c-hjb:3}), the coefficients $b^k$ and $\sigma^k$ have derivatives w.r.t. $x$, which are equi-continuous w.r.t. $k$.
From \cite[Chapter 5,Theorem 5.3]{friedman1975stochastic}, $x \mapsto Z^{t,x,k}_s$ is a.s. differentiable and the gradient process $\nabla Z^{t,x,k}$ is the solution of the linear SDE
\begin{equation} \label{eq:linearSDE}
\begin{cases}
\d ( \nabla Z^{t,x,k}_s ) = \nabla b^k_s ( Z^{t,x,k}_s ) \nabla Z^{t,x,k}_s \d s + \nabla \sigma^k_s ( Z^{t,x,k}_s ) \nabla Z^{t,x,k}_s \d B_s, \\
Z^{t,x,k}_t = \mathrm{Id}.
\end{cases}
\end{equation} 
We now fix $(t,x)$ in $[0,T] \times \R^d$.
From the Lipschitz assumptions, the derivatives of the coefficients are globally bounded. 
We then define 
\[ \chi^k_t(x) := \int_t^T \big[ c^k_s (Z^{t,x,k}_s) + \lambda^k_s \psi^k_s (Z^{t,x,k}_s) \big] \d s,  \]
whose gradient w.r.t. $x$ writes
\[ \nabla \chi^k_t(x) := \int_t^T \big[ \nabla Z^{t,x,k}_s \nabla c^k_s (Z^{t,x,k}_s) + \lambda^k_s \nabla Z^{t,x,k}_s \nabla \psi^k_s (Z^{t,x,k}_s) \big] \d s.  \]
Setting $v^k :=\E[e^{\chi^k_t(x)}]$, we can write that
\begin{equation} \label{eq:GradApproxFK}
\nabla \varphi^k_t (x) = - \frac{\nabla v^k_t (x)}{v^k_t (x)} = - \frac{\E \big[ e^{\chi^k_t(x)} \nabla \chi^k_t(x) \big]}{v^k_t (x)}.
\end{equation}
To prove the equi-continuity of $\nabla \varphi^k_t$ at $x$, let us consider a sequence $( x_l )_{l \geq 1}$ that converges in $\R^d$ towards $x$ as $l \rightarrow + \infty$. 
We want to show that $\nabla \varphi^k_t (x_l)$ converges towards $\nabla \varphi^k_t (x)$ uniformly in $k$.
From \eqref{eq:ApproxFK}, we obtain as previously that  $( v^k_t (0) )_{k \geq 1}$ converges towards a positive number, so that $( \varphi^k_t (0) )_{k \geq 1}$ is bounded; the uniform bound on $\nabla \varphi^k$ then gives that $\varphi^k_t$ has linear growth uniformly in $k$. 
As a consequence, $v^k_t (x_l)$ is bounded from above, and bounded from below by a positive constant, uniformly in $(k,l)$.
Similarly, $\nabla v^k_t (x_l)$ is uniformly bounded, and $v^k_t (x_l)$ converges towards $v^k_t (x)$ uniformly in $k$.
The convergence of $\nabla v^k_t ( x_l)$ remains to be studied. 
First,
\begin{align*}
\big\lvert \nabla v^k_t (x_l) - \nabla v^k_t (x) \big\lvert &= \big\vert \E \big[ ( e^{\chi^k_t(x_l)} - e^{\chi^k_t(x)} ) \nabla \chi^k_t(x) + e^{\chi^k_t(x_l)} ( \nabla \chi^k_t(x_l) - \nabla \chi^k_t(x) ) \big] \big\vert, \\
&\leq \E \big[ \max ( e^{\chi^k_t(x_l)} , e^{\chi^k_t(x)} ) \lvert \chi^k_t(x_l) - \chi^k_t(x) \vert \lvert \nabla \chi^k_t(x) \rvert \big] \\
&\phantom{abcdefghijklmnopqrstuvwx.}+ \E \big[ \lvert \nabla \chi^k_t(x_l) - \nabla \chi^k_t(x) \vert e^{\chi^k_t(x_l)} \big]. 
\end{align*} 
We now show how to control the second term on the r.h.s., the first one being similar (and even easier because it does not involve $\nabla \chi^k_t(x_l)$).
Since $( x_l )_{l \geq 1}$, $\nabla b^k$, $\nabla \sigma^k$ and $\sigma^k$ are bounded uniformly in $k$, it is standard from \eqref{eq:linearSDE} that for any $\alpha > 0$ and $q \geq 1$,
\[ \E \bigg[ \sup_{t \leq s \leq T} \big\vert \nabla Z^{t,x_l,k}_s \big\vert^q \bigg] \quad \text{and} \quad \E \exp \bigg[ \alpha \sup_{t \leq s \leq T} \big\vert Z^{t,x_l,k}_s \big\vert \bigg] \]
are bounded uniformly in $(k,l)$. 
From the linear growth of $c^k$ and $\psi^k$ (uniformly in $k$) together with the uniform bound on $\lVert \lambda^k \rVert_{L^1(0,T)}$, we deduce analogous uniform bounds on $\E [ \vert \nabla \chi^k_t (x_l) \vert^q ]$ and $\E [ e^{\alpha \vert \chi^k_t (x_l) \vert} ]$.
For $M > 0$, we then write
\begin{multline*} 
\E \big[ \lvert \nabla \chi^k_t(x_l) - \nabla \chi^k_t(x) \vert e^{\chi^k_t(x_l)} \big] = \E \big[ \lvert \nabla \chi^k_t(x_l) - \nabla \chi^k_t(x) \vert e^{\chi^k_t(x_l)} \1_{\sup_{t \leq s \leq T} \vert Z^{t,x_l,k}_s \vert + \vert Z^{t,x,k}_s \vert \leq M} \big] \\
+ \E \big[ \lvert \nabla \chi^k_t(x_l) - \nabla \chi^k_t(x) \vert e^{\chi^k_t(x_l)} \vert \1_{\sup_{t \leq s \leq T} \vert  Z^{t,x_l,k}_s \vert + \vert Z^{t,x,k}_s \vert > M} \big]. 
\end{multline*}
The moment bounds ensure that the second term can be made arbitrarily small by choosing a large enough $M$, uniformly in $(k,l)$. 
Moreover,
\begin{multline*} \lvert \nabla \chi^k_t(x_l) - \nabla \chi^k_t(x) \rvert \leq T \sup_{t \leq s \leq T} \lvert \nabla Z^{t,x_l,k}_s \nabla c^k_s (Z^{t,x_l,k}_s) - \nabla Z^{t,x,k}_s \nabla c^k_s (Z^{t,x,k}_s) \rvert \\
+ \lVert \lambda^k \rVert_{L^1(0,T)} \sup_{t \leq s \leq T} \vert \nabla Z^{t,x_l,k}_s \nabla \psi^k_s (Z^{t,x_l,k}_s) - \nabla Z^{t,x,k}_s \nabla \psi^k_s (Z^{t,x,k}_s) \vert. 
\end{multline*}
When $M$ is fixed, the conditioning ensures that $Z^{t,x_l,k}_s$ remain in a compact set $K$ that does not depend on $(k,l)$.
Since convolution in time does not affect the space regularity, the continuous functions $x \mapsto \nabla c^k_s (x)$ and $x \mapsto \nabla \psi^k_s (x)$ are uniformly continuous on $K$, uniformly in $(s,k,l)$.
From \cite[Chapter 5,Lemma 3.3]{friedman1975stochastic},
\begin{equation} \label{eq:ApproxControl}
\E \bigg[ \sup_{t \leq s \leq T} \big\lvert Z^{t,x_l,k}_s - Z^{t,x,k}_s \big\rvert^2 \bigg] \xrightarrow[l \rightarrow + \infty]{} 0, 
\end{equation}
and this holds uniformly in $k$, because the Lipschitz constant of $b^k$ and $\sigma^k$ are independent of $k$.
Using the Cauchy-Schwarz inequality, we can now conclude if we prove that
\[ \E \bigg[ \sup_{t \leq s \leq T} \big\lvert \nabla Z^{t,x_l,k}_s - \nabla Z^{t,x,k}_s \big\rvert^2 \bigg] \xrightarrow[l \rightarrow + \infty]{} 0, \] uniformly in $k$.
Following \cite[Chapter 5,Theorem 5.2]{friedman1975stochastic} for this proof, we use \eqref{eq:linearSDE} to write that for every $t \leq s \leq T$,
\begin{multline} \label{eq:GradCV} 
\nabla Z^{t,x_l,k}_s - \nabla Z^{t,x,k}_s = \gamma^{k,x_l}_s + \delta^{k,x_l}_s + \int_t^s \nabla b^k_r ( Z^{t,x_l,k}_r ) [ \nabla Z^{t,x_l,k}_r - \nabla Z^{t,x,k}_r ] \d r \\
+ \int_t^s \nabla \sigma^k_r ( Z^{t,x_l,k}_r ) [ \nabla Z^{t,x_l,k}_r - \nabla Z^{t,x,k}_r ] \d B_r,
\end{multline}
where
\[ \gamma^{k,x_l}_s := \int_t^s [ \nabla b^k_r ( Z^{t,x_l,k}_r ) - \nabla b^k_r ( Z^{t,x,k}_r ) ] \nabla Z^{t,x,k}_r \d r, \] \[ \delta^{k,x_l}_s := \int_t^s [ \nabla \sigma^k_r ( Z^{t,x_l,k}_r ) - \nabla \sigma^k_r ( Z^{t,x,k}_r ) ] \nabla Z^{t,x,k}_r \d B_r. \]
We recall that $\nabla b^k$ and $\nabla \sigma^k$ are bounded uniformly in $k$.
We now take the square of \eqref{eq:GradCV}, and we use the Jensen and the Burkholder-Davis-Gundy (BDG) inequalities to get that for every $t \leq s \leq T$,
\begin{multline} \label{eq:GradGronwall}
\E \bigg[ \sup_{t \leq r \leq s} \big\lvert \nabla Z^{t,x_l,k}_r - \nabla Z^{t,x,k}_r \big\rvert^2 \bigg] \leq 4 \E \bigg[ \sup_{t \leq r \leq T} \vert \gamma^{k,x_l}_r \vert^2 + \vert \delta^{k,x_l}_r \vert^2 \bigg] \\ + C \int_t^s \E \bigg[ \sup_{t \leq r \leq u} \big\lvert \nabla Z^{t,x_l,k}_r - \nabla Z^{t,x,k}_r \big\rvert^2 \bigg] \d u, 
\end{multline} 
for a constant $C > 0$ that does not depend on $(k,l)$. 
Using Gronwall's lemma, we can conclude if we show that the first term on the r.h.s. of \eqref{eq:GradGronwall} goes to $0$ as $l \rightarrow +\infty$, uniformly in $k$.
As previously, we write that
\begin{multline*} 
\E \bigg[ \sup_{t \leq r \leq T} \vert \gamma^{k,x_l}_r \vert^2 \bigg] = \E \bigg[ \sup_{t \leq r \leq T} \vert \gamma^{k,x_l}_r \vert^2 \1_{\sup_{t \leq s \leq T} \vert  Z^{t,x_l,k}_s \vert + \vert Z^{t,x,k}_s \vert + \vert \nabla Z^{t,x,k}_r \vert \leq M} \bigg] \\
+ \E \bigg[ \sup_{t \leq r \leq T} \vert \gamma^{k,x_l}_r \vert^2 \1_{\sup_{t \leq s \leq T} \vert  Z^{t,x_l,k}_s \vert + \vert Z^{t,x,k}_s \vert + \vert \nabla Z^{t,x,k}_r \vert > M} \bigg], 
\end{multline*}
and the second term can be made arbitrarily small by choosing a large enough $M$, uniformly in $(k,l)$. 
For the first term, when $M$ is fixed, $Z^{t,x_l,k}_s$, $Z^{t,x,k}_s$ and $\nabla Z^{t,x_l,k}_s$ remain in a compact set $K$ that does not depend on $(k,l)$, and $x \mapsto \nabla b^k_s (x)$ is uniformly continuous on $K$, uniformly in $(s,k,l)$.
The uniform vanishing of $\E [ \sup_{t \leq r \leq T} \vert \gamma^{k,x_l}_r \vert^2 ]$ then results from \eqref{eq:ApproxControl}. 
Using the BDG inequality, the same reasoning gives the uniform vanishing of $\E [ \sup_{t \leq r \leq T} \vert \delta^{k,x_l}_r \vert^2 ]$, completing the proof.
\end{proof}

Iterating the method of the above proof for differentiating $x \mapsto \nabla Z^{t,x,k}$ within \eqref{eq:ApproxFK}, we get the following corollary.

\begin{corollary}
In the setting of Lemma \ref{lem:thmIFCV},
if the coefficients $b^k$, $\sigma^k$, $c^k$ and $\psi^k$ are $C^\infty$ in $x$ with bounded derivatives, then $\varphi^k$ is $C^\infty$ in $x$.
\end{corollary}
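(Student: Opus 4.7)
The plan is to iterate the argument used in the proof of Lemma \ref{lem:mfRegGrad}, which already identified the first derivative of $\varphi^k$ by differentiating $v^k_t(x) = \E[e^{\chi^k_t(x)}]$ under the expectation via~\eqref{eq:ApproxFK}. The representation $\varphi^k_t = -\log v^k_t$ with $v^k_t$ bounded below by a positive constant reduces the problem to showing that $x \mapsto v^k_t(x)$ is of class $C^\infty$, and then invoking the chain rule.

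First, I would establish by induction on $n \geq 1$ that $x \mapsto Z^{t,x,k}_s$ is a.s. $C^n$, and that the $n$-th derivative process $D^n_x Z^{t,x,k}$ satisfies a linear SDE whose coefficients are polynomial expressions in $D^j_x b^k, D^j_x \sigma^k$, $1 \leq j \leq n$, evaluated at $Z^{t,x,k}$, multiplied by lower-order derivatives $D^j_x Z^{t,x,k}$ with $j < n$. The base case $n=1$ is \eqref{eq:linearSDE}; the induction step is obtained by formally differentiating this SDE in $x$, and the solution is then justified rigorously via standard fixed-point arguments for linear SDEs, since all coefficients of $b^k, \sigma^k$ are assumed $C^\infty$ with bounded derivatives. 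From the boundedness of those derivatives and the BDG inequality, a Gronwall argument analogous to the one leading to~\eqref{eq:GradGronwall} yields moment estimates
\[
\E\Bigl[\sup_{t \leq s \leq T} |D^n_x Z^{t,x,k}_s|^q\Bigr] < \infty, \qquad \forall q \geq 1,
\]
locally uniformly in $x$, together with continuity in $x$.

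Next, I would differentiate $\chi^k_t(x) = \int_t^T [c^k_s(Z^{t,x,k}_s) + \lambda^k_s \psi^k_s(Z^{t,x,k}_s)] \, \d s$ using the Faà di Bruno formula. Since $c^k, \psi^k$ are $C^\infty$ in $x$ with bounded derivatives and $\|\lambda^k\|_{L^1(0,T)}$ is bounded, each $D^n_x \chi^k_t(x)$ is a polynomial in the derivatives $D^j_x Z^{t,x,k}$ with bounded $x$-dependent coefficients, hence possesses moments of every order locally uniformly in $x$. Applying the Faà di Bruno formula once more to $e^{\chi^k_t(x)}$ and bounding the resulting expressions using $\E[e^{\alpha|\chi^k_t(x)|}] < \infty$ for every $\alpha > 0$ (already obtained in Lemma~\ref{lem:mfRegGrad}) together with the Cauchy–Schwarz inequality, one justifies by dominated convergence the differentiation $D^n_x v^k_t(x) = \E[D^n_x e^{\chi^k_t(x)}]$ for every $n \geq 1$, with continuous dependence on $x$.

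Finally, since $v^k_t(x) \geq \delta_{K} > 0$ on every compact $K \subset \R^d$ (shown in Lemma~\ref{lem:mfRegGrad}), the function $\varphi^k_t = -\log v^k_t$ is $C^\infty$ by composition. The main obstacle is the bookkeeping involved in controlling all mixed moments of the derivative processes $D^n_x Z^{t,x,k}$ while iterating the Gronwall argument of Lemma~\ref{lem:mfRegGrad}; this is where the assumption that all space derivatives of $b^k$ and $\sigma^k$ are bounded is crucial, since it linearises the SDEs solved by $D^n_x Z^{t,x,k}$ and prevents any moment blow-up.
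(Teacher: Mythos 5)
Your proposal is correct and takes the same approach the paper intends: the corollary is stated with only the remark that one should "iterate the method of the above proof" (Lemma~\ref{lem:mfRegGrad}), and you have carried out exactly this iteration — inductively differentiating the flow $x \mapsto Z^{t,x,k}_s$, propagating moment bounds via BDG/Gronwall, applying Faà di Bruno and dominated convergence to differentiate $v^k_t = \E[e^{\chi^k_t}]$, and concluding by composition with $-\log$ using the uniform positive lower bound on $v^k_t$. Your write-up is more detailed than the paper's one-line justification but matches it in substance.
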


\appendix

\section{Linear functional derivative}\label{app:Diff}

This appendix gathers some useful tools for differentiating functions on measures.
Let $E$ be a complete metric space, endowed with its Borel $\sigma$-algebra.

\begin{definition}[Linear functional derivative] \label{def:diff}
Let $\mathcal{C}$ be a convex subset of $\ps(E)$ and let $\mu \in \mathcal{C}$.
A map $F : \mathcal{C} \rightarrow \R$ is differentiable at $\mu$ w.r.t. to the set of directions $\mathcal{C}$ if there exists a measurable map 
\[
\frac{\delta F}{\delta\mu}(\mu):
\begin{cases}
E \rightarrow \R, \\
x \mapsto \frac{\delta F}{\delta\mu}(\mu,x),
\end{cases}
\]
such that for every $\mu'$ in $\mathcal{C}$, $\tfrac{\delta F}{\delta\mu}(\mu)$ is $\mu'$-integrable and satisfies
\begin{equation*} 
\varepsilon^{-1} \bigl[ F( (1-\varepsilon) \mu + \varepsilon \mu' ) - F ( \mu ) \bigr] \xrightarrow[\varepsilon \rightarrow 0^+]{} \biggl\langle \mu' - \mu, \frac{\delta F}{\delta \mu}(\mu) \biggr\rangle. 
\end{equation*}

To alleviate notations, the dependence on $\mathcal{C}$ is not emphasised in the notation $\tfrac{\delta F}{\delta \mu}(\mu)$. 
The map $\tfrac{\delta F}{\delta \mu}(\mu)$ being defined up to an additive constant, we adopt the usual convention that $\langle \mu, \tfrac{\delta F}{\delta \mu}(\mu) \rangle = 0$.
This map is called the \emph{linear functional derivative of $F$ at $\mu$} (w.r.t. the set of directions $\mathcal{C}$).
We notice that this definition does not depend on the behaviour of $F$ outside of an arbitrary small neighbourhood of $\mu$.
\end{definition}

\begin{example}[Linear case]
In the particular case $F(\mu) = \langle \mu, f \rangle$ for some measurable $f: E \rightarrow \R$ that is $\mu$-integrable for every $\mu$ in $\mathcal{C}$, we have $\frac{\delta F}{\delta \mu}(\mu,x) = f(x) - \langle \mu, f \rangle$.
\end{example}

\begin{rem}[Convex case]
If $F$ is convex on $\mathcal{C}$ and differentiable at $\mu \in \mathcal{C}$ w.r.t. $\mathcal{C}$, Definition \ref{def:diff} implies that
\begin{equation}\label{eq:convex-diff}
\forall \mu' \in \mathcal{C}, \qquad    F(\mu') \geq F(\mu) + \left\langle \mu'-\mu, \frac{\delta F}{\delta \mu}(\mu)\right\rangle.
\end{equation}
\end{rem}

\begin{lemma}[Integration]\label{lem:integ-diff}
    Let $\mu, \mu' \in \mathcal{C}$. Assume that for any $r \in [0,1]$, $F$ is differentiable at $(1-r)\mu' + r\mu$ w.r.t. the set of directions $\mathcal{C}$, and that
    \begin{equation*}
        \int_0^1 \left|\bigg\langle \mu-\mu', \frac{\delta F}{\delta \mu}((1-r)\mu' + r \mu) \bigg\rangle\right| \d r < \infty.
    \end{equation*}
    Then
    \begin{equation*}
        F(\mu)-F(\mu') = \int_0^1 \bigg\langle \mu-\mu', \frac{\delta F}{\delta \mu}((1-r)\mu' + r \mu) \bigg\rangle \d r.
    \end{equation*}
\end{lemma}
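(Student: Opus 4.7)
The plan is to reduce to a one-dimensional fundamental-theorem-of-calculus argument on $[0,1]$. Define $g : [0,1] \to \R$ by $g(r) := F(\mu_r)$, where $\mu_r := (1-r)\mu' + r\mu$, so that $g(0) = F(\mu')$ and $g(1) = F(\mu)$. The target identity becomes $g(1) - g(0) = \int_0^1 g'(r)\,\d r$ with the candidate derivative $g'(r) = \langle \mu - \mu', \tfrac{\delta F}{\delta\mu}(\mu_r)\rangle$.

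First, I would verify that $g$ is genuinely differentiable at every interior point. For $r \in [0,1)$ and small $\varepsilon > 0$, observe that $\mu_{r+\varepsilon} = (1-\eta)\mu_r + \eta\mu$ with $\eta = \varepsilon/(1-r)$, since $\mu - \mu_r = (1-r)(\mu-\mu')$. The differentiability of $F$ at $\mu_r$ in the direction $\mu \in \mathcal C$ then gives, as $\varepsilon \to 0^+$,
\[
\varepsilon^{-1}[g(r+\varepsilon) - g(r)] = \frac{1}{1-r}\,\eta^{-1}\bigl[F((1-\eta)\mu_r + \eta\mu) - F(\mu_r)\bigr] \xrightarrow[\varepsilon \to 0^+]{} \frac{1}{1-r}\bigl\langle \mu - \mu_r, \tfrac{\delta F}{\delta\mu}(\mu_r)\bigr\rangle = \bigl\langle \mu - \mu', \tfrac{\delta F}{\delta\mu}(\mu_r)\bigr\rangle.
\]
The same manipulation with the direction $\mu'$, writing $\mu_{r-\varepsilon} = (1-\eta)\mu_r + \eta\mu'$ for $\eta = \varepsilon/r$ when $r \in (0,1]$, yields the matching left derivative. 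Hence $g$ is differentiable on $(0,1)$ with the claimed $g'(r)$, has one-sided derivatives at the two endpoints (the integrability assumption at $r=0$ and $r=1$ being covered by the integrability clause in Definition~\ref{def:diff}), and is therefore continuous on $[0,1]$.

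Finally, I would invoke the classical result that a function $g : [0,1] \to \R$ which is continuous on $[0,1]$ and differentiable on $(0,1)$ with $g'$ Lebesgue-integrable is automatically absolutely continuous and satisfies $g(1) - g(0) = \int_0^1 g'(r)\,\d r$. Since integrability of $g'$ is exactly the hypothesis of the lemma, this closes the argument. The only non-routine step is the bookkeeping in the rescaling computation of $g'$: one must express $\mu_{r \pm \varepsilon}$ as a \emph{proper} convex combination of $\mu_r$ and an element of $\mathcal C$ (namely $\mu$ or $\mu'$), so that Definition~\ref{def:diff} applies legitimately, rather than informally differentiating $F$ in the signed direction $\mu - \mu'$ which need not lie in $\mathcal C$.
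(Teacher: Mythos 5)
The paper states Lemma~\ref{lem:integ-diff} without proof, so there is no in-paper argument to compare against; your argument, however, is correct and is exactly the natural way to establish it. The key manipulation is precisely the one you single out: Definition~\ref{def:diff} only yields one-sided directional derivatives along convex combinations towards elements of $\mathcal C$, so one must rewrite $\mu_{r+\varepsilon}=(1-\eta)\mu_r+\eta\mu$ with $\eta=\varepsilon/(1-r)$ (and analogously $\mu_{r-\varepsilon}=(1-\eta)\mu_r+\eta\mu'$ with $\eta=\varepsilon/r$) rather than differentiate along the signed direction $\mu-\mu'$. Your identification $\mu-\mu_r=(1-r)(\mu-\mu')$ then produces the claimed derivative $g'(r)=\langle\mu-\mu',\tfrac{\delta F}{\delta\mu}(\mu_r)\rangle$ from both sides, and the final step---that a function continuous on $[0,1]$, differentiable at every point of $(0,1)$, with $g'\in L^1$, satisfies the Lebesgue fundamental theorem of calculus---is a standard theorem (it can be recovered from Rudin's Theorem~7.21 on subintervals $[\delta,1-\delta]$ and passing to the limit using continuity and dominated convergence). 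No gaps.
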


Notably, the relative entropy does not fit in the setting of Definition \ref{def:diff}, because the directional derivative may be $-\infty$.
However, the following classical result still holds, whose proof traces back to \cite[Lemma 2.1]{csiszar1975divergence}. 

\begin{lemma}[Derivative of $H$] \label{lem:HDiff}
Let $\mu$ and $\mu'$ be measures in $\ps(E)$ with $H(\mu \vert \nu)$, $H(\mu' \vert \nu) < +\infty$. 
For $\varepsilon \in [0,1]$, we define $\mu_\varepsilon := (1-\varepsilon)\mu + \varepsilon\mu'$. Then,
\[ \frac{\d}{\d \varepsilon}\bigg\vert_{\varepsilon=0} H(\mu_\varepsilon \vert\nu) = \int_E \log \frac{\d \mu}{\d \nu}\d \mu' - H(\mu\vert\nu), \]
the integral on the r.h.s. being in $\R \cup \{-\infty\}$.
\end{lemma}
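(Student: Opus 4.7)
The plan is to reduce the problem to a one-dimensional differentiation under the integral sign. Since $H(\mu\vert\nu), H(\mu'\vert\nu) < +\infty$, both $\mu$ and $\mu'$ are absolutely continuous w.r.t. $\nu$, and so is $\mu_\varepsilon$, with density $\rho_\varepsilon := (1-\varepsilon) f + \varepsilon g$ where $f := \tfrac{\d \mu}{\d\nu}$ and $g := \tfrac{\d \mu'}{\d\nu}$. Writing
\[
    \varepsilon^{-1}\bigl[H(\mu_\varepsilon\vert\nu) - H(\mu\vert\nu)\bigr] = \int_E h_\varepsilon\,\d\nu, \qquad h_\varepsilon(x) := \frac{\rho_\varepsilon(x) \log \rho_\varepsilon(x) - f(x) \log f(x)}{\varepsilon},
\]
the goal is to identify the limit of the right-hand side as $\varepsilon \to 0^+$.

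The key structural observation is that, for each fixed $x$, the map $\varepsilon \mapsto \rho_\varepsilon(x) \log \rho_\varepsilon(x)$ is convex on $[0,1]$ (composition of an affine function with the convex function $u \mapsto u \log u$). By the standard monotonicity of chord slopes at the origin of a convex function, $h_\varepsilon(x)$ is non-decreasing in $\varepsilon \in (0,1]$, and therefore decreases to a pointwise limit $h_0(x) \in [-\infty,+\infty)$ as $\varepsilon \searrow 0$. In particular $h_\varepsilon \leq h_1 = g \log g - f \log f$, which is $\nu$-integrable thanks to the finiteness of both entropies. Applying the monotone convergence theorem to the non-negative non-decreasing family $(h_1 - h_\varepsilon)_{\varepsilon \in (0,1]}$ yields $\int_E h_\varepsilon\,\d\nu \to \int_E h_0\,\d\nu$ in $\R \cup \{-\infty\}$.

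A case analysis then gives the pointwise limit. Where $f(x) > 0$, a direct differentiation yields $h_0(x) = (g(x) - f(x))(1 + \log f(x))$. Where $f(x) = 0$ and $g(x) > 0$, one has $\rho_\varepsilon(x) = \varepsilon g(x)$, so $h_\varepsilon(x) = g(x) \log \varepsilon + g(x) \log g(x) \to -\infty$, in accordance with the convention $\log 0 = -\infty$. Where $f(x) = g(x) = 0$, $h_\varepsilon(x) \equiv 0$. Using $\int_E (g-f)\,\d\nu = 0$ and $\int_E f \log f\,\d\nu = H(\mu\vert\nu)$, one then computes $\int_E h_0\,\d\nu = \int_E g \log f\,\d\nu - H(\mu\vert\nu) = \int_E \log\tfrac{\d\mu}{\d\nu}\,\d\mu' - H(\mu\vert\nu)$, as claimed.

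The only subtlety is the case $\mu'(\{f = 0\}) > 0$, which is precisely when the right-hand side equals $-\infty$; the monotone structure above absorbs this automatically, so no ingredient beyond the finiteness of both entropies is needed.
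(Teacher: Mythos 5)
Your argument is correct: you reduce to the pointwise difference quotient $h_\varepsilon$, use convexity of $u\mapsto u\log u$ to get monotonicity of $h_\varepsilon$ in $\varepsilon$, dominate by the integrable $h_1=g\log g - f\log f$, apply monotone convergence, and identify the pointwise limit by a case analysis on $\{f>0\}$, $\{f=0,g>0\}$, $\{f=g=0\}$. The paper does not include its own proof, merely citing \cite[Lemma 2.1]{csiszar1975divergence}; your convexity-plus-monotone-convergence argument is precisely the classical route behind that reference, so the approaches coincide.
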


\section{A variant of Theorem~\ref{thm:abstractGibbs} for linear constraints} \label{sec:appLin}

In this section we only work with linear inequality constraints as in Remarks~\ref{rem:lin-ineq-constr}-\ref{rem:LinIneqCons}. We present a set of relaxed assumptions on the family $(\psi_\t)_{\t \in \T}$ which ensures that the conclusion of Theorem~\ref{thm:abstractGibbs} remains in force, in the absence of equality constraints ($\S = \emptyset)$.

\begin{proposition}[Lagrange multiplier for linear constraints]\label{pro:LinAbs}
Let us consider $\nu$ in $\ps(E)$,  a continuous $\phi : E \to \R_+$, and a measurable $\mathcal{F} : \ps_\phi (E) \to \R$.
Let $(\psi_\t)_{\t \in \T}$ be a family of measurable functions $E \to \R$. 
We assume that there exists $C_\psi \geq 0$ such that, 
\begin{equation}\label{eq:lb-dom-psi}
\forall x \in E, \qquad \inf_{\t \in \T} \psi_t(x) \geq -C_\psi [ 1+\phi(x)].
\end{equation}
Then the $\Psi_\t : \ps_\phi(E) \to (-\infty,+\infty], \mu \mapsto \Psi_t ( \mu )$ are measurable functions. 
Let the set $\Adineq$ be defined accordingly. 
We further assume that for $\nu$-a.e. $x$ in $E$, the map $\t \in \T \mapsto \psi_t(x)$ is continuous, and that there exist $\tilde{\mu} \in \ps_\phi(E)$ and $\eta > 0$ such that
\begin{equation}\label{eq:EquivLinQualif}
\A \t \in \T, \quad \langle \tilde\mu, \psi_t \rangle \leq - \eta.
\end{equation}
In this setting, let $\overline{\mu} \in \Adineq$ be a minimiser for~\eqref{eq:min-pb} (with $\S=\emptyset$). 
We assume that $\F$ is differentiable at $\overline{\mu}$ w.r.t. the set of directions $\ps_\phi(E)$ in the sense of Definition \ref{def:diff}, and that $C_\F \in \R$ exists such that for $\nu$-a.e. $x$ in $E$,
\begin{equation*} 
\frac{\delta\F}{\delta\mu}(\overline\mu,x) \geq -C_\F \, [ 1+\phi(x) ].
\end{equation*} 
We eventually assume that 
\begin{equation} \label{eq:assIntImpr}
\langle \nu, e^{\overline{\alpha} \phi} \rangle < +\infty,
\end{equation}
for some $\overline{\alpha} \in \R$ with $C_\F + C_\psi C(\nu,\F,\eta,\tilde\mu,\overline{\mu}) < \overline{\alpha}$, where 
\[ C(\nu,\F,\eta,\tilde\mu,\overline{\mu}) := \eta^{-1} \bigg[ \log \bigg\langle \nu, \exp \bigg[ - \frac{\delta \F}{\delta\mu}(\overline{\mu}) \bigg] \bigg\rangle + H ( \tilde\mu \vert \nu ) + \biggl\langle \tilde\mu, \frac{\delta\F}{\delta\mu}(\overline{\mu}) \biggr\rangle \bigg]. \]
Then, there exists a positive Radon measure $\overline{\lambda}$ on $\T$ such that
\begin{equation} \label{eq:LinGibbsMeasure}
\frac{\d \overline\mu}{\d \nu} \bigl( x \bigr) = \overline{Z}^{-1} \exp \biggl[ - \frac{\delta \F}{\delta \mu}(\overline\mu,x) - \int_\T \psi_{\t}(x) \overline{\lambda}( \d \t) \biggr],
\end{equation} 
where $\overline{Z} \in (0,+\infty)$ is a normalising constant.
Moreover, $\overline\lambda(\T) \leq C(\nu,\F,\eta,\tilde\mu,\overline{\mu})$ and the complementary slackness condition is satisfied:
\begin{equation} \label{eq:LinCompSlackness}
\langle \overline{\mu}, \psi_{\t} \rangle = 0 \quad \text{for } \overline{\lambda}\text{-a.e. } \t \in\T.
\end{equation} 
\end{proposition}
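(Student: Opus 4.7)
Here is how I would approach the proof. The linear structure of the constraints allows a cleaner variational argument than the general Theorem \ref{thm:abstractGibbs}, but the absence of any upper bound on the $\psi_t$ forces us to work in an exponential/Orlicz function space adapted to the integrability hypothesis \eqref{eq:assIntImpr}.

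\emph{Step 1 (Linearised optimality).} Since $\Adineq$ is convex as an intersection of affine half-spaces, for any $\mu \in \AdF \cap \Adineq$ the convex combination $\mu_\varepsilon := (1-\varepsilon) \overline\mu + \varepsilon \mu$ remains in $\AdF \cap \Adineq$. Writing the optimality inequality $\mathcal{I}(\overline\mu) \leq \mathcal{I}(\mu_\varepsilon)$, using the convexity bound for $H(\cdot|\nu)$ and Lemma \ref{lem:HDiff}, and the differentiability of $\F$ at $\overline\mu$, I would let $\varepsilon \to 0^+$ to obtain
\[
    \Bigl\langle \mu,\; \log \tfrac{\d\overline\mu}{\d\nu} + \tfrac{\delta\F}{\delta\mu}(\overline\mu) \Bigr\rangle \geq H(\overline\mu|\nu),
    \qquad \forall \mu \in \AdF \cap \Adineq,
\]
the integral being a priori in $\R \cup \{-\infty\}$; the sign condition forces its finiteness. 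Setting $\Phi := \log(\d\overline\mu/\d\nu) + \tfrac{\delta\F}{\delta\mu}(\overline\mu) - H(\overline\mu|\nu)$, this says that $\langle \mu, \Phi\rangle \geq 0$ for every $\mu$ with bounded density w.r.t.\ $\nu$ satisfying $\langle \mu, \psi_t\rangle \leq 0$ for all $t$ (such measures lie in $\AdF \cap \Adineq$ thanks to \eqref{eq:lb-dom-psi}, \eqref{eq:assIntImpr} and the lower bound on $\tfrac{\delta\F}{\delta\mu}(\overline\mu)$).

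\emph{Step 2 (Separation and extraction of a multiplier).} The key point is that the lower bounds $\psi_t \geq -C_\psi[1+\phi]$, $\tfrac{\delta\F}{\delta\mu}(\overline\mu) \geq -C_\F[1+\phi]$ together with $\langle \nu, e^{\overline\alpha \phi}\rangle < \infty$ put all these objects in an Orlicz/exponential space $\mathcal{X}$ of functions integrable against both $\overline\mu$ and all candidate test measures with density bounded by $e^{c\phi}$ for $c < \overline\alpha$. I would then argue by contradiction via the geometric Hahn–Banach theorem \cite[Corollary 4.5]{barbu2012convexity}, as in Step 2 of the proof of Theorem \ref{thm:abstractGibbs}: if $\Phi$ did not belong to the closure of the convex cone $\mathcal{C}$ generated by $\{-\psi_t : t \in \T\} \cup \R \cup C_b(E,\R_+)$ in $\mathcal{X}$, a separating linear functional would produce a probability measure $\mu_h$ with bounded density w.r.t.\ $\nu$, satisfying $\langle \mu_h, \psi_t\rangle \leq 0$ for all $t$ yet $\langle \mu_h, \Phi\rangle < 0$, contradicting Step 1. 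Approximating $\Phi$ by $-\sum_i \lambda_{i,k} \psi_{t_{i,k}} + c_k + g_k$ with $g_k \geq 0$ and testing against $\tilde\mu$, the qualification \eqref{eq:EquivLinQualif} yields the uniform bound $\eta\lambda_k(\T) \leq c_k - \langle\tilde\mu,\Phi\rangle + o(1)$, and testing against $\overline\mu$ controls $c_k$. By Prokhorov the associated Radon measures $\lambda_k := \sum_i \lambda_{i,k}\delta_{t_{i,k}}$ admit a weakly convergent subsequence with limit $\overline\lambda \in \M_+(\T)$; using the continuity of $t \mapsto \psi_t(x)$ on a full $\nu$-measure set and the dominated convergence built from \eqref{eq:assIntImpr}, I pass to the limit to obtain $\Phi = -\int_\T \psi_t \overline\lambda(\d t) - \log\overline Z$ $\nu$-a.e., which exponentiates to \eqref{eq:LinGibbsMeasure}.

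\emph{Step 3 (Slackness and mass bound).} Integrating \eqref{eq:LinGibbsMeasure} against $\overline\mu$ (using $\langle\overline\mu, \tfrac{\delta\F}{\delta\mu}(\overline\mu)\rangle = 0$) gives $\log\overline Z = -H(\overline\mu|\nu)$ and $\int_\T \langle\overline\mu, \psi_t\rangle\overline\lambda(\d t) = 0$; since $\langle\overline\mu,\psi_t\rangle \leq 0$ and $\overline\lambda \geq 0$, this delivers \eqref{eq:LinCompSlackness}. For the mass estimate, I would expand $H(\tilde\mu|\overline\mu) \geq 0$ via the Gibbs form to obtain
\[
    \eta \,\overline\lambda(\T) \;\leq\; -\int_\T \langle \tilde\mu,\psi_t\rangle\overline\lambda(\d t) \;\leq\; H(\tilde\mu|\nu) + \log\overline Z + \bigl\langle \tilde\mu, \tfrac{\delta\F}{\delta\mu}(\overline\mu)\bigr\rangle,
\]
and then use the dual formula \eqref{eq:DualEntrop} to bound $\log\overline Z = -H(\overline\mu|\nu) \leq \log\langle\nu, \exp[-\tfrac{\delta\F}{\delta\mu}(\overline\mu)]\rangle$, yielding $\overline\lambda(\T) \leq C(\nu,\F,\eta,\tilde\mu,\overline\mu)$. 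With this a priori bound in hand, the lower bounds on $\psi_t$ and $\tfrac{\delta\F}{\delta\mu}(\overline\mu)$ together with the relation $\overline\alpha > C_\F + C_\psi \cdot C$ ensure that the exponential in \eqref{eq:LinGibbsMeasure} is $\nu$-integrable, so $\overline Z \in (0,\infty)$.

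The main obstacle will be Step 2: without an upper bound on $\psi_t$ (which drove the $L^1(\overline\mu) \cap L^1(\tilde\mu)$-separation in Theorem \ref{thm:abstractGibbs}), one must carefully choose an exponential function space whose dual contains enough test measures to run the contradiction, and then justify the passage to the limit $\lambda_k \to \overline\lambda$ uniformly in $x$ with only lower-semicontinuous $\psi_t$. The integrability threshold $\overline\alpha > C_\F + C_\psi \cdot C(\nu,\F,\eta,\tilde\mu,\overline\mu)$ is tailored precisely so that a priori multipliers of mass at most $C$ produce $\nu$-integrable Gibbs densities, closing the loop.
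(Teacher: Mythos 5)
Your proposal mirrors the Hahn--Banach separation strategy of Theorem~\ref{thm:abstractGibbs}, transplanted to an (unspecified) exponential Orlicz space to compensate for the missing upper bound on the $\psi_t$. The paper's proof of Proposition~\ref{pro:LinAbs} is genuinely different, and indeed deliberately so: as is remarked before the proof, its whole point is to be \emph{elementary} and avoid Hahn--Banach. After the same linearisation in Step~1, the paper does not separate $\Phi$ from a cone; instead it passes to the Lagrangian dual. It introduces the Gibbs free energy
$G(\lambda) = -\log\langle \nu, \exp[-\tfrac{\delta\F}{\delta\mu}(\overline\mu) - \int_\T \psi_t\,\lambda(\d t)]\rangle$,
well defined exactly when $C_\F + C_\psi\lambda(\T) < \overline\alpha$ because of the \emph{one-sided} bound $\psi_t \geq -C_\psi[1+\phi]$; proves weak duality against the linearised primal; takes a maximising sequence $\lambda_k$, for which the qualification~\eqref{eq:EquivLinQualif} gives the mass bound $\lambda_k(\T) \leq \varepsilon + C$ directly; extracts a weak limit $\overline\lambda$ by Prokhorov; shows $G$ is upper semicontinuous along this limit by Fatou (which only needs the lower bound on the $\psi_t$); and then recovers admissibility of $\mu_{\overline\lambda}$ and slackness from first-order conditions in $\lambda$ (perturbations $\overline\lambda + \varepsilon\delta_{t_0}$ and $(1-\varepsilon)\overline\lambda$). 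Finally, strict convexity of the linearised primal from Step~1 identifies $\mu_{\overline\lambda} = \overline\mu$.

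The gap in your Step~2 is real, you flag it yourself, and it is not a technicality: without the domination $|\psi_t| \leq D^\psi[1+\phi]$, the integrals $\int_\T \psi_t\,\lambda_k(\d t)$ need not converge in $L^1(\overline\mu)$ (or $L^1(\tilde\mu)$) along $\lambda_k \to \overline\lambda$, so the approximation argument that identifies $\Phi$ with $-\int\psi_t\overline\lambda(\d t) + \mathrm{const}$ does not close. Moving to an Orlicz/exponential space does not obviously repair this, because the topological dual of an $L^{\exp}$-type space is substantially larger than a space of measures with bounded densities, so the separating functional produced by Hahn--Banach need not correspond to an admissible test measure $\mu_h$. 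Your Steps~1 and~3 (linearisation, slackness, mass bound) are essentially correct and coincide in spirit with the paper's, but the core extraction of $\overline\lambda$ is left at the level of an intention; the paper's dual approach is precisely engineered so that Fatou's one-sided inequality and the a priori mass bound do all the work, rendering the separation argument unnecessary.
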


Notice that, contrarily to the remainder of this work, here the functions $\Psi_t$ are not required to be l.s.c. on $\ps_\phi(E)$, but under~\eqref{eq:lb-dom-psi} they are so if the $\psi_t$ are l.s.c. on $E$; 
furthermore, they are allowed to take the value $+\infty$ on $\ps_\phi(E)$. 
The main strength of Proposition \ref{pro:LinAbs} is to remove the upper bound condition in Remark~\ref{rem:LinIneqCons}.
Only lower bounds are required, as in \cite[Assumption (A1)]{liu2020large}.
Another advantage of Proposition \ref{pro:LinAbs} is that its proof is elementary, in the sense that it does not rely on the use of the Hahn-Banach theorem.

\begin{proof}[Proof of Proposition \ref{pro:LinAbs}] 
Since $\F$ is not assumed to be convex, $\overline{\mu}$ may not be unique.

\medskip

\noindent\emph{\textbf{Step 1.} Linearisation.} 
Let us consider $\mu$ in $\Adineq$. 
Then $\F ( \mu)$ is finite by assumption. 
We first assume that $H(\mu \vert\nu) < +\infty$.
For $\varepsilon \in (0,1]$, we set $\overline{\mu}_\varepsilon := (1-\varepsilon)\overline{\mu} + \varepsilon \mu$. 
Since $\ps_{\phi}(E)$ is convex, we write for any $\varepsilon \in (0,1]$,
\[ H(\overline{\mu}|\nu) + \F(\overline{\mu}) \leq H(\overline{\mu}_\varepsilon|\nu) + \F(\overline{\mu}_\varepsilon) \leq (1-\varepsilon) H(\overline{\mu}|\nu) + \varepsilon H(\mu|\nu) + \F(\overline{\mu}_\varepsilon), \]
where we used the convexity of $H$ in the second inequality. This rewrites
\begin{equation*}
    H(\overline{\mu}|\nu) - H(\mu|\nu) \leq \varepsilon^{-1} \big[ \F(\overline{\mu}_\varepsilon)-\F(\overline{\mu}) \big] ,
\end{equation*}
which yields, thanks to Definition~\ref{def:diff}, 
\begin{equation*}
    H(\overline{\mu}|\nu) + \left\langle \overline{\mu}, \frac{\delta\F}{\delta\mu}(\overline{\mu})\right\rangle \leq H(\mu|\nu) + \left\langle \mu, \frac{\delta\F}{\delta\mu}(\overline{\mu})\right\rangle.
\end{equation*}
This inequality still holds if $H({\mu}|\nu) = +\infty$. 
We thus proved that $\overline{\mu}$ is a minimiser for
\begin{equation} \label{eq:AuxProb} \inf_{\mu \in \Adineq} H(\mu \vert\nu) + \biggl\langle \mu, \frac{\delta\F}{\delta\mu}(\overline{\mu}) \biggr\rangle. 
\end{equation}
By strict convexity of $H$, $\overline{\mu}$ is the unique minimiser for \eqref{eq:AuxProb}.

\medskip

\noindent\emph{\textbf{Step 2.} Dual problem.} 
For any $\lambda$ in $\M_+(\T)$ with $C_\F + C_\psi \lambda (\T) < \overline\alpha$, we define the Gibbs free energy 
\[ G( \lambda ) := -\log \bigg\langle \nu, \exp \biggl[ -\frac{\delta\F}{\delta\mu}(\overline{\mu})-\int_\I \psi_\t \lambda(\d \t) \biggr] \bigg\rangle, \]
together with the Gibbs measure $\mu_\lambda$ such that
\begin{equation*} 
\frac{\d \mu_\lambda}{\d \nu} \bigl( x \bigr) = e^{G(\lambda)} \exp \biggl[ - \frac{\delta \F}{\delta \mu}(\overline\mu,x) - \int_\I \psi_\t(x) \lambda( \d \t) \biggr].
\end{equation*}
From \eqref{eq:assIntImpr}, $\mu_\lambda$ is well-defined and satisfies $H( \mu_\lambda \vert \nu) < +\infty$; thus, $\mu_\lambda$ belongs to $\ps_\phi(E)$ thanks to Lemma~\ref{lem:integ}. 
For every $\mu$ in $\ps_\phi(E)$, we get as in the proof of Theorem~\ref{thm:convex-case} that
\[ H(\mu|\nu) + \biggl\langle \mu, \frac{\delta\F}{\delta\mu}(\overline{\mu}) \biggr\rangle + \int_\I \langle \mu , \psi_\t \rangle \lambda(\d \t) = H(\mu|\mu_\lambda) + G(\lambda) \]
this equality being in $\R \cup \{+\infty \}$.
Since $H(\mu|\mu_\lambda) \geq 0$, this proves that
\begin{equation} \label{eq:Gvar}
G( \lambda ) = \inf_{\mu \in \ps_\phi(E)} H( \mu \vert \nu ) + \biggl\langle \mu, \frac{\delta\F}{\delta\mu}(\overline{\mu}) \biggr\rangle + \int_\I \langle \mu , \psi_\t \rangle \lambda(\d \t),
\end{equation} 
the unique minimiser being $\mu_\lambda$. 
We now study the Lagrange dual problem
\begin{equation} \label{eq:DualLin}
\sup_{\substack{\lambda \in \M_+(\I) \\ C_\F + C_\psi \lambda (\T) < \overline\alpha}} G(\lambda).
\end{equation} 
The following weak duality relation is verified
\begin{equation} \label{eq:WeakDual}
\sup_{\substack{\lambda \in \M_+(\I) \\ C_\F + C_\psi \lambda (\T) < \overline\alpha}} G(\lambda) \leq \inf_{\mu \in \Adineq} H(\mu \vert\nu) + \biggl\langle \mu, \frac{\delta\F}{\delta\mu}(\overline{\mu}) \biggr\rangle, 
\end{equation}
guarantying the finiteness of the supremum \eqref{eq:DualLin}.

\medskip

\noindent\emph{\textbf{Step 3.} Existence for $\overline{\lambda}$.} 
Let $( \lambda_k )_{k\in \N}$ be a maximising sequence for \eqref{eq:DualLin}.
Noticing that
\[ \sup_{\substack{\lambda \in \M_+(\I) \\ C_\F + C_\psi \lambda (\T) < \overline\alpha}} G(\lambda) \geq G(0) = -\log \bigg\langle \nu, \exp \bigg[ - \frac{\delta \F}{\delta\mu}(\overline{\mu}) \bigg] \bigg\rangle,\]
we get that for every $\varepsilon > 0$, $G(\lambda_k) \geq G(0) - \eta \varepsilon$ for $k$ large enough.
Using \eqref{eq:Gvar}, 
\[ G(\lambda_k) \leq 
H ( \tilde\mu \vert \nu ) + \biggl\langle \tilde\mu, \frac{\delta\F}{\delta\mu}(\overline{\mu}) \biggr\rangle + \int_\I \langle \tilde\mu , \psi_\t \rangle \lambda_k (\d \t), \]
with $\langle \tilde\mu , \psi_\t \rangle \leq -\eta$, hence
\begin{equation} \label{eq:MassB}
\lambda_k(\I) \leq \varepsilon + C(\nu,\F,\eta,\tilde\mu,\overline{\mu}). 
\end{equation} 
Using the Prokhorov theorem as in the proof of Theorem \ref{thm:abstractGibbs}, the bound on $( \lambda_k ( \T ) )_{k \geq 1}$ implies that $( \lambda_k )_{k \geq 1}$ is relatively compact for the weak convergence of measures.
Up to re-indexing, we can thus assume that $( \lambda_k )_{k \in \N}$ weakly converges towards some $\overline\lambda \in \M_+(\T)$.
By assumption, $\t \mapsto \psi_\t(x)$ is $\nu$-a.e. continuous, and $\nu$-a.e. bounded because $\T$ is compact. 
As a consequence,
\[ \int_\I \psi_\t(x) \lambda_k(\d \t) \xrightarrow[k \rightarrow + \infty]{} \int_\I \psi_\t(x) \overline\lambda( \d \t) \, \text{ for $\nu$-a.e. } x \in E.\]
Fatou's lemma then yields
\[ \bigg\langle \nu , \exp \biggl[ - \frac{\delta\F}{\delta\mu}(\overline{\mu}) -\int_\I \psi_\t \overline\lambda(\d \t) \biggr] \bigg\rangle \leq \liminf_{k\rightarrow+\infty} \bigg\langle \nu, \exp \biggl[ - \frac{\delta\F}{\delta\mu}(\overline{\mu}) -\int_\I \psi_\t \lambda_k(\d \t) \biggr] \bigg\rangle, \]
proving that $\overline{\lambda}$ realises the supremum \eqref{eq:DualLin}. 
Since \eqref{eq:MassB} was true for every $\varepsilon > 0$ provided that $k$ was large enough, we get that $\overline\lambda(\T) \leq C(\nu,\F,\eta,\tilde\mu,\overline{\mu})$.
In particular, $C_\F + C_\psi \overline\lambda (\T) < \overline\alpha$ from our assumption on $\overline\alpha$.

\medskip

\noindent\emph{\textbf{Step 4.} Admissibility for the Gibbs measure $\mu_{\overline\lambda}$.} 
Let $\tilde\varepsilon > 0$ be such that $C_\F + C_\psi [ \overline\lambda (\T) + \tilde\varepsilon ] < \overline\alpha$.
Given any $(\t_0,\varepsilon)$ in $\T \times (0,\tilde\varepsilon]$, the perturbation $\overline{\lambda} + \varepsilon \delta_{\t_0}$ is admissible for \eqref{eq:DualLin}, $\delta_{\t_0}$ being the Dirac mass at $\t_0$. The optimality of $\overline{\lambda}$ yields
\[ \log \bigg\langle \nu , \exp \biggl[ - \frac{\delta\F}{\delta\mu}(\overline{\mu}) -\int_\I \psi_\t \overline{\lambda}(\d \t) \biggr] \bigg\rangle \leq \log \bigg\langle \nu , \exp \biggl[ - \frac{\delta\F}{\delta\mu}(\overline{\mu}) -\int_\I \psi_\t \big[ \overline{\lambda}(\d \t) + \varepsilon \delta_{\t_0}(\d \t) \big] \biggr] \bigg\rangle, \]
so that subtracting the r.h.s., and dividing by $\varepsilon$,
\[ \varepsilon^{-1} \big\langle \mu_{\overline\lambda} , 1 - e^{-\varepsilon \psi_{\t_0}} \big\rangle \leq 0, \]
that we rewrite as
\begin{equation} \label{eq:PreFatou}
\varepsilon^{-1} \big\langle \mu_{\overline\lambda}, 1 - e^{-\varepsilon \lvert \psi_{\t_0}\rvert_+} \big\rangle \leq \varepsilon^{-1} \big\langle \mu_{\overline\lambda},  e^{\varepsilon \lvert \psi_{\t_0} \rvert_-} -1 \big\rangle. 
\end{equation} 
By the mean value theorem,
\[ \A \varepsilon \in \big( 0,\tfrac{\tilde\varepsilon }{2} \big], \quad \varepsilon^{-1} \lvert e^{\varepsilon \lvert \psi_{\t_0} \rvert_-} -1 \rvert \leq \lvert \psi_{\t_0} \rvert_- e^{\varepsilon \lvert \psi_{\t_0} \rvert_-} \leq C e^{\tilde\varepsilon \lvert \psi_{\t_0} \rvert_-}, \]
for some constant $C >0$ that does not depend on $\varepsilon$.
The integrability condition \eqref{eq:assIntImpr} and our choice of $\tilde\varepsilon$ guarantee that $e^{\tilde\varepsilon \lvert \psi_{\t_0} \rvert_-}$ is $\mu_{\overline\lambda}$-integrable, so that
\[ \varepsilon^{-1} \big\langle \mu_{\overline\lambda}, e^{\varepsilon \lvert \psi_{\t_0} \rvert_-} -1 \big\rangle \xrightarrow[\varepsilon \rightarrow 0]{} \langle \mu_{\overline\lambda}, \lvert \psi_{\t_0} \rvert_- \rangle. \]
To send $\varepsilon$ to $0$ in the l.h.s. of \eqref{eq:PreFatou} which is non-negative, we apply Fatou's lemma.
At the end of the day, gathering everything, 
\[ \langle \mu_{\overline\lambda}, \psi_{\t_0} \rangle \leq 0. \]
Since the above inequality holds for every $\t_0$ in $\T$, $\mu_{\overline{\lambda}}$ belongs to $\Adineq$. 

\medskip

\noindent\emph{\textbf{Step 5.} Complementary slackness condition.} 
For any $\varepsilon$ in $(0,1)$, the perturbation $\overline{\lambda} - \varepsilon \overline{\lambda}$ is admissible for \eqref{eq:DualLin}.
As above, the optimality of $\overline{\lambda}$ implies
\[ \biggl\langle \mu_{\overline{\lambda}} , \exp \biggl[ \varepsilon \int_\T \psi_\t \overline{\lambda}(\d \t) \biggr]-1 \bigg\rangle \geq 0. \]
Dividing by $\varepsilon$ and sending $\varepsilon$ to $0$, the same splitting and domination arguments as above provide that
\[ \int_\T \langle \mu_{\overline{\lambda}}, \psi_\t \rangle \overline{\lambda}(\d \t) \geq 0. \]
From the previous step, $\langle \mu_{\overline{\lambda}}, \psi_\t \rangle$ is always non-positive. This proves that $\langle \mu_{\overline{\lambda}}, \psi_\t \rangle = 0$ for $\overline\lambda$-a.e. $\t$ in $\I$.

\medskip

\noindent\emph{\textbf{Step 6.} Conclusion.} 
Since $\mu_{\overline{\lambda}}$ is the unique minimiser for \eqref{eq:Gvar}, \eqref{eq:WeakDual} and the previous step show that $\mu_{\overline{\lambda}}$ is a minimiser for \eqref{eq:AuxProb}. 
From the uniqueness proved in \emph{\textbf{Step 1.}}, we eventually get that $\mu_{\overline{\lambda}} = \overline{\mu}$.
\end{proof}

The same method could be adapted when the $\Psi_\t$ are non-linear convex functions.

\section*{Acknowledgements}
\addcontentsline{toc}{section}{Acknowledgement}

The work of G.C. is partially supported by the Agence Nationale de la Recherche through the grants ANR-20-CE40-0014 (SPOT) and ANR-23-CE40-0003 (Conviviality). 
The work of J.R. is partially supported by the Agence Nationale de la Recherche through the grants ANR-19-CE40-0010-01 (QuAMProcs) and ANR-23-CE40-0003 (Conviviality).

\printbibliography
\addcontentsline{toc}{section}{References}

\end{document}